\newcommand\kk{\Bbbk}
\newcommand\unit{\text{\usefont{U}{bbm}{m}{n}{1}}}
\newcommand\NN{\mathbb{N}}
\newcommand\ZZ{\mathbb{Z}}
\newcommand\QQ{\mathbb{Q}}
\newcommand\fD{\mathfrak{D}}
\newcommand\fS{\mathfrak{S}}
\newcommand\sP{\mathsf{P}}
\newcommand\sR{\mathsf{R}}
\newcommand\sS{\mathsf{S}}
\newcommand\sT{\mathsf{T}}
\newcommand\cA{\mathcal{A}}
\newcommand\cB{\mathcal{B}}
\newcommand\cC{\mathcal{C}}
\newcommand\cD{\mathcal{D}}
\newcommand\cE{\mathcal{E}}
\newcommand\cH{\mathcal{H}}
\newcommand\cI{\mathcal{I}}
\newcommand\cV{\mathcal{V}}
\newcommand\cM{\mathcal{M}}
\newcommand\cS{\mathcal{S}}
\newcommand\cG{\mathcal{G}}
\newcommand\rS{\text{\usefont{U}{rsfs}{m}{n}{S}}}
\newcommand\id{\mathrm{id}}
\newcommand\op{\mathrm{op}}
\newcommand\Id{\mathrm{Id}}
\newcommand\term[1]{\emph{#1}}
\DeclareMathOperator\Tr{Tr}
\DeclareMathOperator\Ker{Ker}
\DeclareMathOperator\Coker{Coker}
\DeclareMathOperator\Image{Image}
\DeclareMathOperator\Hom{Hom}
\DeclareMathOperator\End{End}
\DeclareMathOperator\Ext{Ext}
\DeclareMathOperator\Irr{Irr}
\DeclareMathOperator\Tab{\mathrm{Tab}}
\DeclareMathOperator\STab{\mathrm{STab}}
\DeclareMathOperator\cHom{\mathcal{H}\mathit{om}}
\DeclareMathOperator\cEnd{{\mathcal{E}\mathit{nd}}}
\DeclareMathOperator\cAdj{{\mathcal{A}\mathit{dj}}}
\DeclareMathOperator\Ann{\mathrm{Ann}}
\newcommand\onetoone{\stackrel{1:1}{\longleftrightarrow}}
\newcommand\Mod{\mathcal{M}\mathit{od}}
\newcommand\lMod[1]{#1\text{-}\Mod}
\newcommand\rMod[1]{\Mod\text{-}#1}
\newcommand\bMod[2]{#1\text{-}\Mod\text{-}#2}
\DeclareMathOperator\Rep{\mathcal{R}\mathit{ep}}
\newcommand\uH{\underline H}
\newcommand\ugamma{\underline{\smash\gamma}}
\newcommand\um{\underline m}
\newcommand\uM{\underline M}
\newcommand\uS{\underline S}
\newcommand\uGamma{\underline\Gamma}
\newcommand\uTab{\underline\Tab}
\newcommand\scdots{{\scriptstyle\cdots}}
\newcommand\qbinom[2]{\genfrac{[}{]}{0pt}{}{#1}{#2}}
\DeclarePairedDelimiter\abs{\lvert}{\rvert}
\DeclarePairedDelimiterX\set[2]{\{}{\}}{#1\,\delimsize\vert\,#2}
\DeclarePairedDelimiter\qqn{\llbracket}{\rrbracket}
\newcommand\mathcircled[1]{\text{\textcircled{$\scriptstyle#1$}}}
\newcommand\circleA{\mathcircled1}
\newcommand\circleB{\mathcircled2}
\newcommand\circleC{\mathcircled3}
\newcommand\circleD{\mathcircled4}
\newcommand\circleE{\mathcircled5}
\newcommand\circleF{\mathcircled6}
\newcommand\circleG{\mathcircled7}
\newcommand\circleH{\mathcircled8}
\newcommand\circleI{\mathcircled i}
\newcommand\circleJ{\mathcircled j}
\newcommand\yng@margin{\ifmmode\mathchoice{\,}{}{}{}\fi}
\let\yng@original=\yng
\def\yng(#1){\yng@margin\yng@original(#1)\yng@margin}
\let\young@original=\young
\def\young(#1){\yng@margin\young@original(#1)\yng@margin}
\newtheorem{theorem}{Theorem}[section]
\newtheorem{proposition}[theorem]{Proposition}
\newtheorem{lemma}[theorem]{Lemma}
\newtheorem{corollary}[theorem]{Corollary}
\theoremstyle{definition}
\newtheorem{definition}[theorem]{Definition}
\newtheorem{notation}[theorem]{Notation}
\theoremstyle{remark}
\newtheorem{remark}[theorem]{Remark}
\newtheorem{example}[theorem]{Example}
\newcommand\theoremwithoutproof[1]{%
\newenvironment{#1*}[1][]{\begin{#1}[##1]\pushQED\qed}{\popQED\end{#1}}%
}
\let\theoremwithoutproof\relax
\begin{document}

\begin{frontmatter}

\title{A cellular approach to the Hecke--Clifford superalgebra}

\author[utms]{Masaki Mori}
\ead{mori@ms.u-tokyo.ac.jp}

\address[utms]{Graduate School of Mathematical Sciences, The University of Tokyo, Tokyo 153, Japan}

\journal{arXiv.org}

\begin{abstract}
The Hecke--Clifford superalgebra is a super-analogue of
the Iwahori--Hecke algebra of type $\mathsf{A}$.
The classification of its simple modules
is done by Brundan, Kleshchev and Tsuchioka using
a method of categorification of affine Lie algebras.
In this paper, we introduce another way to produce its
simple modules with a generalized theory of cellular algebras
which is originally developed by Graham and Lehrer.
In our construction the key is that there is a right action of
the Clifford superalgebra on the super-analogue of the Specht module.
With the help of the notion of the Morita context, a simple module
of the Hecke--Clifford superalgebra is made from that of
the Clifford superalgebra.
\end{abstract}

\begin{keyword}
Cellular algebras \sep the Iwahori--Hecke algebra \sep the Hecke--Clifford superalgebra
\end{keyword}
\end{frontmatter}

\section*{Introduction}

The purpose of this paper is to classify the simple modules
over the Hecke--Clifford superalgebra by use of an extended theory of cellular algebras.
The original theory of cellular algebras
is developed by Graham and Lehrer~\cite{GrahamLehrer96}
as an axiomatization of various algebras arising as endomorphism algebra
on natural representation of classical groups and quantum groups:
the symmetric group algebra,
the Brauer algebra, the partition algebra,
the Iwahori--Hecke algebra, the Birman--Murakami--Wenzl algebra and so on so forth.
First recall the notion of cellular algebra
with more general one introduced by Du and Rui~\cite{DuRui98}.
The definition below is based on that given by K\"onig and Xi~\cite{KonigXi98},
which is equivalent but slightly ring-theoretic than the original one.
Let $(\Lambda,\le)$ be a partially ordered set.
In this introduction we assume that the set $\Lambda$ is finite for simplicity.

\begin{definition}
Let $A$ be an algebra over a commutative ring $\kk$.
$A$ is called a \term{standardly based algebra} on $\Lambda$
if it is equipped with a particular basis over $\kk$
\[\set{a^\lambda_{ij}\in A}{\lambda\in\Lambda,i\in I(\lambda),j\in J(\lambda)}\]
parametrized by families of finite sets $I(\lambda)$ and $J(\lambda)$ for each $\lambda$
which satisfies the following properties.
\begin{enumerate}
\item For each $\lambda\in\Lambda$,
the $\kk$-submodule $A^{<\lambda}\subset A$ spanned by
\[\set{a^\mu_{ij}\in A}{\mu<\lambda,i\in I(\mu),j\in J(\mu)}\]
is a 2-sided ideal of $A$.
\item For each $\lambda\in\Lambda$, there exist
a left $A$-module $M_\lambda=\kk\set{m^\lambda_i}{i\in I(\lambda)}$ and
a right $A$-module $N_\lambda=\kk\set{n^\lambda_j}{j\in J(\lambda)}$,
which also have parametrized bases, such that
\begin{align*}
M_\lambda\otimes_\kk N_\lambda&\to A/A^{<\lambda},\\
m^\lambda_i\otimes n^\lambda_j&\mapsto a^\lambda_{ij}
\end{align*}
is a homomorphism between $(A,A)$-bimodules.
\end{enumerate}
$A$ is also called a \term{cellular algebra}
if $I(\lambda)=J(\lambda)$ for all $\lambda$ and the map
$a^\lambda_{ij}\mapsto a^\lambda_{ji}$
defines an anti-involution on the algebra $A$.
\end{definition}
We here do not pay much attention to anti-involutions,
so standardly based algebras are fundamental for us.
Intuitively the cell $\kk\set{a^\lambda_{ij}}{i\in I(\lambda),j\in J(\lambda)}$
for each $\lambda\in\Lambda$
is made to imitate the structure of matrix algebra,
so that the modules $M_\lambda$ and $N_\lambda$ respectively correspond to
column and row vector spaces.
As a semisimple algebra decompose into a direct sum of matrix algebras,
a cellular algebra has a filtration whose successive quotients are such cells.

One of the most striking result of the theory is
the classification of simple modules performed as follows.
First we can show that there is a canonical $A$-bilinear form
\[(,)\colon N_\lambda\times M_\lambda\to\kk\]
between $M_\lambda$ and $N_\lambda$ for each $\lambda$.
Now suppose $\kk$ is a field and let
\[L_\lambda\coloneqq M_\lambda\big/\set{x\in M_\lambda}{(y,x)=0\text{ for all }y\in N_\lambda}\]
for each $\lambda$. Graham and Lehrer~\cite[Theorem~3.4]{GrahamLehrer96}
prove that an $A$-module $L_\lambda$ is either zero or simple,
and the set $\set{L_\lambda}{\lambda\in\Lambda,L_\lambda\neq0}$
consists of pairwise distinct all simple $A$-modules.
This is an analogue of the fact that each matrix component of a semisimple algebra
produces its simple module.

However, this strategy does not work well in representation theory of superalgebras;
there are no known non-trivial cellular superalgebras in the original definition.
This is essentially because there is another kind of simple superalgebras
in addition to matrix algebras, namely matrix algebras over the Clifford superalgebra.
The key idea is that we allow a generalized cellular algebra
to have such a new kind of cells.

The construction above of simple modules, though the developers of the theory might have not noticed,
implicitly use the notion of Morita context which connect
the two algebras $A/A^{<\lambda}$ and $\kk$.
\begin{definition}\label{def:Morita_between_algebras}
A \term{Morita context} between algebras $A$ and $B$ is
a pair of an $(A,B)$-bimodule $M$ and a $(B,A)$-bimodule $N$
equipped with bimodule homomorphisms
$\eta\colon M\otimes_B N\to A$ and $\rho\colon N\otimes_A M\to B$
which satisfy the associativity laws
\begin{align*}
\eta(x\otimes y)\cdot x' &= x\cdot\rho(y\otimes x'),&
\rho(y\otimes x)\cdot y' &= y\cdot\eta(x\otimes y')
\end{align*}
for every $x,x'\in M$ and $y,y'\in N$.
\end{definition}
This is a weaker version of the Morita equivalence~\cite{Morita58} and studied in detail by
Nicholson and Watters~\cite{NicholsonWatters88}.
Morita's original theorem says that if both $\eta$ and $\rho$ are surjective
then we have a category equivalence between
the module categories of these algebras,
and every category equivalence is obtained in this form.
For such data, we can prove the following statement
\begin{theorem}
Let $I\subset A$, $J\subset B$ be the images of $\eta$ and $\rho$ respectively.
Let $\Irr(A)$ be the set of isomorphism class of simple $A$-modules,
and let $\Irr^I(A)$ be its subset consisting of simple modules $V$
such that $IV=V$. We similarly define $\Irr^J(B)$.
For a $B$-module $W$, let $DW$ be the image of the $A$-homomorphism
\begin{align*}
M\otimes_B W&\to\Hom_B(N,W)\\
m\otimes w&\mapsto(n\mapsto\rho(n\otimes m)w).
\end{align*}
Then $W\mapsto DW$ induces a one-to-one correspondence
$\Irr^I(A)\onetoone\Irr^J(B)$.
\end{theorem}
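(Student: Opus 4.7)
The plan is to introduce a symmetric counterpart $D'$ of $D$, obtained by interchanging the two sides of the Morita context, and show that $D$ and $D'$ are mutually inverse on isomorphism classes of simples. Explicitly, for a left $A$-module $V$, let $D'V\subset\Hom_A(M,V)$ denote the image of the $B$-module homomorphism $N\otimes_A V\to\Hom_A(M,V)$, $n\otimes v\mapsto(m\mapsto\eta(m\otimes n)v)$. The routine compatibilities---that $\phi_W$ is $A$-linear, and that $IDW=DW$ when $JW=W$---follow from the bimodule axioms together with the associativity identity $\eta(x\otimes y)m=x\rho(y\otimes m)$: this yields $\phi_W(m\otimes\rho(y\otimes x)w)=\eta(m\otimes y)\cdot\phi_W(x\otimes w)$, and symmetrically $JD'V=D'V$ when $IV=V$.

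The heart of the argument is that $DW$ is a simple $A$-module for every $W\in\Irr^J(B)$. Nonvanishing is automatic: the factorization $\psi_W=\mathrm{ev}\circ(\id_N\otimes\phi_W)$, where $\psi_W\colon N\otimes_A M\otimes_B W\to W$ sends $n\otimes m\otimes w$ to $\rho(n\otimes m)w$, is surjective onto $JW=W\neq 0$. For simplicity, let $V'\subset DW$ be a nonzero $A$-submodule; the evaluation $N\otimes_A V'\to W$ is $B$-linear with nonzero image, hence surjective by simplicity of $W$. Applying $M\otimes_B-$ and then $\phi_W$ yields a surjection $M\otimes_B N\otimes_A V'\twoheadrightarrow DW$ which, via the key identity $\phi_W(m\otimes f(n))=\eta(m\otimes n)\cdot f$ for $f\in\Hom_B(N,W)$, has image inside the $A$-submodule $V'\subseteq DW$, forcing $V'=DW$. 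The analogous argument shows that $D'V$ is simple (and nonzero) for every $V\in\Irr^I(A)$.

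To close the bijection I would establish $D\circ D'\cong\id$ on $\Irr^I(A)$. Write $\phi'_V\colon N\otimes_A V\twoheadrightarrow D'V$ for the defining surjection. For $w\in D'V\subset\Hom_A(M,V)$, the computation $(\rho(n\otimes m)w)(m')=w(\eta(m'\otimes n)m)=\eta(m'\otimes n)\cdot w(m)$ matches $\phi'_V(n\otimes w(m))(m')$, so $\phi_{D'V}(m\otimes w)$ is the $B$-homomorphism $n\mapsto\phi'_V(n\otimes w(m))$. Since $IV=V$ implies $\{w(m):w\in D'V,\,m\in M\}$ spans $V$, we obtain $D(D'V)=\tau'(V)$, where $\tau'(v):=(n\mapsto\phi'_V(n\otimes v))$. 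Simplicity of $V$ forces $\tau'$ to be either zero or injective, and $\tau'=0$ would give $D'V=0$, contradicting the nonvanishing of $D'V$. Hence $\tau'$ is an $A$-isomorphism $V\cong D(D'V)$, and the symmetric statement $D'(DW)\cong W$ closes the bijection.

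The trickiest step is the simplicity argument. Its fulcrum is the identity $\phi_W(m\otimes f(n))=\eta(m\otimes n)\cdot f$, which converts the $B$-surjection coming from evaluation into an $A$-surjection onto $V'$ inside $DW$. Once this is secured, the remaining pieces are structured symmetric bookkeeping through the Morita axioms.
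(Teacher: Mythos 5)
Your proof is correct and takes essentially the same route as the paper's: the paper establishes this statement as Theorem~\ref{thm:morita_simple_1:1} in the categorical setting, and your simplicity argument (any nonzero $A$-submodule $V'\subseteq DW$ gives a surjective evaluation $N\otimes_A V'\twoheadrightarrow W$ by simplicity of $W$, which pulled back through $M\otimes_B{\bullet}$ and the associativity law forces $V'=DW$) is precisely the paper's proof that $DX$ is the simple socle of $F^\vee X$, while your isomorphism $\tau'\colon V\to D(D'V)$ is the module-level form of the paper's corollary that $X\simeq D'DX$ for $X\in\cC^I$. The only difference is the level of abstraction (elements and modules versus adjoint functors between abelian categories), not the underlying ideas.
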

We will prove it in Theorem~\ref{thm:morita_simple_1:1}
in a more general setting:
we also treat a Morita context between two abelian categories instead of
that between two algebras $A$ and $B$, so that
it is redefined as that between their module categories $\lMod{A}$ and $\lMod{B}$.
We do this process for two reasons.
First since our purpose
is a classification of simple objects in the module category,
it is more essential to deal directly with the module category $\lMod{A}$
rather than the algebra $A$ itself.
Second we expect that our strategy works in more general settings
outside representation theory of algebras.

Anyway, note that for a standardly based algebra, for each $\lambda\in\Lambda$
the embedding
$M_\lambda\otimes N_\lambda\hookrightarrow A/A^{<\lambda}$
and the bilinear form $N_\lambda\otimes_A M_\lambda\to\kk$ make
pair $(M_\lambda,N_\lambda)$ into a Morita context between
the algebras $A/A^{<\lambda}$ and $\kk$,
and $L_\lambda$ above is just $D\kk$ where $\kk$ is viewed as
a trivial $\kk$-module.
The classification of simple modules of a cellular algebra
is a consequence of this theorem.

For a general Morita context we do not need that one algebra is a base ring $\kk$.
Hence by replacing $\kk$ with a more general one, such as the Clifford superalgebra,
we can define generalized cellular algebras
in order to obtain a similar method of classification
which we can apply to more various things.
In this paper we introduce the notion of \term{standardly filtered algebra}
over a family of algebras $\{B_\lambda\}_{\lambda\in\Lambda}$;
see Definition~\ref{def:standardly_filtered_algebra}.
A standardly filtered algebra $A$ also consists of
a Morita context $(M_\lambda,N_\lambda)$ for each $\lambda\in\Lambda$,
between quotient algebras $A/A^{<\lambda}$ of $A$ and $B_\lambda/B'_\lambda$ of $B_\lambda$.
Let $B''_\lambda/B'_\lambda\subset B_\lambda/B'_\lambda$
the image of the Morita context map $N_\lambda\otimes_A M_\lambda\to B_\lambda/B'_\lambda$,
and write $\Irr^{B''_\lambda}_{B'_\lambda}(B_\lambda)\coloneqq\Irr^{B''_\lambda/B'_\lambda}(B_\lambda/B'_\lambda)$.
These data induce the following classification
which generalizes \cite[Theorem~3.4]{GrahamLehrer96}.

\begin{theorem}
The Morita contexts induce a one-to-one correspondence
\[\Irr(A)\onetoone\bigsqcup_{\lambda\in\Lambda}\Irr^{B''_\lambda}_{B'_\lambda}(B_\lambda).\]
\end{theorem}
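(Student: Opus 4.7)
The plan is to combine the filtration of $A$ by the ideals $A^{<\lambda}$, which stratifies $\Irr(A)$, with the already-proved Morita-context bijection (Theorem~\ref{thm:morita_simple_1:1}) applied at each stratum, assembling the resulting local bijections into the global one.

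First, I would attach a canonical label $\lambda(V)\in\Lambda$ to each simple $A$-module $V$. Since $V$ is simple, the submodule $A^{<\lambda}V\subseteq V$ equals $0$ or $V$, so $\Lambda(V)\coloneqq\{\lambda\in\Lambda:A^{<\lambda}V=0\}$ is a well-defined lower set. Choosing any maximal $\lambda\in\Lambda(V)$, I would show $I_\lambda V\neq 0$ in $A/A^{<\lambda}$, where $I_\lambda$ denotes the image of $\eta_\lambda$: for every $\mu$ covering $\lambda$ from above, $A^{<\mu}V=V$ combined with the standard-basis description of $A^{<\mu}$ forces the cell at $\lambda$ to contribute nontrivially. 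This gives $V\in\Irr^{I_\lambda}(A/A^{<\lambda})$, so every simple module lies in at least one stratum.

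Second, for each fixed $\lambda$ I would apply Theorem~\ref{thm:morita_simple_1:1} to the Morita context $(M_\lambda,N_\lambda)$ between $A/A^{<\lambda}$ and $B_\lambda/B'_\lambda$, whose image ideals are by construction $I_\lambda$ and $B''_\lambda/B'_\lambda$. This immediately yields
\[\Irr^{I_\lambda}(A/A^{<\lambda})\onetoone\Irr^{B''_\lambda}_{B'_\lambda}(B_\lambda),\]
and assembling over $\lambda\in\Lambda$ produces the candidate bijection of the theorem.

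The main obstacle is showing that the assignment $V\mapsto\lambda(V)$ is well-defined, equivalently that the strata $\Irr^{I_\lambda}(A/A^{<\lambda})$ are pairwise disjoint inside $\Irr(A)$. The comparable case is routine: if $\lambda<\mu$ and $V\in\Irr^{I_\mu}(A/A^{<\mu})$, then $A^{<\mu}V=0$ while $A^{<\mu}\supseteq I_\lambda$ as subsets of $A$, so $I_\lambda V=0$ and $V$ cannot lie in $\Irr^{I_\lambda}(A/A^{<\lambda})$. The incomparable case is where the data of a standardly filtered algebra (Definition~\ref{def:standardly_filtered_algebra}) really enter: one has to exploit the associativity laws for $\eta_\lambda$ and $\rho_\lambda$ together with the compatibility between cells at incomparable labels to rule out that a simple module is simultaneously supported at two incomparable $\lambda,\mu$. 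Once this disjointness is established, the decomposition $\Irr(A)=\bigsqcup_\lambda\Irr^{I_\lambda}(A/A^{<\lambda})$ together with the local Morita bijection produced in the second step gives the stated correspondence.
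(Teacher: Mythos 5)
Your overall strategy --- stratify $\Irr(A)$ over $\Lambda$ using the ideal filter $\{A^{\le\lambda}\}$ and then apply the two-category bijection of Theorem~\ref{thm:morita_simple_1:1} on each stratum --- is exactly the paper's. The second step, and the comparable half of the disjointness argument, are fine. But there are two genuine gaps in the first step.

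First, the labelling $V\mapsto\lambda(V)$ via a maximal element of $\Lambda(V)=\{\lambda : A^{<\lambda}V=0\}$ does not work. The theorem is actually proved under the hypothesis that $\Lambda$ is well-founded (the introduction's finiteness is only ``for simplicity''), and well-foundedness guarantees minimal elements of nonempty subsets, not maximal ones, so $\lambda(V)$ need not exist. Worse, even for finite $\Lambda$ the claim that $I_\lambda V\neq0$ at \emph{any} maximal $\lambda\in\Lambda(V)$ is false: take $\Lambda=\{\lambda,\nu,\mu\}$ with $\lambda,\nu<\mu$ and $\lambda,\nu$ incomparable, and a simple $V$ with $A^{\le\lambda}V=0$ and $A^{\le\nu}V=V$. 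Then $\lambda$ is maximal in $\Lambda(V)$, yet $I_\lambda V=0$. Your cover argument breaks precisely here: $A^{<\mu}V=V$ can be witnessed entirely by the cell at $\nu$, which is incomparable to $\lambda$ (and ``the standard-basis description'' is in any case not available for a merely standardly filtered algebra). The correct construction, which is what the paper does in Proposition~\ref{prop:dividing_simples}, is to take a \emph{minimal} element of $\{\lambda : A^{\le\lambda}V\neq0\}$ --- nonempty because $\sum_\lambda A^{\le\lambda}=A$ and $AV=V$ --- which exists by well-foundedness and automatically satisfies both $A^{\le\lambda}V=V$ and $A^{<\lambda}V=\sum_{\mu<\lambda}A^{\le\mu}V=0$.

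Second, the incomparable case of disjointness is not a matter of the associativity laws of $\eta_\lambda,\rho_\lambda$; it is an immediate consequence of the third axiom of an ideal filter. If $V$ lies in both the $\lambda$- and $\mu$-strata with $\lambda\not\le\mu$, then $V=A^{\le\lambda}A^{\le\mu}V\subset\sum_{\nu\le\lambda,\mu}A^{\le\nu}V\subset A^{<\lambda}V=0$, a contradiction. With these two repairs your argument coincides with the paper's.
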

In the classical case, each $B_\lambda$ is taken to be a base field $\kk$
so that $\Irr^{B''_\lambda}_{B'_\lambda}(B_\lambda)$ is either
$\{\kk\}$ or $\varnothing$.
Thus in this case $\Irr(A)$ is in bijection with some subset of $\Lambda$.
If so, we simply say that $A$ is a standardly filtered algebra
over $\kk$ on the set $\Lambda$, similarly as before.
In any case the classification of simple modules of $A$
can be reduced to those of $B_\lambda$'s via this correspondence.
In this paper we also introduce the notion of generalized standardly based algebra
and that of generalized cellular algebra
over the family $\{B_\lambda\}$, not over the single base ring $\kk$.
It seems better to list several examples
rather than to introduce its detailed definition.
In many cases a standardly filtered algebra
is produced from a category as follows.

\begin{lemma}\label{lem:category_produces_filter}
Let $\cA$ be a $\kk$-linear (super)category. For each $\lambda\in\Lambda$, let us take 
an object $X_\lambda\in\cA$ and
a subalgebra $B_\lambda\subset\End_\cA(X_\lambda)$.
Let $\cA^\lambda\subset\cA$ be a 2-sided ideal of $\cA$
generated by $X_\lambda$; that is,
\[\cA^\lambda(X,Y)\coloneqq\Hom_\cA(X_\lambda,Y)\circ\Hom_\cA(X,X_\lambda)
\subset\Hom_\cA(X,Y)\]
for each pair of objects $X,Y\in\cA$.
Now suppose that
\[\Hom_\cA(X_\mu,X_\lambda)=\sum_{\nu\le\lambda,\mu}\cA^\nu(X_\mu,X_\lambda)\]
for each pair of $\lambda,\mu\in\Lambda$ and
\[\End_\cA(X_\lambda)=B_\lambda+\sum_{\mu<\lambda}\cA^\mu(X_\lambda,X_\lambda)\]
for each $\lambda\in\Lambda$.
Then for every $\omega\in\Lambda$,
$\End_\cA(X_\omega)$ is a standardly filtered algebra over $\{B_\lambda\}$.
Here the Morita contexts for $A$ above is given by
\begin{align*}
A/A^{<\lambda}&\coloneqq\End_{\cA/\cA^{<\lambda}}(X_\omega),&
M_\lambda&\coloneqq\Hom_{\cA/\cA^{<\lambda}}(X_\lambda,X_\omega),\\
B_\lambda/B'_\lambda&\coloneqq\End_{\cA/\cA^{<\lambda}}(X_\lambda),&
N_\lambda&\coloneqq\Hom_{\cA/\cA^{<\lambda}}(X_\omega,X_\lambda)
\end{align*}
where $\cA^{<\lambda}\coloneqq\sum_{\mu<\lambda}\cA^\mu$.
The homomorphisms equipped on this Morita context is just the composition of morphisms.
\end{lemma}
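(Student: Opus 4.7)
The proof is a direct verification of the axioms of a standardly filtered algebra, and is almost entirely formal: it uses only the associativity of composition in $\cA$ together with the two hypotheses of the lemma. I would proceed in three steps, working throughout inside the quotient category $\cA/\cA^{<\lambda}$.

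First, set $A := \End_\cA(X_\omega)$ and $A^{<\lambda} := \cA^{<\lambda}(X_\omega, X_\omega)$. Because $\cA^{<\lambda} = \sum_{\mu<\lambda}\cA^\mu$ is a sum of two-sided categorical ideals, $A^{<\lambda}$ is a two-sided ideal of $A$, and the tautological identifications give $A/A^{<\lambda} = \End_{\cA/\cA^{<\lambda}}(X_\omega)$ together with analogous descriptions for the Hom-spaces defining $M_\lambda$, $N_\lambda$ and $\End_{\cA/\cA^{<\lambda}}(X_\lambda)$. The second hypothesis $\End_\cA(X_\lambda) = B_\lambda + \cA^{<\lambda}(X_\lambda, X_\lambda)$ says exactly that the natural map $B_\lambda \to \End_{\cA/\cA^{<\lambda}}(X_\lambda)$ is surjective; its kernel $B'_\lambda := B_\lambda \cap \cA^{<\lambda}(X_\lambda, X_\lambda)$ is a two-sided ideal of $B_\lambda$, being the intersection of a two-sided ideal of $\End_\cA(X_\lambda)$ with the subalgebra $B_\lambda$. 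This realizes $B_\lambda/B'_\lambda \cong \End_{\cA/\cA^{<\lambda}}(X_\lambda)$, the right-hand algebra of the Morita context.

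Second, I would equip $M_\lambda$ and $N_\lambda$ with the evident bimodule structures coming from pre- and post-composition in $\cA/\cA^{<\lambda}$. That the $B_\lambda$-actions descend to $B_\lambda/B'_\lambda$ follows because composition on either side with any morphism in $\cA^{<\lambda}(X_\lambda, X_\lambda)$ lands in $\cA^{<\lambda}$, which is killed in the quotient category. The two composition maps $\eta\colon M_\lambda \otimes_{B_\lambda/B'_\lambda} N_\lambda \to A/A^{<\lambda}$ and $\rho\colon N_\lambda \otimes_{A/A^{<\lambda}} M_\lambda \to B_\lambda/B'_\lambda$ are then well-defined bimodule homomorphisms, and the two associativity identities required by Definition~\ref{def:Morita_between_algebras} are literally the associativity of composing three morphisms in $\cA/\cA^{<\lambda}$.

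Third, I would verify the cell-covering statement implicit in the definition of standardly filtered algebra, namely that the image of $\eta$ identifies with the expected $\lambda$-cell of the filtration. Here the first hypothesis, applied with $\mu = \omega$, gives $\Hom_\cA(X_\omega, X_\lambda) = \sum_{\nu \le \lambda, \omega}\cA^\nu(X_\omega, X_\lambda)$ and dually for $\Hom_\cA(X_\lambda, X_\omega)$, so that $\cA^\lambda(X_\omega, X_\omega) = \Hom_\cA(X_\lambda, X_\omega) \circ \Hom_\cA(X_\omega, X_\lambda)$ is, modulo $A^{<\lambda}$, precisely the image of $\eta$. The main obstacle I anticipate is not any single computation but keeping consistent bookkeeping between three simultaneous quotients: $A^{<\lambda} \subset A$, $B'_\lambda \subset B_\lambda$, and $\cA^{<\lambda} \subset \cA$. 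Once these are aligned and the bimodule structures are checked to factor through the correct ideals, the rest of the argument is formal, and the supercategory case requires only the usual Koszul signs when rewriting compositions.
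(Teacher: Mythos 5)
Your overall strategy---unpacking Definition~\ref{def:standardly_filtered_algebra} and verifying its clauses directly via composition of morphisms in $\cA/\cA^{<\lambda}$---is sound, and it is in substance what the paper does: the paper's own ``proof'' is the remark that the lemma is Theorem~\ref{thm:multi_morita_context} applied to Example~\ref{ex:category_is_morita}, i.e.\ the general machinery of Morita contexts among the module categories of the algebras $\End_\cA(X_\lambda)$ equipped with a compatible partial order. Your identifications of the quotients, your description of $B'_\lambda=B_\lambda\cap\cA^{<\lambda}(X_\lambda,X_\lambda)$ via the second hypothesis, and your observation that the associativity laws for $\eta$ and $\rho$ are just associativity of composition are all correct.

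There is, however, one axiom your outline never verifies, and it is precisely the one for which the first hypothesis is stated for \emph{all} pairs $\lambda,\mu$ rather than only for $\mu=\omega$: you must check that $\{A^{\le\lambda}\}_{\lambda\in\Lambda}$, with $A^{\le\lambda}\coloneqq\sum_{\mu\le\lambda}\cA^\mu(X_\omega,X_\omega)$, is an ideal filter in the sense of Definition~\ref{def:ideal_filter}. The covering condition $A=\sum_\lambda A^{\le\lambda}$ is the first hypothesis with $\lambda=\mu=\omega$, and the multiplicativity condition $A^{\le\lambda}A^{\le\mu}\subset\sum_{\nu\le\lambda,\mu}A^{\le\nu}$ follows by inserting the decomposition $\Hom_\cA(X_\omega,X_\lambda)\circ\Hom_\cA(X_\mu,X_\omega)\subset\Hom_\cA(X_\mu,X_\lambda)=\sum_{\nu\le\lambda,\mu}\cA^\nu(X_\mu,X_\lambda)$ into the middle of the fourfold composite defining $\cA^\lambda(X_\omega,X_\omega)\circ\cA^\mu(X_\omega,X_\omega)$. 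By contrast, your third step needs only the tautology $A^{\le\lambda}=\cA^\lambda(X_\omega,X_\omega)+A^{<\lambda}$, so the trace-ideal identification is essentially free; the real content of the first hypothesis lives in the ideal-filter axioms, which your plan skips. Finally, Definition~\ref{def:standardly_filtered_algebra} asks for a \emph{standard} (not merely prestandard) filter, i.e.\ $A^{\le\mu}M_\lambda=0=N_\lambda A^{\le\mu}$ whenever $\lambda\not\le\mu$; either check this directly by the same insertion trick (which forces the relevant composites into $\cA^{<\lambda}$, hence to vanish in the quotient), or reduce to the prestandard case via Proposition~\ref{prop:prestandard_filter_induces_standard}. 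With these additions your argument is complete and is the elementary unpacking of the paper's categorical one.
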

\begin{example}
The Iwahori--Hecke algebra $H_n(q)$ of type $\mathsf{A}_{n-1}$
for $q\in\kk$ is standardly filtered over $\kk$
on the set of compositions of $n$.
If moreover $q$ is an invertible element, it is also standardly based
and the index set can be restricted to partitions of $n$.
For the proof we take the module category of $H_n$ as $\cA$ and
for a composition $\lambda$ we pick up the corresponding parabolic module as $X_\lambda$.
Then the Morita contexts above is given by the Specht modules.
See Section~\ref{sec:cellular_hecke}.
In particular, for $q=1$ the symmetric group algebra $\kk\fS_n$ is
also standardly based.
\end{example}
\begin{example}
The Hecke--Clifford superalgebra $H^c_n(a;q)$, which is our main target,
is also standardly filtered on the same sets over the Clifford superalgebras
with different quadratic form for each composition.
The key point is that there is a right action of
this Clifford superalgebra on the super-analogue of the Specht module.
It is also standardly based in a generalized sense if $2aq\in\kk^\times$ and
the $q$-characteristic of $\kk$ is greater than $n/2$.
The strategy of the proof is same as that for the Iwahori--Hecke algebra.
See Section~\ref{sec:cellular_hecke_clifford}.
For the special case $q=1$, we obtain that the Sergeev superalgebra
$W_n(a)=C_n(a)\rtimes\fS_n$ is
also standardly filtered over the Clifford superalgebras.
\end{example}
\begin{example}
The Temperley--Lieb algebra $\mathrm{TL}_n(t)$ is standardly based
over $\kk$ on the set of natural numbers $\{0,1,\dotsc,n\}$.
We can simply take the Temperley--Lieb category as $\cA$,
and for $k\le n$ the ``$k$-points'' object as $X_k$.
\end{example}
\begin{example}
The partition algebra $P_n(t)$ is standardly based over $\{\kk\fS_k\}_{0\le k\le n}$,
the symmetric group algebras.
Note the natural inclusion $\kk\fS_k\subset P_k(t)$.
We take Deligne's
``representation category $\Rep(\fS_t)$ of the symmetric group $\fS_t$ for $t\in\kk$''
(see \cite{Deligne07}) as $\cA$
and similarly ``$k$-points'' object $[1]^{\otimes k}$ as $X_k$.
In addition, by using that each $\kk\fS_k$ is standardly based over $\kk$ on the set of partitions of $k$,
we obtain that $P_n(t)$ is also standardly based over $\kk$
on the set of all partitions of $k\le n$.
It gives a simple alternative proof of \cite{Xi99}.
We can prove similar results for
the Brauer algebra and the walled Brauer algebra
using the Deligne's category for ``the orthogonal group $O_t$''
and ``the general linear group $GL_t$'' respectively.
\end{example}

As we listed in the examples above,
the \term{Iwahori--Hecke algebra} $H_n=H_n(q)$ is one of the most important example
of a cellular algebra whose cellular basis
is given by Kazhdan and Lusztig's canonical basis~\cite{KazhdanLusztig79}
or Murphy's basis~\cite{Murphy92,Murphy95}.
It first comes from a study of
flag varieties over the finite fields,
and also appears as an endomorphism algebra of a certain
representation of the quantum general linear group via an analogue of
the Schur--Weyl duality,
then considered as a $q$-analogue of the symmetric group algebra.
Now suppose $\kk$ is a field and $q\in\kk$ be a non-zero element.
When $q$ is not a root of unity, its representations are
very similar to those of the symmetric group in characteristic zero,
and a concrete construction of the simple modules
called Young's seminormal form is given by Hoefsmit~\cite{Hoefsmit74}.
For modular representations $q=\sqrt[e]{1}$,
its simple modules are studied by Dipper and James~\cite{DipperJames86,DipperJames87}
in a cellular way.
In addition to cellular representation theory
there is a beautiful approach on the classification of simple modules
made by Lascoux, Leclerc, Thibon~\cite{LascouxLeclercThibon96},
Ariki~\cite{Ariki96}, Grojnowski~\cite{Grojnowski99},
Brundan~\cite{Brundan98}, Kleshchev~\cite{Kleshchev95} and others,
called the \term{categorification}.
Based on their works it is proven that the union set $\bigsqcup_{n\in\NN}\Irr(H_n)$
of simple modules of $H_n$ for all $n\in\NN$ has a structure of
Kashiwara crystal~\cite{Kashiwara02} over the quantum affine algebra
$U_v(\widehat{\mathfrak{sl}}_e)$ of type $\mathsf{A}^{(1)}_{e-1}$,
and is isomorphic to the crystal basis $B(\Lambda_0)$
of the irreducible representation $V(\Lambda_0)$ whose highest weight
is the fundamental weight $\Lambda_0$
(\cite[Theorem~14.2, 14.3]{Grojnowski99} for $\lambda=\Lambda_0$).
One can obtain each simple module by applying Kashiwara operators $\tilde f_i$
on the trivial module of $H_0=\kk$. However this construction is
too abstract and hard to compute in practical use.
Compared with this Lie theory, the cellular theory has advantages that
we can construct simple modules in a concrete way,
and that we can apply it even when $\kk$ is a more general commutative ring:
we only requires that $q\in\kk$ is invertible.
\begin{theorem}
Suppose $q\in\kk$ is invertible. Then there is a one-to-one correspondence
\[\Irr(H_n)\onetoone\bigsqcup_{\lambda\colon\textup{partition}}\Irr^{\kk f_\lambda}(\kk).\]
Here $f_\lambda\coloneqq[\lambda_1-\lambda_2]![\lambda_2-\lambda_3]!\dotsm[\lambda_r]!\in\kk$
for each partition $\lambda=(\lambda_1,\lambda_2,\dots,\lambda_r)$
where $[k]!$ denotes the $q$-factorial.
\end{theorem}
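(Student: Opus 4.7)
The plan is to apply the general classification theorem for standardly based algebras to $H_n$, using the fact---stated as the Iwahori--Hecke example in this introduction and carried out in Section~\ref{sec:cellular_hecke}---that $H_n$ is standardly based over $\kk$ on the set of partitions of $n$ whenever $q\in\kk^\times$. For each partition $\lambda$ the relevant Morita context pairs the quotient $H_n/H_n^{<\lambda}$ with the base ring $\kk$; here $M_\lambda$ is the Specht module $S^\lambda$ and $N_\lambda$ is its dual Specht counterpart, both arising from Murphy's cellular basis. Because the second algebra is just $\kk$, we have $B'_\lambda=0$, and $B''_\lambda\subset\kk$, being the image of the canonical pairing $N_\lambda\otimes_{H_n}M_\lambda\to\kk$, is automatically a principal ideal $\kk f_\lambda$. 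The general classification theorem therefore yields
\[\Irr(H_n)\onetoone\bigsqcup_\lambda\Irr^{\kk f_\lambda}(\kk)\]
without further effort, and the only remaining task is to identify $f_\lambda$ with $[\lambda_1-\lambda_2]!\,[\lambda_2-\lambda_3]!\dotsm[\lambda_r]!$.

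To pin down $f_\lambda$, I will evaluate the pairing on a carefully chosen pair of Murphy basis elements associated to the row-superstandard $\lambda$-tableau $t^\lambda$. In Murphy's setup, the generator $m_{t^\lambda}$ is built from the $q$-row-symmetrizer $x_\lambda=\sum_{w\in\fS_\lambda}q^{\ell(w)}T_w$ together with a $q$-antisymmetrizer indexed by the conjugate partition $\lambda'$. Using the idempotency relation $x_\lambda T_w=q^{\ell(w)}x_\lambda$ for $w\in\fS_\lambda$ and the corresponding column identity, the expansion of $\langle n_{t^\lambda},m_{t^\lambda}\rangle$ collapses to a single $q$-weighted sum over minimal representatives of a double coset inside $\fS_{\lambda_1}$. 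A direct count shows that these representatives enumerate the Young subgroup $\fS_\mu$ associated with the composition $\mu\coloneqq(\lambda_1-\lambda_2,\lambda_2-\lambda_3,\dotsc,\lambda_r)$, so the sum equals its Poincar\'e polynomial $\prod_i[\mu_i]!=f_\lambda$, as desired.

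The principal obstacle is showing that the pair $(n_{t^\lambda},m_{t^\lambda})$ indeed generates the entire image $B''_\lambda$---equivalently, that for any other standard tableau pair $(s,t)$ the scalar $\langle n_s,m_t\rangle\in\kk$ is divisible by $f_\lambda$. This is handled by a Garnir-style straightening argument: any cross term arising from a non-superstandard pair either lies in $H_n^{<\lambda}$ after applying column straightening relations, and hence vanishes in the relevant quotient, or rearranges into a scalar multiple of the superstandard pairing. Once this divisibility claim is in place the equality $B''_\lambda=\kk f_\lambda$ is established, and the bijection then follows directly from the general classification theorem.
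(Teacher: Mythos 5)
Your overall strategy is the paper's: realize $H_n$ as a standardly filtered algebra over $\kk$ with Morita contexts given by the Specht modules, apply the general classification theorem, and then identify the trace ideal $J_\lambda=m_\lambda\cdot S_\lambda\subset S_{\lambda;\lambda}\simeq\kk$. But the identification step, which is the entire content of the theorem beyond the general machinery, has a genuine gap. You assert that the diagonal pairing at the superstandard tableau equals $f_\lambda$ and that every other pairing is divisible by $f_\lambda$ "by a Garnir-style straightening argument". Neither claim is substantiated, and the second one is exactly where the work lies. Straightening tells you that every pairing value is a $\kk$-linear combination of pairing values, but it does not by itself produce the common factor $f_\lambda=[\lambda_1-\lambda_2]!\dotsm[\lambda_r]!$: the factor $[\lambda_1-\lambda_2]!$ arises in the paper's proof of $J_\lambda\subset\kk f_\lambda$ from a specific mechanism, namely factoring $m_\lambda$ through the refinement $\mu=(\lambda_1,1^{n-\lambda_1})$, observing that the product $m_\mu\cdot m_\sT$ carries a coefficient divisible by $[\nu_1]!$ where $\nu_1$ is the number of entries $\le\lambda_1$ in the first row of $\sT$, and that the terms with $\nu_1<\lambda_1-\lambda_2$ vanish in $S_{\lambda;\lambda}$ because they factor through a strictly dominating parabolic module; one then inducts on the remaining rows. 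Your sketch contains no analogue of this, so the inclusion $J_\lambda\subset\kk f_\lambda$ is not established. Your diagonal computation is also suspect as stated: with the row symmetrizer alone one gets $m_\lambda\cdot m_\lambda=[\lambda_1]!\dotsm[\lambda_r]!\,m_\lambda$, not $f_\lambda m_\lambda$, and once you insert the antisymmetrizer $y_{\lambda'}$ you have changed models of the Specht module (the paper's $S_\lambda$ is a quotient of $H_nm_\lambda$, with no antisymmetrizer anywhere), so you would additionally need to check that the two pairings generate the same ideal of $\kk$.

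Two further remarks. First, you aim for the exact equality $B''_\lambda=\kk f_\lambda$, which is stronger than needed and stronger than what the paper proves: the paper only establishes $\kk f_\lambda^r\subset J_\lambda\subset\kk f_\lambda$ (the lower bound coming from one explicit element $m_\lambda\cdot m_{\sR_\downarrow}$ whose value is a unit times $[\lambda_1-\lambda_2]!\,[\lambda_2-\lambda_3]!^2\dotsm[\lambda_r]!^r$, not $f_\lambda$), and this sandwich already forces $\Irr^{J_\lambda}(\kk)=\Irr^{\kk f_\lambda}(\kk)$ because a maximal ideal of $\kk$ contains $f_\lambda$ if and only if it contains $f_\lambda^r$. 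If your exact computation could be carried out it would be a nice sharpening, but as written it is an unproven claim sitting in place of the proof. Second, your phrase that the image of the pairing "is automatically a principal ideal" is false in a general commutative ring $\kk$; the image is an ideal, and whether it is principal (let alone generated by $f_\lambda$) is precisely what must be proved.
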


Our main target in this paper is the \term{Hecke--Clifford superalgebra}
$H^c_n=H^c_n(a;q)$ for $a,q\in\kk$,
which is a super version of the Iwahori--Hecke algebra.
It is introduced by Olshanski~\cite{Olshanski92}
as a partner of the quantum Queer superalgebra via the Schur--Weyl duality
and is considered as a $q$-analogue of the wreath product
$W_n=C_n\rtimes\fS_n$ of the Clifford superalgebra,
which is called the \term{Sergeev superalgebra}.
It is known that the spin representation theory of the symmetric group $\fS_n$
is controlled by $W_n$; see \cite{BrundanKleshchev02} or \cite{Kleshchev05}.
Young's seminormal form of $H^c_n$ for characteristic zero case is independently founded by
Hill, Kujawa and Sussan~\cite{HillKujawaSussan11} and Wan~\cite{Wan10},
and the categorification method for $q=\sqrt[e]{1}$ is developed
by Brundan and Kleshchev~\cite{BrundanKleshchev01} for odd $e$
and by Tsuchioka~\cite{Tsuchioka10} for even $e$
using the quantum affine algebra
of type $\mathsf{A}^{(2)}_{e-1}$ and of type $\mathsf{D}^{(2)}_{e/2}$ respectively.
Hence this paper fills the missing one: the cellular representation
theory of $H^c_n$.
In our cellular method the classification can be done in a very weak assumption
same as the case of the Iwahori--Hecke algebra above.
This is our main theorem.
\begin{theorem}
Suppose $q\in\kk$ is invertible. Then there is a one-to-one correspondence
\[\Irr(H^c_n)\onetoone\bigsqcup_{\lambda\colon\textup{partition}}\Irr^{\Delta_\lambda+\Theta_\lambda}_{\Theta_\lambda}(\Gamma_\lambda).\]
Here $\Gamma_\lambda$ is the Clifford superalgebra defined on the quadratic form
with respect to the scalars $a\qqn{\lambda_1},a\qqn{\lambda_2},\dots,a\qqn{\lambda_r}$
where $\qqn{k}$ denotes the $q^2$-integer,
and $\Delta_\lambda,\Theta_\lambda\subset\Gamma_\lambda$ are its 2-sided ideals
defined in Section~\ref{sec:cellular_hecke_clifford}.
\end{theorem}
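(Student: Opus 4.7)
The plan is to deduce the theorem from Lemma~\ref{lem:category_produces_filter} combined with the general classification for standardly filtered algebras stated earlier. I would take $\cA\coloneqq\lMod{H^c_n}$ as a $\kk$-linear supercategory and let $\Lambda$ be the set of compositions of $n$ equipped with the dominance order. For each composition $\lambda=(\lambda_1,\ldots,\lambda_r)$, let $X_\lambda$ be the parabolic module super-induced from $H^c_{\lambda_1}\otimes\cdots\otimes H^c_{\lambda_r}$ up to $H^c_n$, starting from the super-analogue of the trivial module (on which each parabolic $T_i$ acts by $q$). Taking $\omega=(1,1,\ldots,1)$ gives $X_\omega\cong H^c_n$ as a left module and $\End_\cA(X_\omega)\cong H^c_n$ as algebras.

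The crucial new ingredient is the subalgebra $\Gamma_\lambda\subset\End_\cA(X_\lambda)$, realised by a right action of the Clifford superalgebra on the quadratic form with diagonal entries $a\qqn{\lambda_1},\ldots,a\qqn{\lambda_r}$. I would define the generator $y_i$ to act on the distinguished cyclic vector of $X_\lambda$ by a weighted sum of the Hecke--Clifford Clifford generators $c_j$ for $j$ in the $i$-th block, with $q$-weights arranged so that the identity $\sum_{k=0}^{\lambda_i-1}q^{2k}=\qqn{\lambda_i}$ produces the relation $y_i^2=a\qqn{\lambda_i}$; a direct computation then shows that this action commutes with the left $H^c_n$-action and satisfies Clifford anti-commutation between distinct blocks after sign normalisation.

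Next I would verify the two covering conditions of Lemma~\ref{lem:category_produces_filter}. The equality $\Hom_\cA(X_\mu,X_\lambda)=\sum_{\nu\le\lambda,\mu}\cA^\nu(X_\mu,X_\lambda)$ should follow from a super Mackey-type decomposition along double cosets $\fS_\mu\backslash\fS_n/\fS_\lambda$, each coset contributing a summand that factors through $X_\nu$ for the composition $\nu$ of the corresponding intersection. The condition $\End_\cA(X_\lambda)=\Gamma_\lambda+\sum_{\mu<\lambda}\cA^\mu(X_\lambda,X_\lambda)$ is the specialisation $\mu=\lambda$: the trivial double coset contributes precisely the image of $\Gamma_\lambda$, while the non-trivial cosets factor through strictly lower compositions. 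Carrying the super signs through this decomposition is the main obstacle, and plays the role in our theory that Murphy's basis plays in the non-super case.

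With these hypotheses Lemma~\ref{lem:category_produces_filter} exhibits $H^c_n$ as a standardly filtered algebra over $\{\Gamma_\lambda\}$ indexed by compositions, and the general classification theorem then yields a bijection $\Irr(H^c_n)\onetoone\bigsqcup_{\lambda\text{ comp.}}\Irr^{\Delta_\lambda+\Theta_\lambda}_{\Theta_\lambda}(\Gamma_\lambda)$, where $\Theta_\lambda$ and $\Delta_\lambda+\Theta_\lambda$ match the general $B'_\lambda$ and $B''_\lambda$ attached to the Morita context $N_\lambda\otimes_{H^c_n}M_\lambda\to\Gamma_\lambda$ coming from the super-analogue of the Specht module. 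Finally, since $q\in\kk^\times$, a standard argument with a $q$-Young symmetrizer gives an isomorphism $X_\lambda\cong X_{w\cdot\lambda}$ in $\cA$ for any $w\in\fS_r$ permuting the parts, and the symmetry of the quadratic form identifies the whole triple $(\Gamma_\lambda,\Theta_\lambda,\Delta_\lambda)$ with that of its partition reordering. Thus the disjoint union may be restricted to partitions, which produces the stated correspondence.
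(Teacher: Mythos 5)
Your high-level strategy is indeed the paper's: realize $H^c_n$ as standardly filtered over $\{\Gamma_\lambda\}$ via the parabolic supermodules and the right Clifford action $\gamma^L_{\lambda;i}m_\lambda=m_\lambda\gamma^R_{\lambda;i}$, then invoke the general classification theorem. But there is a decisive gap: you assert that "$\Theta_\lambda$ and $\Delta_\lambda+\Theta_\lambda$ match the general $B'_\lambda$ and $B''_\lambda$ attached to the Morita context" without any argument, and this identification is the mathematical content of the theorem. The general machinery only delivers a bijection with $\bigsqcup_\lambda\Irr^{B''_\lambda}_{B'_\lambda}(\Gamma_\lambda)$ for some abstractly defined ideals; to get the stated theorem you must compute (a) the kernel $B'_\lambda=\Ker(\Gamma_\lambda\twoheadrightarrow S^c_{\lambda;\lambda})$ and show it is generated by the elements $\gamma_{\lambda;i}-\gamma_{\lambda;j}$ with $\lambda_i=\lambda_j$, and (b) the trace ideal $J^c_\lambda=m_\lambda\cdot S^c_\lambda$ and show $\Delta_\lambda^r+\Theta_\lambda\subset J^c_\lambda\subset\Delta_\lambda+\Theta_\lambda$ (equality is not proved; only this sandwich, which suffices since $\Irr^{I^r}=\Irr^I$). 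Step (a) in the paper requires the entire "non-integral rank" apparatus ($\uM^c_{\lambda;\mu}$, fakecompositions, Sagan's shifted Knuth correspondence, and a dimension count over $\QQ(a,q)$ descended to $\ZZ[a,q^\pm]$), and step (b) rests on the computation of $m_nc_{i_1}\dotsm c_{i_r}m_n$ in Lemma~\ref{lem:gamma} and the ideals $K_n$, from which $\Delta_\lambda$ is built. None of this is present or replaceable by a formal argument.

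A secondary but genuine obstruction: you take $\cA=\lMod{H^c_n}$ with its actual hom spaces, but $\Hom_{H^c_n}(M^c_\mu,M^c_\lambda)$ can be strictly larger than the span of the circled-tableau elements when $[2]$ is a zero divisor (e.g.\ $c_1c_2m_2\in\unit^c_2\cap\unit^{c*}_2$ when $q=1$, $2=0$), so your Mackey-type description of $\End_\cA(X_\lambda)$ is wrong over a general base ring and the resulting $B'_\lambda$ need not be $\Theta_\lambda$. The paper deliberately avoids this by working with the free submodules $M^c_{\lambda;\mu}=\kk\{m_\sS\}$, which are closed under composition and stable under base change, and builds the standard filter on those rather than on the full hom spaces. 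Finally, your reduction from compositions to partitions via $M^c_\lambda\simeq M^c_{w\lambda}$ is correct (it is the invertibility of the braiding for $q\in\kk^\times$), but note the index set is partitions, not strict partitions: for a non-strict partition $\lambda$ the quotient $\Gamma_\lambda/\Theta_\lambda$ need not vanish over a general $\kk$, which is exactly why $\Theta_\lambda$ must appear in the statement.
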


This paper is organized as follows.
There are roughly two parts.
First we extend
the theory of cellular algebras so that we can apply it to our target,
the Hecke--Clifford superalgebra.
We start by reviewing the enriched category theory in Section~\ref{sec:enriched}
in order to treat representation category of superalgebra.
Next in Section~\ref{sec:ideal} and \ref{sec:morita}
we define Morita contexts between two categories,
and develop the theory of classification of simple objects.
We then in Section~\ref{sec:cellular} introduce the notion of standardly filtered algebra
by use of Morita contexts.

Using these results we apply our generalized cellular theory
to the two algebras.
In Section~\ref{sec:cellular_hecke}
we reconstruct the cellular representation theory of the Iwahori--Hecke algebra
from a generalized viewpoint to make it suitable for our purpose.
Finally in Section~\ref{sec:cellular_hecke_clifford}
we proceed to the study of the cellular representation theory of the Hecke--Clifford superalgebra
and completes the classification of its simple modules.

\subsubsection*{Acknowledgments}
First and foremost I would like to thank my advisor Hisayosi Matumoto
for his useful comments and great patience.
I am also deeply grateful to Susumu Ariki
whose excellent works suggested me this exciting subject.
Finally and above all, I would wish to acknowledge my wonderful wife Yasuko
for her support of my life.

\section{Basics on enriched categories}\label{sec:enriched}
Throughout in this paper, we fix a commutative ring $\kk$.
Tensor products over $\kk$ are simply denoted by $\otimes$.
We denote by $\kk^\times$ the set of invertible elements in $\kk$.
We here recall the basic notions of enriched categories in a special case.
For details we refer the reader to the textbook \cite{Kelly82}.

\subsection{Enriched categories}
We denote by $\cM$ the symmetric tensor category of $\kk$-modules,
by $\cS$ that of $\kk$-supermodules and by $\cG$ that of graded $\kk$-modules.
So they consist of $\kk$-modules $V=\bigoplus_{i\in I}V_i$ graded by the abelian group
$I=\{1\}$, $\ZZ/2\ZZ$ and $\ZZ$ respectively, and $\kk$-homomorphisms
which respect these gradings.
When we take an element $x\in V$, we always assume that $x$ is a homogeneous element.
For such $x$, we denote by $\abs{x}\in I$ the degree of $x$.
The symmetries on them are defined as
\begin{align*}
V\otimes W&\to W\otimes V\\
x\otimes y&\mapsto(-1)^{\abs{x}\abs{y}}y\otimes x
\end{align*}
using the Koszul sign $(-1)^{\abs{x}\abs{y}}$.
If you want to use the na\"\i ve symmetry in the graded case,
you should concentrate on evenly graded spaces for convention
(actually we can take an arbitrary abelian group $I$
with a homomorphism $I\to\{\pm1\}$).

Now let $\cV$ be one of $\cM$, $\cS$ or $\cG$.
In each case, a $\cV$-category is called a \term{$\kk$-linear category},
a \term{$\kk$-linear supercategory} or a \term{$\kk$-linear graded category}.
Shortly, a $\cV$-category $\cC$ consists of a hom object $\Hom_\cC(X,Y)\in\cV$ for each pair of objects $X,Y\in\cC$
instead of an ordinary hom set.
By taking the degree-zero part $\Hom_{\cC_0}(X,Y)\coloneqq\Hom_\cC(X,Y)_0$
we obtain the underlying $\cM$-category $\cC_0$.
When we write $f\colon X\to Y$ we mean that $f$ is a homogeneous element
of $\Hom_\cC(X,Y)$.
For $\cV$-categories $\cC$ and $\cD$,
the \term{tensor product $\cV$-category} $\cC\boxtimes\cD$
and the \term{opposite $\cV$-category} $\cC^\op$ are
defined through the symmetry on $\cV$.
Their morphisms are in the form $f\boxtimes g$ and $f^\op$ respectively
and the compositions are given by
\begin{align*}
(f_1\boxtimes g_1)\circ(f_2\boxtimes g_2)&\coloneqq(-1)^{\abs{g_1}\abs{f_2}}
(f_1\circ f_2)\boxtimes(g_1\circ g_2),\\
f_1^\op\circ f_2^\op&\coloneqq(-1)^{\abs{f_1}\abs{f_2}}(f_2\circ f_1)^\op.
\end{align*}

Similarly a $\cV$-functor $F\colon\cC\to\cD$ is a collection of
a degree preserving homomorphism
$\Hom_\cC(X,Y)\to\Hom_\cD(FX,FY)$ for each pair of $X,Y\in\cC$.
By taking its degree-zero part we obtain its underlying
usual functor $F_0\colon\cC_0\to\cD_0$.
A $\cV$-natural transformation $F\to G$ is defined as
a usual natural transformation between the underlying functors $F_0\to G_0$
which satisfies an additional condition.

We denote by $\cHom(\cC,\cD)_0$ the usual category consisting of
$\cV$-functors and $\cV$-natural transformations between them.
The set (or the class) of natural transformations between
$\cV$-functors $F,G\colon\cC\to\cD$
is denoted by $\Hom_{\cC,\cD}(F,G)_0$ for short.
We can also complete this category to a $\cV$-category $\cHom(\cC,\cD)$
(except that it may not be locally small) by letting its hom object
$\Hom_{\cC,\cD}(F,G)$ as a equalizer of the parallel morphisms
\[\prod_{X\in\cC}\Hom_\cC(FX,GX)\rightrightarrows\prod_{Y,Z\in\cC}
\Hom_\cV\bigl(\Hom_\cC(Y,Z),\Hom_\cD(FY,GZ)\bigr).\]
We represent its homogeneous element as $h\colon F\to G$.
The reader should pay attention to that
its naturality means
\[(hY)\circ(Ff)=(-1)^{\abs{h}\abs{f}}(Gf)\circ(hX)\]
for every $f\colon X\to Y$.
Note that a $\cV$-algebra $A$ (a monoid object in $\cV$)
is nothing but a $\cV$-category $\cC$ with a single object $*\in\cC$
such that $A\simeq\End_\cC(*)$.
Then the category of left $A$-modules, which we denote by $\lMod{A}$,
is just the functor category $\cHom(\cC,\cV)$.
The \term{tensor product $\cV$-algebra} $A\otimes B$ and
the \term{opposite $\cV$-algebra} $A^\op$
are special cases of the operations for $\cV$-categories above.
We also denote by $\rMod{A}\simeq\lMod{A^\op}$ the category of right $A$-modules.

\subsection{Alternative definitions}
Now for a while consider the super case $\cV=\cS$.
$\cS$ has the \term{parity change functor}
$\Pi\colon\cS\to\cS$ which exchanges the grading
\[(\Pi V)_0\coloneqq V_1,\quad(\Pi V)_1\coloneqq V_0\]
for supermodule $V=V_0\oplus V_1$.
In a general $\cS$-category $\cC$, for an object $Y\in\cC$
its parity change $\Pi Y\in\cC$, if exists,
is defined as a representation of the $\cS$-functor
\[\Hom_\cC(X,\Pi Y)\simeq\Pi\Hom_\cC(X,Y).\]
If every object in $\cC$ has its parity change,
$\Pi$ can be defined as an $\cS$-functor $\Pi\colon\cC\to\cC$ and
we also have an $\cS$-natural isomorphism
\[\Pi\Hom_\cC(X,Y)\simeq\Hom_\cC(\Pi X,Y).\]
We say that such $\cC$ is \term{$\Pi$-closed}.
Instead of to treat the theory of enriched categories directly,
we can view a $\Pi$-closed $\cS$-category as a usual category
with additional informations as follows.
\begin{lemma}
Giving a $\Pi$-closed $\cS$-category $\cC$ is equivalent to
giving an $\cM$-category $\cC_0$ with an $\cM$-functor $\Pi\colon\cC_0\to\cC_0$
and an isomorphism $\xi\colon\Pi^2\simeq\Id_{\cC_0}$ such that
$\xi\Pi=\Pi\xi$ as $\cS$-natural isomorphisms $\Pi^3\to\Pi$.
\end{lemma}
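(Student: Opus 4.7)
The plan is to exhibit mutually inverse constructions between the two kinds of data.

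Given a $\Pi$-closed $\cS$-category $\cC$, take $\cC_0$ to be its underlying $\cM$-category and restrict the $\cS$-functor $\Pi$ to a functor $\cC_0\to\cC_0$. To obtain $\xi$, use that $\Pi^2=\Id$ holds on the nose on $\cS$: then $\Hom_\cC(X,\Pi^2 Y)\simeq\Pi^2\Hom_\cC(X,Y)=\Hom_\cC(X,Y)$ naturally in $X$, and the enriched Yoneda lemma produces a canonical even isomorphism $\xi_Y\colon\Pi^2 Y\xrightarrow{\sim}Y$. Concretely, if $\pi_Y\colon\Pi Y\to Y$ denotes the odd universal arrow (the image of $\Id_{\Pi Y}$ under $\Hom_\cC(\Pi Y,\Pi Y)\simeq\Pi\Hom_\cC(\Pi Y,Y)$), then $\xi_Y=\pi_Y\circ\pi_{\Pi Y}$, and the coherence $\xi\Pi=\Pi\xi$ is a short verification comparing $\pi_{\Pi Y}\circ\pi_{\Pi^2 Y}$ with $\Pi(\pi_Y\circ\pi_{\Pi Y})$ via the universal property.

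Conversely, given $(\cC_0,\Pi,\xi)$, I would assemble an $\cS$-category $\cC$ with the same objects and hom-supermodules
\begin{align*}
\Hom_\cC(X,Y)_0 &:= \Hom_{\cC_0}(X,Y),\\
\Hom_\cC(X,Y)_1 &:= \Hom_{\cC_0}(X,\Pi Y).
\end{align*}
Composition is defined by case analysis on parities: for $f,g$ both even, use composition in $\cC_0$; for odd $f$ (represented by $\tilde f\colon X\to\Pi Y$) and even $g$, put $g\circ f := \Pi g\circ\tilde f$; for even $f$ and odd $g$ (represented by $\tilde g\colon Y\to\Pi Z$), put $g\circ f := \tilde g\circ f$; and for both odd, put $g\circ f := \xi_Z\circ\Pi\tilde g\circ\tilde f$. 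Identities come from $\cC_0$, and $\Pi Y$ represents $\Pi\Hom_\cC(-,Y)$ with universal element $\Id_{\Pi Y}\in\Hom_{\cC_0}(\Pi Y,\Pi Y)=\Hom_\cC(\Pi Y,Y)_1$, so $\cC$ is $\Pi$-closed.

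The main step is verifying associativity of this composition. Of the eight parity cases, seven reduce immediately to associativity in $\cC_0$ combined with functoriality of $\Pi$ and at most one application of naturality of $\xi$. The genuinely nontrivial case is three odd morphisms $\tilde f\colon X\to\Pi Y$, $\tilde g\colon Y\to\Pi Z$, $\tilde h\colon Z\to\Pi W$, where unfolding the definitions yields
\begin{align*}
(h\circ g)\circ f &= \Pi\xi_W\circ\Pi^2\tilde h\circ\Pi\tilde g\circ\tilde f,\\
h\circ(g\circ f) &= \tilde h\circ\xi_Z\circ\Pi\tilde g\circ\tilde f = \xi_{\Pi W}\circ\Pi^2\tilde h\circ\Pi\tilde g\circ\tilde f,
\end{align*}
the second equality by naturality of $\xi$ on $\tilde h$. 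Hence associativity here is equivalent to $\xi_{\Pi W}=\Pi\xi_W$, i.e., precisely the hypothesis $\xi\Pi=\Pi\xi$. The hardest part is singling out this case and recognizing that the coherence is both necessary and sufficient; once that is done, verifying that the two constructions are mutually inverse is routine, since on the $(\cC_0,\Pi,\xi)$ side the round trip is tautological and on the $\cC$ side the $\Pi$-closedness isomorphisms identify the rebuilt hom-supermodules and compositions with the originals.
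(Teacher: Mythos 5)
Your construction is essentially identical to the paper's: the same hom-supermodules $\Hom_\cC(X,Y)_1=\Hom_{\cC_0}(X,\Pi Y)$, the same parity-case composition, and the same observation that $\xi\Pi=\Pi\xi$ is exactly what makes the composite of three odd morphisms associative. Your proposal is correct; it even spells out the forward direction and the three-odd-case computation, which the paper leaves implicit.
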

\begin{proof}
Clearly a $\Pi$-closed $\cS$-category induces such a datum.
Conversely let $\cC_0$ be an $\cM$-category equipped with
$\Pi\colon\cC_0\to\cC_0$ and $\xi\colon\Pi^2\simeq\Id_{\cC_0}$.
For each $X,Y\in\cC_0$, let $\Hom_\cC(X,Y)$ be a supermodule defined by
\[\Hom_\cC(X,Y)_0\coloneqq\Hom_{\cC_0}(X,Y),\quad\Hom_\cC(X,Y)_1\coloneqq\Hom_{\cC_0}(X,\Pi Y).\]
Then we can define the composition
$\Hom_\cC(Y,Z)\otimes\Hom_\cC(X,Y)\to\Hom_\cC(X,Z)$ by
\begin{align*}
\Hom_{\cC_0}(Y,Z)\otimes\Hom_{\cC_0}(X,Y)&\to\Hom_{\cC_0}(X,Z),\\
\Hom_{\cC_0}(Y,\Pi Z)\otimes\Hom_{\cC_0}(X,Y)&\to\Hom_{\cC_0}(X,\Pi Z),\\
\Hom_{\cC_0}(Y,Z)\otimes\Hom_{\cC_0}(X,\Pi Y)&\simeq\Hom_{\cC_0}(\Pi Y,\Pi Z)\otimes\Hom_{\cC_0}(X,\Pi Y)\\
&\to\Hom_{\cC_0}(X,\Pi Z),\\
\Hom_{\cC_0}(Y,\Pi Z)\otimes\Hom_{\cC_0}(X,\Pi Y)&\simeq\Hom_{\cC_0}(\Pi Y,\Pi^2 Z)\otimes\Hom_{\cC_0}(X,\Pi Y)\\
&\simeq\Hom_{\cC_0}(\Pi Y,Z)\otimes\Hom_{\cC_0}(X,\Pi Y)\\
&\to\Hom_{\cC_0}(X,Z).
\end{align*}
The condition $\xi\Pi=\Pi\xi$ is needed for that composition of
three odd morphisms is associative.
\end{proof}

\begin{lemma}
Let $\cC$ and $\cD$ be $\Pi$-closed $\cS$-categories.
Then giving an $\cS$-functor $F\colon\cC\to\cD$ is equivalent to giving
an $\cM$-functor $F_0\colon\cC_0\to\cD_0$ between their underlying $\cM$-categories
equipped with an isomorphism $\alpha\colon F_0\Pi\simeq\Pi F_0$ which makes the diagram below
commutes:
\[\begin{tikzcd}
F_0\Pi^2 \arrow{r}{\alpha\Pi} \arrow{rd}[swap]{F_0\xi} &
\Pi F_0\Pi \arrow{r}{\Pi\alpha} &
\Pi^2 F_0 \arrow{ld}{\xi F_0} \\&
F_0\rlap{.}
\end{tikzcd}\]
\end{lemma}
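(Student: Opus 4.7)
The plan is to mirror the structure of the previous lemma's proof, extracting from an $\cS$-functor $F$ the pair $(F_0,\alpha)$ via the canonical parity-swap morphisms, and conversely reconstructing $F$ from $(F_0,\alpha)$ using the decomposition $\Hom_\cC(X,Y)=\Hom_{\cC_0}(X,Y)\oplus\Hom_{\cC_0}(X,\Pi Y)$ established there.

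For the forward direction, given an $\cS$-functor $F\colon\cC\to\cD$, I take $F_0$ to be its restriction to the underlying $\cM$-categories. In any $\Pi$-closed $\cS$-category there is a canonical odd isomorphism $\pi_Y\colon Y\to\Pi Y$, namely the element of $\Hom_\cC(Y,Y)_1\simeq\Hom_{\cC_0}(Y,\Pi Y)$ corresponding to $\id_{\Pi Y}$. Applying $F$ yields an odd isomorphism $F\pi_Y\colon F_0 Y\to F_0\Pi Y$ in $\cD$, and I define $\alpha_Y\colon F_0\Pi Y\to\Pi F_0 Y$ to be the unique even isomorphism with $\alpha_Y\circ F\pi_Y=\pi_{F_0 Y}$. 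Its naturality in $Y$ follows from naturality of $\pi_{-}$ in both $\cC$ and $\cD$ together with functoriality of $F_0$.

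For the reverse direction, given $(F_0,\alpha)$, I extend $F_0$ to morphisms by sending an odd morphism $f\colon X\to Y$, represented by an even $\widetilde f\colon X\to\Pi Y$ in $\cC_0$, to the odd morphism $F_0 X\to F_0 Y$ represented by $\alpha_Y\circ F_0\widetilde f\colon F_0 X\to\Pi F_0 Y$. Functoriality must be checked on the four parity combinations of composable pairs; using the composition formulas from the previous lemma, the three cases involving at most one odd factor follow straightforwardly from functoriality of $F_0$ and from naturality of $\alpha$ applied to the even factor.

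The main obstacle is the odd--odd case, where the structure isomorphism $\xi$ intervenes on both sides. Taking odd $f\colon X\to Y$ and $g\colon Y\to Z$ with representatives $\widetilde f,\widetilde g$ in $\cC_0$, the representative of $F_0 g\circ F_0 f$ in $\cD_0$ unfolds, after one application of naturality of $\alpha$, to $\xi_{F_0 Z}\circ\Pi\alpha_Z\circ\alpha_{\Pi Z}\circ F_0(\Pi\widetilde g\circ\widetilde f)$, whereas the representative of $F_0(g\circ f)$ is $F_0\xi_Z\circ F_0(\Pi\widetilde g\circ\widetilde f)$. The equality of these expressions is exactly the coherence triangle $\xi F_0\circ\Pi\alpha\circ\alpha\Pi=F_0\xi$ in the statement, which thus plays the same role here as $\xi\Pi=\Pi\xi$ did in the previous lemma for the associativity of three odd morphisms. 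Finally, the two constructions are mutually inverse by unwinding the definitions, completing the equivalence.
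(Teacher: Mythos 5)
Your proof is correct and follows essentially the same route as the paper: the paper defines $F$ on odd morphisms by $\alpha_Y\circ F_0\widetilde f$ exactly as you do, and likewise invokes the coherence triangle precisely to make the composition of two odd morphisms functorial, while dismissing the forward direction as clear. The only quibble is a bookkeeping slip in your forward direction: the canonical odd isomorphism $\pi_Y\colon Y\to\Pi Y$ lives in $\Hom_\cC(Y,\Pi Y)_1\simeq\Hom_{\cC_0}(Y,Y)$ and corresponds to $\id_Y$ (not to $\id_{\Pi Y}$ inside $\Hom_\cC(Y,Y)_1$), but this does not affect the argument.
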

\begin{proof}
The one direction is clear.
So let $F_0\colon\cC_0\to\cD_0$ be an $\cM$-functor equipped with
an isomorphism $\alpha\colon F_0\Pi\simeq\Pi F_0$.
On objects simply let $FX\coloneqq F_0X$.
Then we can define the degree preserving map $F\colon\Hom_\cC(X,Y)\to\Hom_\cC(FX,FY)$ by
\begin{align*}
\Hom_{\cC_0}(X,Y)&\to\Hom_{\cC_0}(FX,FY),\\
\Hom_{\cC_0}(X,\Pi Y)&\to\Hom_{\cC_0}(FX,F\Pi Y)\simeq\Hom_{\cC_0}(FX,\Pi FY).
\end{align*}
The commutativity of the diagram above is used to ensure that $F$
preserves composition of two odd morphisms.
\end{proof}

\begin{lemma}
Let $F$ and $G$ be $\cS$-functors $\cC\to\cD$ between $\Pi$-closed $\cS$-categories.
Then a natural transformation $h\colon F_0\to G_0$ is $\cS$-natural if and only if
the square
\[\begin{tikzcd}
F_0\Pi \arrow{r}{h\Pi} \arrow{d}[swap]{\alpha} &
G_0\Pi \arrow{d}{\alpha} \\
\Pi F_0 \arrow{r}[swap]{\Pi h} &
\Pi G_0
\end{tikzcd}\]
commutes.
\end{lemma}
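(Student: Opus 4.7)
The plan is to unpack the $\cS$-naturality condition into equations in the underlying $\cM$-category $\cD_0$ and reduce it to ordinary naturality of $h$ combined with compatibility with the $\Pi$-structure maps $\alpha^F, \alpha^G$ attached to $F$ and $G$ by the preceding lemma.

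Because $h$ is already natural as $F_0 \to G_0$, the $\cS$-naturality equation $(hY) \circ (Ff) = (Gf) \circ (hX)$ holds automatically for even $f$. It therefore suffices to analyse it for odd $f \colon X \to Y$ in $\cC$. First I would translate everything to $\cC_0$ and $\cD_0$: such an $f$ corresponds to an even morphism $\tilde f \colon X \to \Pi Y$ in $\cC_0$, and by the description of $\cS$-functors in the preceding lemma, the odd morphism $Ff$ is represented by $\alpha^F_Y \circ F_0 \tilde f \colon FX \to \Pi FY$, and similarly $Gf$ by $\alpha^G_Y \circ G_0 \tilde f$.

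Next, applying the four composition formulas from the proof of the first lemma, the left hand side $(hY) \circ (Ff)$ (outer even composed with inner odd, so $\Pi$ is applied to the outer morphism) becomes $(\Pi hY) \circ \alpha^F_Y \circ F_0 \tilde f$ inside $\Hom_{\cD_0}(FX, \Pi GY)$, while the right hand side $(Gf) \circ (hX)$ (outer odd composed with inner even, ordinary composition) becomes $\alpha^G_Y \circ G_0 \tilde f \circ hX$. Invoking the ordinary naturality of $h$ at $\tilde f$ rewrites the latter as $\alpha^G_Y \circ h(\Pi Y) \circ F_0 \tilde f$. Hence $\cS$-naturality at every odd $f$ is equivalent to the identity
\[
(\Pi hY) \circ \alpha^F_Y \circ F_0 \tilde f = \alpha^G_Y \circ h(\Pi Y) \circ F_0 \tilde f
\]
holding for every $\tilde f \colon X \to \Pi Y$ in $\cC_0$.

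If the square in the statement commutes at every $Y$, then both sides agree regardless of $\tilde f$. Conversely, specialising to $X = \Pi Y$ and $\tilde f = \id_{\Pi Y}$ recovers the square at $Y$. The main obstacle is the bookkeeping: correctly selecting which of the four composition formulas applies on each side and inserting $\alpha^F, \alpha^G$ in the right places, with care taken that $hY$ being even forces a $\Pi$ when composing with an odd morphism. Once this translation is done, the equivalence of the two conditions follows immediately.
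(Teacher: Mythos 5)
Your proof is correct and follows essentially the same route as the paper's: the paper simply observes that $\cS$-naturality amounts to two parallel maps on hom objects agreeing, that the even parts agree by ordinary naturality, and that the square encodes agreement on the odd parts. You have merely written out the odd-part computation explicitly using the composition formulas, which checks out.
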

\begin{proof}
The $\cS$-naturality just says that the two parallel maps
$\Hom_\cC(X,Y)\rightrightarrows\Hom_\cC(FX,GY)$, which are induced by $h$, coincide.
By the usual naturality it is satisfied for the even parts.
The condition above is equivalent to that it also holds for the odd parts.
\end{proof}

\begin{remark}
This characterization of $\Pi$-closed $\cS$-category is appeared
in \cite{KangKashiwaraTsuchioka11,KangKashiwaraOh13,KangKashiwaraOh13_2}
as their definition of \term{supercategory}.
In \cite{KangKashiwaraOh13_2}, our $\cS$-category is called
a \term{$1$-supercategory}.
\end{remark}

Next consider the graded case $\cV=\cG$.
Analogously we have the \term{$k$-th degree shift functor} $\Sigma^k$
defined by $(\Sigma^k V)_i\coloneqq V_{i+k}$ for $V\in\cG$.
We say that a $\cG$-category is \term{$\Sigma$-closed} if
each $Y\in\cC$ has its degree shift $\Sigma^k Y$ defined by
\[\Sigma^k\Hom_\cC(X,Y)\simeq\Hom_\cC(X,\Sigma^k Y).\]
$\Sigma$-closed $\cG$-category can be also characterized as follows.
The proofs are similar as before so we omit them.
\begin{lemma*}
Giving a $\Sigma$-closed $\cG$-category $\cC$ is equivalent to
giving a $\cM$-category $\cC_0$ with a $\cM$-functor
$\Sigma\colon\cC_0\to\cC_0$ which is an equivalence.
\end{lemma*}

\begin{lemma*}
Let $\cC$ and $\cD$ be $\Sigma$-closed $\cG$-categories.
Then giving a $\cG$-functor $F\colon\cC\to\cD$ is equivalent to giving
a $\cM$-functor $F_0\colon\cC_0\to\cD_0$
equipped with an isomorphism $\alpha\colon F_0\Sigma\simeq\Sigma F_0$.
\end{lemma*}

\begin{lemma*}
Let $F$ and $G$ be $\cG$-functors $\cC\to\cD$ between $\Sigma$-closed $\cG$-categories.
Then a natural transformation $f\colon F_0\to G_0$ is $\cG$-natural if and only if
the square
\[\begin{tikzcd}
F_0\Sigma \arrow{r}{f\Sigma} \arrow{d}[swap]{\alpha} &
G_0\Sigma \arrow{d}{\alpha} \\
\Sigma F_0 \arrow{r}[swap]{\Sigma f} &
\Sigma G_0
\end{tikzcd}\]
commutes.
\end{lemma*}

\subsection{Limits in enriched category}

Let $\cC$ be a $\cV$-category.
We say a usual functor $\cI\to\cC_0$ from a small category $\cI$
to the underlying category of $\cC$ a \term{diagram} in $\cC$.
So a diagram consists of $Y_i\in\cC$ for each $i\in\cI$
and a degree-zero morphism $Y_i\to Y_j$ for each arrow $i\to j$ in $\cI$.
For a diagram $\{Y_i\}_{i\in\cI}$, its \term{(conical) $\cV$-limit}
is an object $\varprojlim_i Y_i\in\cC$ equipped with a family of
degree-zero morphisms $\varprojlim_i Y_i\to Y_i$ for each $i$
which satisfies the $\cV$-natural isomorphism
\[\Hom_\cC(X,\varprojlim_i Y_i)\simeq\varprojlim_i\Hom_\cC(X,Y_i).\]
Note that the usual limit only implies
\[\Hom_{\cC_0}(X,\varprojlim_i Y_i)\simeq\varprojlim_i\Hom_{\cC_0}(X,Y_i).\]
Since taking the degree-zero part $\cV\to\cM$; $V\mapsto V_0$ preserves limits,
the $\cV$-limit of an diagram is also its usual limit.
The converse does not hold in general but
there are no differences between them in a suitable condition.
\begin{lemma}
Suppose $\cC$ is an $\cM$-category (resp.\ a $\Pi$-closed $\cS$-category,
a $\Sigma$-closed $\cG$-category).
Then for any diagram its limit in $\cC_0$ is automatically
its $\cV$-limit in $\cC$.
\end{lemma}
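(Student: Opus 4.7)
The case $\cV=\cM$ is tautological: an $\cM$-category coincides with its underlying category $\cC_0$, and an $\cM$-limit is by definition the same as a usual limit.

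For $\cV=\cS$ with $\cC$ being $\Pi$-closed, let $L$ denote a limit of the diagram $\{Y_i\}_{i\in\cI}$ in $\cC_0$ with projections $p_i\colon L\to Y_i$. The plan is to show that the canonical map
$$\Hom_\cC(X,L)\to\varprojlim_i\Hom_\cC(X,Y_i)$$
is an isomorphism in $\cS$ for every $X\in\cC$. Since limits of supermodules are computed degree-wise, it suffices to verify this on the even and odd components separately. Recall from the previous subsection that $\Hom_\cC(X,Y)_0=\Hom_{\cC_0}(X,Y)$ and $\Hom_\cC(X,Y)_1\simeq\Hom_{\cC_0}(X,\Pi Y)$. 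The even component thus becomes the map $\Hom_{\cC_0}(X,L)\to\varprojlim_i\Hom_{\cC_0}(X,Y_i)$, which is an isomorphism by the assumption that $L$ is a limit in $\cC_0$. The odd component becomes $\Hom_{\cC_0}(X,\Pi L)\to\varprojlim_i\Hom_{\cC_0}(X,\Pi Y_i)$.

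To see this last map is an isomorphism, I would exploit the fact that $\Pi\colon\cC_0\to\cC_0$ is an equivalence of $\cM$-categories (its quasi-inverse is $\Pi$ itself, via $\xi\colon\Pi^2\simeq\Id_{\cC_0}$), and so preserves all limits. Consequently $\Pi L$ together with the morphisms $\Pi p_i$ realizes the limit of $\{\Pi Y_i\}$ in $\cC_0$, which gives the required isomorphism on odd parts.

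The graded case $\cV=\cG$ proceeds along identical lines: a limit in $\cG$ splits into its homogeneous parts, so one reduces to showing that for each $k\in\ZZ$ the map $\Hom_{\cC_0}(X,\Sigma^k L)\to\varprojlim_i\Hom_{\cC_0}(X,\Sigma^k Y_i)$ is an isomorphism, which holds because each $\Sigma^k$ is an equivalence of $\cC_0$ and therefore preserves limits. There is no genuine obstacle in the argument; the only care required is to unpack the definition of $\cV$-natural isomorphism and observe that the universal property decouples across degrees, after which both the even and odd (or $k$th graded) pieces follow from the corresponding ordinary limit statement applied to the shifted diagram.
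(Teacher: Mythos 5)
Your proof is correct and follows essentially the same degree-wise decomposition as the paper's. The only (harmless) difference is in how the odd/shifted components are handled: you identify $\Hom_\cC(X,Y)_1$ with $\Hom_{\cC_0}(X,\Pi Y)$ and then invoke that $\Pi$ (resp.\ $\Sigma^k$), being an equivalence, preserves limits, whereas the paper identifies it with $\Hom_{\cC_0}(\Pi X,Y)$ and simply applies the universal property of the limit in $\cC_0$ at the shifted source object, so it never needs the observation that $\Pi$ preserves limits.
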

\begin{proof}
It is trivial for the $\cV=\cM$ case.
When $\cV=\cS$, we have
\begin{align*}
\Hom_\cC(X,\varprojlim_i Y_i)&\simeq
\Hom_{\cC_0}(X,\varprojlim_i Y_i)\oplus\Hom_{\cC_0}(\Pi X,\varprojlim_i Y_i)\\
&\simeq\varprojlim_i\Hom_{\cC_0}(X,Y_i)\oplus\varprojlim_i\Hom_{\cC_0}(\Pi X,Y_i)\\
&\simeq\varprojlim_i\Hom_\cC(X,Y_i).
\end{align*}
The similar proof works for $\cV=\cG$
since a limit commutes with direct sums in a Grothendieck category $\cG$.
\end{proof}
The dual notion \term{(conical) $\cV$-colimit} of a diagram is introduced similarly.
Next we introduce the notion of abelian $\cV$-category.
\begin{definition}
We say that a $\cS$-category (resp.\ a $\cG$-category) $\cC$ is \term{abelian}
if it is $\Pi$-closed (resp.\ $\Sigma$-closed)
and its underlying $\cM$-category $\cC_0$ is abelian in an ordinary sense.
\end{definition}
By the lemma above, in an abelian $\cV$-category
we can use several categorical notions such as
zero object, direct sum, kernel, cokernel, monomorphism, epimorphism
and exactness defined via enriched Hom functors
without any modifications.
We are also allowed to operate homological computations as follows.
\begin{lemma}
Let $\cC$ be an abelian $\cV$-category.
Then $P\in\cC$ is projective in $\cC_0$ if and only if
the functor $\Hom_\cC(P,{\bullet})\colon\cC\to\cV$ is exact.
\end{lemma}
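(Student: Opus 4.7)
The plan is to verify the equivalence degreewise, reducing exactness of the enriched functor $\Hom_\cC(P,-)$ to exactness of ordinary Hom functors valued in $\cM$. The case $\cV=\cM$ is just the standard definition of projective object, so the content lies in $\cV=\cS$; the case $\cV=\cG$ is entirely analogous, with the degree shifts $\Sigma^k$ playing the role of $\Pi$.

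For the direction $(\Leftarrow)$, I would observe that the degree-zero part functor $(-)_0\colon\cV\to\cM$ is exact, since exactness in $\cV$ is detected degreewise (a sequence in $\cS$ or $\cG$ is exact iff each homogeneous component is exact in $\cM$). Composing the assumed exact functor $\Hom_\cC(P,-)$ with $(-)_0$ then yields $\Hom_{\cC_0}(P,-)$, which is therefore exact, so $P$ is projective in $\cC_0$.

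For the direction $(\Rightarrow)$, assume $P$ is projective in $\cC_0$ and let $0\to X\to Y\to Z\to 0$ be a short exact sequence in $\cC_0$. Since exactness in $\cS$ is equivalent to exactness of both the even and odd components, I would check each separately. By the characterization of $\Pi$-closed $\cS$-categories proved earlier in this section, $\Hom_\cC(P,W)_0=\Hom_{\cC_0}(P,W)$ while $\Hom_\cC(P,W)_1=\Hom_{\cC_0}(P,\Pi W)$. The even component inherits exactness at once from the projectivity of $P$. For the odd component, $\Pi\colon\cC_0\to\cC_0$ is an autoequivalence (its quasi-inverse is itself via $\xi$) and hence exact, so the sequence $0\to\Pi X\to\Pi Y\to\Pi Z\to 0$ is still short exact in $\cC_0$; applying $\Hom_{\cC_0}(P,-)$ then preserves its exactness by projectivity of $P$.

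The only real obstacle is bookkeeping: one must correctly decompose the enriched Hom using the $\Pi$-closed (resp.\ $\Sigma$-closed) structure and invoke that the parity change (resp.\ degree shift) functor is an autoequivalence of $\cC_0$, hence exact. No genuinely new enriched-categorical input is required beyond the structural lemmas already established in this section.
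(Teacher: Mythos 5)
Your proof is correct and follows essentially the same route as the paper, which disposes of the ``if'' direction by exactness of $V\mapsto V_0$ and of the ``only if'' direction by the same degreewise decomposition of the enriched Hom used in the preceding lemma on limits (the paper places $\Pi$ on the source, writing $\Hom_\cC(P,W)_1=\Hom_{\cC_0}(\Pi P,W)$, whereas you place it on the target; both reduce to the exactness of the autoequivalence $\Pi$ and the projectivity of $P$).
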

\begin{proof}
The ``if'' part follows from that taking the degree-zero part
$V\mapsto V_0$ is exact.
The ``only if'' part can be proven similarly as the lemma above.
\end{proof}

We here study limits and colimits in a functor category.
Let $\{F_i\colon\cC\to\cD\}$ be a diagram in $\cHom(\cC,\cD)$.
If the $\cV$-limit $\varprojlim_i F_i X$ exists for each $X\in\cC$,
then the $\cV$-limit $\varprojlim_i F_i$ also exists and is defined as
\[(\varprojlim_i F_i)X\coloneqq\varprojlim_i F_i X.\]
Dually $\cV$-colimits of this diagram is also computed value-wise.
We remark that the composition of $\cV$-functors
\begin{align*}
\cHom(\cD,\cE)\boxtimes\cHom(\cC,\cD)&\to\cHom(\cC,\cE),\\
F\boxtimes G&\mapsto F G
\end{align*}
is also defined as a $\cV$-functor.
In particular, if $\cD$ is abelian then so is $\cHom(\cC,\cD)$.
By definition, the right multiplication
\[\bullet G\colon\cHom(\cD,\cE)\to\cHom(\cC,\cE)\]
is both $\cV$-continuous (i.e.\ preserves $\cV$-limits) and
$\cV$-cocontinuous (i.e.\ does $\cV$-colimits).
In contrast, the left multiplication
\[F\bullet\colon\cHom(\cC,\cD)\to\cHom(\cC,\cE)\]
preserves certain $\cV$-limits or $\cV$-colimits for all $\cC$ if and only if $F$ does.

Suppose that $\cV$-categories $\cC$ and $\cD$ are both abelian,
and $\cC$ has enough projectives.
Then for each $F\colon\cC\to\cD$,
its $i$-th left derived $\cV$-functor $L_iF\colon\cC\to\cD$ makes sense as usual.
$L_i$ can be viewed as a $\cV$-endofunctor on $\cHom(\cC,\cD)$,
which is also abelian,
and from a short exact sequence $0\to F\to G\to H\to0$
of $\cV$-functors they yield a long exact sequence
\[\dotsb\to L_2H\to L_1F\to L_1G\to L_1H\to L_0F\to L_0G\to L_0 H\to0.\]
Dually, when $\cC$ has enough injectives
we define the $i$-th right derivation $R^i$.

We are most interested in the zeroth derivation $L_0$.
By definition there is a canonical $\cV$-natural transformation $L_0 F\to F$.
$L_0F\colon\cC\to\cD$ is right exact and
of course $L_0F\simeq F$ when $F$ is already right exact.
For another right exact $\cV$-functor $G\colon\cC\to\cD$, the map
\[\Hom_{\cC,\cD}(G,L_0F)\to\Hom_{\cC,\cD}(G,F)\]
is an isomorphism since its inverse map is given by
\[\Hom_{\cC,\cD}(G,F)\to\Hom_{\cC,\cD}(L_0G,L_0F)\simeq
\Hom_{\cC,\cD}(G,L_0F).\]
Thus $L_0F$ is considered as the most applicative
right exact approximation of $F$.

\subsection{Adjunctions}

An adjunction from $\cC$ to $\cD$ is a pair of
adjoint $\cV$-functors $F\colon\cC\to\cD$ and $F^\vee\colon\cD\to\cC$
where $F$ is left adjoint to $F^\vee$.
That is, it is called so if there is a $\cV$-natural isomorphism
\[\Hom_\cD(FX,Y)\simeq\Hom_\cC(X,F^\vee Y).\]
Then $F$ is $\cV$-cocontinuous and $F^\vee$ is $\cV$-continuous.
It is also characterized by degree-zero natural transformations
$\delta\colon\Id_\cC\to F^\vee F$ and
$\epsilon\colon F F^\vee\to\Id_\cD$
which satisfy the zig-zag identities
\begin{align*}
\id_F &= (F\xrightarrow{F\delta} F F^\vee F\xrightarrow{\epsilon F} F),&
\id_{F^\vee} &= (F^\vee\xrightarrow{\delta F^\vee} F^\vee F F^\vee\xrightarrow{F^\vee\epsilon} F^\vee).
\end{align*}
$\delta$ and $\epsilon$ are called the unit and the counit of the adjunction respectively.
For a $\cV$-functor $F\colon\cC\to\cD$, the rest datum
$(F^\vee,\delta,\varepsilon)$ which makes an adjunction
is uniquely determined up to unique isomorphism if it exists.
In order to keep notations simple we say that ``$F$ is an adjunction from $\cC$ to $\cD$''
when $F\colon\cC\to\cD$ has a fixed right adjoint $\cV$-functor $F^\vee$.
We denote by $\cAdj(\cC,\cD)$ the full subcategory
of $\cHom(\cC,\cD)$ consisting of adjunctions.

For an adjunction $F\colon\cC\to\cD$, the left multiplication
$F\bullet$ is left adjoint to $F^\vee\bullet$
while the right multiplication
$\bullet F$ is right adjoint to $\bullet F^\vee$.
Thus for two parallel adjunction $F,G\colon\cC\to\cD$,
we have a canonical isomorphism
\[\Hom_{\cC,\cD}(F,G)\simeq\Hom_{\cD,\cC}(G^\vee,F^\vee).\]

We here list few examples of adjunction category.
\begin{example}
Let $A$ and $B$ be $\cV$-algebras.
For an $(A,B)$-bimodule $M$, the $\cV$-functors between their module categories
\begin{align*}
F\colon\lMod{B}&\to\lMod{A},& F^\vee\colon\lMod{A}&\to\lMod{B},\\
W&\mapsto M\otimes_B W,& V&\mapsto \Hom_A(M,V)
\end{align*}
form an adjunction from $B\text{-}\Mod$ to $\lMod{A}$.
Conversely let $F\colon B\text{-}\Mod\to\lMod{A}$ be an arbitrary adjunction.
When we put $M\coloneqq FB$ the multiplication on $B$ from right
makes $M$ a right $B$-module. We have
\[F^\vee V\simeq\Hom_B(B,F^\vee V)\simeq\Hom_A(FB,V)=\Hom_A(M,V)\]
so every adjunction between the module categories can be obtained in this way.
In addition, $\cAdj(B\text{-}\Mod,\lMod{A})$ is equivalent to $\bMod AB$,
the category of $(A,B)$-bimodules.
In particular it is abelian and the embedding to
$\cHom(B\text{-}\Mod,A\text{-}\Mod)$ is right exact,
but not left exact in general.
\end{example}

\begin{example}
Suppose $\kk$ is a field,
and for $A$ and $B$ above let $\lMod{A}^f$, $\lMod{B}^f$ be the categories of
their finite dimensional left modules.
Suppose that $N$ is a $(B,A)$-bimodule which satisfies these finiteness conditions:
\begin{enumerate}
\item if a right $A$-module $V$ is finite dimensional then so is $\Hom_{A^\op}(V,N)$,
\item if a left $B$-module $W$ is finite dimensional then so is $\Hom_B(W,N)$,
\item $N$ is locally finite dimensional, that is,
it is the union of its finite dimensional $(B,A)$-submodules.
\end{enumerate}
We denote by $V^\vee\coloneqq\Hom_\cV(V,\kk)$ the dual space of
a finite dimensional vector space.
Then the functors
\begin{align*}
F\colon\lMod{B}^f&\to\lMod{A}^f,& F^\vee\colon\lMod{A}^f&\to\lMod{B}^f,\\
W&\mapsto\Hom_B(W,N)^\vee,& V&\mapsto \Hom_{A^\op}(V^\vee,N)
\end{align*}
form an adjunction via the natural isomorphism
\begin{align*}
\Hom_A(\Hom_B(W,N)^\vee,V)&\simeq
\Hom_{A^\op}(V^\vee,\Hom_B(W,N))\\&\simeq
\Hom_B(W,\Hom_{A^\op}(V^\vee,N)).
\end{align*}
In this case $N$ is recovered from $F$ by the formula
$N\simeq\varinjlim_{B\twoheadrightarrow B'}(FB')^\vee$
where $B'$ runs over all finite dimensional quotient algebras of $B$.
Every adjunctions are obtained in this way
and $\cAdj(\lMod{B}^f,\lMod{A}^f)$ is equivalent to the opposite
of the category of such $(B,A)$-bimodules.
This category does not necessarily have kernels.
\end{example}

\begin{example}
For a small $\cV$-category $\cA$,
we call a $\cV$-functor $\cA\to\cV$
``a left $\cA$-module'', and denote by $\lMod\cA\coloneqq\cHom(\cA,\cV)$.
Similarly, for another $\cV$-category $\cB$,
a right $\cB$-module (resp.\ an $(\cA,\cB)$-bimodule) is
just a $\cV$-functor $\cB^\op\to\cV$
(resp.\ $\cB^\op\boxtimes\cA\to\cV$).
Since a $\ZZ$-linear category is a ``ring with several objects''
as Mitchell~\cite{Mitchell72} noticed, this terminology is a generalization for
usual algebras.

For an $(\cA,\cB)$-bimodule $M$ and an $(\cA,\cC)$-bimodule $N$,
we can form an $(\cB,\cC)$-bimodule $\Hom_\cA(M,N)$ defined as
\begin{align*}
\cC^\op\boxtimes\cB&\to\cV,\\
Z\boxtimes Y&\mapsto
\Hom_{\cA,\cV}(M(Y,{\bullet}),N(Z,{\bullet})).
\end{align*}
On the other hand, if $P$ is an $(\cB,\cC)$-bimodule,
there is an $(\cA,\cC)$-bimodule denoted by $M\otimes_\cB P$,
which sends $Z\boxtimes X\in\cC^\op\boxtimes\cA$ to the coequalizer of the parallel maps
\[\bigoplus_{Y',Y''\in\cB}
M(Y',X)\otimes\Hom_\cB(Y'',Y')\otimes P(Z,Y'')\rightrightarrows
\bigoplus_{Y\in\cB}
M(Y,X)\otimes P(Z,Y).\]
Similarly as above, every adjunction $F\colon\cB\text{-}\Mod\to\cA\text{-}\Mod$
is represented by some $(\cA,\cB)$-bimodule $M$ using $\otimes$ and $\Hom$.
The identity functor on $\cA\text{-}\Mod$ corresponds
to the $(\cA,\cA)$-bimodule $\Hom_\cA({\bullet},{\bullet})$.
\end{example}

\subsection{The category of adjunctions}

First of all, we make sure a well-known fact
that adjunctions are closed under colimits, especially cokernels.
Let $\{F_i\}$ be a diagram in $\cAdj(\cC,\cD)$.
For each arrow $i\to j$ between indices, the $\cV$-natural transformation $F_i\to F_j$
induces the corresponding $F_j^\vee\to F_i^\vee$.
Thus they form the diagram $\{F^\vee_i\}$ in $\cHom(\cD,\cC)$
whose arrows are reversed.

\begin{proposition}\label{prop:colim_adj}
Let $\{F_i\}$ be as above, and
suppose that the $\cV$-colimit $F\coloneqq\varinjlim_i F_i$ and the $\cV$-limit
$F^\vee\coloneqq\varprojlim_i F^\vee_i$ are both exist.
Then $F$ is left adjoint to $F^\vee$.
Moreover the canonical morphisms $F_i\to F$ and $F^\vee\to F_i^\vee$ are
mapped to each other by the isomorphism
\[\Hom_{\cC,\cD}(F_i,F)\simeq\Hom_{\cD,\cC}(F^\vee,F_i^\vee).\]
\end{proposition}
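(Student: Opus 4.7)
The plan is to derive the adjunction $F\dashv F^\vee$ from a chain of $\cV$-natural isomorphisms built out of the individual adjunctions $F_i\dashv F_i^\vee$ together with the universal properties of the $\cV$-colimit $F$ and the $\cV$-limit $F^\vee$. Concretely, for $X\in\cC$ and $Y\in\cD$, I would compose
\[
\Hom_\cD(FX,Y)\simeq\varprojlim_i\Hom_\cD(F_iX,Y)\simeq\varprojlim_i\Hom_\cC(X,F_i^\vee Y)\simeq\Hom_\cC(X,F^\vee Y),
\]
where the first isomorphism is the defining universal property of the $\cV$-colimit $FX=\varinjlim_i F_iX$ applied to the representable $\Hom_\cD({\bullet},Y)$, the middle is the adjunction isomorphism $\Hom_\cD(F_iX,Y)\simeq\Hom_\cC(X,F_i^\vee Y)$ for each $i$, and the last is the universal property of $F^\vee Y=\varprojlim_i F_i^\vee Y$. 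The only thing to check is that the middle isomorphisms, which are $\cV$-natural in $X$ and $Y$ separately, are also natural in the index $i\in\cI$; this follows from the fact that an arrow $i\to j$ in $\cI$ induces $F_i\to F_j$ and, by taking mates, $F_j^\vee\to F_i^\vee$, and these mates intertwine the adjunction isomorphisms by the very definition of mates.

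First I would record two preliminaries. I would point out that $\Hom_\cD({\bullet},Y)\colon\cD^\op\to\cV$ sends $\cV$-colimits to $\cV$-limits, hence
$\Hom_\cD(\varinjlim_iF_iX,Y)\simeq\varprojlim_i\Hom_\cD(F_iX,Y)$ as objects of $\cV$, and dually $\Hom_\cC(X,{\bullet})$ preserves $\cV$-limits. Second, I would assemble the three isomorphisms as $\cV$-natural in $X\in\cC^\op$ and $Y\in\cD$, which produces a $\cV$-natural isomorphism $\Hom_\cD(F{\bullet},{\bullet})\simeq\Hom_\cC({\bullet},F^\vee{\bullet})$; this is precisely what it means for $F$ to be left adjoint to $F^\vee$.

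For the second assertion, I would unwind the isomorphism $\Hom_{\cC,\cD}(F_i,F)\simeq\Hom_{\cD,\cC}(F^\vee,F_i^\vee)$ provided by the adjunctions. The canonical map $\iota_i\colon F_i\to F$ is the component of the colimit cocone and corresponds, under the $\cV$-natural chain above evaluated on $X=X,Y=FX$ and the unit of $F_i\dashv F_i^\vee$, to a map $F^\vee\to F_i^\vee$. By the standard recipe for mates, this is obtained by post-composing the unit $\delta_i\colon\Id_\cC\to F_i^\vee F_i$ with $F_i^\vee\iota_i$ and then with the counit $\epsilon\colon FF^\vee\to\Id_\cD$ pulled back; equivalently it is the morphism characterized by the zig-zag built from $\iota_i$ and the existing adjunction data. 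I would then observe that the projection $\pi_i\colon F^\vee\to F_i^\vee$ coming from the limit cone satisfies exactly this description, because by construction the family $\{\pi_i\}$ was chosen so that for every $j\to i$ the square of projections commutes, which is precisely the compatibility forced on the mate of $\iota_i$.

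The main potential obstacle is the bookkeeping of $\cV$-naturality in the index variable $i$ in the middle isomorphism, since one is passing a limit through a variable that lies on both sides of the Hom; this is handled by the mate correspondence, which sends a natural transformation $F_i\to F_j$ to the unique $F_j^\vee\to F_i^\vee$ compatible with units and counits, and by the fact that the resulting isomorphism $\Hom_\cD(F_i X,Y)\simeq\Hom_\cC(X,F_i^\vee Y)$ is a $\cV$-natural isomorphism of functors $\cI^\op\to\cV$. Once this is granted, the rest of the argument is a formal manipulation of universal properties, and both conclusions fall out simultaneously from the construction of the isomorphism.
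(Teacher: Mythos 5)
Your proposal is correct and is essentially the paper's own first proof: the same chain $\Hom_\cD(FX,Y)\simeq\varprojlim_i\Hom_\cD(F_iX,Y)\simeq\varprojlim_i\Hom_\cC(X,F_i^\vee Y)\simeq\Hom_\cC(X,F^\vee Y)$, with the correspondence of the canonical cocone and cone maps read off from it. You merely spell out the naturality-in-$i$ bookkeeping via mates that the paper leaves implicit (the paper also offers a second, purely 2-categorical proof via units and counits, which you do not need).
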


We here give two proofs for this easy but important result.
\begin{proof}[First proof]
First we prove the lemma by studying the functors value-wise.
The statements are obvious by
the $\cV$-natural isomorphism
\[\Hom_\cD(FX,Y)\simeq\varprojlim_i \Hom_\cD(F_i X,Y)
\simeq\varprojlim_i \Hom_\cC(X,F_i^\vee Y)
\simeq\Hom_\cC(X,F^\vee Y).
\qedhere\]
\end{proof}
\begin{proof}[Second proof]
The second is a ``2-categorical'' proof.
For each arrow $i\to j$, we have a commutative diagram
\[\begin{tikzcd}
\Id_\cC \arrow{r} \arrow{d} &
F^\vee_j F_j \arrow{r} \arrow{d} &
F^\vee_j F \arrow{d} \\
F^\vee_i F_i \arrow{r} &
F^\vee_i F_j \arrow{r} &
F^\vee_i F
\end{tikzcd}\]
so it induces the unit
$\Id_\cC\to\varprojlim_i F^\vee_i F\simeq F^\vee F$.
The counit $F F^\vee\to\Id_\cD$ is defined analogously.
To prove that $F\to F F^\vee F\to F$ is equal to $\id_F$,
it suffices to show that its pullback $F_i\to F\to F F^\vee F\to F$
is equal to $F_i\to F$ for each $i$.
It follows from the commutativity of the diagram below:
\[\begin{tikzcd}
F \arrow{d} &
F_i \arrow{l} \arrow{r} \arrow{d} \arrow[bend left=20]{rr}{\id} &
F_i F_i^\vee F_i \arrow{r} \arrow{d} &
F_i \arrow{d} \\
F F^\vee F \arrow[bend right=20]{rrr} &
F_i F^\vee F \arrow{l} \arrow{r} &
F_i F_i^\vee F \arrow{r} &
F\rlap{.}
\end{tikzcd}\]
Similarly $F^\vee\to F^\vee F F^\vee\to F^\vee$ is also
equal to the identity so $F$ and $F^\vee$ are adjoint to each other.
The diagram above also shows us that
$F_i\to F$ also coincides with the composite
$F_i\to F_i F^\vee F\to F_i F_i^\vee F\to F$
which is induced by $F^\vee\to F_i^\vee$.
\end{proof}

\begin{remark}\label{rem:2-cat}
Although we are now studying the 2-category of
$\cV$-categories,
the second proof also works for an arbitrary 2-category
such that the right multiplication of a 1-cell is both $\cV$-continuous and $\cV$-cocontinuous.
\end{remark}

We will soon use the next corollary.
\begin{corollary}\label{cor:exseq_of_adjs}
Suppose that both $\cC$ and $\cD$ are abelian.
A sequence of adjunctions $F\to G\to H\to 0$ is exact in $\cHom(\cC,\cD)$
if and only if so is the corresponding sequence
$0\to H^\vee\to G^\vee\to F^\vee$ in $\cHom(\cD,\cC)$.
In particular, $F\to G$
is epic if and only if the corresponding
$G^\vee\to F^\vee$ is monic.
\end{corollary}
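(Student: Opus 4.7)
The plan is to reinterpret both exactness statements as universal properties and then invoke Proposition~\ref{prop:colim_adj} to identify them. Because $\cD$ is abelian, $\cHom(\cC,\cD)$ is abelian (with pointwise (co)kernels), and exactness of $F\to G\to H\to 0$ in $\cHom(\cC,\cD)$ is simply the assertion that $G\to H$ is a cokernel of $F\to G$. Dually, since $\cC$ is abelian, exactness of $0\to H^\vee\to G^\vee\to F^\vee$ in $\cHom(\cD,\cC)$ says that $H^\vee\to G^\vee$ is a kernel of $G^\vee\to F^\vee$. In both cases the relevant (co)kernel exists in the ambient functor category, regardless of whether or not it \emph{a priori} lies in the subcategory $\cAdj$.

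The key input is Proposition~\ref{prop:colim_adj} applied to the two-object diagram $F\to G$ in $\cAdj(\cC,\cD)$. It guarantees that $K\coloneqq\Coker(F\to G)$ is itself an adjunction whose right adjoint is canonically identified with $\Ker(G^\vee\to F^\vee)$, and that the cokernel projection $G\to K$ corresponds, under the Hom-adjoint isomorphism $\Hom_{\cC,\cD}(G,K)\simeq\Hom_{\cD,\cC}(K^\vee,G^\vee)$, to the kernel embedding $K^\vee\to G^\vee$. Thus, inside $\cAdj$, the universal property ``cokernel of $F\to G$'' is equivalent, under the assignment $F\mapsto F^\vee$, to the universal property ``kernel of $G^\vee\to F^\vee$.''

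To close the argument, I would note that $F\mapsto F^\vee$ is an equivalence $\cAdj(\cC,\cD)\simeq\cAdj(\cD,\cC)^\op$: fully faithful by the Hom isomorphism recalled above, and essentially surjective because every adjunction in $\cAdj(\cD,\cC)$ arises as the right adjoint of its own left adjoint. Consequently the isomorphism $H\simeq K$ that witnesses exactness on one side exists if and only if the corresponding isomorphism $H^\vee\simeq K^\vee$ witnesses exactness on the other. The ``in particular'' clause is the specialization $H=0$, using that the zero adjunction has zero as its right adjoint. I do not anticipate a substantive obstacle; the only step demanding care is verifying, via the second clause of Proposition~\ref{prop:colim_adj}, that the cokernel projection and the kernel embedding really do correspond under the adjoint Hom bijection, since this is what lets the universal property transfer.
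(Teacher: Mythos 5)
Your proof is correct and follows the same route the paper intends: the corollary is stated without an explicit proof precisely because it is the formal consequence of Proposition~\ref{prop:colim_adj} that you spell out (cokernel and kernel as dual universal properties, the second clause matching the projection $G\to K$ with the embedding $K^\vee\to G^\vee$, and $F\mapsto F^\vee$ being an equivalence $\cAdj(\cC,\cD)\simeq\cAdj(\cD,\cC)^\op$). The only cosmetic point is that $\Coker(F\to G)$ is the colimit of the pushout diagram $0\leftarrow F\to G$ (not of the two-object diagram $F\to G$, whose colimit is just $G$), with dual limit the pullback computing $\Ker(G^\vee\to F^\vee)$; with that reading your application of the proposition is exactly right.
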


As we have seen in the examples,
the $\cV$-category $\cAdj(\cC,\cD)$ may have limits
which differ from those taken in $\cHom(\cC,\cD)$.
We here give a simple sufficient condition for
$\cAdj(\cC,\cD)$ to be abelian.

\begin{lemma}\label{lem:ker_of_adj}
Let $\cC$ and $\cD$ be abelian $\cV$-categories,
and suppose that $\cC$ has enough projectives and
$\cD$ has enough injectives.
Let $F,G\colon\cC\to\cD$ be parallel adjunctions with
a degree-zero natural transformation $F\to G$.
Let $K\coloneqq\Ker(F\to G)$ and $C\coloneqq\Coker(G^\vee\to F^\vee)$.
Then $L_0K$ is left adjoint to $R^0C$, so it is an adjunction.
Moreover, $L_0K$ is the kernel of $F\to G$ in $\cAdj(\cC,\cD)$.
\end{lemma}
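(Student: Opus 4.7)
The plan is to first establish that $L_0K$ and $R^0C$ are adjoint to each other via a Hom-isomorphism, and then upgrade this to the kernel statement using the universal property of $L_0$ recalled right before the lemma (namely $\Hom_{\cC,\cD}(E,L_0K)\simeq\Hom_{\cC,\cD}(E,K)$ for right exact $E$).

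For the adjunction, I would first verify the isomorphism on a projective $P\in\cC$ and an injective $I\in\cD$, where $L_0K(P)=K(P)$ and $R^0C(I)=C(I)$. The computation
\begin{align*}
\Hom_\cD\bigl(K(P),I\bigr)
&\simeq\Coker\bigl(\Hom_\cD(GP,I)\to\Hom_\cD(FP,I)\bigr)\\
&\simeq\Coker\bigl(\Hom_\cC(P,G^\vee I)\to\Hom_\cC(P,F^\vee I)\bigr)\\
&\simeq\Hom_\cC\bigl(P,C(I)\bigr)
\end{align*}
uses injectivity of $I$ to convert the kernel $K(P)$ into a cokernel, then the adjunctions for $F$ and $G$ to swap the functors (this step aligns correctly precisely because the morphism $G^\vee\to F^\vee$ is the transpose of $F\to G$ under the canonical isomorphism $\Hom_{\cC,\cD}(F,G)\simeq\Hom_{\cD,\cC}(G^\vee,F^\vee)$), and finally projectivity of $P$ to push the $\Coker$ inside.

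Next, to extend to arbitrary $X$ and $Y$, I pick a projective presentation $P_1\to P_0\to X\to 0$ and an injective copresentation $0\to Y\to I^0\to I^1$. Right exactness of $L_0K$ and left exactness of $R^0C$ give four-term presentations of $L_0K(X)$ and $R^0C(Y)$. Applying $\Hom_\cD(\cdot,I^\alpha)$ to the first, combined with the projective-case isomorphism above, produces $\Hom_\cD(L_0K(X),I^\alpha)\simeq\Hom_\cC(X,R^0C(I^\alpha))$ for $\alpha=0,1$; taking kernels in $\alpha$ then yields
\[\Hom_\cD\bigl(L_0K(X),Y\bigr)\simeq\Hom_\cC\bigl(X,R^0C(Y)\bigr).\]
This isomorphism is $\cV$-natural in both variables since every ingredient (kernel, cokernel, chosen resolution, the given adjunctions) is $\cV$-functorial, so $L_0K$ is indeed left adjoint to $R^0C$ and thus lies in $\cAdj(\cC,\cD)$.

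Finally, to identify $L_0K$ as the kernel of $F\to G$ in $\cAdj(\cC,\cD)$: applying $L_0$ to the inclusion $K\to F$ and using $L_0F\simeq F$ (since the left adjoint $F$ is right exact) yields a canonical morphism $L_0K\to F$, whose composite with $F\to G$ vanishes because $K\to F\to G$ is already zero. Conversely, given any adjunction $E$ with a morphism $E\to F$ whose composite with $F\to G$ is zero, the universal property of $K$ in $\cHom(\cC,\cD)$ produces a unique $E\to K$; since $E$ is right exact, the recalled universal property of $L_0$ lifts this uniquely to $E\to L_0K$. The main obstacle is the bookkeeping in the double-resolution argument in the middle paragraph: no new ideas are needed, but the exact sequences must be chased in the correct order, and one must keep in mind that $L_0K(P_i)=K(P_i)$ on projectives and $R^0C(I^\alpha)=C(I^\alpha)$ on injectives in order to glue the two halves of the argument together.
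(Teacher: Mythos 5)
Your proposal is correct and follows essentially the same route as the paper's proof: compute $\Hom_\cD(KP,Q)\simeq\Hom_\cC(P,CQ)$ on a projective--injective pair via the two adjunctions, glue over a projective presentation of $X$ and an injective copresentation of $Y$ to get the adjunction $\Hom_\cD((L_0K)X,Y)\simeq\Hom_\cC(X,(R^0C)Y)$, and then combine the universal property of the kernel $K$ in $\cHom(\cC,\cD)$ with the isomorphism $\Hom_{\cC,\cD}(E,L_0K)\simeq\Hom_{\cC,\cD}(E,K)$ for right exact $E$. The only cosmetic difference is that the paper takes the two kernels simultaneously (as the kernel of a map into a direct sum) where you take them in two stages, and you make explicit the compatibility of $G^\vee\to F^\vee$ with $F\to G$, which the paper leaves implicit.
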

\begin{proof}
Let $X\in\cC$, $Y\in\cD$ and take a projective resolution $P'\to P\to X\to0$ and
an injective resolution $0\to Y\to Q\to Q'$ respectively.
By definition $\Hom_\cD((L_0K)X,Y)$ is the kernel of the map
\[\Hom_\cD(KP,Q)\to\Hom_\cD(KP',Q)\oplus\Hom_\cD(KP,Q').\]
Since $P$ is projective and $Q$ is injective, each term can be represented as
\begin{align*}
\Hom_\cD(KP,Q)&\simeq\Coker(\Hom_\cD(GP,Q)\to\Hom_\cD(FP,Q))\\
&\simeq\Coker(\Hom_\cC(P,G^\vee Q)\to\Hom_\cC(P,F^\vee Q))\\
&\simeq\Hom_\cC(P,CQ).
\end{align*}
Hence we have a natural isomorphism $\Hom_\cD((L_0K)X,Y)\simeq\Hom_\cC(X,(R^0C)Y)$
since its right-hand side also has a similar representation.
For any $H\in\cAdj(\cC,\cD)$ there is a natural isomorphism
\[\Hom_{\cC,\cD}(H,L_0K)\simeq\Hom_{\cC,\cD}(H,K)\simeq
\Ker(\Hom_{\cC,\cD}(H,F)\to\Hom_{\cC,\cD}(H,G)).\]
Thus $L_0K$ is the kernel of $F\to G$ taken in $\cAdj(\cC,\cD)$.
\end{proof}

\begin{proposition}\label{prop:adj_is_abelian}
Let $\cC$ and $\cD$ be as above. Then  $\cAdj(\cC,\cD)$ is abelian and
the embedding $\cAdj(\cC,\cD)\hookrightarrow\cHom(\cC,\cD)$ is right exact.
\end{proposition}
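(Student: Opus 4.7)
The plan is to check the axioms of an abelian category for $\cAdj(\cC,\cD)$ one by one, using Proposition~\ref{prop:colim_adj} and Lemma~\ref{lem:ker_of_adj} as the main inputs. First, the zero adjunction and finite biproducts exist in $\cAdj(\cC,\cD)$ because they are (co)limits in $\cHom(\cC,\cD)$, and Proposition~\ref{prop:colim_adj} ensures such colimits of adjunctions are adjunctions (with right adjoints obtained as the corresponding limits of the $F_i^\vee$); since $\cAdj(\cC,\cD)$ is a full subcategory of $\cHom(\cC,\cD)$, these universal objects remain universal in $\cAdj(\cC,\cD)$. Kernels of arbitrary morphisms are provided by Lemma~\ref{lem:ker_of_adj}. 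Cokernels likewise exist by Proposition~\ref{prop:colim_adj} applied to the coequalizer diagram $\phi,0\colon F\rightrightarrows G$, and they coincide with cokernels taken in $\cHom(\cC,\cD)$; this already yields the right exactness of the embedding $\cAdj(\cC,\cD)\hookrightarrow\cHom(\cC,\cD)$.

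The only remaining and nontrivial axiom is that the canonical coimage-to-image morphism of any $\phi\colon F\to G$ in $\cAdj(\cC,\cD)$ is an isomorphism. Write $K$, $I$, $C$ for the kernel, image and cokernel of $\phi$ computed in $\cHom(\cC,\cD)$. By Lemma~\ref{lem:ker_of_adj} the kernel of $\phi$ in $\cAdj(\cC,\cD)$ is $L_0K$, and the same lemma applied to $G\to C$ (whose $\cHom$-kernel is $I$) shows that the image in $\cAdj(\cC,\cD)$ is $L_0I$. Since cokernels in $\cAdj(\cC,\cD)$ agree with those in $\cHom(\cC,\cD)$, the coimage equals $\Coker(L_0K\to F)$ computed in $\cHom(\cC,\cD)$. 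Applying the right exact endofunctor $L_0$ to the exact sequence $K\to F\to I\to 0$ yields
\[L_0K\to L_0F\to L_0I\to 0\]
exact, and since $F$ is an adjunction, hence right exact, the canonical $L_0F\to F$ is an isomorphism; therefore the coimage is $L_0I$ as well. Naturality of $L_0\to\Id$ identifies the structural maps $F\to L_0I$ and $L_0I\to G$ and shows that their composite is $\phi$, so the canonical coimage-to-image morphism is the identity $\id_{L_0I}$, in particular an isomorphism.

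The most delicate step is precisely this last identification: one must check that the two a priori distinct constructions of $L_0I$---as $\Coker(L_0K\to F)$ on the coimage side and as $L_0\Ker(G\to C)$ on the image side---are compatible with the factorization of $\phi$. Once Lemma~\ref{lem:ker_of_adj} and Proposition~\ref{prop:colim_adj} are available, this reduces to standard naturality diagrams for the transformation $L_0\to\Id$ combined with the fact that it is invertible on right exact functors such as $F$ and $G$.
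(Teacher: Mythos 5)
Your proposal is correct and follows the same skeleton as the paper's proof: finite direct sums and cokernels from Proposition~\ref{prop:colim_adj}, kernels from Lemma~\ref{lem:ker_of_adj}, and right exactness of the embedding from the fact that cokernels in $\cAdj(\cC,\cD)$ are computed in $\cHom(\cC,\cD)$. The one step you execute differently is the verification that image and coimage coincide. The paper disposes of this in a single line by evaluating on projectives: both the image and the coimage of $\phi\colon F\to G$ in $\cAdj(\cC,\cD)$ are right exact, hence determined by their values on a generating family of projectives, and on each projective $P$ both compute $\Image(FP\to GP)$ (since $(L_0K)P=KP$ and $(L_0I)P=IP$). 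You instead run the argument formally at the level of the functor category: from the short exact sequence $0\to K\to F\to I\to 0$ and right exactness of $L_0$ you get $L_0I\simeq\Coker(L_0K\to L_0F)$, identify $L_0F\simeq F$, and then use naturality of $L_0\to\Id$ to match the two factorizations of $\phi$. Both arguments rest on the same fact (right exact functors are controlled by projectives, packaged either as evaluation or as the reflection $L_0$); yours is longer but makes the identification of the structural maps explicit, which the paper leaves implicit. The only genuine omission is the (trivial) observation, with which the paper's proof opens, that $\cAdj(\cC,\cD)$ is closed under $\Pi$ or $\Sigma$ — this is part of the paper's definition of an abelian $\cV$-category and should be stated.
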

\begin{proof}
Clearly it is closed under $\Pi$ or $\Sigma$.
By Proposition~\ref{prop:colim_adj} and Lemma~\ref{lem:ker_of_adj}
it also has finite direct sums, kernels and cokernels.
In $\cAdj(\cC,\cD)$ the image of the morphism $F\to G$ is isomorphic to its coimage,
since they give same value $\Image(FP\to GP)$ on enough projectives $P\in\cC$.
The cokernel of a morphism in $\cAdj(\cC,\cD)$ is equal
to that taken in $\cHom(\cC,\cD)$ so the embedding is right exact.
\end{proof}

\section{Ideal functors in abelian categories}\label{sec:ideal}
From now on, we omit all prefixes ``$\cV$-''
in order to avoid redundant descriptions,
so ``a category'' means a $\cV$-category,
``a functor'' a $\cV$-functor, etc.

We here study some kind of endofunctors
which we call \term{ideal functors}.
These are analogues of 2-sided ideals in a ring.
Later it is used to divide the category into two parts by a Morita context.

\subsection{Ideal functors}
Let $\cC$ be an abelian category (i.e.\ an abelian $\cV$-category)
and consider the category $\cEnd(\cC)\coloneqq\cHom(\cC,\cC)$
which is also abelian
and has the specific object $\Id_\cC$, the identity functor.

\begin{definition}
A subfunctor $I\subset\Id_\cC$
is called an \term{ideal functor} on $\cC$ if its cokernel
$T_I\coloneqq\Coker(I\hookrightarrow\Id_\cC)$
is an adjunction.
The cokernel of the corresponding morphism $T_I^\vee\hookrightarrow\Id_\cC$
(monic by Corollary~\ref{cor:exseq_of_adjs}) is denoted by $I^\circ$.
\end{definition}

\begin{example}
Consider the case that $\cC$ is a module category $A\text{-}\Mod$.
Then quotient adjunctions of $\Id_{\lMod{A}}$
are in bijection with quotient $(A,A)$-bimodules of $A$, that is,
$A/I$ for a 2-sided ideal $I\subset A$.
Thus the corresponding ideal functor maps an $A$-module $V$
to the kernel of the map \[V\twoheadrightarrow T_I V\coloneqq A/I\otimes_A V\simeq V/IV,\]
namely $IV$.
This is why we call such kind of functor an ``ideal functor''.
In this case the right adjoint $T_I^\vee$ can be represented as
\[T_I^\vee V\coloneqq\Hom_A(A/I,V)\simeq\set{v\in V}{Iv=0}.\]
\end{example}

\begin{example}
When $\cC=\cA\text{-}\Mod$ is a module category of a category $\cA$,
an ideal functor on $\cC$ is also corresponds to a 2-sided ideal $\cI\subset\cA$.
Here a 2-sided ideal in a category is a collection of spaces of morphisms
\[\cI(V,W)\subset\Hom_\cA(V,W)\]
for each pair of $V,W\in\cA$,
which is closed under compositions with all morphisms in $\cA$.
From such an ideal we can form a quotient category $\cA/\cI$,
whose hom sets are defined by pairwise quotient.
\end{example}

\begin{example}\label{ex:rad_is_ideal}
The socle $\mathrm{Soc}(X)$ of an object $X\in\cC$ is
the sum of all simple subobjects of $X$.
Dually, its top $\mathrm{Top}(X)$ is defined as $X/\mathrm{Rad}(X)$
where the radical $\mathrm{Rad}(X)$ is the intersection
of all its maximal subobjects of $X$.
If these objects exist for every $X\in\cC$,
then $\mathrm{Soc}$, $\mathrm{Top}$ and $\mathrm{Rad}$
can be defined as endofunctors on $\cC$.

Suppose that for all object $X\in\cC$,
$\mathrm{Top}(X)$ and $\mathrm{Soc}(X)$
are both semisimple.
Then the functor $\mathrm{Top}$ is left adjoint to $\mathrm{Soc}$, thus
$\mathrm{Rad}=\Ker(\Id_\cC\twoheadrightarrow\mathrm{Top})$ is an ideal functor.
\end{example}

\begin{example}
If $I$ is an ideal functor on $\cC$,
$(I^\circ)^\op$ is an ideal functor on $\cC^\op$.
\end{example}

On a general abelian category,
a typical example is the image of an adjunction.

\begin{proposition}\label{prop:image_of_adjunction_is_ideal}
Let $F\colon\cC\to\cC$ be an adjunction with a degree-zero
natural transformation $F\to\Id_\cC$.
Then $I\coloneqq\Image(F\to\Id_\cC)$ is an ideal functor.
$I^\circ$ is also the image of the corresponding natural transformation
$\Id_\cC\to F^\vee$.
\end{proposition}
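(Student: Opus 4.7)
The plan is to exploit the structure of $\cAdj(\cC,\cC)$ established in Proposition~\ref{prop:adj_is_abelian} and Corollary~\ref{cor:exseq_of_adjs}. The given morphism $F\to\Id_\cC$ lives in $\cAdj(\cC,\cC)$ since both $F$ and $\Id_\cC$ are adjunctions (the latter being self-adjoint). Our task is to identify the cokernel of this morphism, taken in $\cAdj(\cC,\cC)$, with $T_I=\Coker(I\hookrightarrow\Id_\cC)$ computed in $\cHom(\cC,\cC)$.

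First I would factor $F\to\Id_\cC$ through its image $F\twoheadrightarrow I\hookrightarrow\Id_\cC$ in $\cHom(\cC,\cC)$, so that $\Coker(F\to\Id_\cC)\simeq\Coker(I\hookrightarrow\Id_\cC)=T_I$ in $\cHom(\cC,\cC)$. By Proposition~\ref{prop:adj_is_abelian} the embedding $\cAdj(\cC,\cC)\hookrightarrow\cHom(\cC,\cC)$ is right exact, hence the cokernel of $F\to\Id_\cC$ in $\cAdj(\cC,\cC)$ coincides with the functor $T_I$. In particular $T_I$ is an adjunction, which is precisely the condition for $I$ to be an ideal functor.

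Next I would compute $I^\circ$ using Corollary~\ref{cor:exseq_of_adjs}. Applying it to the exact sequence
\[F\longrightarrow\Id_\cC\longrightarrow T_I\longrightarrow0\]
in $\cHom(\cC,\cC)$ yields the exact sequence
\[0\longrightarrow T_I^\vee\longrightarrow\Id_\cC\longrightarrow F^\vee\]
(using $\Id_\cC^\vee\simeq\Id_\cC$), in which the morphism $\Id_\cC\to F^\vee$ is the one corresponding to $F\to\Id_\cC$ under the adjunction isomorphism $\Hom_{\cC,\cC}(F,\Id_\cC)\simeq\Hom_{\cC,\cC}(\Id_\cC,F^\vee)$. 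Exactness identifies $T_I^\vee$ with $\Ker(\Id_\cC\to F^\vee)$, hence
\[I^\circ=\Coker(T_I^\vee\hookrightarrow\Id_\cC)=\Id_\cC/\Ker(\Id_\cC\to F^\vee)=\Image(\Id_\cC\to F^\vee),\]
which is the desired description.

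No serious obstacle is expected: everything is a formal consequence of the abelian structure of $\cAdj(\cC,\cC)$ and the duality between morphisms of adjunctions and their right adjoints. The only subtle point is that cokernels of adjunctions are taken in $\cAdj(\cC,\cC)$ rather than in $\cHom(\cC,\cC)$ in general, but right-exactness of the embedding dispenses with this worry for the cokernel side; kernels on the dual side present no issue because we invoke exactness in $\cHom(\cC,\cC)$ directly via Corollary~\ref{cor:exseq_of_adjs}.
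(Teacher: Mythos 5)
Your overall route is the paper's: show that $\Coker(F\to\Id_\cC)=T_I$ is an adjunction, then dualize the exact sequence $F\to\Id_\cC\to T_I\to0$ via Corollary~\ref{cor:exseq_of_adjs} to identify $T_I^\vee$ with $\Ker(\Id_\cC\to F^\vee)$ and hence $I^\circ$ with $\Image(\Id_\cC\to F^\vee)$. The second half of your argument is correct and is exactly what the paper does.

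The one real problem is your appeal to Proposition~\ref{prop:adj_is_abelian} in the first step. That proposition is stated under the standing hypotheses of Lemma~\ref{lem:ker_of_adj}, namely that $\cC$ has enough projectives and enough injectives; the present proposition assumes only that $\cC$ is abelian, and the paper cares about this generality (the remark immediately following exhibits $\lfMod{\kk[x_1,x_2,\dotsc]}$, which lacks such resolutions, as a case where ideal functors need not come from adjunctions). So as written your proof establishes the statement only under extra hypotheses. The fix is to cite Proposition~\ref{prop:colim_adj} instead, which is what the paper does: a cokernel is a colimit, the diagram $F\rightrightarrows\Id_\cC$ lies in $\cAdj(\cC,\cC)$, and that proposition (with no projectivity or injectivity assumptions) produces the right adjoint $\varprojlim$ of the reversed diagram, namely $\Ker(\Id_\cC\to F^\vee)$, so that $T_I$ is an adjunction. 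Nothing else in your argument needs to change.
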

\begin{proof}
By Proposition~\ref{prop:colim_adj}, the cokernel
$T_I=\Coker(F\to\Id_\cC)$
has the right adjoint functor
$T_I^\vee\coloneqq\Ker(\Id_\cC\to F^\vee)$
so it is an adjunction.
Thus by definition $I$ is an ideal functor.
Since $0\to T_I^\vee\to\Id_\cC\to F^\vee$ is exact
by Corollary~\ref{cor:exseq_of_adjs},
$F^\vee$ contains $I^\circ=\Coker(T_I^\vee\hookrightarrow\Id_\cC)$
as a subfunctor.
\end{proof}

\begin{remark}
Suppose $\cC$ has enough projectives and injectives.
Then for an ideal functor $I$ on $\cC$,
$L_0I$ is an adjunction and
$L_0I\twoheadrightarrow I$ is epic
by Proposition~\ref{prop:adj_is_abelian}.
Thus in this case every ideal functor is obtained as the image of an adjunction.
When $\cC=A\text{-}\Mod$, $L_0I$ for a 2-sided ideal $I\subset A$
is just the tensor functor $I\otimes_A{\bullet}$
where $I$ is viewed as an $(A,A)$-bimodule.

On the other hand, let $A$ be a polynomial algebra $\kk[x_1,x_2,\dotsc]$
in infinitely many variables over a field $\kk$
and consider the case $\cC=\lMod{A}^f$,
the category of its finite dimensional modules.
Then $I\colon V\mapsto\sum_i x_iV$ is clearly an ideal functor on $\lMod{A}^f$
but there are no adjunctions which cover $I$.
\end{remark}

We here list basic properties of ideal functors.
For simplicity if there is a canonical isomorphism $F\to G$ between objects
which is clear from the context, we write $F=G$ for short.

\begin{lemma}\label{lem:T_I_is_idempotent}
Let $I\subset\Id_\cC$ be an ideal functor. Then
\begin{enumerate}
\item the morphisms $T_I=T_I\cdot\Id_\cC\to T_I^2$
and $T_I=\Id_\cC\cdot T_I\to T_I^2$ coincide,
\item $T_I^\vee T_I=T_I=T_I^2$,
\item $IT_I=0=I^\circ T_I$.
\end{enumerate}
\end{lemma}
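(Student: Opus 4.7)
Write $\alpha\colon\Id_\cC\twoheadrightarrow T_I$ for the canonical projection and $\iota\colon I\hookrightarrow\Id_\cC$ for the inclusion; denote by $\delta,\epsilon$ the unit and counit of the adjunction $T_I\dashv T_I^\vee$. By Corollary~\ref{cor:exseq_of_adjs} the adjoint $\alpha^\vee\colon T_I^\vee\hookrightarrow\Id_\cC$ is monic. The plan is to establish~(1) first, then deduce $T_I^2=T_I$ and $IT_I=0$, then $T_I^\vee T_I=T_I$, from which $I^\circ T_I=0$ is automatic.

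Claim~(1) is a one-line application of the middle-four interchange law: the horizontal composite of $\alpha$ with itself yields a morphism $\Id_\cC\to T_I^2$ that factors both as $(T_I\alpha)\circ\alpha$ and as $(\alpha T_I)\circ\alpha$, so right-cancelling the epi $\alpha$ gives $T_I\alpha=\alpha T_I$.

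For $T_I^2=T_I$ the key is to extract the identity $T_I\iota=0$ from naturality of $\alpha$: the naturality square at $\iota_X$ reads $\alpha_X\circ\iota_X=T_I(\iota_X)\circ\alpha_{IX}$, whose left-hand side vanishes by definition of $T_I=\Coker(\iota)$, and $\alpha_{IX}$ is epic. Right-exactness of $T_I$ (a left adjoint) applied to $0\to I\to\Id_\cC\to T_I\to 0$ then forces $T_I^2=T_I$. Combined with~(1), the morphism $\alpha T_I=T_I\alpha\colon T_I\to T_I^2$ is an epimorphism onto an isomorphic target, hence an isomorphism, and its kernel at $X$ is exactly $I(T_IX)$, so $IT_I=0$.

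The harder half of~(2), namely $T_I^\vee T_I=T_I$, is the main obstacle. I would transport $T_I\iota=0$ across the hom-adjunction $\Hom(T_II,T_I)\simeq\Hom(I,T_I^\vee T_I)$: there $T_I\iota$ corresponds to $\delta\circ\iota$, so $\delta\circ\iota=0$, and by the universal property of $\Coker(\iota)$ the unit factors as $\delta=\gamma\circ\alpha$ for a unique $\gamma\colon T_I\to T_I^\vee T_I$. The claim is that $\gamma$ is a two-sided inverse of $\alpha^\vee T_I$. Using the formula $\alpha^\vee=\epsilon\circ\alpha T_I^\vee$ for the adjoint of a natural transformation between adjunctions, naturality of $\alpha$ at $\delta_X$, and the zig-zag identity $\epsilon T_I\circ T_I\delta=\id_{T_I}$, one computes $(\alpha^\vee T_I)\circ\delta=\alpha$; hence $(\alpha^\vee T_I)\circ\gamma\circ\alpha=\alpha$, and cancelling the epi $\alpha$ gives $(\alpha^\vee T_I)\circ\gamma=\id_{T_I}$. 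Since $\alpha^\vee T_I$ is also monic (it is $\alpha^\vee$ evaluated at objects of the form $T_IX$), a section promotes it to an isomorphism. Finally $I^\circ T_I=\Coker(\alpha^\vee T_I)=0$ is immediate. The whole difficulty really concentrates in recognising that ``$T_I\iota=0$'' and ``$\delta\circ\iota=0$'' carry the same content and then squeezing the inverse of $\alpha^\vee T_I$ out of the zig-zag identity.
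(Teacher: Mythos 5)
Your proof is correct and follows essentially the same route as the paper's: the heart of both arguments is the factorization $\alpha=(\alpha^\vee T_I)\circ\delta$ (which you verify explicitly via the mate formula and the zig-zag identity, where the paper merely asserts that $\Id_\cC\twoheadrightarrow T_I$ factors through $T_I^\vee T_I\hookrightarrow T_I$), combined with the monicity of $\alpha^\vee T_I$ supplied by Corollary~\ref{cor:exseq_of_adjs}, to force $T_I^\vee T_I\simeq T_I$. The only divergence is that you obtain $T_I^2=T_I$ and $IT_I=0$ independently from $T_I\iota=0$ and right-exactness of $T_I\bullet$, whereas the paper deduces $T_I=T_I^2$ formally from the pair of identities $T_I^\vee T_I=T_I$ and $T_IT_I^\vee=T_I^\vee$; your route is slightly more direct and spares you the second of these.
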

\begin{proof}
(1) follows from that the epimorphism $\Id_\cC\twoheadrightarrow T_I$
equalize these two morphisms.
Since $\Id_\cC\twoheadrightarrow T_I$ factors through
$\Id_\cC\to T_I^\vee T_I\hookrightarrow T_I$,
the monomorphism $T_I^\vee T_I\hookrightarrow T_I$ is also epic.
So it must be an isomorphism since the functor category is abelian.
Similarly $T_I^\vee=T_IT_I^\vee$ holds
and we obtain
\[T_I=T_I^\vee T_I=T_I T_I^\vee T_I=T_I^2\]
so (2) holds. (3) is just a rephrasing of (2).
\end{proof}

In general an ideal functor is neither left exact nor right exact.
Still, we can prove the following useful properties.
\begin{proposition}
An ideal functor preserves all images.
\end{proposition}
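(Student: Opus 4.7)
The plan is to show that the canonical epi-mono factorization of any morphism $f\colon X\to Y$ is preserved by $I$: if $f = i\circ p$ with $p\colon X\twoheadrightarrow J$ epic and $i\colon J\hookrightarrow Y$ monic (so $J=\Image(f)$), then $If = (Ii)\circ(Ip)$ is again epi-mono, which by uniqueness of the image factorization yields $\Image(If) = IJ = I(\Image(f))$.

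Preservation of monomorphisms is immediate from $I\hookrightarrow\Id_\cC$ being a subfunctor: the composite $IJ\hookrightarrow J\xrightarrow{i} Y$ is monic, but it factors as $IJ\xrightarrow{Ii} IY\hookrightarrow Y$, forcing $Ii$ to be monic as well.

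The main work is to show $I$ preserves epimorphisms. Given epic $p\colon X\twoheadrightarrow Y$ with kernel $\iota\colon K\hookrightarrow X$, I would apply the snake lemma to the diagram with rows $0\to IX\to X\to T_I X\to 0$ and $0\to IY\to Y\to T_I Y\to 0$ (exact by definition of $I$, since limits and colimits in $\cEnd(\cC)$ are computed pointwise) and vertical maps $Ip$, $p$ and $T_I p$. Since $T_I$ is an adjunction and hence right exact, $T_I p$ is epic and applying $T_I$ to $K\to X\to Y\to 0$ identifies $\Ker(T_I p)$ with the image of $T_I\iota\colon T_I K\to T_I X$. The snake sequence therefore collapses to
\[0 \to \Ker(Ip) \to K \to \Ker(T_I p) \xrightarrow{\delta} \Coker(Ip) \to 0,\]
and it suffices to check that the connecting map $\delta$ vanishes, as this forces $\Coker(Ip)=0$. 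Since $T_I K\twoheadrightarrow\Ker(T_I p)$ is epic, it is enough to see that $\delta$ composed with this surjection is zero; but any class coming from $T_I K$ lifts through $K\twoheadrightarrow T_I K$ to an element of $K\subseteq X$, which is annihilated by $p$, so tracing the standard construction of the snake connecting map gives the value $0$ in $\Coker(Ip)$.

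The main obstacle is phrasing the vanishing of $\delta$ rigorously inside an abstract abelian $\cV$-category rather than a concrete module category. This can be handled either by invoking the Freyd--Mitchell embedding on the underlying abelian category $\cC_0$ to carry out the element-level argument, or by constructing $\delta$ diagrammatically as a span of morphisms and exhibiting the required factorization through $K$ (where $p$ vanishes) directly; both routes realize $\delta$ as the zero morphism, completing the proof.
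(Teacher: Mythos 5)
Your proof is correct and follows essentially the same route as the paper: monomorphisms are preserved because $I$ is a subfunctor of the mono functor $\Id_\cC$, epimorphisms are preserved via the pointwise short exact sequence $0\to IX\to X\to T_IX\to 0$ together with the right exactness of $T_I$, and images then follow from preservation of epi-mono factorizations. Your snake-lemma computation with the explicit vanishing of the connecting map is just the worked-out version of the diagram chase the paper compresses into ``apply the nine lemma.''
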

\begin{proof}
Let $I$ be an ideal functor on $\cC$.
A functor is called \term{mono} (resp.\ \term{epi})
if it preserves all monomorphisms (resp. epimorphisms).
Since $\Id_\cC$ is clearly a mono functor,
so is its subfunctor $I$.
$I$ is also epi because
$\Id_\cC$ is epi and the cokernel $T_I$ is right exact;
apply the nine lemma.
Thus $I$ preserves all epi-mono factorizations.
\end{proof}

Recall that for a possibly infinite family of subobjects $\{X_i\subset X\}$,
their sum, if exists, is the minimum subobject $\sum_i X_i\subset X$
which contains all $X_i$.

\begin{lemma}\label{lem:IZ_in_Y}
Let $X\in\cC$ be an object and $Y,Z\subset X$ its subobjects.
Then $IZ\subset Y$ if and only if
$Z\subset\Ker(X\twoheadrightarrow X/Y\twoheadrightarrow I^\circ(X/Y))$
(in other words, $X/Z$ is a quotient of $I^\circ(X/Y)$).
\end{lemma}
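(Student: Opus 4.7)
The plan is to translate both sides of the asserted equivalence into conditions on the single morphism $f\colon Z\to X/Y$ induced by the composite $Z\hookrightarrow X\twoheadrightarrow X/Y$, and then reduce to a general universal fact about the adjunction $T_I\dashv T_I^\vee$.

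First I would rephrase each side. By naturality of $I\hookrightarrow\Id_\cC$, the inclusion $IZ\hookrightarrow X$ factors as $IZ\hookrightarrow IX\hookrightarrow X$, so the condition $IZ\subset Y$ as subobjects of $X$ is equivalent to $f\circ(IZ\hookrightarrow Z)=0$. On the other hand, since $I^\circ=\Coker(T_I^\vee\hookrightarrow\Id_\cC)$, we have $T_I^\vee(X/Y)=\Ker\bigl(X/Y\twoheadrightarrow I^\circ(X/Y)\bigr)$, and therefore $Z\subset\Ker\bigl(X\twoheadrightarrow X/Y\twoheadrightarrow I^\circ(X/Y)\bigr)$ is equivalent to $f$ factoring through the monomorphism $T_I^\vee(X/Y)\hookrightarrow X/Y$.

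The lemma then reduces to the following general claim: \emph{for any morphism $f\colon V\to W$, $f$ factors through $T_I^\vee W\hookrightarrow W$ if and only if $f\circ(IV\hookrightarrow V)=0$}. To prove it, I note that both conditions put $f$ in natural bijection with a single morphism $\bar f\colon T_IV\to W$---the first via the adjunction $\Hom(V,T_I^\vee W)\simeq\Hom(T_IV,W)$ combined with post-composition along the mono $T_I^\vee W\hookrightarrow W$, the second by the universal property of the cokernel $T_IV=\Coker(IV\hookrightarrow V)$. The key calculation is then to verify that, for a given $\bar f$, the two resulting maps $V\to W$ coincide: namely $\bar f\circ\pi_V$, where $\pi_V\colon V\twoheadrightarrow T_IV$ is the natural quotient, and $\iota_W\circ h$, where $h\colon V\to T_I^\vee W$ is the adjoint of $\bar f$ and $\iota\colon T_I^\vee\to\Id_\cC$ is the natural inclusion. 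Applying naturality of $\iota$ to $\bar f$ reduces this to the equality $\iota_{T_IV}\circ\eta_V=\pi_V$, where $\eta$ is the unit of the adjunction.

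The main obstacle I anticipate is precisely this last compatibility between the unit of the adjunction and the natural quotient. But it is already extracted in the proof of Lemma~\ref{lem:T_I_is_idempotent}, which shows that $\pi\colon\Id_\cC\twoheadrightarrow T_I$ factors as $\Id_\cC\xrightarrow{\eta}T_I^\vee T_I\xrightarrow{\iota T_I}T_I$ with $\iota T_I$ an isomorphism. Granting this, the general claim follows, and the lemma is immediate by specializing to $V=Z$, $W=X/Y$.
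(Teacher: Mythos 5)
Your proof is correct. It differs from the paper's in how the two implications are organized: the paper proves only the ``only if'' direction directly --- factoring $Z\to X/Y$ through $T_IZ$ via the cokernel property and then killing the composite with $X/Y\twoheadrightarrow I^\circ(X/Y)$ by naturality together with $I^\circ T_I=0$ from Lemma~\ref{lem:T_I_is_idempotent} --- and obtains the converse by passing to the opposite category. You instead prove both directions at once by exhibiting the two subsets of $\Hom(Z,X/Y)$ (maps killing $IZ$, maps landing in $T_I^\vee(X/Y)$) as the image of one and the same map $\Hom(T_IZ,X/Y)\to\Hom(Z,X/Y)$, the coincidence resting on the identity $\iota_{T_I}\circ\eta=\pi$ that is indeed the content of the factorization $\Id_\cC\to T_I^\vee T_I\hookrightarrow T_I$ used in the proof of Lemma~\ref{lem:T_I_is_idempotent}. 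The ingredients are the same (cokernel universal property, the adjunction $T_I\dashv T_I^\vee$, and that factorization), but your version is self-dual by construction and so avoids the appeal to the opposite category, at the price of making the mate/unit bookkeeping explicit; the paper's version is shorter per direction but leans on the (lightly justified) fact that $(I^\circ)^\op$ is an ideal functor on $\cC^\op$ whose associated ``$\circ$'' is $I^\op$.
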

\begin{proof}
Suppose $IZ\subset Y$, or equivalently, the composite
$IZ\hookrightarrow Z\to X/Y$ is zero.
Then $Z\to X/Y$ factors through
$Z\twoheadrightarrow T_I Z$.
Hence $Z\to X/Y\twoheadrightarrow I^\circ(X/Y)$
factors through $I^\circ T_I Z=0$
so it must be zero.
Taking the opposite category
we can dually prove the other implication.
\end{proof}
\begin{proposition}
An ideal functor commutes with summation.
\end{proposition}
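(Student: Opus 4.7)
The plan is to prove $I\bigl(\sum_i X_i\bigr) = \sum_i IX_i$ for any family of subobjects $\{X_i\subset X\}$ whose sum exists. Writing $W \coloneqq \sum_i X_i$ and $V \coloneqq \sum_i IX_i$, the inclusion $V \subset IW$ is immediate from the preceding proposition: since $I$ preserves monomorphisms, each embedding $X_i \hookrightarrow W$ yields $IX_i \hookrightarrow IW$, so their sum $V$ sits inside $IW$.

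For the reverse inclusion $IW \subset V$, I would apply Lemma~\ref{lem:IZ_in_Y} twice, with the universal property of the sum of subobjects as the bridge between the two applications. Taking ambient $X$, $Y = V$, and $Z = W$ in that lemma, the containment $IW \subset V$ is equivalent to $W \subset \Ker\bigl(X \twoheadrightarrow X/V \twoheadrightarrow I^\circ(X/V)\bigr)$. Since $W = \sum_i X_i$ is by construction the least subobject containing each $X_i$, this containment holds iff $X_i \subset \Ker\bigl(X \to I^\circ(X/V)\bigr)$ for every $i$. Applying Lemma~\ref{lem:IZ_in_Y} a second time, now with $Z = X_i$ in place of $W$ (and still $Y = V$), this last condition is equivalent to $IX_i \subset V$, which is tautological from the definition of $V$.

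The whole argument is a short, symmetric reduction to Lemma~\ref{lem:IZ_in_Y}, so I do not anticipate any real obstacle; the only substantive input beyond that lemma is the defining universal property of the subobject sum, which is purely formal.
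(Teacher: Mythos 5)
Your argument is essentially the paper's own proof: both inclusions come from Lemma~\ref{lem:IZ_in_Y} applied twice, with the universal property of the subobject sum as the bridge, so the substance is correct. The one refinement the paper makes is that it does not presuppose that $\sum_i IX_i$ exists (sums of infinite families of subobjects need not exist in $\cC$ at this point of the paper); it runs your second half with an \emph{arbitrary} subobject $Y\subset X$ containing every $IX_i$ in place of your $V$, concluding $I\sum_i X_i\subset Y$ for every such $Y$, which simultaneously establishes that $\sum_i IX_i$ exists and equals $I\sum_i X_i$ --- you should phrase your reduction the same way rather than naming $V$ at the outset.
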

\begin{proof}
Let $I$ be an ideal functor on $\cC$.
Take an object $X\in\cC$ and a family of subobjects
$\{X_i\subset X\}$ whose sum $\sum_i X_i$ exists.
Note that $X_i\hookrightarrow\sum_i X_i$ induces
$IX_i\hookrightarrow I\sum_i X_i$.
Thus if $\sum_i IX_i$ exists, it is contained in $I\sum_i X_i$.

Conversely, let $Y\subset X$ be a subobject which contains every $IX_i$.
Let
\[Y'\coloneqq\Ker(X\twoheadrightarrow X/Y\twoheadrightarrow I^\circ(X/Y)).\]
Then $X_i\subset Y'$ for all $i$ by the ``only if '' part of Lemma~\ref{lem:IZ_in_Y},
so $\sum_i X_i\subset Y'$.
On the other hand, its ``if'' part says that $IY'\subset Y$.
Thus $I\sum_i X_i\subset IY'\subset Y$,
so $\sum_i IX_i$ exists and actually
\[\sum_i IX_i=I\sum_i X_i.\qedhere\]
\end{proof}

\subsection{Subcategories defined by ideal functors}

In this subsection we fix an ideal functor $I$ on $\cC$.
Using an ideal functor, we define two full subcategory of $\cC$ in the following manner.

\begin{lemma}\label{lem:annihilated}
For an object $X\in\cC$, the following conditions are equivalent.
\begin{enumerate}
\item $IX=0$ ($\iff X=T_I X$),
\item $I^\circ X=0$ ($\iff T_I^\vee X=X$).
\end{enumerate}
\end{lemma}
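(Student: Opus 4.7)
The plan is first to establish the parenthetical equivalences within each condition, and then to show $(1) \Leftrightarrow (2)$ by exploiting the idempotence relations proved in Lemma~\ref{lem:T_I_is_idempotent}.

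For the parenthetical equivalences: by the definition of $T_I$ as the cokernel of $I \hookrightarrow \Id_\cC$, evaluating at $X$ gives a short exact sequence $0 \to IX \to X \to T_I X \to 0$, so $IX = 0$ is equivalent to the quotient map $X \twoheadrightarrow T_I X$ being an isomorphism, that is, $X = T_I X$. Dually, since $I^\circ$ is defined as the cokernel of $T_I^\vee \hookrightarrow \Id_\cC$, we obtain $0 \to T_I^\vee X \to X \to I^\circ X \to 0$, giving $I^\circ X = 0 \Leftrightarrow T_I^\vee X = X$.

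The heart of the matter is therefore the equivalence $X = T_I X \Leftrightarrow X = T_I^\vee X$. For this I rely entirely on Lemma~\ref{lem:T_I_is_idempotent}, which gives the idempotence relations $T_I^\vee T_I = T_I$ and (dually, as noted in its proof) $T_I T_I^\vee = T_I^\vee$. Assuming $X = T_I X$, applying $T_I^\vee$ yields $T_I^\vee X = T_I^\vee T_I X = T_I X = X$; conversely, assuming $X = T_I^\vee X$, applying $T_I$ yields $T_I X = T_I T_I^\vee X = T_I^\vee X = X$.

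There is essentially no obstacle here: once one recognizes that both conditions say precisely that $X$ lies in the essential image of the idempotent adjoint pair $(T_I, T_I^\vee)$, the proof reduces to a two-line calculation from the already-established identities. The only subtlety worth flagging is to confirm that the dual identity $T_I T_I^\vee = T_I^\vee$ is indeed available from Lemma~\ref{lem:T_I_is_idempotent}'s proof (it is stated explicitly there, parallel to $T_I^\vee T_I = T_I$), so no new work is required.
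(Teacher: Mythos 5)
Your proof is correct and follows essentially the same route as the paper, whose entire proof is the remark ``Similar to the proof of $T_I=T_I^2$ in Lemma~\ref{lem:T_I_is_idempotent}'': you simply evaluate the identities $T_I^\vee T_I=T_I$ and $T_I T_I^\vee=T_I^\vee$ established there at the object $X$, which is exactly the intended argument. The only implicit step is the routine naturality check that the composite isomorphism $T_I^\vee X\to T_I^\vee T_IX\to T_IX\to X$ agrees with the canonical inclusion $T_I^\vee X\hookrightarrow X$, which is harmless to omit.
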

\begin{proof}
Similar to the proof of $T_I=T_I^2$ in Lemma~\ref{lem:T_I_is_idempotent}.
\end{proof}

\begin{definition}
An object $X\in\cC$ is called
\term{$I$-annihilated} if it satisfies the conditions above.
We denote by $\cC_I$ the full subcategory of $\cC$ consisting of
$I$-annihilated objects.
\end{definition}

The other subcategory is defined analogously.

\begin{definition}
An object $X\in\cC$ is called
\begin{enumerate}
\item \term{$I$-accessible} if $IX=X$ ($\iff T_I X=0$),
\item \term{$I$-torsion-free} if $X=I^\circ X$ ($\iff T_I^\vee X=0$).
\end{enumerate}
We denote by $\cC^I$ the full subcategory of $\cC$
consisting of $I$-accessible $I$-torsion-free objects.
\end{definition}

By definition it is clear that these subcategories are closed
under the parity change $\Pi$ or the degree shift $\Sigma$. 

\begin{example}
Let $A$ be a ring and $I\subset A$ a 2-sided ideal.
Then an $A$-module $V$ is $I$-annihilated if and only if
it can be defined over the quotient ring $A/I$.
Thus there is a canonical category equivalence $(\lMod{A})_I\simeq\lMod{(A/I)}$.
\end{example}

\begin{example}
Suppose that every object in $\cC$ is of finite length.
Then the assumption in Example~\ref{ex:rad_is_ideal} is satisfied.
For an ideal functor $I=\mathrm{Rad}$, we have that
$\cC^{\mathrm{Rad}}=\{0\}$ (Nakayama's lemma)
and $\cC_{\mathrm{Rad}}$ consists of all semisimple objects in $\cC$.
\end{example}

Clearly the intersection of $\cC^I$ and $\cC_I$ is $\{0\}$.
In addition, there are no non-zero morphisms between objects in these categories.

\begin{lemma}
Let $X,Y,Z\in\cC$ and suppose that
$X$ is $I$-accessible, $Y$ is $I$-torsion-free and $Z$ is $I$-annihilated.
Then $\Hom_\cC(X,Z)=0$ and $\Hom_\cC(Z,Y)=0$.
\end{lemma}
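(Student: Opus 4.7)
The plan is to invoke the adjunction $T_I\dashv T_I^\vee$ together with the characterizations recorded in Lemma~\ref{lem:annihilated} and the definition of $I$-torsion-free. The three hypotheses on $X,Y,Z$ translate directly into vanishing statements in the image of one of these two functors: $T_IX=0$ since $X$ is $I$-accessible, $T_I^\vee Y=0$ since $Y$ is $I$-torsion-free, and for the $I$-annihilated object $Z$ we have both $Z=T_IZ$ and $Z=T_I^\vee Z$ (the two equivalent forms from Lemma~\ref{lem:annihilated}). Since $T_I$ is a left adjoint to $T_I^\vee$ by assumption on ideal functors, the argument amounts to sliding the identities $Z\simeq T_IZ$ or $Z\simeq T_I^\vee Z$ across the adjunction to expose the zero object.

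Concretely, for the first vanishing I would write
\[\Hom_\cC(X,Z)\simeq\Hom_\cC(X,T_I^\vee Z)\simeq\Hom_\cC(T_IX,Z)=\Hom_\cC(0,Z)=0,\]
using $Z=T_I^\vee Z$ and then adjunction and $T_IX=0$. For the second vanishing,
\[\Hom_\cC(Z,Y)\simeq\Hom_\cC(T_IZ,Y)\simeq\Hom_\cC(Z,T_I^\vee Y)=\Hom_\cC(Z,0)=0,\]
using $Z=T_IZ$, adjunction, and $T_I^\vee Y=0$.

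There is essentially no obstacle here: everything reduces to unpacking the equivalent forms of the three annihilation conditions established in Lemma~\ref{lem:annihilated} and applying the adjunction once in each direction. The only mild point to note is that the same object $Z$ needs to be presented alternately as $T_IZ$ and as $T_I^\vee Z$, but Lemma~\ref{lem:annihilated} exactly provides both descriptions simultaneously, so no additional argument is required.
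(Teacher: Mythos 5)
Your proof is correct and is essentially identical to the paper's own argument: both vanishing statements follow by rewriting $Z$ as $T_I^\vee Z$ (resp.\ $T_I Z$) via Lemma~\ref{lem:annihilated} and then sliding across the adjunction $T_I\dashv T_I^\vee$ to hit $T_IX=0$ (resp.\ $T_I^\vee Y=0$). No issues.
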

\begin{proof}
Follows from
\begin{gather*}
\Hom_\cC(X,Z)=\Hom_\cC(X,T_I^\vee Z)\simeq\Hom_\cC(T_I X,Z)=0,\\
\Hom_\cC(Z,Y)=\Hom_\cC(T_I Z,Y)\simeq\Hom_\cC(Z,T_I^\vee Y)=0.
\qedhere
\end{gather*}
\end{proof}

The important property is that simple objects in $\cC$ are divided into these subcategories.
The proof of the lemma below is obvious.

\begin{lemma*}
When $X\in\cC$ is simple, these three conditions are all equivalent.
\begin{enumerate}
\item $X$ is $I$-accessible,
\item $X$ is $I$-torsion-free,
\item $X$ is not $I$-annihilated.
\qedhere
\end{enumerate}
\end{lemma*}

\begin{notation}
We denote by $\Irr\cC$
the isomorphism class of simple objects in $\cC$.
For an ideal functor $I$ on $\cC$,
we also denote by $\Irr\cC^I$ and $\Irr\cC_I$
the subsets of $\Irr\cC$ whose members are simple objects contained
in respective subcategories.
\end{notation}
By the lemma, we have a decomposition $\Irr\cC=\Irr\cC^I\sqcup\Irr\cC_I$.
Note that the definitions of $\Irr\cC^I$ and $\Irr\cC_I$
need both the category $\cC$ and
the ideal functor $I$,
not only the subcategories themselves.

\begin{proposition}\label{prop:limit_properties_of_subcats}
\ 
\begin{enumerate}
\item $I$-accessible objects are closed under
quotients, extensions and direct sums,
\item $I$-torsion-free objects are closed under
subobjects, extensions and direct products,
\item $I$-annihilated objects are closed under
subobjects, quotients, direct products and direct sums.
\end{enumerate}
In particular, $\cC^I$ is an exact subcategory of $\cC$ in Quillen's sense,
and $\cC_I$ is an abelian subcategory.
\end{proposition}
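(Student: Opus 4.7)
The plan is to exploit the fact that $T_I$, being a left adjoint, is right exact and cocontinuous, while $T_I^\vee$ is left exact and continuous; combined with the fact, already established, that $I$ preserves monomorphisms, epimorphisms and summation, every item in the list reduces to a short diagram chase.

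For (1), recall that $X$ is $I$-accessible iff $T_I X = 0$. Cocontinuity of $T_I$ immediately gives closure under direct sums. For a quotient $X \twoheadrightarrow Y$, right exactness yields $T_I X \twoheadrightarrow T_I Y$, whence $T_I Y = 0$. For an extension $0 \to X \to Y \to Z \to 0$ of two $I$-accessible objects, the right-exact sequence $T_I X \to T_I Y \to T_I Z \to 0$ has vanishing outer terms, and a quick exactness argument (the image of the first map is zero, so $T_I Y \hookrightarrow T_I Z = 0$) forces $T_I Y = 0$. Item (2) is obtained by the formally dual argument applied to $T_I^\vee$: it preserves monomorphisms and products, and the kernel--monomorphism analysis handles extensions.

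For (3), I would use all three characterizations of $I$-annihilation interchangeably (namely $IX = 0$, $T_I X = X$, $T_I^\vee X = X$), as supplied by Lemma~\ref{lem:annihilated}. Closure under subobjects and quotients follows because $I$ preserves monics and epics, so $IX' \hookrightarrow IX = 0$ and $IX \twoheadrightarrow IX''$ give $IX' = 0$ and $IX'' = 0$ respectively. Closure under direct sums follows because $I$ commutes with summation (proven just before). Closure under direct products is the one place the subobject/quotient characterization is inconvenient; I would use instead that $T_I^\vee$ preserves products, so $T_I^\vee \prod_i X_i \simeq \prod_i T_I^\vee X_i \simeq \prod_i X_i$.

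For the concluding ``in particular'' clause, closure of $\cC^I$ under extensions (part of (1)) is precisely what is needed to make it an exact subcategory of $\cC$ in Quillen's sense, where the admissible short exact sequences are those exact in $\cC$ with all three terms in $\cC^I$. For $\cC_I$, closure under subobjects, quotients and finite direct sums from (3) ensures that kernels, cokernels, and biproducts computed in $\cC$ lie in $\cC_I$, so $\cC_I$ inherits an abelian structure and the inclusion is exact. I do not anticipate any real obstacle; the only mildly delicate point is the extension case in (1) and (2), where right/left exactness alone is not formally enough and one must also use the vanishing on the other end, but this is immediate from the exact sequence.
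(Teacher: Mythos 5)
Your proposal is correct and follows the same route as the paper, whose proof is the one-line observation that $T_I$ is cocontinuous and $T_I^\vee$ is continuous; you have simply filled in the routine diagram chases (including the correct handling of extensions via vanishing of the outer terms in the right/left exact sequence) together with the already-proved facts that $I$ preserves monos, epis and summation. The only tiny imprecision is attributing closure of $\cC^I$ under extensions to (1) alone, whereas $\cC^I$ consists of objects that are both $I$-accessible and $I$-torsion-free, so (1) and (2) are both needed.
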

\begin{proof}
Follow from that $T_I$ is cocontinuous
and that $T_I^\vee$ is continuous.
\end{proof}
Obviously $\Irr\cC_I$ is equal to
the isomorphism class of simple objects in an abelian category $\cC_I$,
so this notation makes no confusions.
Let us denote by the embedding $\cC_I\hookrightarrow\cC$ of abelian category by
$\Phi_I$.
Namely, for $X\in\cC_I$, we explicitly write $\Phi_IX\in\cC$
when we need to emphasis the categories in which these objects belong.

\begin{lemma}
$\Phi_I$ has both the left adjoint functor $\Phi_I^\wedge$
and the right adjoint functor $\Phi_I^\vee$
such that $\Phi_I\Phi_I^\wedge=T_I$, $\Phi_I\Phi_I^\vee=T_I^\vee$
and $\Phi_I^\wedge\Phi_I=\Phi_I^\vee\Phi_I=\Id_{\cC_I}$.
\end{lemma}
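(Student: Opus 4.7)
The plan is to define $\Phi_I^\wedge$ and $\Phi_I^\vee$ by restricting $T_I$ and $T_I^\vee$ to land in $\cC_I$, and then deduce everything from the idempotency properties of $T_I$ and $T_I^\vee$ already established in Lemma~\ref{lem:T_I_is_idempotent} and the characterization of $\cC_I$ in Lemma~\ref{lem:annihilated}.

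First I would check that $T_I$ and $T_I^\vee$ both land in $\cC_I$. For any $X\in\cC$, Lemma~\ref{lem:T_I_is_idempotent} gives $T_I(T_IX)=T_I^2X=T_IX$, so $T_IX$ is $I$-annihilated by Lemma~\ref{lem:annihilated}; likewise the identity $T_IT_I^\vee=T_I^\vee$ (proved in the same lemma as an analogue of $T_I^\vee T_I=T_I$) shows $T_I(T_I^\vee X)=T_I^\vee X$, so $T_I^\vee X\in\cC_I$. Thus I may define
\[
\Phi_I^\wedge\colon\cC\to\cC_I,\quad X\mapsto T_IX,\qquad
\Phi_I^\vee\colon\cC\to\cC_I,\quad X\mapsto T_I^\vee X,
\]
viewed through the (fully faithful) inclusion $\Phi_I$. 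The compositional identities $\Phi_I\Phi_I^\wedge=T_I$ and $\Phi_I\Phi_I^\vee=T_I^\vee$ are then immediate from the definitions.

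Next I would verify the adjunctions. For $X\in\cC$ and $Y\in\cC_I$, since $Y$ is $I$-annihilated we have $\Phi_IY=T_I^\vee Y$ by Lemma~\ref{lem:annihilated}, and therefore by the adjunction $T_I\dashv T_I^\vee$ on $\cC$,
\[
\Hom_{\cC_I}(\Phi_I^\wedge X,Y)=\Hom_\cC(T_IX,\Phi_IY)=\Hom_\cC(T_IX,T_I^\vee Y)\simeq\Hom_\cC(X,\Phi_IY),
\]
which (together with full faithfulness of $\Phi_I$) exhibits $\Phi_I^\wedge$ as the left adjoint of $\Phi_I$. Dually, using $Y=T_IY$ for $Y\in\cC_I$,
\[
\Hom_{\cC_I}(Y,\Phi_I^\vee X)=\Hom_\cC(\Phi_IY,T_I^\vee X)\simeq\Hom_\cC(T_I\Phi_IY,X)=\Hom_\cC(\Phi_IY,X),
\]
so $\Phi_I^\vee$ is right adjoint to $\Phi_I$.

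Finally, for $Y\in\cC_I$ the identities $\Phi_I^\wedge\Phi_IY=T_IY=Y$ and $\Phi_I^\vee\Phi_IY=T_I^\vee Y=Y$ are another instance of Lemma~\ref{lem:annihilated}, giving $\Phi_I^\wedge\Phi_I=\Phi_I^\vee\Phi_I=\Id_{\cC_I}$. There is no real obstacle here; the only subtlety is the bookkeeping between an object of $\cC_I$ and its image under $\Phi_I$, which is painless because $\Phi_I$ is fully faithful. Each assertion of the lemma is therefore a direct translation of one of the identities $T_I^2=T_I$, $T_I^\vee T_I=T_I$, $T_IT_I^\vee=T_I^\vee$ already in hand.
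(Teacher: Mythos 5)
Your proof is correct and follows essentially the same route as the paper: define $\Phi_I^\wedge$ and $\Phi_I^\vee$ as the restrictions of $T_I$ and $T_I^\vee$ (which land in $\cC_I$ by the idempotency identities), then obtain both adjunctions from $T_I\dashv T_I^\vee$ together with $T_I\Phi_I=\Phi_I$ and $T_I^\vee\Phi_I=\Phi_I$ for objects of $\cC_I$. The only cosmetic issue is writing $T_I^\vee Y$ where $T_I^\vee\Phi_I Y$ is meant, which you already flag as harmless bookkeeping.
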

\begin{proof}
Since $T_I X$ and $T_I^\vee X$ for any $X\in\cC$ are $I$-annihilated,
the functors $\Phi_I^\wedge$ and $\Phi_I^\vee$ can be defined
as the restriction of $T_I$ and $T_I^\vee$ respectively.
For any $Y\in\cC_I$, we have naturally
\begin{gather*}
\Hom_\cC(X,\Phi_I Y)=\Hom_\cC(X,T_I^\vee\Phi_IY)\simeq
\Hom_\cC(T_I X,\Phi_IY)=\Hom_{\cC_I}(\Phi_I^\wedge X,Y),\\
\Hom_\cC(\Phi_IY,X)=\Hom_\cC(T_I\Phi_IY,X)\simeq
\Hom_\cC(\Phi_IY,T_I^\vee X)=\Hom_{\cC_I}(Y,\Phi_I^\vee X).
\end{gather*}
Thus these two functors are respective adjoints of the embedding.
\end{proof}
\begin{corollary*}
For an ideal functor $J$ on another abelian category $\cD$,
the functor
\begin{align*}
\cHom(\cC_I,\cD_J)&\to\cHom(\cC,\cD),\\
F&\mapsto \Phi_JF\Phi_I^\wedge
\end{align*}
is fully faithful.
Its image is equivalent to the full subcategory
\[\set{G\colon\cC\to\cD}{G=T_JGT_I}\subset\cHom(\cC,\cD)\]
and the inverse is induced by
\begin{align*}
\cHom(\cC,\cD)&\to\cHom(\cC_I,\cD_J),\\
G&\mapsto \Phi_J^\wedge G\Phi_I.
\end{align*}
In particular, $F$ is an adjunction if and only if
so is $\Phi_JF\Phi_I^\wedge$.
\end{corollary*}

\begin{remark}
$\cC_I$ is characterized up to equivalence by these data:
let $\cE$ be an abelian category with an adjoint $\Phi\colon\cE\to\cC$
which also has a left adjoint functor $\Phi^\wedge$,
and suppose that the counit $\Phi^\wedge\Phi\to\Id_\cE$ is an isomorphism
and the unit $\Id_\cC\to\Phi\Phi^\wedge$ is epic.
Then $\cE$ is canonically equivalent to $\cC_I$
where $I\coloneqq\Ker(\Id_\cC\twoheadrightarrow\Phi\Phi^\wedge)$.
\end{remark}

\subsection{Ideal operations}
In this subsection we consider operations against ideal functors:
summation, product and quotient.
These are analogues of those against usual 2-sided ideals in rings.
Firstly we introduce summation of ideal functors.

\begin{proposition}
Let $\{I_i\}$ be a family of ideal functors on $\cC$,
and suppose that their sum $\sum_i I_i\subset\Id_\cC$
and the intersection $\bigcap_i T_{I_i}^\vee\subset\Id_\cC$ exist.
Then $\sum_i I_i$ is again an ideal functor.
\end{proposition}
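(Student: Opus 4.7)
The plan is to apply Proposition~\ref{prop:colim_adj} directly: exhibit $T_{\sum_i I_i}$ as the colimit of a diagram of adjunctions whose corresponding limit of right adjoints is precisely $\bigcap_i T_{I_i}^\vee$, which exists by hypothesis. So the point is to realize the quotient $\Id_\cC\twoheadrightarrow T_{\sum_i I_i}$ as the universal quotient of $\Id_\cC$ through which every $\Id_\cC\twoheadrightarrow T_{I_i}$ factors.

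First I would form the small diagram $D$ in $\cEnd(\cC)$ with one object sent to $\Id_\cC$ and one object for each index $i$ sent to $T_{I_i}$, with a single arrow from the first to each $T_{I_i}$ given by the canonical projection. Because each $I_i\subset\sum_i I_i$ lies in the kernel of $\Id_\cC\twoheadrightarrow T_{\sum_i I_i}$, one checks that a cocone on $D$ at an object $F$ amounts to nothing more than a morphism $\Id_\cC\to F$ that kills every $I_i$; by the universal property of sum and cokernel this is the same as a morphism $T_{\sum_i I_i}\to F$. Hence $T_{\sum_i I_i}=\varinjlim D$ in $\cEnd(\cC)$.

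Next I would look at the opposite diagram $D^\vee$ of right adjoints. Since each $T_{I_i}$ is an adjunction, this consists of $\Id_\cC$ and the subfunctors $T_{I_i}^\vee\hookrightarrow\Id_\cC$ with the canonical monomorphisms as arrows. Because these maps are monic, a compatible cone at $F$ is just a map $F\to\Id_\cC$ whose image lies inside every $T_{I_i}^\vee$, so the limit of $D^\vee$ is precisely $\bigcap_i T_{I_i}^\vee$, which exists by assumption.

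Invoking Proposition~\ref{prop:colim_adj} then shows that the colimit $T_{\sum_i I_i}$ is an adjunction with right adjoint $\bigcap_i T_{I_i}^\vee$, so by definition $\sum_i I_i$ is an ideal functor. No serious obstacle is expected: the only thing to be careful about is the identification of the colimit and limit in the two paragraphs above, and the fact that the diagram $D$ really consists of adjunctions (the $T_{I_i}$ by hypothesis and $\Id_\cC$ trivially via itself), so that Proposition~\ref{prop:colim_adj} applies.
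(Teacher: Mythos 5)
Your proposal is correct and follows exactly the paper's argument: the paper likewise identifies $T_{\sum_i I_i}$ as the pushout of the adjunctions $\Id_\cC\twoheadrightarrow T_{I_i}$ under $\Id_\cC$, identifies the corresponding limit of right adjoints with $\bigcap_i T_{I_i}^\vee$, and invokes Proposition~\ref{prop:colim_adj}. Your write-up merely spells out the cocone/cone identifications that the paper leaves implicit.
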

\begin{proof}
The cokernel of $\sum_i I_i\hookrightarrow\Id_\cC$
is the pushout of adjunctions $\Id_\cC\twoheadrightarrow T_{I_i}$ under $\Id_\cC$,
which is left adjoint to the pullback $\bigcap_i T_{I_i}^\vee$
by Proposition~\ref{prop:colim_adj}.
\end{proof}

\begin{remark}
In contrast, the intersection of ideal functors is not an ideal functor.
This is because even in the module category of a ring,
the equation $(I\cap J)V=IV\cap JV$ does not hold in general.
\end{remark}

In particular, a finite sum always exists.
We can represent finite sum in another way as follows.

\begin{proposition}
Let $I$ and $J$ be ideal functors on $\cC$.
Then $T_IT_J=T_{I+J}$.
\end{proposition}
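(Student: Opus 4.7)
The plan is to identify $T_I T_J$ with $T_{I+J}$ by realizing both as the cokernel of the same inclusion $I+J \hookrightarrow \Id_\cC$. Note that $I+J$ is an ideal functor by the preceding proposition, the required intersection $T_I^\vee \cap T_J^\vee$ being a finite limit which always exists in the abelian category $\cEnd(\cC)$.

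First I would construct a canonical epimorphism $\Id_\cC \twoheadrightarrow T_I T_J$ as the composite
\[
\Id_\cC \xrightarrow{\,p\,} T_J \xrightarrow{\,q\,} T_I T_J,
\]
where $p$ is the defining projection with kernel $J$, and $q$ is obtained from the epimorphism $\Id_\cC \twoheadrightarrow T_I$ by right multiplication by $T_J$. Right multiplication by any functor preserves both limits and colimits and is in particular exact, so $q$ is an epimorphism with $\Ker q = I T_J$. It then suffices to show $\Ker(q \circ p) = I+J$, since the universal property of the cokernel will then yield $T_I T_J \cong T_{I+J}$ canonically as quotients of $\Id_\cC$.

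The core of the argument is the image identification $\Image(I \hookrightarrow \Id_\cC \xrightarrow{p} T_J) = IT_J$ as subfunctors of $T_J$. For this I would use the earlier fact that an ideal functor preserves images, and in particular preserves epimorphisms: applying $I$ to the natural epi $p$ yields an epic natural transformation $Ip\colon I \twoheadrightarrow IT_J$, and naturality of the inclusion $\iota\colon I \hookrightarrow \Id_\cC$ with respect to $p$ gives $p \circ \iota = \iota_{T_J} \circ Ip$; hence the epi-mono factorization of $p \circ \iota$ has image exactly $IT_J$. From this, the containment $I+J \subseteq \Ker(q \circ p)$ is immediate since $J \subseteq \Ker p$ and $p(I) = IT_J = \Ker q$; conversely, $p$ induces an order isomorphism between subfunctors of $\Id_\cC$ containing $J$ and subfunctors of $T_J$, under which both $I+J$ and $\Ker(q \circ p)$ correspond to the same subfunctor $IT_J$, forcing them to coincide. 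The main obstacle is the image identification; everything else is formal bookkeeping in the abelian functor category $\cEnd(\cC)$.
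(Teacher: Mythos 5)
Your proof is correct and rests on exactly the same two facts as the paper's: exactness of right multiplication by $T_J$ applied to $0\to I\to\Id_\cC\to T_I\to 0$ (giving $\Ker q=IT_J$), and the epimorphism $I\twoheadrightarrow IT_J$ obtained because ideal functors preserve images. The only difference is in the wrap-up: you compute $\Ker(q\circ p)=p^{-1}(IT_J)=I+J$ via the subobject-lattice correspondence for the epimorphism $p$, whereas the paper observes that the same data make the square with vertices $\Id_\cC$, $T_I$, $T_J$, $T_IT_J$ cocartesian and then invokes the pushout description of $T_{I+J}$ from the preceding proposition --- two formulations of the same argument in the abelian category $\cEnd(\cC)$.
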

\begin{proof}
We have a commutative diagram
\[\begin{tikzcd}
0 \arrow{r} &
I \arrow{r} \arrow[two heads]{d} &
\Id_\cC \arrow{r} \arrow[two heads]{d} &
T_I \arrow{r} \arrow[two heads]{d} &
0 \\
0 \arrow{r} &
IT_J \arrow{r} &
T_J \arrow{r} &
T_IT_J \arrow{r} &
0
\end{tikzcd}\]
where the rows are exact since the right multiplication of $T_J$ is exact,
and $I\to IT_J$ is epic since $I$ preserves images.
These properties imply that the right square is cocartesian.
In other words, $T_IT_J$ is the pushout of $T_I$ and $T_J$ under $\Id_\cC$.
\end{proof}

Secondly we consider the product of two ideal functors defined by composition.

\begin{lemma}
For ideal functors $I$ and $J$ on $\cC$, we have $T_I J=JT_{IJ}$.
\end{lemma}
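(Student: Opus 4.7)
The plan is to identify both $T_IJ$ and $JT_{IJ}$ with the image of the composite natural transformation $J\hookrightarrow\Id_\cC\twoheadrightarrow T_{IJ}$, working pointwise inside $T_{IJ}(X)=X/IJ(X)$ for each $X\in\cC$.

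First I would unwind the left-hand side. Since $IJ$ is a subfunctor of $J$ (as $IJ(X)=I(J(X))\subset J(X)$), we have the short exact sequence $0\to IJ\to J\to T_IJ\to 0$, so $T_IJ(X)=J(X)/IJ(X)$. The chain of inclusions $IJ(X)\subset J(X)\subset X$ makes $T_IJ(X)$ naturally a subobject of $T_{IJ}(X)$, and it is precisely the image of the composite $J(X)\hookrightarrow X\twoheadrightarrow T_{IJ}(X)$.

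Second, I would extract the same description for $JT_{IJ}$ by applying the natural transformation $J\hookrightarrow\Id_\cC$ to the quotient $\pi\colon\Id_\cC\twoheadrightarrow T_{IJ}$. The resulting naturality square
\[\begin{tikzcd}
J \arrow[hook]{r} \arrow{d}[swap]{J\pi} & \Id_\cC \arrow{d}{\pi} \\
JT_{IJ} \arrow[hook]{r} & T_{IJ}
\end{tikzcd}\]
factors $J\to T_{IJ}$ through $JT_{IJ}$. The crucial input is the earlier proposition that any ideal functor preserves images, hence epimorphisms; applied to $\pi$, this forces $J\pi\colon J\twoheadrightarrow JT_{IJ}$ to be epic. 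Therefore $JT_{IJ}$ is also the image of $J\to T_{IJ}$, and combining both identifications yields $T_IJ=JT_{IJ}$.

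The argument is essentially a diagram chase, with the one nontrivial ingredient being the preservation of image factorizations by $J$, which is already established in the paper. I do not expect any real obstacle; the main pitfall is just to keep track of which subobject of $T_{IJ}(X)$ one is pointing at, which is handled transparently by identifying both sides with the same image.
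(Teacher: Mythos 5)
Your proof is correct and follows essentially the same route as the paper: both identify $T_IJ$ and $JT_{IJ}$ as the image of the composite $J\hookrightarrow\Id_\cC\twoheadrightarrow T_{IJ}$, the former via the exact sequence $0\to IJ\to J\to T_IJ\to 0$ and the latter via the fact that ideal functors preserve epimorphisms. The only cosmetic difference is that you verify monicity of $T_IJ\to T_{IJ}$ by a direct pointwise argument where the paper invokes the four lemma.
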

Here we let $T_{IJ}\coloneqq\Coker(IJ\hookrightarrow\Id_\cC)$
though we have not yet prove that $IJ$ is an ideal functor.
\begin{proof}
Let us consider the diagram
\[\begin{tikzcd}
0 \arrow{r} &
IJ \arrow{r} \arrow{d}{\id} &
J \arrow{r} \arrow[hook]{d} &
T_I J \arrow{r} \arrow[hook]{d} &
0 \\
0 \arrow{r} & IJ \arrow{r} & \Id_\cC \arrow{r} & T_{IJ} \arrow{r} & 0
\end{tikzcd}\]
whose rows are exact.
The commutative square induces $T_I J\to T_{IJ}$
which is monic by the four lemma.
On the other hand, we have another commutative diagram
\[\begin{tikzcd}
J \arrow[two heads]{r} \arrow[hook]{d} &
J T_{IJ} \arrow[hook]{d} \\
\Id_\cC \arrow[two heads]{r} &
T_{IJ}\rlap{.}
\end{tikzcd}\]
where $J\to JT_{IJ}$ is epic and $JT_{IJ}\to T_{IJ}$ is monic.
Thus we have two epi-mono factorization of the diagonal morphism
so that its images must be equal.
\end{proof}

\begin{lemma}
For $I$ and $J$ as above, we have $I^\circ J=JI^\circ$.
\end{lemma}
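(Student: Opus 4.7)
The plan is to imitate the structure of the preceding lemma by exhibiting two epi-mono factorizations of the natural composite $J\hookrightarrow\Id_\cC\twoheadrightarrow I^\circ$ and then invoking the uniqueness of such factorizations in the abelian category $\cHom(\cC,\cC)$.

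One factorization is $J\twoheadrightarrow JI^\circ\hookrightarrow I^\circ$: the first arrow is $J$ applied to the epimorphism $\Id_\cC\twoheadrightarrow I^\circ$ (which remains epic, since $J$ preserves epimorphisms), and the second is the monomorphism $J\hookrightarrow\Id_\cC$ evaluated at $I^\circ$. By naturality of $J\hookrightarrow\Id_\cC$ the composite agrees with $J\hookrightarrow\Id_\cC\twoheadrightarrow I^\circ$. The other candidate is $J\twoheadrightarrow I^\circ J\to I^\circ$, where the first arrow is the right-hand map of the exact sequence
\[0\to T_I^\vee J\to J\to I^\circ J\to 0\]
obtained by right multiplication of $0\to T_I^\vee\to\Id_\cC\to I^\circ\to 0$ by $J$, and the second is induced by $J\hookrightarrow\Id_\cC$.

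The main obstacle is to verify that this second arrow $I^\circ J\to I^\circ$ is in fact monic, so that $J\twoheadrightarrow I^\circ J\hookrightarrow I^\circ$ really is an epi-mono factorization. Evaluated at $X\in\cC$ it is the natural map $JX/T_I^\vee(JX)\to X/T_I^\vee X$, whose kernel equals $(JX\cap T_I^\vee X)/T_I^\vee(JX)$, so one needs the identity $T_I^\vee(JX)=JX\cap T_I^\vee X$ as subobjects of $X$. This is where the left exactness of the right adjoint $T_I^\vee$ enters: applying $T_I^\vee$ to $0\to JX\to X\to X/JX\to 0$ yields $T_I^\vee(JX)=\Ker(T_I^\vee X\to T_I^\vee(X/JX))$, and the monomorphism $T_I^\vee(X/JX)\hookrightarrow X/JX$ identifies this kernel with $T_I^\vee X\cap JX$. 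Once this monomorphism is secured, uniqueness of epi-mono factorization forces $I^\circ J=JI^\circ$ as subfunctors of $I^\circ$.
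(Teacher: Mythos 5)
Your proof is correct and follows the paper's own (one-line) argument: both $JI^\circ$ and $I^\circ J$ are exhibited as the image of $J\hookrightarrow\Id_\cC\twoheadrightarrow I^\circ$ via two epi-mono factorizations. The monicity of $I^\circ J\to I^\circ$, which you derive from the left exactness of $T_I^\vee$ (giving $T_I^\vee(JX)=JX\cap T_I^\vee X$), is precisely the detail the paper leaves implicit under ``in a similar way.''
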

\begin{proof}
Follows in a similar way from that both are the image of $J\hookrightarrow\Id_\cC\twoheadrightarrow I^\circ$.
\end{proof}

\begin{proposition}
Let $I$ and $J$ be ideal functors on $\cC$.
Then $IJ$ is also an ideal functor such that $(IJ)^\circ=J^\circ I^\circ$.
\end{proposition}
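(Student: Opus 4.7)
The plan is to construct the right adjoint $T_{IJ}^\vee$ of $T_{IJ}$ explicitly as the largest subobject killed by $IJ$, and then to identify the cokernel $(IJ)^\circ$ with $J^\circ I^\circ$. First I would note that $IJ = I \circ J$ inherits from $I$ and $J$ the property of preserving sums and images (hence monomorphisms and epimorphisms), since these were established for individual ideal functors in the earlier propositions of this section and are closed under composition. Consequently, the sum of all subobjects $W_0 \subset W$ satisfying $IJ W_0 = 0$ is itself killed by $IJ$, and I define $T_{IJ}^\vee W$ to be this maximum subobject; functoriality in $W$ is clear since any $W \to W'$ sends subobjects killed by $IJ$ to subobjects killed by $IJ$.

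Next I would verify the hom-set isomorphism $\Hom_\cC(T_{IJ} V, W) \simeq \Hom_\cC(V, T_{IJ}^\vee W)$. A morphism $T_{IJ} V \to W$ is precisely an $f \colon V \to W$ with $f(IJV) = 0$; invoking the naturality identity $f(IJV) = IJ(\Image f)$, this vanishing is equivalent to $\Image f$ being killed by $IJ$, equivalently $\Image f \subset T_{IJ}^\vee W$ by maximality, equivalently $f$ factoring through $T_{IJ}^\vee W \hookrightarrow W$. This exhibits $T_{IJ}^\vee$ as a right adjoint, so $T_{IJ}$ is an adjunction and $IJ$ is an ideal functor. The main obstacle here is the naturality identity itself, which uses that the epimorphism $V \twoheadrightarrow \Image f$ is preserved by $IJ$; this in turn follows because $I$ and $J$ each preserve epimorphisms as ideal functors.

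Finally I would identify $(IJ)^\circ$ with $J^\circ I^\circ$. The composite $\Id_\cC \twoheadrightarrow I^\circ \twoheadrightarrow J^\circ I^\circ$ is epic, so it suffices to verify that its kernel at $W$ coincides with $T_{IJ}^\vee W$. For the forward inclusion: if $W_0 \subset W$ is killed by $IJ$, then $JW_0 \subset T_I^\vee W$ because $T_I^\vee W$ is the maximal $I$-annihilated subobject of $W$, whence the image of $W_0$ in $I^\circ W$ lies in $T_J^\vee I^\circ W$. For the reverse: if the image of $Z \subset W$ in $I^\circ W$ lies in $T_J^\vee I^\circ W$, then $JZ$ is carried to zero in $I^\circ W$, so $JZ \subset T_I^\vee W$, and then $IJZ \subset I(T_I^\vee W) = 0$ since $T_I^\vee W$ is $I$-annihilated by Lemma~\ref{lem:annihilated}. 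This completes the identification $(IJ)^\circ = J^\circ I^\circ$.
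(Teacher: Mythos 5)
Your proof is correct, but it takes a genuinely different route from the paper's. The paper first establishes two commutation lemmas, $T_I J = J T_{IJ}$ and $I^\circ J = J I^\circ$, by diagram chases in the functor category; it then \emph{defines} $T_{IJ}^\vee\coloneqq\Ker(\Id_\cC\twoheadrightarrow J^\circ I^\circ)$ and uses those lemmas to deduce $T_{IJ}^\vee T_{IJ}=T_{IJ}$ and $T_{IJ}T_{IJ}^\vee=T_{IJ}^\vee$, from which the unit, the counit and the zig-zag identities of the adjunction fall out formally. You instead work objectwise: you build the right adjoint as the maximal $IJ$-annihilated subobject, verify the universal property on hom-sets directly (your ``naturality identity'' is precisely the image-preservation argument underlying Lemma~\ref{lem:IZ_in_Y}), and only afterwards identify the result with $\Ker(\Id_\cC\to J^\circ I^\circ)$. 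Your route bypasses the two commutation lemmas entirely and makes the universal property of $T_{IJ}^\vee$ transparent; the paper's route is shorter once those lemmas are in hand and stays at the level of natural transformations throughout.

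One caveat: your first step forms the sum of \emph{all} subobjects of $W$ killed by $IJ$, and the ambient abelian category is not assumed to admit arbitrary sums of subobjects (the paper consistently hedges with ``if exists''). This is harmless only because your final step shows that $\Ker(W\to J^\circ I^\circ W)$ is itself killed by $IJ$ and contains every $IJ$-killed subobject, hence \emph{is} the desired maximum. You should therefore reorder the argument: define $T_{IJ}^\vee W\coloneqq\Ker(W\to J^\circ I^\circ W)$ outright, prove by the two inclusions of your last paragraph that it is the largest $IJ$-annihilated subobject of $W$, and then run your adjunction argument verbatim. With that rearrangement the proof is complete.
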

\begin{proof}
Let $T_{IJ}\coloneqq\Coker(IJ\hookrightarrow\Id_\cC)$
and $T_{IJ}^\vee\coloneqq\Ker(\Id_\cC\twoheadrightarrow J^\circ I^\circ)$.
It suffices to prove that these functors actually form an adjunction.
By the lemma above, we have $IJ T_{IJ}=IT_IJ=0$.
So $JT_{IJ}$ is $I$-annihilated and this implies $JI^\circ T_{IJ}=I^\circ JT_{IJ}=0$.
Thus $I^\circ T_{IJ}$ is $J$-annihilated so $J^\circ I^\circ T_{IJ}=0$.
Equivalently, we have $T_{IJ}^\vee T_{IJ}=T_{IJ}$.
We can prove $T_{IJ}T_{IJ}^\vee=T_{IJ}^\vee$ in a similar way.
Using these isomorphisms, we can define the unit $\Id_\cC\twoheadrightarrow T_{IJ}=T_{IJ}^\vee T_{IJ}$
and the counit $T_{IJ}T_{IJ}^\vee=T_{IJ}^\vee\hookrightarrow\Id_\cC$.
Now it is obvious that these morphisms satisfy the zig-zag identities.
\end{proof}

Lastly we study about quotient of ideal functors.

\begin{definition}
Let $I$ and $J$ be two ideal functors on $\cC$ such that $J\subset I\subset\Id_\cC$.
Since $\cC_J$ is closed under subobjects, $X\in\cC_J$ implies $IX\in\cC_J$.
We denote this restricted functor of $I$ by
$I_J\colon\cC_J\to\cC_J$.
\end{definition}
Recall that we denote by $\Phi_J\colon\cC_J\to\cC$
the embedding of abelian category, and
the endofunctor category can be also exactly embedded as
\begin{align*}
\cEnd(\cC_J)&\to\cEnd(\cC),\\
F&\mapsto\Phi_J F\Phi^\wedge_J.
\end{align*}
\begin{lemma}
Let $I$ and $J$ be as above. Then
\[
\Phi_J I_J\Phi^\wedge_J=IT_J
=\Ker(T_J\twoheadrightarrow T_I)\simeq I/J.
\]
\end{lemma}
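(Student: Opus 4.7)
The statement packages three separate identifications, and my plan is to verify them in sequence; once the earlier structural facts about ideal functors are in hand, everything reduces to routine diagram chasing.

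First, I would establish $\Phi_J I_J \Phi_J^\wedge = IT_J$ by directly unraveling the definitions. By the preceding lemma characterizing $\Phi_J^\wedge$, we have $\Phi_J \Phi_J^\wedge = T_J$, and by construction $I_J$ is the restriction of $I$ to $\cC_J$; this restriction makes sense precisely because $\cC_J$ is closed under subobjects (Proposition~\ref{prop:limit_properties_of_subcats}(3)), so that the subobject $I(T_J X) \subset T_J X$ remains $J$-annihilated. Evaluating on any $X \in \cC$ gives $\Phi_J I_J \Phi_J^\wedge X = I(T_J X) = (IT_J)X$, with naturality in $X$ clear from the naturality of the defining pieces.

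Next, for $IT_J = \Ker(T_J \twoheadrightarrow T_I)$, the epimorphism $T_J \twoheadrightarrow T_I$ arises from $J \subset I$: the canonical epi $\Id_\cC \twoheadrightarrow T_I$ kills $J$ and so factors through $T_J$. To identify both sides as the same subfunctor of $T_J$, I would argue pointwise. Applying the already-established fact that ideal functors preserve images (hence epimorphisms) to the projection $X \twoheadrightarrow X/JX$ yields a factorization $IX \twoheadrightarrow I(X/JX) \hookrightarrow X/JX$ of the composite $IX \hookrightarrow X \twoheadrightarrow X/JX$; by uniqueness of epi-mono factorizations, $(IT_J)X = I(X/JX)$ coincides with the image $IX/JX$ (using $JX \subset IX$). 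On the other hand $\Ker(X/JX \twoheadrightarrow X/IX) = IX/JX$ by direct inspection, so the two subfunctors of $T_J$ agree.

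Finally, for the isomorphism $IT_J \simeq I/J$, I would consider the composite $\iota \colon I \hookrightarrow \Id_\cC \twoheadrightarrow T_J$ in the abelian category $\cEnd(\cC)$. Its kernel, computed pointwise, is $IX \cap JX = JX$, i.e.\ the subfunctor $J$ of $I$; its image is precisely the subfunctor $IT_J$ computed in the preceding paragraph. The first isomorphism theorem then yields $I/J \simeq IT_J$. No step here is a genuine obstacle; the only mildly subtle point is the pointwise identity $I(X/JX) = IX/JX$, which is automatic in module categories but in our abstract setting requires the earlier proposition that ideal functors preserve images.
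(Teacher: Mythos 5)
Your proof is correct and follows essentially the same route as the paper: the first equality from $\Phi_J I_J=I\Phi_J$ and $\Phi_J\Phi_J^\wedge=T_J$, the middle equality from the fact that ideal functors preserve images, and the last isomorphism from the epi-mono factorization of $I\hookrightarrow\Id_\cC\twoheadrightarrow T_J$. The only cosmetic difference is that the paper obtains $IT_J=\Ker(T_J\twoheadrightarrow T_I)$ by citing the already-proved identity $T_IT_J=T_{I+J}=T_I$ together with the exact sequence $0\to IT_J\to T_J\to T_IT_J\to 0$, whereas you recompute the same fact pointwise; both rest on the same image-preservation lemma.
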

\begin{proof}
By definition, $\Phi_J I_J=I\Phi_J$ so $\Phi_J I_J\Phi^\wedge_J=IT_J$.
Since $T_I T_J=T_{I+J}=T_I$, it is equal to
\[IT_J=\Ker(T_J\twoheadrightarrow T_I T_J)=\Ker(T_J\twoheadrightarrow T_I).\]
The last isomorphism is obvious.
\end{proof}
\begin{proposition}
For $I$ and $J$ as above, $I_J$ is an ideal functor on $\cC_J$.
Moreover every ideal functors on $\cC_J$ is obtained in this way,
and ideal functors on $\cC_J$ are in one-to-one correspondence
with those on $\cC$ which contain $J$.
\end{proposition}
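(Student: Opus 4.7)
The plan is to leverage the preceding lemma, which identifies $\Phi_J I_J \Phi_J^\wedge$ with $IT_J \simeq I/J \subset T_J$, together with the earlier corollary stating that the embedding $\iota\colon\cEnd(\cC_J)\hookrightarrow\cEnd(\cC)$, $F\mapsto\Phi_J F\Phi_J^\wedge$, is fully faithful and both preserves and reflects the property of being an adjunction. The ideal functors on $\cC$ containing $J$ will correspond to subfunctors of $T_J$ in $\cEnd(\cC)$ lying in the image of $\iota$, which is exactly the image of the $K\mapsto\iota(K)$ map.

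First I would show $I_J$ is an ideal functor on $\cC_J$. Since colimits in $\cHom(\cC_J,\cC_J)$ are computed valuewise and, for every $X\in\cC_J$, the object $T_IX$ lies in $\cC_I\subset\cC_J$, the cokernel of $I_J\hookrightarrow\Id_{\cC_J}$ in $\cEnd(\cC_J)$ is $\bar T_I\coloneqq\Phi_J^\wedge T_I\Phi_J$. Because $J\subset I$ implies $I+J=I$, the preceding proposition gives $T_IT_J=T_I$ and symmetrically $T_JT_I=T_I$, so $\iota(\bar T_I)=T_JT_IT_J=T_I$. Since $T_I$ is an adjunction by hypothesis, the reflection property of $\iota$ forces $\bar T_I$ to be an adjunction, whence $I_J$ is an ideal functor.

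For the converse, given an ideal functor $K$ on $\cC_J$, I set $\bar K\coloneqq\iota(K)=\Phi_J K\Phi_J^\wedge$, a subfunctor of $T_J=\iota(\Id_{\cC_J})$, and define $I\subset\Id_\cC$ as the preimage of $\bar K$ under the projection $\Id_\cC\twoheadrightarrow T_J$. Then $J\subset I$ and $I/J=\bar K$ by construction. A valuewise computation using $\Phi_J\Phi_J^\wedge=T_J$ shows
\[\Coker(\bar K\hookrightarrow T_J)(Y)=\Coker(KT_JY\hookrightarrow T_JY)=T_K(T_JY)=\iota(T_K)(Y),\]
so $T_I=\Id_\cC/I=T_J/\bar K=\iota(T_K)$, which is an adjunction by the corollary, making $I$ an ideal functor containing $J$. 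The two assignments are mutually inverse: starting from $K$ and building $I$, the preceding lemma gives $\iota(I_J)=IT_J=\bar K=\iota(K)$, hence $I_J=K$ by faithfulness of $\iota$; starting from $I\supset J$ and passing through $K=I_J$, the identification $T_I=T_J/IT_J$ shows that the preimage of $IT_J$ under $\Id_\cC\twoheadrightarrow T_J$ equals $\Ker(\Id_\cC\twoheadrightarrow T_I)=I$.

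The delicate point will be the identity $\Coker(\bar K\hookrightarrow T_J)=\iota(T_K)$, which is the assertion that $\iota$ commutes with cokernels whose source and target both lie in its image. Although $\iota$ is not exact in general, the valuewise calculation above together with $T_J\Phi_J=\Phi_J$ and $\Phi_J^\wedge\Phi_J=\Id_{\cC_J}$ makes it routine; the care required is in distinguishing colimits computed in $\cEnd(\cC_J)$ from those computed in $\cEnd(\cC)$ and verifying that the reflective structure is preserved.
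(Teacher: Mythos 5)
Your proof is correct and follows essentially the same route as the paper: both directions rest on the fully faithful embedding $F\mapsto\Phi_J F\Phi_J^\wedge$ preserving and reflecting adjunctions, the identification $\Phi_J I_J\Phi_J^\wedge=IT_J=\Ker(T_J\twoheadrightarrow T_I)$ from the preceding lemma, and recovering the ideal functor on $\cC$ from $K$ via the quotient adjunction $\Phi_J T_K\Phi_J^\wedge$ of $T_J$. The only difference is presentational: you phrase the correspondence on the subfunctor side (preimages of $\iota(K)$ in $\Id_\cC$) and check both composites via faithfulness, while the paper works with the quotient adjunctions directly and verifies $\tilde K\Phi_J=\Phi_J K$.
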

\begin{proof}
Via the above embedding, $\Coker(I_J\hookrightarrow\Id_{\cC_J})$ is mapped to
an adjunction $\Coker(IT_J\hookrightarrow T_J)=T_I$.
Thus $I_J$ is an ideal functor and $I$ is recovered from $I_J$ in this way.

Conversely, suppose that $K$ is an ideal functor on $\cC_J$.
Then $T_{\tilde K}\coloneqq\Phi_J T_K\Phi_J^\wedge$ is a quotient adjunction of $T_J$ so
$\tilde K\coloneqq\Ker(\Id_\cC\twoheadrightarrow T_J\twoheadrightarrow T_{\tilde K})$
is an ideal functor which contains $J$.
Moreover we have
\[\tilde K\Phi_J=\Ker(\Phi_J\twoheadrightarrow T_{\tilde K}\Phi_J)
=\Ker(\Phi_J\twoheadrightarrow \Phi_J T_K)=\Phi_J K,\]
which means that $\tilde K_J\simeq K$.
Thus these operations gives a one-to-one correspondence up to unique isomorphism.
\end{proof}

Now the next statements are clear.

\begin{proposition*}
Let $I,J\subset\Id_\cC$ be ideal functors. Then
\begin{enumerate}
\item $\cC_{I+J}=\cC_I\cap\cC_J$,
\item $\cC^{IJ}=\cC^I\cap\cC^J$,
\item if $J\subset I$ then $(\cC_J)_{I_J}=\cC_I$ and $(\cC_J)^{I_J}=\cC^I\cap\cC_J$.
\qedhere
\end{enumerate}
\end{proposition*}

\subsection{Compatibility with extension}
In this subsection we fix an abelian category $\cC$
and an ideal functor $I\subset\Id_\cC$.
For each pair of $X,Y\in\cC$, let us denote
$\Ext^i_\cC(X,Y)_0\coloneqq\Ext^i_{\cC_0}(X,Y)$ the Ext group taken in $\cC_0$.
It is defined as the set of equivalence classes of exact sequences
\[0\to Y\to E_i\to\dots\to E_1\to A\to 0.\]
We also define the graded Ext group $\Ext^i_\cC(X,Y)$ as
\[\Ext^i_\cC(X,Y)\coloneqq\Ext^i_{\cC_0}(X,Y)\oplus\Ext^i_{\cC_0}(X,\Pi Y)\]
for the super case $\cV=\cS$ and
\[\Ext^i_\cC(X,Y)\coloneqq\bigoplus_k\Ext^i_{\cC_0}(X,\Sigma^k Y)\]
for the graded case $\cV=\cG$.

Every exact sequences in $\cC_I$ are still exact in $\cC$,
so we have a canonical map \[\Ext^i_{\cC_I}(X,Y)\to\Ext^i_\cC(\Phi_I X,\Phi_I Y).\]
However this map is rarely an isomorphism because
when we take an exact sequence in $\cC$
whose both ends are in $\cC_I$,
the rest terms do not need to belong in $\cC_I$.
In this subsection we give some characterizations
of that $\Phi_I$ preserves $\Ext$ functors.
These results are a reformulation of those in \cite{AuslanderPlatzeckTodorov92}.

In a module category case,
the condition for $\Ext^1$ is well-known:
it is equivalent to that the ideal is idempotent.
We can easily generalize this fact as follows.
\begin{proposition}\label{prop:ext_1}
The followings are equivalent.
\begin{enumerate}
\item $\Ext^1_{\cC_I}(X,Y)\simeq\Ext^1_\cC(X,Y)$
for every $X,Y\in\cC_I$,
\item $\cC_I$ is closed under extensions,
\item $I^2=I$.
\end{enumerate}
\end{proposition}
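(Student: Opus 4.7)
The strategy is to establish $(1)\Leftrightarrow(2)$ directly from the description of $\Ext^1$ by equivalence classes of short exact sequences, and then to prove $(2)\Leftrightarrow(3)$ by a direct computation at the level of subobjects.

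For $(1)\Leftrightarrow(2)$, Proposition~\ref{prop:limit_properties_of_subcats} tells us that $\cC_I\hookrightarrow\cC$ is a full abelian subcategory closed under subobjects and quotients. Thus the canonical map $\Ext^1_{\cC_I}(X,Y)\to\Ext^1_\cC(\Phi_I X,\Phi_I Y)$ is always injective, since any equivalence between short exact sequences whose terms lie in $\cC_I$ is automatically a morphism of the full subcategory. Surjectivity says precisely that every short exact sequence $0\to Y\to E\to X\to 0$ in $\cC$ with $X,Y\in\cC_I$ is equivalent to one entirely inside $\cC_I$; by the five lemma, this is the same as requiring $E\in\cC_I$. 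Hence $(1)$ is equivalent to closedness of $\cC_I$ under extensions.

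For $(3)\Rightarrow(2)$: given a short exact sequence $0\to Y\to E\to X\to 0$ with $X,Y\in\cC_I$, the naturality of the inclusion $I\hookrightarrow\Id_\cC$ gives a commutative square with $IX=0$ in one corner, so the composite $IE\hookrightarrow E\to X$ is zero, hence $IE\subset Y$ as subobjects of $E$. Applying $I$ again (recall that $I$ preserves monomorphisms, since it preserves all images) yields $I^2 E\subset IY=0$. Under the hypothesis $I^2=I$ this forces $IE=0$, so $E\in\cC_I$.

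For $(2)\Rightarrow(3)$: for arbitrary $Z\in\cC$, we consider the short exact sequence
\[
0\to IZ/I^2 Z\to Z/I^2 Z\to Z/IZ\to 0.
\]
The rightmost term is $T_IZ$, and the leftmost is $T_I(IZ)$; both are $I$-annihilated by Lemma~\ref{lem:T_I_is_idempotent}(3). If $\cC_I$ is closed under extensions then also $I(Z/I^2Z)=0$. To conclude $IZ=I^2 Z$ we identify $I(Z/I^2 Z)$ with $IZ/I^2 Z$ as subobjects of $Z/I^2 Z$: writing $\pi\colon Z\twoheadrightarrow Z/I^2 Z$, the map $I\pi\colon IZ\to I(Z/I^2 Z)$ is epic (since $I$ preserves epimorphisms), so the composite $IZ\to I(Z/I^2 Z)\hookrightarrow Z/I^2 Z$ has image equal to $I(Z/I^2 Z)$; but this composite coincides with $IZ\hookrightarrow Z\xrightarrow{\pi} Z/I^2 Z$, whose image is visibly $IZ/I^2 Z$.

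The main delicate point is this final identification $I(Z/I^2 Z)=IZ/I^2 Z$; it rests on the image-preservation property of ideal functors combined with the factorization of $IZ\to Z/I^2 Z$ through $I(Z/I^2 Z)$. Everything else reduces to standard diagram chases and the idempotency relations for $T_I$ already collected in Lemma~\ref{lem:T_I_is_idempotent}.
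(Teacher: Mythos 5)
Your proof is correct and follows essentially the same route as the paper: $(1)\Leftrightarrow(2)$ is the standard reading of $\Ext^1$ via extension classes, $(3)\Rightarrow(2)$ is the same two-step application of $I$ to the middle term, and for $(2)\Rightarrow(3)$ you use the very same short exact sequence $0\to IZ/I^2Z\to T_{I^2}Z\to T_IZ\to 0$ that the paper obtains from its identity $T_II=IT_{I^2}$. The only differences are cosmetic: you argue directly rather than by contrapositive, and you re-derive the identification $I(Z/I^2Z)=IZ/I^2Z$ inline from the image-preservation property instead of citing the lemma $T_IJ=JT_{IJ}$.
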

\begin{proof}
(1) $\Leftrightarrow$ (2) is immediate by definition.
Now let $0\to X\to Y\to Z\to 0$ be a short exact sequence in $\cC$
and suppose $X,Z\in\cC_I$.
Then clearly $I^2Y=0$ so (3) implies (2).
Conversely assume that (3) fails so that $I^2\subsetneq I$.
Then there exists $X\in\cC$ such that
$I^2X\subsetneq IX$.
By $T_II=IT_{I^2}$ the sequence
\[0\to T_IIX\to T_{I^2}X\to T_IX\to0\]
is exact.
Both the left and the right term is in $\cC_I$ but the middle is not
since $I^2X\subsetneq IX$ implies $IT_{I^2}X=T_IIX\neq0$.
Hence (2) does not hold, so we have (2) $\Leftrightarrow$ (3).
\end{proof}

\begin{lemma}\label{lem:enough_proj}
Suppose that $\cC$ has enough projectives.
\begin{enumerate}
\item If $P\in\cC$ is projective, then so is $\Phi_I^\wedge P\in\cC_I$.
\item $\cC_I$ also has enough projectives.
\item Each projective object in $\cC_I$ is a direct summand
of $\Phi_I^\wedge P$ for some projective object $P\in\cC$.
\end{enumerate}
\end{lemma}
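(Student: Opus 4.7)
The plan is to reduce all three statements to the adjunction $\Phi_I^\wedge\dashv\Phi_I$ together with the exactness of $\Phi_I$, both of which are available from the previous subsection.

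As a preparatory step I would first record that the inclusion $\Phi_I\colon\cC_I\hookrightarrow\cC$ is exact. By Proposition~\ref{prop:limit_properties_of_subcats}(3), $\cC_I$ is closed under subobjects and quotients in $\cC$, so kernels and cokernels computed in $\cC$ of maps between $I$-annihilated objects automatically lie in $\cC_I$ and therefore coincide with kernels and cokernels computed in $\cC_I$; hence short exact sequences in $\cC_I$ are exactly the short exact sequences of $\cC$ with all three terms in $\cC_I$.

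For (1), the adjunction isomorphism
\[\Hom_{\cC_I}(\Phi_I^\wedge P,Y)\simeq\Hom_\cC(P,\Phi_I Y)\]
expresses $\Hom_{\cC_I}(\Phi_I^\wedge P,{\bullet})$ as the composition of the exact functor $\Phi_I$ with $\Hom_\cC(P,{\bullet})$, the latter being exact because $P$ is projective; hence $\Phi_I^\wedge P$ is projective in $\cC_I$. For (2), take any $X\in\cC_I$, choose a projective $P\in\cC$ together with an epimorphism $P\twoheadrightarrow\Phi_I X$ in $\cC$, and apply $\Phi_I^\wedge$: this is right exact as a left adjoint, and $\Phi_I^\wedge\Phi_I\simeq\Id_{\cC_I}$, so we obtain an epimorphism $\Phi_I^\wedge P\twoheadrightarrow X$ whose source is projective by (1). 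For (3), given a projective $Q\in\cC_I$, the construction in (2) yields an epimorphism $\Phi_I^\wedge P\twoheadrightarrow Q$ with $P\in\cC$ projective, and projectivity of $Q$ lets this epimorphism split, realizing $Q$ as a direct summand of $\Phi_I^\wedge P$.

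The only step that requires any real care is the preparatory one, since exactness of a full abelian subcategory inclusion is not automatic in general; once that is in hand, the rest is a mechanical use of adjointness. No new structure on the ideal functor $I$ is needed beyond what was already established.
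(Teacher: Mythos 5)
Your proof is correct and follows essentially the same route as the paper: (1) from the exactness of the right adjoint $\Phi_I$ via the adjunction isomorphism, and (2), (3) from lifting an epimorphism $P\twoheadrightarrow\Phi_I X$ through $\Phi_I^\wedge$ and splitting it. Your preparatory remark on why $\Phi_I$ is exact is a detail the paper leaves implicit (it is covered by Proposition~\ref{prop:limit_properties_of_subcats}, which makes $\cC_I$ an abelian subcategory closed under subobjects and quotients), and including it does no harm.
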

\begin{proof}
(1) follows from that its right adjoint $\Phi_I$ is exact.
For any $X\in\cC_I$, there is a projective object $P\in\cC$ and
an epimorphism $P\twoheadrightarrow\Phi_I X$ which induces
$\Phi_I^\wedge P\twoheadrightarrow X$ so (2) and (3) follow from (1).
\end{proof}

In the rest of this subsection, we require that
$\cC$ has enough projectives and injectives
in order to use its homological properties.
Then we have that $\Ext_\cC^i(X,{\bullet})$ and $\Ext_\cC^i({\bullet},Y)$ are
the $i$-th left derived functors of $\Hom_\cC(X,{\bullet})$
and $\Hom_\cC({\bullet},Y)$ respectively.

\begin{lemma}\label{lem:ext_preserving_X}
Let $2\le k\le\infty$.
For $X\in\cC$, the followings are equivalent.
\begin{enumerate}
\item $\Ext^i_{\cC_I}(\Phi_I^\wedge X,Y)\simeq\Ext^i_\cC(X,\Phi_I Y)$
for any $Y\in\cC_I$ and $0\le i<k$,
\item $\Ext^i_\cC(X,\Phi_I Q)=0$ for any injective $Q\in\cC_I$ and $1\le i<k$,
\item $(L_i\Phi_I^\wedge)X=0$ for any $1\le i<k$,
\item $(L_0I)X\simeq IX$ and $(L_iI)X=0$ for any $1\le i<k-1$.
\end{enumerate}
\end{lemma}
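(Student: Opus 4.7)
My plan is to close the loop $(4) \Leftrightarrow (3) \Rightarrow (1) \Rightarrow (2) \Rightarrow (3)$. For $(3) \Leftrightarrow (4)$, I would apply the short exact sequence of functors $0 \to I \to \Id_\cC \to T_I \to 0$ to a projective resolution $P_\bullet \twoheadrightarrow X$, obtaining a short exact sequence of complexes $0 \to IP_\bullet \to P_\bullet \to T_IP_\bullet \to 0$. Its long exact sequence in homology gives $L_{j+1}T_IX \simeq L_jIX$ for $j \ge 1$ together with the four-term sequence
\[0 \to L_1T_IX \to L_0IX \to X \to T_IX \to 0.\]
Hence $L_0IX \simeq IX$ iff $L_1T_IX = 0$, and reindexing identifies the vanishing $L_jT_IX = 0$ for $1 \le j < k$ with the conditions in $(4)$.

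For $(3) \Rightarrow (1)$, Lemma~\ref{lem:enough_proj} ensures that $\Phi_I^\wedge$ sends projectives of $\cC$ to projectives of $\cC_I$. Thus $\Phi_I^\wedge P_\bullet$ is a complex of projectives in $\cC_I$ with $H_0 = \Phi_I^\wedge X$ and $H_j = L_j\Phi_I^\wedge X = 0$ for $1 \le j < k$ by $(3)$. By a standard partial-resolution argument, such a complex computes $\Ext^p_{\cC_I}(\Phi_I^\wedge X, Y) = H^p(\Hom_{\cC_I}(\Phi_I^\wedge P_\bullet, Y))$ in the range $p < k$, since for $p \le k-1$ the kernel and image appearing in the cochain cohomology only involve the degrees $\le k-1$ of the resolution together with the image $\Image(\Phi_I^\wedge P_k\to\Phi_I^\wedge P_{k-1}) = \Ker(\Phi_I^\wedge P_{k-1}\to\Phi_I^\wedge P_{k-2})$ forced by $L_{k-1}\Phi_I^\wedge X = 0$. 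The chain-level adjunction $\Hom_\cC(P_p, \Phi_IY) \simeq \Hom_{\cC_I}(\Phi_I^\wedge P_p, Y)$ then yields the comparison isomorphism $\Ext^p_\cC(X, \Phi_IY) \simeq \Ext^p_{\cC_I}(\Phi_I^\wedge X, Y)$ for $p < k$.

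The implication $(1) \Rightarrow (2)$ is immediate since higher Ext into an injective vanishes. For $(2) \Rightarrow (3)$, the key tool is the Grothendieck spectral sequence
\[E_2^{p,q} = \Ext^p_{\cC_I}(L_q\Phi_I^\wedge X, Y) \Longrightarrow \Ext^{p+q}_\cC(X, \Phi_IY),\]
available because $\Phi_I^\wedge$ sends projectives to projectives and $\Hom_\cC(-,\Phi_IY) \simeq \Hom_{\cC_I}(\Phi_I^\wedge-, Y)$. Specializing to an injective $Y = Q$ of $\cC_I$ kills every column with $p \ge 1$, so the sequence collapses to $\Ext^n_\cC(X, \Phi_IQ) \simeq \Hom_{\cC_I}(L_n\Phi_I^\wedge X, Q)$. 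Since $\cC_I$ has enough injectives—$\Phi_I^\vee$ is right adjoint to the exact embedding $\Phi_I$ and the unit $\Id_{\cC_I} \simeq \Phi_I^\vee\Phi_I$ is invertible by full faithfulness—hypothesis $(2)$ forces $\Hom_{\cC_I}(L_i\Phi_I^\wedge X, Q) = 0$ for every injective $Q$, whence $L_i\Phi_I^\wedge X = 0$ in the range $1 \le i < k$.

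The main technical delicacy is to justify the Grothendieck spectral sequence in the $\cV$-enriched setting, but the standard Cartan--Eilenberg construction via a double complex transfers without modification: projective and injective resolutions, Hom complexes, and their filtrations are all $\cV$-enriched notions whose degree-zero incarnations coincide with the usual ones, and compatibility with $\Pi$ or $\Sigma$ renders the resulting isomorphisms automatically $\cV$-natural.
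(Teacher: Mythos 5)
Your proof is correct, and three of the four arrows coincide with the paper's argument: $(1)\Rightarrow(2)$ is trivial in both; your $(3)\Rightarrow(1)$ is the paper's argument (if anything you are more careful about why a resolution that is only exact in degrees $<k$ still computes $\Ext^p$ for $p<k$); and your $(3)\Leftrightarrow(4)$ is the paper's four-term sequence $0\to L_1T_I\to L_0I\to\Id_\cC\to T_I\to0$ together with $L_iT_I\simeq L_{i-1}I$ — just note explicitly that condition (3) concerns $L_i\Phi_I^\wedge X\in\cC_I$ while your computation produces $L_iT_IX\in\cC$; the bridge is $\Phi_I(L_i\Phi_I^\wedge)=L_iT_I$ (exactness of $\Phi_I$) plus faithfulness of the embedding, which the paper states in one line. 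The genuine divergence is $(2)\Rightarrow(3)$: you invoke the hyperderived/Grothendieck spectral sequence $\Ext^p_{\cC_I}(L_q\Phi_I^\wedge X,Y)\Rightarrow\Ext^{p+q}_\cC(X,\Phi_IY)$ and collapse it at injectives, whereas the paper argues elementarily that (2) makes the complex $\Hom_{\cC_I}(\Phi_I^\wedge P_\bullet,Q)$ exact for every injective $Q$, and then uses the standard fact that, with enough injectives, exactness after applying $\Hom(-,Q)$ for all injective $Q$ detects exactness of $\Phi_I^\wedge P_\bullet$ itself. Both are valid; your route buys a cleaner conceptual statement (the collapsed edge isomorphism $\Ext^n_\cC(X,\Phi_IQ)\simeq\Hom_{\cC_I}(L_n\Phi_I^\wedge X,Q)$, which immediately gives $(2)\Leftrightarrow(3)$ in one stroke), at the cost of having to justify the Cartan--Eilenberg machinery in the $\cV$-enriched setting — a nontrivial overhead that the paper's injective-test argument avoids entirely.
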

\begin{proof}
Let $P_k\to\dots\to P_1\to P_0\to X\to0$ be a projective resolution of $X$.
First (1) $\Rightarrow$ (2) is trivial.
Assume (2) then it implies
\[
0\to\Hom_{\cC_I}(\Phi_I^\wedge X,Q)\to
\Hom_{\cC_I}(\Phi_I^\wedge P_0,Q)\to\dots\to\Hom_{\cC_I}(\Phi_I^\wedge P_k,Q)\]
is exact for any injective $Q\in\cC_I$.
Since $\cC_I$ has enough injectives by the dual of
Lemma~\ref{lem:enough_proj}, the sequence
\[\Phi_I^\wedge P_k\to\dots\to\Phi_I^\wedge P_1\to\Phi_I^\wedge P_0\to\Phi_I^\wedge X\to0\]
must be exact so (3) holds.
Conversely, if (3) is satisfied, the sequence above is
a projective resolution of $\Phi_I^\wedge X$. Thus
\[\Ext^i_{\cC_I}(\Phi_I^\wedge X,Y)
\simeq H_i(\Hom_{\cC_I}(\Phi_I^\wedge P_i,Y))
\simeq H_i(\Hom_\cC(P_i,\Phi_I Y))
\simeq\Ext^i_\cC(X,\Phi_I Y)\]
so (1) holds.
Finally we have $\Phi_I(L_i\Phi_I^\wedge)=L_iT_I$ since $\Phi_I$ is exact.
Hence (3) and (4) are equivalent by that the sequence
\[0\to L_1T_I\to L_0I\to\Id_\cC\to T_I\to0\]
is exact and that $L_iT_I\simeq L_{i-1}I$.
\end{proof}

One can easily check that if every $X\in\cC$ satisfies the above conditions for $k=2$,
then it is also true for $k=\infty$. In this situation,
we can rewrite the conditions as follows.
\begin{corollary*}
The followings are equivalent.
\begin{enumerate}
\item $\Ext^i_{\cC_I}(\Phi_I^\wedge X,Y)\simeq\Ext^i_\cC(X,\Phi_I Y)$
for any $X\in\cC$, $Y\in\cC_I$ and $i\ge0$,
\item $\Phi_I$ sends injectives to injectives,
\item $\Phi_I^\wedge$ is exact,
\item $I$ is an adjunction.
\qedhere
\end{enumerate}
\end{corollary*}

However this condition is too strong for our purpose
because we only need objects of
the form $\Phi_IX\in\cC$ for $X\in\cC_I$.
Now we state a criteria of $\Ext$ preserving property for ideal functors.

\begin{proposition}\label{prop:ext_preserving}
Let $2\le k\le\infty$. Then the followings are equivalent.
\begin{enumerate}
\item $\Ext^i_{\cC_I}(X,Y)\simeq\Ext^i_\cC(\Phi_I X,\Phi_I Y)$ for any $X,Y\in\cC_I$
and $0\le i<k$,
\item $\Ext^i_\cC(\Phi_I P,\Phi_I Q)=0$ for any projective $P\in\cC_I$,
injective $Q\in\cC_I$ and $1\le i<k$,
\item $(L_i\Phi_I^\wedge)\Phi_I=0$ for any $1\le i<k$,
\item $(L_i\Phi_I^\wedge)\Phi_I P=0$ for any projective $P\in\cC_I$ and $1\le i<k$.
\item $(L_iI)\Phi_I=0$ for any $0\le i<k-1$,
\item $(L_iI)\Phi_I P=0$ for any projective $P\in\cC_I$ and $0\le i<k-1$.
\end{enumerate}
When $k\ge3$, it is also equivalent to that:
\begin{enumerate}\setcounter{enumi}{6}
\item $(L_0I)^2\simeq L_0I$, and $(L_iI)IP=0$ for any projective $P\in\cC$
and $1\le i<k-2$.
\end{enumerate}
Since the conditions (1) and (2) are self-dual, we can replace
the rest conditions by their dual statements.
\end{proposition}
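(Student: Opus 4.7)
The plan is to leverage Lemma~\ref{lem:ext_preserving_X}, which characterizes the $\Ext$-preserving property for a single object $X\in\cC$, by specializing to $X = \Phi_I P$ with $P\in\cC_I$ projective. Two simplifications occur in this special case. First, $\Phi_I P$ is $I$-annihilated, so $I\Phi_I P = 0$, and the clause ``$(L_0 I)X\simeq IX$'' in part~(4) of that lemma degenerates to $(L_0 I)\Phi_I P = 0$. Second, $\Phi_I^\wedge\Phi_I P \simeq P$ is projective in $\cC_I$ by Lemma~\ref{lem:enough_proj}, so the $i=0$ clause of part~(1) is automatic (by full faithfulness of $\Phi_I$) and the remaining clauses reduce to $\Ext^i_\cC(\Phi_I P,\Phi_I Y) = 0$ for $1\le i<k$ and all $Y\in\cC_I$. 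Taking these statements uniformly over projective $P\in\cC_I$ yields the chain of equivalences $(2)\Leftrightarrow(4)\Leftrightarrow(6)$ directly from Lemma~\ref{lem:ext_preserving_X}.

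Next I would lift (4) to (3) and (6) to (5) by dimension shifting. Given $Y\in\cC_I$, choose a short exact sequence $0\to K\to P\to Y\to 0$ in $\cC_I$ with $P$ projective; since $\Phi_I$ is exact, this yields a short exact sequence in $\cC$, and the long exact sequence for $L_\bullet\Phi_I^\wedge$ (resp.\ $L_\bullet I$) together with $L_0\Phi_I^\wedge\Phi_I = \Id_{\cC_I}$ lets one reduce $L_n$ on $\Phi_I Y$ to $L_{n-1}$ on $\Phi_I K$, inductively. The equivalence $(3)\Leftrightarrow(5)$ is immediate from identities used in the proof of Lemma~\ref{lem:ext_preserving_X}: since $\Phi_I$ is exact and faithful, $\Phi_I(L_i\Phi_I^\wedge) \simeq L_i T_I$, and the short exact sequence $0\to I\to\Id_\cC\to T_I\to 0$ gives $L_i T_I \simeq L_{i-1} I$ for $i\ge 1$, so the ranges $1\le i<k$ and $0\le i<k-1$ match up.

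For $(1)\Leftrightarrow(2)$ I would run a Cartan--Eilenberg-style balancing argument. The direction $(1)\Rightarrow(2)$ is by specialization. For the converse, take $X,Y\in\cC_I$ with a projective resolution $P_\bullet\to X$ and an injective resolution $Y\to Q^\bullet$ in $\cC_I$; each $P_n$ is a direct summand of some $\Phi_I^\wedge R_n$ with $R_n\in\cC$ projective, so assumption~(2) forces $\Ext^i_\cC(\Phi_I P_n,\Phi_I Q^m) = 0$ for $1\le i<k$. A standard double-complex argument then shows that $\Hom_{\cC_I}(P_\bullet,Q^\bullet) \simeq \Hom_\cC(\Phi_I P_\bullet,\Phi_I Q^\bullet)$ computes both $\Ext$ groups in the stated range. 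I expect this balancing step, with the accompanying bookkeeping of which resolutions remain adequate after $\Phi_I$ is applied, to be the main technical obstacle.

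Finally, for $k\ge 3$, I would deduce the equivalence with (7) from the long exact sequence of $L_\bullet I$ attached to $0\to IP\to P\to T_I P\to 0$ for $P\in\cC$ projective. Since $L_i I(P) = 0$ for $i\ge 1$, this yields
\[L_0 I(T_I P) \simeq \Coker(L_0 I(IP)\to IP),\quad L_1 I(T_I P)\simeq\Ker(L_0 I(IP)\to IP),\quad L_{i+1} I(T_I P)\simeq L_i I(IP)\ (i\ge 1).\]
Because every projective in $\cC_I$ is a direct summand of some $\Phi_I^\wedge P$ and $\Phi_I\Phi_I^\wedge P \simeq T_I P$, condition~(6) translates exactly to $L_0 I(IP)\simeq IP$ together with $L_i I(IP) = 0$ for $1\le i<k-2$. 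The first of these, combined with $L_0 I(P) = IP$ on projectives and the right-exactness of $L_0 I$, is equivalent to $(L_0 I)^2 \simeq L_0 I$, giving~(7). The self-duality remark follows by passing to $\cC^\op$, which exchanges projectives with injectives and $L_\bullet$ with $R^\bullet$, leaves~(1) and~(2) invariant, and converts the derived-functor conditions into their dual forms.
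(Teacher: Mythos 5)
Your proof is correct and follows essentially the same skeleton as the paper's: specialize Lemma~\ref{lem:ext_preserving_X} to objects of the form $\Phi_I P$ to get $(2)\Leftrightarrow(4)\Leftrightarrow(6)$, recover the unrestricted versions by dimension shifting along $0\to K\to P\to Y\to0$, and handle (7) via the long exact sequence of $L_\bullet I$ applied to $0\to IP\to P\to T_IP\to0$ together with the reduction of projectives in $\cC_I$ to summands of $\Phi_I^\wedge P$. The one place you diverge is $(1)\Leftrightarrow(2)$: the Cartan--Eilenberg balancing argument you flag as the main technical obstacle is an unnecessary detour. Lemma~\ref{lem:ext_preserving_X} is stated for an arbitrary $X\in\cC$, so applying it to $X=\Phi_I X'$ for \emph{arbitrary} $X'\in\cC_I$ (using $\Phi_I^\wedge\Phi_I\simeq\Id_{\cC_I}$ and $I\Phi_I X'=0$, exactly as you do for projectives) already yields $(1)\Leftrightarrow(3)\Leftrightarrow(5)$ directly; combined with your dimension-shifting step $(3)\Leftrightarrow(4)$ this closes the cycle with no double-complex bookkeeping. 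This is what the paper does. Your balancing argument does go through --- one first kills $\Ext^j_\cC(\Phi_I P,\Phi_I Y)$ for all $Y\in\cC_I$ and $1\le j<k$ by shifting along an injective resolution of $Y$, then uses $\Phi_I P_\bullet\to\Phi_I X$ as an acyclic resolution --- but it buys nothing over the direct route. The treatment of (7) and the self-duality remark match the paper exactly.
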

\begin{proof}
(1) $\Leftrightarrow$ (3) $\Leftrightarrow$ (5) and
(2) $\Leftrightarrow$ (4) $\Leftrightarrow$ (6) follow from the previous lemma.
(3) $\Rightarrow$ (4) is obvious.
Now suppose (4). Let $X\in\cC$ and take an exact sequence
$0\to K\to P\to X\to0$ with $P$ projective.
By applying $(L_i\Phi_I^\wedge)\Phi_I$ it yields the long exact sequence
\[
\dotsb\to(L_1\Phi_I^\wedge)\Phi_I K\to
(L_1\Phi_I^\wedge)\Phi_I P\to
(L_1\Phi_I^\wedge)\Phi_I X\to
K\to P\to X\to0.
\]
Then the assumption implies $(L_1\Phi_I^\wedge)\Phi_I X=0$.
Moreover we have $(L_i\Phi_I^\wedge)\Phi_I X\simeq(L_{i-1}\Phi_I^\wedge)\Phi_I K$
for each $2\le i<k$ so by induction all of them must be zero.
Hence (4) implies (3).

Now suppose $k\ge3$ and we prove that (6) is also equivalent to (7).
First by Lemma~\ref{lem:enough_proj}, (6) can be replaced by
\begin{enumerate}
\item[(6')] $(L_iI)T_I P=0$ for any projective $P\in\cC$ and $0\le i<k-1$.
\end{enumerate}
Applying $L_i I$'s to the exact sequence $0\to IP\to P\to T_I P\to0$,
we obtain that it is equivalent to that $(L_0I)IP\simeq IP$ and
$(L_iI)IP=0$ for $1\le i<k-2$.
Moreover, the first condition is equivalent to that $(L_0I)^2\simeq L_0I$ since
a right exact functor is determined by the values on projectives
which generate whole $\cC$.
\end{proof}

Hence it also yields a well-known statement as a special case:
for an algebra $A$ and its 2-sided ideal $I\subset A$,
$\Ext^2_{A/I}=\Ext^2_A$ if and only if
the multiplication $I\otimes_A I\to I$ is an isomorphism.

\begin{example}
Suppose $I^2=I$ and $I$ sends projectives to projectives.
Then for $k=\infty$ the condition (7) above is easily verified.
\end{example}

\subsection{Ideal filters}
In this subsection we consider a family of ideal functors
indexed by a partially ordered set $(\Lambda,\le)$.
Such situation occurs mainly in the study of cellular algebras
or quasi-hereditary algebras.
In this subsection we fix an abelian category $\cC$
which is closed under sums and intersections of subobjects with cardinality $\#\Lambda$.

\begin{definition}\label{def:ideal_filter}
An \term{ideal filter} on $\cC$ indexed by $(\Lambda,\le)$
is a family of ideal functors $\{I^{\le\lambda}\}_{\lambda\in\Lambda}$
which satisfies these three conditions:
\begin{align*}
I^{\le\lambda}&\subset I^{\le\mu}\quad\text{if $\lambda\le\mu$},&
\Id_\cC&=\sum_{\lambda}I^{\le\lambda},&
I^{\le\lambda}I^{\le\mu}&\subset\sum_{\nu\le\lambda,\mu}I^{\le\nu}.
\end{align*}
\end{definition}

From now on $\{I^{\le\lambda}\}$ denotes an ideal filter on $\cC$
indexed by $(\Lambda,\le)$.

\begin{notation}
For each $\lambda\in\Lambda$,
we define \[I^{<\lambda}\coloneqq\sum_{\mu<\lambda}I^{\le\mu}.\]
When an ideal filter on $\cC$ is fixed, we write
$\cC^{\le\lambda}\coloneqq\cC^{I^{\le\lambda}}$ and
$\cC_{<\lambda}\coloneqq\cC_{I^{<\lambda}}$ for short.
We also denote $\cC[\lambda]\coloneqq\cC^{\le\lambda}\cap\cC_{<\lambda}$
and $\Irr\cC[\lambda]\coloneqq\set{V\in\Irr\cC}{V\in\cC[\lambda]}$.
\end{notation}

The purpose of introducing an ideal filter
is to divide the category into a small subcategories as follows.

\begin{lemma}
If $\lambda\neq\mu$ we have $\cC[\lambda]\cap\cC[\mu]=\{0\}$,
so that $\Irr\cC[\lambda]\cap\Irr\cC[\mu]=\varnothing$.
\end{lemma}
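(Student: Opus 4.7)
The plan is to unpack the definitions and then argue by cases on the order relation between $\lambda$ and $\mu$. Suppose $X\in\cC[\lambda]\cap\cC[\mu]$. By definition this means
\[I^{\le\lambda}X=X,\quad I^{\le\mu}X=X,\quad I^{<\lambda}X=0,\quad I^{<\mu}X=0,\]
using Lemma~\ref{lem:annihilated} to identify accessibility and torsion-freeness with the equalities $IX=X$ and annihilation with $IX=0$. We want to deduce $X=0$.

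First I would handle the comparable case. If $\lambda<\mu$ (the opposite is symmetric), then by the monotonicity axiom of Definition~\ref{def:ideal_filter} we have $I^{\le\lambda}\subset I^{<\mu}$, so
\[X=I^{\le\lambda}X\subset I^{<\mu}X=0,\]
and we are done. The only remaining and slightly more delicate case is when $\lambda$ and $\mu$ are incomparable; this is where the third axiom $I^{\le\lambda}I^{\le\mu}\subset\sum_{\nu\le\lambda,\mu}I^{\le\nu}$ is needed.

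In the incomparable case, combining $X=I^{\le\lambda}X$ and $X=I^{\le\mu}X$ gives
\[X=I^{\le\lambda}I^{\le\mu}X\subset\sum_{\nu\le\lambda,\mu}I^{\le\nu}X,\]
using that an ideal functor commutes with summation. For any $\nu$ with $\nu\le\lambda$ and $\nu\le\mu$, incomparability of $\lambda$ and $\mu$ forces $\nu\ne\lambda$, hence $\nu<\lambda$. Thus $I^{\le\nu}\subset I^{<\lambda}$, and so the whole sum is contained in $I^{<\lambda}$. Consequently $X\subset I^{<\lambda}X=0$.

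Since $\cC[\lambda]\cap\cC[\mu]=\{0\}$ and simple objects are by definition nonzero, no simple object lies in both $\cC[\lambda]$ and $\cC[\mu]$, which yields $\Irr\cC[\lambda]\cap\Irr\cC[\mu]=\varnothing$. I expect no real obstacle; the only subtle point is the incomparable case, but it reduces cleanly to the product axiom together with the elementary observation that any common lower bound of distinct incomparable elements must be strictly smaller than both.
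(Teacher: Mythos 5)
Your proof is correct and follows essentially the same route as the paper: the comparable case is dispatched by monotonicity ($I^{\le\lambda}\subset I^{<\mu}$), and the remaining case by the product axiom together with the observation that any common lower bound $\nu\le\lambda,\mu$ must satisfy $\nu<\lambda$. The only cosmetic difference is that the paper splits the cases as $\lambda<\mu$ versus $\lambda\not\le\mu$ rather than comparable versus incomparable, which makes no substantive difference.
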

\begin{proof}
Suppose that $X\in\cC[\lambda]\cap\cC[\mu]$.
If $\lambda<\mu$, we have
\[X=I^{\le\lambda}X\subset I^{<\mu}X=0.\]
Otherwise $\lambda\not\le\mu$.
Then \[X=I^{\le\lambda}I^{\le\mu}X\subset\sum_{\nu\le\lambda,\mu}I^{\le\nu}X
\subset I^{<\lambda}X=0.\]
In either case, we have that $X=0$.
\end{proof}

A partially ordered set $(\Lambda,\le)$ is said to be
\term{well-founded} if its every non-empty subset
has a minimal element.
Under the axiom of choice, this property is equivalent
to the conditions below:
\begin{enumerate}
\item there are no infinite descending
chains $\lambda_1>\lambda_2>\dotsb$,
\item there is a well-ordering extension of $\le$.
\end{enumerate}

\begin{proposition}\label{prop:dividing_simples}
Suppose that $\Lambda$ is well-founded. Then
\[\Irr\cC=\bigsqcup_{\lambda\in\Lambda}\Irr\cC[\lambda].\]
\end{proposition}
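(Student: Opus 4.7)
The plan is to verify that every simple object lies in exactly one $\cC[\lambda]$. Disjointness of the union is already the content of the preceding lemma, so the work reduces to showing $\Irr\cC \subset \bigcup_\lambda \Irr\cC[\lambda]$. Given a simple object $V \in \Irr\cC$, I will locate the unique $\lambda$ by selecting a minimal index in a certain subset of $\Lambda$, for which well-foundedness is precisely the tool needed.

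First I would introduce the set $S \coloneqq \set{\lambda \in \Lambda}{I^{\le\lambda}V = V}$. It is non-empty: since sums of ideal functors are computed pointwise as subobjects, the relation $\Id_\cC = \sum_\lambda I^{\le\lambda}$ gives $V = \sum_\lambda I^{\le\lambda}V$, and simplicity of $V$ forces each subobject $I^{\le\lambda}V \subset V$ to be either $0$ or $V$, with at least one equal to $V$. Well-foundedness then yields a minimal element $\lambda_0 \in S$.

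Next I would verify $V \in \cC[\lambda_0] = \cC^{\le\lambda_0} \cap \cC_{<\lambda_0}$. The condition $I^{\le\lambda_0}V = V$ says $V$ is $I^{\le\lambda_0}$-accessible, and by the lemma on simple objects, a simple accessible object is automatically torsion-free, so $V \in \cC^{\le\lambda_0}$. For the other half, I must show $I^{<\lambda_0}V = 0$. Again using that sums of ideal functors act pointwise, $I^{<\lambda_0}V = \sum_{\mu<\lambda_0} I^{\le\mu}V$. If this equalled $V$, then by simplicity some individual $I^{\le\mu}V$ with $\mu < \lambda_0$ would already equal $V$, placing that $\mu$ in $S$ and contradicting minimality of $\lambda_0$. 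Hence $I^{<\lambda_0}V = 0$, i.e.\ $V$ is $I^{<\lambda_0}$-annihilated, so $V \in \cC_{<\lambda_0}$.

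The substantive ingredients are therefore the trichotomy for simple objects under an ideal functor (already established), the fact that sums of ideal functors are computed pointwise on objects (immediate from the definition of sum as subfunctors of $\Id_\cC$), and well-foundedness of $\Lambda$. The only delicate point is ensuring that the pointwise identity $I^{<\lambda_0}V = \sum_{\mu<\lambda_0} I^{\le\mu}V$ is legitimate even when $\Lambda$ is infinite; this is exactly why $\cC$ was assumed closed under sums of subobjects of cardinality $\#\Lambda$, so no obstruction arises. Thus the whole argument is a short extraction of a minimal index and two applications of the simple-object dichotomy, with well-foundedness serving as the unique non-formal input.
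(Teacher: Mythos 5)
Your proof is correct and follows essentially the same route as the paper: form the set of indices $\lambda$ with $I^{\le\lambda}V\neq 0$ (equivalently $I^{\le\lambda}V=V$ by simplicity), extract a minimal element by well-foundedness, and conclude membership in $\cC[\lambda_0]$ via the simple-object trichotomy and the pointwise computation of sums of ideal functors. You merely spell out the final step ``so that $X\in\cC[\lambda]$'' in more detail than the paper does.
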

\begin{proof}
Suppose that $X\in\cC$ is simple. Since we have
$0\neq X=\sum_\lambda I^{\le\lambda}X$,
the set $\set{\lambda\in\Lambda}{I^{\le\lambda}X\neq0}$ is non-empty.
By the assumption it has a minimal element $\lambda\in\Lambda$,
so that $X\in\cC[\lambda]$.
\end{proof}

In practice we can choose a partially ordered set from various choices
to obtain a same result.
First observe that we can remove redundant indices from $\Lambda$.

\begin{lemma}
Suppose that there is a subset $\Lambda_0\subset\Lambda$
such that $\Id_\cC=\sum_{\lambda\in\Lambda_0}I^{\le\lambda}$.
Let $\Lambda'\coloneqq\set{\mu\in\Lambda}{\exists\lambda\in\Lambda_0\text{ s.t.\ }
\mu\le\lambda}$ be the order ideal generated by $\Lambda_0$.
Then
\begin{enumerate}
\item $\{I^{\le\lambda}\}_{\lambda\in\Lambda'}$ is also an ideal filter
indexed by $(\Lambda',\le)$,
\item $I^{<\lambda}=I^{\le\lambda}$ unless $\lambda\in\Lambda'$, so that $\cC[\lambda]=\{0\}$.
\end{enumerate}
\end{lemma}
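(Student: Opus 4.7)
The plan is to verify the three defining properties of an ideal filter for part (1) directly by restriction, and then to derive (2) from a computation that writes $I^{\le\lambda}$ as a sum of pieces indexed by $\Lambda'$.

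For (1), monotonicity $I^{\le\lambda}\subset I^{\le\mu}$ for $\lambda\le\mu$ in $\Lambda'$ is immediate because the restriction changes only the index set. The sum condition is equally easy: $\Lambda_0\subset\Lambda'\subset\Lambda$ gives the chain of inclusions $\Id_\cC=\sum_{\lambda\in\Lambda_0}I^{\le\lambda}\subset\sum_{\lambda\in\Lambda'}I^{\le\lambda}\subset\Id_\cC$. For the product axiom, given $\lambda,\mu\in\Lambda'$ the original filter already gives $I^{\le\lambda}I^{\le\mu}\subset\sum_{\nu\le\lambda,\mu}I^{\le\nu}$ with $\nu$ ranging over all of $\Lambda$; but any such $\nu$ satisfies $\nu\le\lambda\le\lambda_0$ for some $\lambda_0\in\Lambda_0$ (using $\lambda\in\Lambda'$), so $\nu\in\Lambda'$ and the sum is the same.

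For (2), the key computation is the following. Fix $\lambda\notin\Lambda'$ and $X\in\cC$. Using $\Id_\cC=\sum_{\mu\in\Lambda_0}I^{\le\mu}$ we have $X=\sum_{\mu\in\Lambda_0}I^{\le\mu}X$. Since an ideal functor commutes with summation (as proved earlier),
\[I^{\le\lambda}X=\sum_{\mu\in\Lambda_0}I^{\le\lambda}I^{\le\mu}X\subset\sum_{\mu\in\Lambda_0}\sum_{\nu\le\lambda,\mu}I^{\le\nu}X\]
by the product axiom of the filter. Each index $\nu$ appearing here satisfies $\nu\le\mu\in\Lambda_0$ and so $\nu\in\Lambda'$; but $\lambda\notin\Lambda'$, forcing $\nu\neq\lambda$, and combined with $\nu\le\lambda$ this gives $\nu<\lambda$. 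Hence $I^{\le\lambda}X\subset I^{<\lambda}X$, which combined with the trivial reverse inclusion yields $I^{\le\lambda}=I^{<\lambda}$. Consequently every object $X\in\cC[\lambda]=\cC^{\le\lambda}\cap\cC_{<\lambda}$ satisfies $X=I^{\le\lambda}X=I^{<\lambda}X=0$, so $\cC[\lambda]=\{0\}$.

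There is no real obstacle in this argument; the only subtlety to be careful about is that the product condition in Definition~\ref{def:ideal_filter} is phrased as a containment of endofunctors, so to apply it value-wise at $X$ we rely on the earlier fact that ideal functors commute with summation, and on the fact that $\Lambda'$ is closed downward (used to conclude $\nu\in\Lambda'$). Both ingredients are in place, so the proof should read as a direct verification.
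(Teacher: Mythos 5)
Your proof is correct and follows essentially the same route as the paper: part (1) by direct restriction (using that $\Lambda'$ is downward closed), and part (2) by expanding $I^{\le\lambda}$ against $\sum_{\mu\in\Lambda_0}I^{\le\mu}$ and applying the product axiom to force every resulting index $\nu$ into $\Lambda'$, hence $\nu<\lambda$. The only cosmetic difference is that you compose $I^{\le\lambda}$ on the left (invoking commutation with summation) while the paper applies $\sum_\mu I^{\le\mu}=\Id_\cC$ to the object $I^{\le\lambda}X$; both are valid since the product axiom is symmetric in its conclusion.
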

\begin{proof}
(1) is obvious.
Suppose $\lambda\notin\Lambda'$. Then
\[I^{\le\lambda}\subset\sum_{\mu\in\Lambda'}I^{\le\mu}I^{\le\lambda}\subset
\sum_{\mu\in\Lambda'}\sum_{\nu\le\lambda,\mu}I^{\le\nu}\subset
I^{<\lambda}\] so (2) holds.
\end{proof}

Next we consider how these subcategories will be affected
when we strengthen the order on $\Lambda$.

\begin{lemma}
Let $\unlhd$ be an extension of $\le$,
that is, another partial ordering on $\Lambda$
such that $\lambda\le\mu$ implies $\lambda\unlhd\mu$.
For each $\lambda\in\Lambda$, define
\[I^{\unlhd\lambda}\coloneqq\sum_{\mu\unlhd\lambda}I^{\le\mu}.\]
Then 
\begin{enumerate}
\item $\{I^{\unlhd\lambda}\}$ is also an ideal filter
indexed by $(\Lambda,\unlhd)$,
\item $\cC^{\le\lambda}\cap\cC_{<\lambda}=\cC^{\unlhd\lambda}\cap\cC_{\lhd\lambda}$,
\item there is a surjective morphism
$I^{\le\lambda}/I^{<\lambda}\twoheadrightarrow I^{\unlhd\lambda}/I^{\lhd\lambda}$.
\end{enumerate}
\end{lemma}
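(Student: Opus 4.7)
My plan is to verify each of the three parts by unwinding definitions and using the defining properties of an ideal filter (monotonicity, totality, and the product condition) together with the lemmas and propositions on ideal operations already established in this section.

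For (1), I would check the three axioms of Definition~\ref{def:ideal_filter} for $\{I^{\unlhd\lambda}\}$. Monotonicity $I^{\unlhd\lambda}\subset I^{\unlhd\mu}$ when $\lambda\unlhd\mu$ is immediate since the sum defining $I^{\unlhd\mu}$ contains every summand of $I^{\unlhd\lambda}$. Since $\lambda\unlhd\lambda$, we have $I^{\le\lambda}\subset I^{\unlhd\lambda}$, so summing over $\lambda$ recovers $\Id_\cC$. The product axiom is the only substantial step: expand
\[I^{\unlhd\lambda}I^{\unlhd\mu}=\sum_{\alpha\unlhd\lambda,\,\beta\unlhd\mu}I^{\le\alpha}I^{\le\beta}\subset\sum_{\alpha\unlhd\lambda,\,\beta\unlhd\mu}\sum_{\nu\le\alpha,\beta}I^{\le\nu},\]
and observe that for each $\nu$ appearing, $\nu\le\alpha\unlhd\lambda$ forces $\nu\unlhd\lambda$ (since $\unlhd$ extends $\le$ and is transitive), and similarly $\nu\unlhd\mu$; hence $I^{\le\nu}\subset I^{\unlhd\nu}$ with $\nu\unlhd\lambda,\mu$, as required.

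For (2), the inclusion $\cC^{\unlhd\lambda}\cap\cC_{\lhd\lambda}\subset\cC^{\le\lambda}\cap\cC_{<\lambda}$ is straightforward: if $X$ is $I^{\lhd\lambda}$-annihilated then $I^{<\lambda}X\subset I^{\lhd\lambda}X=0$, and in the expression $X=I^{\unlhd\lambda}X=\sum_{\mu\unlhd\lambda}I^{\le\mu}X$ every term with $\mu\lhd\lambda$ vanishes, leaving $X=I^{\le\lambda}X$. The main obstacle lies in the other direction. Suppose $X\in\cC^{\le\lambda}\cap\cC_{<\lambda}$, so $I^{\le\lambda}X=X$ and $I^{<\lambda}X=0$. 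Then certainly $I^{\unlhd\lambda}X\supset I^{\le\lambda}X=X$. For the annihilation, take $\mu\lhd\lambda$ and compute
\[I^{\le\mu}X=I^{\le\mu}(I^{\le\lambda}X)\subset\sum_{\nu\le\mu,\lambda}I^{\le\nu}X\]
by the product axiom of the original filter. I claim each such $\nu$ satisfies $\nu<\lambda$: if $\nu=\lambda$ then $\nu\le\mu$ would give $\lambda\le\mu$, hence $\lambda\unlhd\mu$, contradicting $\mu\lhd\lambda$. Thus $I^{\le\nu}X\subset I^{<\lambda}X=0$, so $I^{\le\mu}X=0$ for every $\mu\lhd\lambda$ and $I^{\lhd\lambda}X=0$.

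For (3), observe that $I^{\le\lambda}\subset I^{\unlhd\lambda}$ so there is a natural composite
\[\varphi\colon I^{\le\lambda}\hookrightarrow I^{\unlhd\lambda}\twoheadrightarrow I^{\unlhd\lambda}/I^{\lhd\lambda}.\]
Since $I^{\unlhd\lambda}=I^{\le\lambda}+I^{\lhd\lambda}$ as subfunctors of $\Id_\cC$, the image of $\varphi$ is $(I^{\le\lambda}+I^{\lhd\lambda})/I^{\lhd\lambda}=I^{\unlhd\lambda}/I^{\lhd\lambda}$, so $\varphi$ is epic. Its kernel is $I^{\le\lambda}\cap I^{\lhd\lambda}$, which contains $I^{<\lambda}$ because $\mu<\lambda$ implies both $\mu\le\lambda$ and $\mu\lhd\lambda$. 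Hence $\varphi$ descends to the desired surjection $I^{\le\lambda}/I^{<\lambda}\twoheadrightarrow I^{\unlhd\lambda}/I^{\lhd\lambda}$. The only delicate point here is that sums and intersections of the relevant subfunctors exist, which is guaranteed by the standing assumption that $\cC$ admits sums and intersections of cardinality $\#\Lambda$.
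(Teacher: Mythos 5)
Your parts (1) and (3) coincide with the paper's argument, and the combinatorial core of part (2) (the product-axiom computation showing $I^{\le\mu}X=0$ for $\mu\lhd\lambda$, with the observation that $\nu=\lambda$ would force $\lambda\le\mu$ and hence $\lambda\unlhd\mu$) is exactly the paper's. One small bookkeeping point you elide: by the standing notation, $I^{\lhd\lambda}$ means $\sum_{\mu\lhd\lambda}I^{\unlhd\mu}$ for the \emph{new} filter, so you should note (as the paper does) that this equals $\sum_{\nu\lhd\lambda}I^{\le\nu}$ before arguing as you do; this is immediate from transitivity of $\unlhd$.

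The genuine gap is in part (2): you treat $\cC^{\le\lambda}$ and $\cC^{\unlhd\lambda}$ as if membership were only the accessibility condition $IX=X$. By definition $\cC^I$ consists of objects that are $I$-accessible \emph{and} $I$-torsion-free, and for a general (non-simple) object these are independent conditions. In the forward direction the missing half is harmless: $I^{\le\lambda}\subset I^{\unlhd\lambda}$ gives $T_{I^{\unlhd\lambda}}^\vee\subset T_{I^{\le\lambda}}^\vee$, so $I^{\le\lambda}$-torsion-free implies $I^{\unlhd\lambda}$-torsion-free. In the backward direction, however, the inclusion of ideal functors points the wrong way, so you must actually argue that $X\in\cC^{\unlhd\lambda}\cap\cC_{\lhd\lambda}$ is $I^{\le\lambda}$-torsion-free. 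This is the dual of your accessibility computation: since $T^\vee$ of a sum is the intersection of the $T^\vee$'s, one has $T_{I^{\unlhd\lambda}}^\vee X=T_{I^{\le\lambda}}^\vee X\cap T_{I^{\lhd\lambda}}^\vee X$; the hypothesis $I^{\lhd\lambda}X=0$ gives $T_{I^{\lhd\lambda}}^\vee X=X$, and torsion-freeness for $I^{\unlhd\lambda}$ makes the left-hand side zero, whence $T_{I^{\le\lambda}}^\vee X=0$ as required. The paper disposes of this with the phrase ``in a similar manner,'' but your write-up omits the condition entirely rather than deferring it, so the statement you actually prove is weaker than the lemma.
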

\begin{proof}
Let us check the conditions in the definition for $\{I^{\unlhd\lambda}\}$.
The first two clearly hold.
Since ideal functors commute with summation, we also have the third one
\begin{align*}
I^{\unlhd\lambda}I^{\unlhd\mu}=
\sum_{\nu\unlhd\lambda,\,\pi\unlhd\mu}I^{\le\nu}I^{\le\pi}\subset
\sum_{\nu\unlhd\lambda,\,\pi\unlhd\mu}\sum_{\rho\le\nu,\pi}I^{\le\rho}\subset
\sum_{\nu\unlhd\lambda,\,\pi\unlhd\mu}\sum_{\rho\unlhd\nu,\pi}I^{\unlhd\rho}=
\sum_{\rho\unlhd\lambda,\mu}I^{\unlhd\rho}.
\end{align*}
Thus $\{I^{\unlhd\lambda}\}$ is an ideal filter.

In order to prove (2), first note that
\[I^{\lhd\lambda}=\sum_{\mu\lhd\lambda}I^{\unlhd\mu}
=\sum_{\mu\lhd\lambda}\sum_{\nu\unlhd\mu}I^{\le\mu}=\sum_{\nu\lhd\lambda}I^{\le\nu}.\]
Suppose that $X\in\cC^{\le\lambda}\cap\cC_{<\lambda}$.
Since $I^{\le\lambda}\subset I^{\unlhd\lambda}$,
clearly $X\in\cC^{\unlhd\lambda}$.
Moreover,
\begin{align*}
I^{\lhd\lambda}X
=I^{\lhd\lambda}I^{\le\lambda}X
=\sum_{\mu\lhd\lambda}I^{\le\mu}I^{\le\lambda}X
\subset\sum_{\mu\lhd\lambda}\sum_{\nu\le\lambda,\mu}I^{\le\nu}X
\subset I^{<\lambda}X=0
\end{align*}
so $X\in\cC_{\lhd\lambda}$.
Conversely, suppose that $X\in\cC^{\unlhd\lambda}\cap\cC_{\lhd\lambda}$.
Then $I^{<\lambda}\subset I^{\lhd\lambda}$ immediately
implies that $X\in\cC_{<\lambda}$. We also have
\begin{align*}
X=I^{\unlhd\lambda}X=I^{\le\lambda}X+I^{\lhd\lambda}X=I^{\le\lambda}X,
\end{align*}
that is, $X$ is $I^{\le\lambda}$-accessible.
We can prove that $X$ is $I^{\le\lambda}$-torsion-free in a similar manner
so $X\in\cC^{\le\lambda}$. Thus
$\cC^{\le\lambda}\cap\cC_{<\lambda}=\cC^{\unlhd\lambda}\cap\cC_{\lhd\lambda}$.

Finally (3) follows from $I^{\le\lambda}\subset I^{\unlhd\lambda}$,
$I^{<\lambda}\subset I^{\lhd\lambda}$ and $I^{\unlhd\lambda}=I^{\le\lambda}+I^{\lhd\lambda}$.
\end{proof}

Hence the notation $\cC[\lambda]\coloneqq\cC^{\le\lambda}\cap\cC_{<\lambda}$
does not depend on taking extension of ordering.
Unfortunately, the functor $I^{\le\lambda}/I^{<\lambda}$
\emph{does} change by extension of ordering.
The condition for stability of this functor is described as follows.

\begin{definition}
For each $\lambda$, let \[I^{\not\ge\lambda}\coloneqq\sum_{\mu\not\ge\lambda}I^{\le\mu}.\]
An ideal filter $\{I^{\le\lambda}\}$ is said to be \term{rigid} if it satisfies
$I^{\le\lambda}\cap I^{\not\ge\lambda}=I^{<\lambda}$ for every $\lambda$.
\end{definition}

Note that the condition $I^{\le\lambda}\cap I^{\not\ge\lambda}\supset I^{<\lambda}$
is always satisfied.
Clearly if $\le$ is a total order then every ideal filter is rigid.

\begin{proposition}
Suppose $\{I^{\le\lambda}\}$ is rigid.
Then for any extension $\unlhd$ of $\le$,
\begin{enumerate}
\item $\{I^{\unlhd\lambda}\}$ is also rigid,
\item the canonical morphism $I^{\le\lambda}/I^{<\lambda}\twoheadrightarrow I^{\unlhd\lambda}/I^{\lhd\lambda}$ is an isomorphism.
\end{enumerate}
\end{proposition}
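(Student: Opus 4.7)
First I would unpack the ``not $\unrhd$'' ideal in terms of the original indexing. Since $I^{\unlhd\mu}=\sum_{\nu\unlhd\mu}I^{\le\nu}$, we have
\[I^{\not\unrhd\lambda}=\sum_{\mu\not\unrhd\lambda}\sum_{\nu\unlhd\mu}I^{\le\nu}=\sum_{\nu\not\unrhd\lambda}I^{\le\nu},\]
the last equality because $\nu\unlhd\mu$ with $\mu\not\unrhd\lambda$ forces $\nu\not\unrhd\lambda$ (otherwise transitivity gives $\mu\unrhd\nu\unrhd\lambda$). Because $\unlhd$ extends $\le$, the condition $\nu\not\unrhd\lambda$ implies $\nu\not\ge\lambda$, so $I^{\not\unrhd\lambda}\subset I^{\not\ge\lambda}$. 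Similarly $\mu\lhd\lambda$ implies $\mu\not\ge\lambda$ (else $\mu\ge\lambda$ would give $\mu\unrhd\lambda$ and then $\mu=\lambda$ by antisymmetry), so $I^{\lhd\lambda}\subset I^{\not\ge\lambda}$, and of course $I^{<\lambda}\subset I^{\lhd\lambda}$. All three of these inclusions will be the workhorses of the argument.

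For part (2), the canonical map $I^{\le\lambda}/I^{<\lambda}\twoheadrightarrow I^{\unlhd\lambda}/I^{\lhd\lambda}$ is epic because $I^{\unlhd\lambda}=I^{\le\lambda}+I^{\lhd\lambda}$ (already observed in the previous lemma), and its kernel is $(I^{\le\lambda}\cap I^{\lhd\lambda})/I^{<\lambda}$. Combining $I^{\lhd\lambda}\subset I^{\not\ge\lambda}$ with rigidity yields
\[I^{\le\lambda}\cap I^{\lhd\lambda}\subset I^{\le\lambda}\cap I^{\not\ge\lambda}=I^{<\lambda},\]
and the opposite inclusion is trivial, so the kernel vanishes.

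For part (1), only $I^{\unlhd\lambda}\cap I^{\not\unrhd\lambda}\subset I^{\lhd\lambda}$ needs proof, since the reverse inclusion follows from $\mu\lhd\lambda\Rightarrow\mu\unlhd\lambda$ and $\mu\not\unrhd\lambda$. I would argue pointwise: for any $X\in\cC$ the subobject lattice of $X$ is modular, and the already established inclusion $I^{\lhd\lambda}X\subset I^{\not\ge\lambda}X$ lets me apply the modular identity,
\[(I^{\le\lambda}X+I^{\lhd\lambda}X)\cap I^{\not\ge\lambda}X=I^{\lhd\lambda}X+(I^{\le\lambda}X\cap I^{\not\ge\lambda}X)=I^{\lhd\lambda}X+I^{<\lambda}X=I^{\lhd\lambda}X,\]
invoking rigidity at the second equality and $I^{<\lambda}\subset I^{\lhd\lambda}$ at the third. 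Since $I^{\unlhd\lambda}X=I^{\le\lambda}X+I^{\lhd\lambda}X$ and $I^{\not\unrhd\lambda}X\subset I^{\not\ge\lambda}X$, the desired inclusion follows for every $X$, which is exactly rigidity of $\{I^{\unlhd\lambda}\}$.

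The main delicate point is that, as the earlier remark warns, intersection does \emph{not} distribute over summation for ideal functors, so one cannot attempt the computation at the level of functors. The key observation that unlocks the argument is that we only need the one-sided modular identity $(A+B)\cap C=B+(A\cap C)$ under the hypothesis $B\subset C$, which is valid in any modular lattice; arranging $B=I^{\lhd\lambda}X$ and $C=I^{\not\ge\lambda}X$ so that $B\subset C$ is exactly where the extension $\unlhd$ of $\le$ gets used, and rigidity then does the rest.
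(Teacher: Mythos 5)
Your proof is correct and takes essentially the same route as the paper's: both rest on the inclusion chain $I^{\lhd\lambda}\subset I^{\not\unrhd\lambda}\subset I^{\not\ge\lambda}$, a single application of the modular law, and rigidity, with part (2) handled identically via the kernel $(I^{\le\lambda}\cap I^{\lhd\lambda})/I^{<\lambda}$. The only cosmetic differences are that you apply modularity with $I^{\not\ge\lambda}$ and then restrict to $I^{\not\unrhd\lambda}$ (the paper does the reverse), and that you work pointwise on subobjects of $X$ rather than directly in the lattice of subfunctors of $\Id_\cC$ — equivalent here since sums and intersections of subfunctors are computed value-wise.
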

\begin{proof}
Note that $I^{\lhd\lambda}\subset I^{\not\unrhd\lambda}\subset I^{\not\ge\lambda}$.
These inclusions imply
\[I^{\unlhd\lambda}\cap I^{\not\unrhd\lambda}
=(I^{\le\lambda}+I^{\lhd\lambda})\cap I^{\not\unrhd\lambda}
=(I^{\le\lambda}\cap I^{\not\unrhd\lambda})+I^{\lhd\lambda}
\subset(I^{\le\lambda}\cap I^{\not\ge\lambda})+I^{\lhd\lambda}.
\]
By the assumption, the right hand side is equal to $I^{<\lambda}+I^{\lhd\lambda}=I^{\lhd\lambda}$, so $\{I^{\unlhd\lambda}\}$ is rigid.
Moreover
\[I^{\le\lambda}\cap I^{\lhd\lambda}\subset I^{\le\lambda}\cap I^{\not\ge\lambda}=I^{<\lambda}\]
so $I^{\le\lambda}\cap I^{\lhd\lambda}=I^{<\lambda}$.
Thus the morphism
$I^{\le\lambda}/I^{<\lambda}\twoheadrightarrow I^{\unlhd\lambda}/I^{\lhd\lambda}$
is an isomorphism since its kernel is
$(I^{\le\lambda}\cap I^{\lhd\lambda})/I^{<\lambda}=0$.
\end{proof}

When an ideal filter is rigid we have an additional result
on simple objects.

\begin{proposition}\label{prop:composition_factor}
Suppose $\{I^{\le\lambda}\}$ is rigid
and $\Lambda$ is well-founded.
Then in the Grothendieck group of finite length objects in $\cC$,
we have the equation
\[[X]=\sum_{\lambda\in\Lambda}[I^{\le\lambda}X/I^{<\lambda}X].\]
\end{proposition}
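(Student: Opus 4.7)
The plan is to reduce to the case where $\le$ itself is a well-ordering, then carry out a transfinite induction on the filtration $\{I^{\le\mu}X\}_{\mu}$ using that $X$ has finite length.

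First I would invoke the axiom of choice together with the well-foundedness of $\Lambda$ to extend $\le$ to a total well-ordering $\unlhd$ on the same set. By the preceding proposition, $\{I^{\unlhd\lambda}\}$ is again a rigid ideal filter and the canonical morphism $I^{\le\lambda}X/I^{<\lambda}X \twoheadrightarrow I^{\unlhd\lambda}X/I^{\lhd\lambda}X$ is an isomorphism for every $\lambda$. Hence the classes appearing on the right-hand side are unchanged if we replace $\le$ by $\unlhd$, so we may assume from the outset that $\le$ is itself a well-ordering.

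Under this assumption, $\{I^{\le\mu}X\}_{\mu\in\Lambda}$ is a chain of subobjects of $X$. Because $X$ has finite length, this chain takes only finitely many distinct values $0 = N_0 \subsetneq N_1 \subsetneq \dots \subsetneq N_m$, and by well-ordering there is a smallest index $\lambda_i \in \Lambda$ with $I^{\le\lambda_i}X = N_i$, giving $\lambda_1 < \dots < \lambda_m$. For $\mu$ strictly between $\lambda_{i-1}$ and $\lambda_i$ one has $I^{\le\mu}X = N_{i-1}$, so the supremum $I^{<\lambda_i}X = \sum_{\mu<\lambda_i} I^{\le\mu}X$ equals $N_{i-1} = I^{\le\lambda_{i-1}}X$ (and equals $0$ for $i=1$). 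Moreover $X = \sum_\mu I^{\le\mu}X$ is the supremum of the chain, hence equals $N_m = I^{\le\lambda_m}X$. Consequently $I^{\le\lambda}X/I^{<\lambda}X = 0$ whenever $\lambda \notin \{\lambda_1,\dots,\lambda_m\}$.

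It then remains a routine telescoping in the Grothendieck group:
\[
[X] = [I^{\le\lambda_m}X] = [I^{<\lambda_m}X] + [I^{\le\lambda_m}X/I^{<\lambda_m}X] = [I^{\le\lambda_{m-1}}X] + [I^{\le\lambda_m}X/I^{<\lambda_m}X],
\]
and iterating (a finite induction on $m$) yields $[X] = \sum_{i=1}^m [I^{\le\lambda_i}X/I^{<\lambda_i}X]$, which is the same as $\sum_{\lambda\in\Lambda}[I^{\le\lambda}X/I^{<\lambda}X]$ since all other summands vanish. The only real point requiring care is step three, where one must justify that the well-ordered chain really has only finitely many jumps and that its supremum is attained at the top jump; both follow immediately from the ACC and DCC on subobjects of a finite length object, so no serious obstacle is expected.
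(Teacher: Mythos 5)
Your proof is correct and follows essentially the same route as the paper: reduce to a well-ordering via the rigidity proposition, use finite length to extract the finitely many jumps of the chain $\{I^{\le\mu}X\}$ at minimal indices, and telescope. The only cosmetic difference is that you index the jumps by the distinct values of the chain while the paper indexes them by length, which changes nothing of substance.
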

\begin{proof}
By the proposition above, by taking its extension
we may assume that $\le$ is a well-ordering.
Suppose $X\in\cC$ is of finite length.
Let us denote by $l(Y)$ the length of $Y\in\cC$.
Since $X=\sum_\lambda I^{\le\lambda}X$, the set
$\set{\lambda\in\Lambda}{l(I^{\le\lambda}X)\ge k}$ is not empty
for each $0\le k\le l(X)$.
Let $\lambda_k$ be its minimum element.
Then we have
\[I^{<\lambda_k}X=\begin{cases*}
0 & if $k=0$,\\
I^{\le\lambda_{k-1}}X & if $\lambda_k\neq\lambda_{k-1}$.
\end{cases*}\]
Hence by taking the composition series
\[0\subset I^{\le\lambda_0}X\subset I^{\le\lambda_1}X\subset\dots
\subset I^{\le\lambda_{l(X)}}X=X\]
we have
\[[X]=[I^{\le\lambda_0}X]+\sum_{1\le k\le l(X)}[I^{\le\lambda_k}X/I^{\le\lambda_{k-1}}X]
=\sum_{\lambda\in\Lambda_0}[I^{\le\lambda}X/I^{<\lambda}X]\]
where $\Lambda_0=\{\lambda_0,\lambda_1,\dots,\lambda_{l(X)}\}$
(overlapping elements are excluded).
It is also clear that $I^{<\lambda}X=I^{\le\lambda}X$ when $\lambda\notin\Lambda_0$,
so the statement holds.
\end{proof}

\section{Morita context between abelian categories}\label{sec:morita}
The classical Morita theory~\cite{Morita58}
treats a category equivalence between respective module categories of
two rings $A$ and $B$. It is performed
as a tensor functor $P\otimes_B{\bullet}\colon\lMod{B}\to\lMod{A}$
and a hom functor $\Hom_A(P,{\bullet})\colon\lMod{A}\to\lMod{B}$
by use of a progenerator $P$,
which is an $(A,B)$-bimodule such that finitely generated and projective
as both left and right modules.
To make this correspondence symmetric,
we can take a $(B,A)$-bimodule $P^\vee\coloneqq\Hom_A(P,A)$ and
rewrite $\Hom_A(P,{\bullet})\simeq P^\vee\otimes_A{\bullet}$.
A Morita context between rings is a weaker notion of Morita equivalence
consists of such pair $(P,P^\vee)$,
which still provides an equivalence between certain
full subcategories of the module categories.
We here introduce a more generalized notion,
a Morita context between two abelian categories.

\subsection{Morita context and its trace ideals}

\begin{definition}
Let $\cC$ and $\cD$ be abelian categories.
A \term{Morita context} between $\cC$ and $\cD$
is a pair of adjunctions $F\colon\cD\to\cC$ and $G\colon\cC\to\cD$
equipped with two degree-zero natural transformations
$\eta\colon F G\to \Id_\cC$ and
$\rho\colon G F\to \Id_\cD$ such that
$F\rho=\eta F$ as morphisms $FGF\rightrightarrows F$ and $\rho G=G\eta$
as $GFG\rightrightarrows G$.
These equations are called the associativity laws.
\end{definition}

When $\cC=\lMod{A}$ and $\cD=\lMod{B}$
are respectively the module categories of algebras $A$ and $B$,
the above definition of Morita context between $\cC$ and $\cD$
coincides with Definition~\ref{def:Morita_between_algebras}
of that between $A$ and $B$ we introduced in the introduction.

\begin{remark}
Iglesias and Torrecillas~\cite{IglesiasTorrecillas95,IglesiasTorrecillas98}
has defined a more general notion called \term{wide (right) Morita context}.
They only required that $F$ and $G$ are right exact.
\end{remark}

For a while we fix a Morita context $(F,G)$
between $\cC$ and $\cD$ as above.
\begin{notation}
We denote by
\begin{align*}
\bar\eta&\colon G\to F^\vee,&
\bar\rho&\colon F\to G^\vee,\\
\eta^\vee&\colon\Id_\cC\to G^\vee F^\vee,&
\rho^\vee&\colon\Id_\cD\to F^\vee G^\vee
\end{align*}
the morphisms induced by adjunctions.
Let $D$ and $D'$ be the
images of $\bar\eta\colon G\to F^\vee$ and
$\bar\rho\colon F\to G^\vee$ respectively.
\end{notation}

The functors $D\colon\cC\to\cD$ and $D'\colon\cD\to\cC$
are called \term{Morita context functors}.
Similarly as ideal functors,
a Morita context functor is not left nor right exact.
However it has a following property again.
\begin{lemma}
Morita context functors preserve all images.
\end{lemma}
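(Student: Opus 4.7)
The plan is to imitate the earlier argument that ideal functors preserve images, exploiting that $D=\Image(\bar\eta\colon G\to F^\vee)$ sits between the cocontinuous functor $G$ (being a left adjoint) and the continuous functor $F^\vee$ (being a right adjoint). So let $f\colon X\to Y$ be a morphism in $\cC$ with epi-mono factorization $X\xrightarrow{p}Z\xrightarrow{i}Y$; my aim is to show $Dp$ is epi and $Di$ is mono, from which $Df=Di\circ Dp$ is the epi-mono factorization of $Df$ and hence $\Image(Df)=DZ$.

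Applying the natural transformation $\bar\eta$ at $p$ and $i$ produces a commutative ladder whose top row is $Gp,Gi$, whose bottom row is $F^\vee p,F^\vee i$, and whose vertical arrows split as an epi $GW\twoheadrightarrow DW$ followed by a mono $DW\hookrightarrow F^\vee W$ for $W\in\{X,Z,Y\}$. Since $G$ is cocontinuous it preserves epimorphisms, so $Gp$ is epi; then the composite $GX\twoheadrightarrow GZ\twoheadrightarrow DZ$ is epi and factors through $DX$, forcing $Dp$ to be epi. Dually, since $F^\vee$ is continuous it preserves monomorphisms, so $F^\vee i$ is mono; then $DZ\hookrightarrow F^\vee Z\hookrightarrow F^\vee Y$ is mono and factors through $DY$, forcing $Di$ to be mono. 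The argument for $D'=\Image(\bar\rho\colon F\to G^\vee)$ is identical with the roles of $F,G$ interchanged, since $F$ is likewise a left adjoint and $G^\vee$ a right adjoint.

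I do not anticipate any real obstacle here. The only mild subtlety is that $D$ is defined as an image taken inside the functor category $\cHom(\cC,\cD)$, so one should note, from the earlier material on limits in enriched functor categories, that (co)limits there are computed value-wise; hence $DW$ really is the pointwise image of $\bar\eta W$ in $\cD$, and the diagram chase above makes sense on each object. Once this is granted, everything reduces to the standard facts that left adjoints preserve epimorphisms and right adjoints preserve monomorphisms, together with the uniqueness of epi-mono factorizations in the abelian category $\cD$.
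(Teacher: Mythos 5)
Your argument is correct and is essentially the paper's own proof, which observes in one line that $D$ preserves monomorphisms because it is a subfunctor of the left exact $F^\vee$ and preserves epimorphisms because it is a quotient of the right exact $G$; you have simply spelled out the diagram chases behind those two claims and the value-wise computation of images in the functor category. No gaps.
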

\begin{proof}
$D$ is both mono and epi
since it is a subobject of a left exact functor $F^\vee$ as well as
a quotient of a right exact functor $G$.
\end{proof}

Let $I\subset\Id_\cC$ and $J\subset\Id_\cD$ be
the images of $\eta\colon FG\to\Id_\cC$
and $\rho\colon GF\to\Id_\cD$ respectively.
These are ideal functors on the respective categories
by Proposition~\ref{prop:image_of_adjunction_is_ideal},
which we call \term{trace ideals}.
First we study how these functors act on
the subcategories defined by these ideal functors $I$ and $J$.

\begin{lemma}
Suppose $X\in\cC$ and consider three morphisms
$\eta X\colon FGX\to X$, $\bar\eta X\colon GX\to F^\vee X$
and $\eta^\vee X\colon X\to G^\vee F^\vee X$.
\begin{enumerate}
\item $X$ is $I$-accessible if and only if $\eta X$ is epic.
\item $X$ is $I$-torsion-free if and only if $\eta^\vee X$ is monic.
\item $X$ is $I$-annihilated if and only if
$\eta X=0$ (equivalently, $\bar\eta X=0$ or $\eta^\vee X=0$).
In particular, it is also equivalent to that $DX=\Image(\bar\eta X)=0$.
\end{enumerate}
\end{lemma}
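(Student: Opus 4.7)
The plan is to reduce everything to the identifications of the trace ideal functors $I$ and $I^\circ$ furnished by Proposition~\ref{prop:image_of_adjunction_is_ideal}, and then unpack the definitions of $I$-accessible, $I$-torsion-free, and $I$-annihilated from the previous subsection.

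First I would observe that, since $FG$ is an adjunction (with right adjoint $G^\vee F^\vee$) and $\eta\colon FG\to\Id_\cC$ is a natural transformation to the identity, Proposition~\ref{prop:image_of_adjunction_is_ideal} applies. It gives that $I=\Image(\eta)$ is an ideal functor and that $I^\circ$ is the image of the corresponding natural transformation $\Id_\cC\to G^\vee F^\vee$. Under the two adjunctions $F\dashv F^\vee$ and $G\dashv G^\vee$, the morphism $\eta\colon FG\to\Id_\cC$ corresponds successively to $\bar\eta\colon G\to F^\vee$ and then to $\eta^\vee\colon\Id_\cC\to G^\vee F^\vee$; consequently $IX=\Image(\eta X)$ sits as a subobject of $X$, while $I^\circ X=\Image(\eta^\vee X)$ appears simultaneously as a quotient of $X$ and a subobject of $G^\vee F^\vee X$.

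With these identifications, part (1) is essentially tautological: by definition $X$ is $I$-accessible iff $IX=X$, and $IX=\Image(\eta X)$, so this amounts to $\eta X$ being epic. For part (2), $X$ is $I$-torsion-free iff the canonical epimorphism $X\twoheadrightarrow I^\circ X$ is an isomorphism; factoring this epi through the mono $I^\circ X\hookrightarrow G^\vee F^\vee X$ (whose composite is $\eta^\vee X$), it is an iso iff $\eta^\vee X$ is monic. For part (3), $X$ is $I$-annihilated iff $IX=0$ iff $\eta X=0$; the equivalences $\eta X=0\iff\bar\eta X=0\iff\eta^\vee X=0$ are immediate consequences of the adjunction bijections $\Hom_\cC(FGX,X)\simeq\Hom_\cD(GX,F^\vee X)\simeq\Hom_\cC(X,G^\vee F^\vee X)$. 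Finally $DX=\Image(\bar\eta X)$ by definition, so $DX=0$ is the same as $\bar\eta X=0$.

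No step is really an obstacle here; the only thing that requires care is to remember that $I^\circ X$ is naturally presented as a quotient of $X$ but is computed as the image of $\eta^\vee X$ inside $G^\vee F^\vee X$, so that ``$X\to I^\circ X$ is monic'' and ``$\eta^\vee X$ is monic'' are the same statement. Once that identification is fixed, each of the three parts is a one-line consequence of the definitions in the previous subsection.
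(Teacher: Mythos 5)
Your proof is correct and is exactly the definitional unpacking the paper intends: the paper's own proof of this lemma is simply ``Obvious by definition,'' and your argument fills in precisely the identifications $IX=\Image(\eta X)$ and $I^\circ X=\Image(\eta^\vee X)$ from Proposition~\ref{prop:image_of_adjunction_is_ideal} together with the adjunction bijections. No gaps; the care you take with $I^\circ X$ being simultaneously a quotient of $X$ and a subobject of $G^\vee F^\vee X$ is the right point to make explicit.
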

\begin{proof}
Obvious by definition.
\end{proof}

\begin{lemma}\label{lem:morita_funct_on_subcats}
Suppose $X\in\cC$.
\begin{enumerate}
\item If $X$ is $I$-accessible
then $GX$ is $J$-accessible.
\item If $X$ is $I$-torsion-free
then $F^\vee X$ is $J$-torsion-free.
\item If $X$ is $I$-annihilated
then both $GX$ and $F^\vee X$ are $J$-annihilated.
\end{enumerate}
In particular, so is $DX$ in each cases.
\end{lemma}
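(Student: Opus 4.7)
The plan is to reduce each claim to a one-line argument using the lemma just proved, which translates $I$-accessibility, $I$-torsion-freeness, and $I$-annihilation of $X$ into the assertions that $\eta X$ is epic, $\eta^\vee X$ is monic, and $\eta X = 0$ (equivalently $\eta^\vee X = 0$) respectively; the analogous characterizations hold for $J$ via $\rho$ and $\rho^\vee$.

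For (1), I would evaluate the associativity law $\rho G = G\eta$ at $X$ to obtain $\rho(GX) = G(\eta X)$. Since $\eta X$ is epic and $G$ is a left adjoint, $G(\eta X) = \rho(GX)$ is epic, so $GX$ is $J$-accessible. For the $GX$ half of (3), the same identity immediately yields $\rho(GX) = G(\eta X) = 0$ whenever $\eta X = 0$, so $GX$ is $J$-annihilated.

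For (2) and the remaining half of (3), the parallel argument requires the \emph{dual} associativity identity $\rho^\vee F^\vee = F^\vee \eta^\vee$ as natural transformations $F^\vee \Rightarrow F^\vee G^\vee F^\vee$. I plan to derive this from $F\rho = \eta F$ by direct unit--counit calculation: unfolding $\eta^\vee = G^\vee\bar\eta \circ \delta_G$ with $\bar\eta = F^\vee\eta \circ \delta_F G$, then invoking the naturality of $\delta_F$ applied to $\eta$ together with the triangle identity $F^\vee\epsilon_F \circ \delta_F F^\vee = \id_{F^\vee}$, reduces the desired equation to $F\rho(F^\vee X) = \eta(FF^\vee X)$, which is precisely the given associativity at $F^\vee X$. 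Granting the dual identity, $\rho^\vee(F^\vee X) = F^\vee(\eta^\vee X)$; since $F^\vee$ is a right adjoint it preserves both monomorphisms and zero morphisms, so the $J$-torsion-free and $J$-annihilated conclusions for $F^\vee X$ follow at once.

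Finally, since $DX = \Image(\bar\eta X)$ is simultaneously a quotient of $GX$ and a subobject of $F^\vee X$, Proposition~\ref{prop:limit_properties_of_subcats} transfers the relevant closure property to $DX$ in each of the three cases. The only subtle point in the argument is verifying the dual associativity identity in (2); although it is a routine mate-calculus consequence of the given law, keeping the chain of naturality squares and triangle identities legible is the main technical concern.
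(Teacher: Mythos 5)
Your proposal is correct and follows essentially the same route as the paper: evaluate the associativity law $\rho G=G\eta$ at $X$, use that $G$ (a left adjoint) preserves epimorphisms and zero morphisms, dualize via $\rho^\vee F^\vee=F^\vee\eta^\vee$ for the $F^\vee$ statements, and transfer to $DX$ as a quotient of $GX$ and a subobject of $F^\vee X$. The paper compresses the dual cases into ``similar manner,'' so your explicit mate-calculus verification of the dual associativity identity is just a fuller write-up of the same argument.
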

\begin{proof}
Since $\rho GX\colon GFGX\to GX$
is equal to $G\eta X$ and $G$ is right exact,
if $\eta X$ is epic then so is $\rho GX$.
This means that if $X$ is $I$-accessible then $GX$ is $J$-accessible
by the lemma above.
(2) and (3) can be proven in a similar manner.
The last statement follows from that these properties are inherited
to subobjects or quotients.
\end{proof}

\begin{lemma}\label{lem:rest_is_annihilated}
$\Coker(DX\hookrightarrow F^\vee X)$ is $J$-annihilated for any $X\in\cC$.
\end{lemma}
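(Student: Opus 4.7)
The plan is to show that $J$ annihilates $C \coloneqq \Coker(DX \hookrightarrow F^\vee X)$ by exhibiting a factorization of $\rho F^\vee X$ through $\bar\eta X$. Let $p\colon F^\vee X \twoheadrightarrow C$ denote the projection. Since $F$ and $G$ are adjunctions, both are cocontinuous, so $GFp$ is again epic. Naturality of $\rho$ yields $\rho C \circ GFp = p \circ \rho F^\vee X$, so it suffices to prove that $p \circ \rho F^\vee X = 0$; then $\rho C = 0$, and since $JC = \Image(\rho C)$, this gives $JC = 0$, which is exactly the claim.

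The heart of the proof will be the identity
\[
\rho F^\vee X = \bar\eta X \circ G\epsilon X
\]
as morphisms $GFF^\vee X \to F^\vee X$, where $\epsilon\colon FF^\vee \to \Id_\cC$ is the counit of $F \dashv F^\vee$. Granted this, $p \circ \rho F^\vee X = p \circ \bar\eta X \circ G\epsilon X = 0$, because $\bar\eta X$ factors through $DX = \Image(\bar\eta X) \subset \Ker p$ by definition of $D$.

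To verify the identity I will pass to adjoints under $F \dashv F^\vee$. The adjoint of $\rho F^\vee X$ is $\epsilon X \circ F(\rho F^\vee X) = \epsilon X \circ \eta(FF^\vee X)$, where the second equality uses the associativity law $F\rho = \eta F$ evaluated at $F^\vee X$. The adjoint of $\bar\eta X \circ G\epsilon X$ is $\epsilon X \circ F\bar\eta X \circ FG(\epsilon X) = \eta X \circ FG(\epsilon X)$, using the defining relation $\eta X = \epsilon X \circ F\bar\eta X$ for the adjoint $\bar\eta$. Finally, the two expressions $\eta X \circ FG(\epsilon X)$ and $\epsilon X \circ \eta(FF^\vee X)$ coincide by naturality of $\eta\colon FG \to \Id_\cC$ applied to the morphism $\epsilon X\colon FF^\vee X \to X$.

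The argument is a short diagram chase combining the adjunction transpose, naturality, and exactly one use of the Morita context's associativity law. No real obstacle is anticipated; the only delicate point is keeping the adjunction transposes straight and applying $F\rho = \eta F$ at the correct object.
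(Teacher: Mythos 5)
Your proof is correct and follows essentially the same route as the paper: apply the right exact $GF$ to the projection onto the cokernel, and show $\rho F^\vee X$ kills the cokernel because it factors as $\bar\eta X\circ G\epsilon X$, which is forced by the associativity law $F\rho=\eta F$. The paper simply asserts this factorization ("$\rho F^\vee$ factors through $G$ by the associativity on $FGF$"), whereas you verify it carefully by passing to adjoint transposes; the extra detail is welcome but the argument is the same.
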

\begin{proof}
Let $C\coloneqq\Coker(D\hookrightarrow F^\vee)=\Coker(G\to F^\vee)$
and consider the commutative diagram
\[\begin{tikzcd}
GFG \arrow{r} \arrow{d} &
GFF^\vee \arrow{r} \arrow{d}{\rho F^\vee} \arrow{ld} &
GFC \arrow{r} \arrow{d}{\rho C} &
0 \\
G \arrow{r}{\bar\eta} &
F^\vee \arrow{r} &
C \arrow{r} &
0\rlap{.}
\end{tikzcd}\]
Its rows are exact since $GF$ is right exact.
Since $\rho F^\vee\colon GFF^\vee\to F^\vee$ factors through $G$
by the associativity on $FGF$,
the induced morphism $\rho C\colon GFC\to C$ is zero.
In other words, $JC=0$.
\end{proof}

\begin{proposition}
If $X\in\cC$ is $I$-accessible, $DX$ is the unique largest
$J$-accessible subobject of $F^\vee X$.
\end{proposition}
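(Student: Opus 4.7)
The plan is to combine the two preceding lemmas to pin $DX$ down as exactly the sum of $J$-accessible subobjects of $F^\vee X$. The two ingredients are already in hand: Lemma~\ref{lem:morita_funct_on_subcats} tells us $DX$ itself is $J$-accessible (since $DX$ is a quotient of $GX$ via $\bar\eta X$, and $J$-accessible objects are closed under quotients by Proposition~\ref{prop:limit_properties_of_subcats}), and Lemma~\ref{lem:rest_is_annihilated} tells us the cokernel $F^\vee X/DX$ is $J$-annihilated.

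First I would observe that if $Y \subset F^\vee X$ is any $J$-accessible subobject, then the composite $Y \hookrightarrow F^\vee X \twoheadrightarrow F^\vee X/DX$ is a morphism from a $J$-accessible object to a $J$-annihilated one, hence zero. The vanishing follows by the same calculation used earlier in the text:
\[
\Hom_\cD(Y,F^\vee X/DX) = \Hom_\cD(Y, T_J^\vee(F^\vee X/DX)) \simeq \Hom_\cD(T_JY, F^\vee X/DX) = 0,
\]
because $Y$ $J$-accessible means $T_JY = 0$. Consequently $Y$ factors through $DX$, i.e.\ $Y \subset DX$.

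This simultaneously shows that $DX$ contains every $J$-accessible subobject of $F^\vee X$ and, since $DX$ is itself $J$-accessible, that it is the unique maximal (hence largest) such subobject. There is no genuine obstacle: the whole argument is a short assembly of Lemma~\ref{lem:morita_funct_on_subcats}, Lemma~\ref{lem:rest_is_annihilated}, and the $\Hom$-vanishing between $J$-accessible and $J$-annihilated objects. The only mild care needed is to invoke the one-sided ($J$-accessible versus $J$-annihilated) vanishing rather than the stronger two-sided version stated earlier for simples, but the proof of that half is identical and already appears in the text.
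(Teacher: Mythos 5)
Your proof is correct and follows essentially the same route as the paper's: both arguments rest on Lemma~\ref{lem:rest_is_annihilated} (the cokernel $F^\vee X/DX$ is $J$-annihilated) together with the incompatibility of $J$-accessible and $J$-annihilated objects. The paper phrases the final step by forming $Y'=DX+Y$ and observing that $Y'/DX$ is simultaneously $J$-accessible and $J$-annihilated, hence zero, whereas you invoke the $\Hom$-vanishing $\Hom_\cD(Y,F^\vee X/DX)=0$ directly; these are the same fact in two guises, and both correctly conclude $Y\subset DX$.
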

\begin{proof}
Let $Y\subset F^\vee X$ be $J$-accessible.
Then by Proposition~\ref{prop:limit_properties_of_subcats},
$Y'\coloneqq DX+Y$ and $Y'/DX$ are also $J$-accessible.
On the other hand, by the lemma above $Y'/DX\subset F^\vee X/DX$
must be $J$-annihilated too.
Hence we conclude that $Y'=DX$, that is, $Y\subset DX$.
\end{proof}

\subsection{Category equivalence}

The first remarkable result which Morita context brings
is the equivalence of categories between respective subcategories
defined by ideal functors.
\begin{theorem}\label{thm:morita_cat_equiv}
$D$ and $D'$ induce
a category equivalence $\cC^I\simeq\cD^J$.
\end{theorem}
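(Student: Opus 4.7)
The plan is to produce, for each $X\in\cC^I$, a natural isomorphism $\theta X\colon X\xrightarrow{\sim}D'DX$ (together with its symmetric counterpart on $\cD^J$); the equivalence $\cC^I\simeq\cD^J$ then follows immediately. First I would handle the restriction of the functors: since $D=\Image(\bar\eta)$ factors as the epic $G\twoheadrightarrow D$ followed by the monic $D\hookrightarrow F^\vee$, and Lemma~\ref{lem:morita_funct_on_subcats} says $G$ preserves $I$-accessibility while $F^\vee$ preserves $I$-torsion-freeness, the closure properties of Proposition~\ref{prop:limit_properties_of_subcats} (quotients for accessibility, subobjects for torsion-freeness) give $DX\in\cD^J$ whenever $X\in\cC^I$. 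The same argument applied to the reversed Morita context handles $D'$.

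Next I would build two candidate inverse maps. Applying the left exact functor $G^\vee$ to the inclusion $DX\hookrightarrow F^\vee X$ and composing with the unit $\delta_G X$ produces $\psi X\colon X\to G^\vee DX$ with $G^\vee(\mathrm{incl})\circ\psi X=\eta^\vee X$; since $X$ is $I$-torsion-free, $\eta^\vee X$ is monic, so $\psi X$ is monic. The image of $\psi X$ lands in $D'DX\subset G^\vee DX$: by the version of Lemma~\ref{lem:rest_is_annihilated} for the reversed Morita context, $G^\vee DX/D'DX$ is $I$-annihilated, so the composition out of the $I$-accessible object $X$ is forced to be zero. This gives the monic $\theta X\colon X\hookrightarrow D'DX$. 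Dually, $\eta X\colon FGX\twoheadrightarrow X$ is epic (as $X$ is $I$-accessible) and factors as an epic $\alpha X\colon FDX\twoheadrightarrow X$ because $F$ preserves epimorphisms.

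The heart of the argument is the identity
\[\psi X\circ\alpha X=\bar\rho DX\quad\text{as morphisms }FDX\to G^\vee DX.\]
I would verify it by translating both sides under the adjunction $G\dashv G^\vee$ into morphisms $GFDX\to DX$ and matching them using the Morita associativity law $\rho G=G\eta$ together with naturality of $\rho$ along the epic $GX\twoheadrightarrow DX$. Given this identity, the monicity of $\psi X$ forces $\Ker(\bar\rho DX)\subset\Ker(\alpha X)$, so $\alpha X$ descends through the canonical epic $\pi\colon FDX\twoheadrightarrow D'DX$ to an epic $\beta X\colon D'DX\twoheadrightarrow X$. Furthermore, decomposing the identity via $\psi X=\mathrm{incl}\circ\theta X$ and $\bar\rho DX=\mathrm{incl}\circ\pi$ and cancelling the monic inclusion yields $\theta X\circ\alpha X=\pi$; composing with $\beta X$ and then cancelling the epics $\alpha X$ and $\pi$ respectively gives $\beta X\circ\theta X=\Id_X$ and $\theta X\circ\beta X=\Id_{D'DX}$.

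The main obstacle is precisely the key identity $\psi X\circ\alpha X=\bar\rho DX$, which encodes the full Morita compatibility of the two adjunctions with the associativity laws; once it is in hand, every other step reduces to straightforward diagram chasing. Finally, the entire argument applied to the reversed Morita context produces $DD'\simeq\Id_{\cD^J}$, completing the equivalence $\cC^I\simeq\cD^J$.
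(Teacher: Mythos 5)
Your argument is correct and follows essentially the same route as the paper: your key identity $\psi X\circ\alpha X=\bar\rho\,DX$ is exactly the commutativity (derived from the associativity laws $\rho G=G\eta$, $F\rho=\eta F$) of the outer rectangle in the paper's diagram relating $FD\twoheadrightarrow I\hookrightarrow\Id_\cC\twoheadrightarrow I^\circ\hookrightarrow G^\vee D$ to $\bar\rho D\colon FD\twoheadrightarrow D'D\hookrightarrow G^\vee D$, and your conclusion is the paper's corollary that $D'D$ is the image of $I\hookrightarrow\Id_\cC\twoheadrightarrow I^\circ$, hence the identity on $\cC^I$. The only difference is organizational: you work pointwise and exhibit explicit mutually inverse maps $\theta X$ and $\beta X$, whereas the paper packages the same computation as one commutative diagram of endofunctors.
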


To prove this theorem, first we list several endofunctors on $\cC$
into a diagram.

\begin{lemma}
Consider the following epi-mono factorizations
\[\eta\colon FG\twoheadrightarrow I\hookrightarrow\Id_\cC,\qquad
\bar\rho G\colon FG\twoheadrightarrow D'G\hookrightarrow G^\vee G,\qquad
\bar\rho D\colon FD\twoheadrightarrow D'D\hookrightarrow G^\vee D.\]
These epimorphisms factor through
\[FG\twoheadrightarrow FD\twoheadrightarrow I
\twoheadrightarrow D'G\twoheadrightarrow D'D.\]
Dually, monomorphisms $\Id_\cC\hookrightarrow G^\vee F^\vee$,
$D'F\hookrightarrow G^\vee F^\vee$ and $D'D\hookrightarrow G^\vee D$
factor through
\[D'D\hookrightarrow D'F^\vee\hookrightarrow I^\circ
\hookrightarrow G^\vee D\hookrightarrow G^\vee F^\vee.\]
These chains of morphisms fit into the commutative diagram
\[\begin{tikzcd}
FG \arrow[two heads]{r} & FD \arrow[two heads]{r} \arrow[two heads]{ldd} \arrow{d} &
I \arrow[two heads]{r} \arrow[hook]{d} & D'G \arrow[two heads]{r} \arrow[hook]{d} & D'D \arrow[hook]{ldd} \\
& FF^\vee \arrow{r} \arrow[two heads]{d} & \Id_\cC \arrow{r} \arrow[two heads]{d} & G^\vee G \arrow{d} \\
D'D \arrow[hook]{r} & D'F^\vee \arrow[hook]{r} &
I^\circ \arrow[hook]{r} & G^\vee D \arrow[hook]{r} & G^\vee F^\vee\rlap{.}
\end{tikzcd}\]
\end{lemma}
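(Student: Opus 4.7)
The plan is to construct each map in the diagram as either a functor applied to an existing morphism or as the essentially unique factor through an image, and then to reduce every commutativity assertion to the two associativity laws of the Morita context together with naturality of the units and counits of the adjunctions.

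Write $p\colon G\twoheadrightarrow D$, $i\colon D\hookrightarrow F^\vee$ for the epi-mono factorization of $\bar\eta$, and $p'\colon F\twoheadrightarrow D'$, $i'\colon D'\hookrightarrow G^\vee$ for that of $\bar\rho$. Let $\delta_G\colon\Id_\cC\to G^\vee G$ and $\epsilon_F\colon FF^\vee\to\Id_\cC$ denote the unit and counit of the indicated adjunctions. As preparation, I would record the two identities
\[\bar\rho G=\delta_G\circ\eta\colon FG\to G^\vee G,\qquad \bar\rho F^\vee=\eta^\vee\circ\epsilon_F\colon FF^\vee\to G^\vee F^\vee,\]
where $\eta^\vee=G^\vee\bar\eta\circ\delta_G\colon\Id_\cC\to G^\vee F^\vee$ is the double adjoint of $\eta$. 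The first follows by taking the $G\dashv G^\vee$-adjoint of the associativity $\rho G=G\eta$ and applying naturality of $\delta_G$ at $\eta$; the second follows analogously from $F\rho=\eta F$ by two passages to adjoints.

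For the top row, $F$ is cocontinuous so $Fp\colon FG\twoheadrightarrow FD$ is epic, and the triangle identity $\eta=\epsilon_F\circ F\bar\eta=(\epsilon_F\circ Fi)\circ Fp$ shows $\eta$ factors through $FD$ with image $I$, yielding $FD\twoheadrightarrow I\hookrightarrow\Id_\cC$. Then by $\bar\rho G=\delta_G\circ\eta$ the morphism $\bar\rho G$ factors through $I$, and its image is $D'G$ by definition, giving $I\twoheadrightarrow D'G$. Finally $D'$ preserves epis since Morita context functors preserve all images by the lemma already established, so $D'p\colon D'G\twoheadrightarrow D'D$ is epic. That the composite recovers $\bar\rho D$ up to its image (using naturality of $\bar\rho$ at $p$) is forced by uniqueness of epi-mono factorization. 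The bottom row is built dually: applying $D'$ to $i$ gives $D'D\hookrightarrow D'F^\vee$; the identity $\bar\rho F^\vee=\eta^\vee\circ\epsilon_F$ puts $D'F^\vee\subset I^\circ$ as subobjects of $G^\vee F^\vee$; the factorization $\eta^\vee=G^\vee i\circ G^\vee p\circ\delta_G$ together with $G^\vee i$ being mono realises $I^\circ\hookrightarrow G^\vee D$; and $G^\vee$ left exact gives $G^\vee D\hookrightarrow G^\vee F^\vee$. The middle row is $\epsilon_F$ followed by $\delta_G$.

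Commutativity of every remaining rectangle and triangle then follows from one of: (i) naturality of the units and counits at $p$, $i$, $p'$, $i'$; (ii) the two key identities above combined with the triangle identities for the adjunctions; or (iii) uniqueness of epi-mono factorization, identifying parallel factorizations of the same morphism. The main obstacle is organizational rather than technical: none of the individual verifications is hard, but keeping the many parallel identifications straight takes care. I would structure the argument by first establishing the three horizontal chains, and then treating the big diagram as a stack of small squares and triangles between successive rows, checking each cell by one of the three mechanisms above.
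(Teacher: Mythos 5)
Your proposal is correct and follows essentially the same route as the paper: your two transpose identities $\bar\rho G=\delta_G\circ\eta$ and $\bar\rho F^\vee=\eta^\vee\circ\epsilon_F$ are precisely the commutativity of the two pentagons the paper reads off from the associativity laws, and the rest (whiskering preserves the epi--mono factorizations, uniqueness of images, the interchange square for $\bar\rho D$) matches the paper's argument step for step.
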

\begin{proof}
First the morphisms $FG\twoheadrightarrow FD$ and $D'G\twoheadrightarrow D'D$
at both ends are induced by
$\bar\eta\colon G\twoheadrightarrow D\hookrightarrow F^\vee$.
Since the functors $F$ and $D'$ are both epi, these morphisms are epic.
Now consider the diagram
\[\begin{tikzcd}
\ & FG \arrow[two heads]{ld} \arrow[two heads]{d} \arrow[two heads]{rd} \\
FD \arrow{d} & I \arrow[hook]{d} & D'G \arrow[hook]{d} \\
FF^\vee \arrow{r} & \Id_\cC \arrow{r} & G^\vee G\rlap{.}
\end{tikzcd}\]
The right pentagon is commutative by the help of the associativity
on $GFG$ while the commutativity of the left one is trivial.
Since $I$ and $D'G$ are the images of the respective pentagons,
there exist unique morphisms $FD\twoheadrightarrow I\twoheadrightarrow D'G$
which make the diagram commutes.
Now it is left us to check the commutativity for $\bar\rho D$.
We have the diagram
\[\begin{tikzcd}
FG \arrow[two heads]{r} \arrow[two heads]{d} &
D'G \arrow[two heads]{d} \\
FD \arrow[two heads]{r} \arrow[two heads]{ru} &
D'D
\end{tikzcd}\]
where the outer square trivially commutes.
Since $FG\twoheadrightarrow FD$ is epic and the upper triangle commutes,
the lower also does.
The dual statement goes similarly and
the last commutativity has been already proven.
\end{proof}

\begin{corollary*}
$D'D$ is equal to
the image of the composite
$I\hookrightarrow\Id_\cC\twoheadrightarrow I^\circ$.
In particular, if $X\in\cC^I$ then canonically $X\simeq D'DX$.
\end{corollary*}

Putting this corollary and Lemma~\ref{lem:morita_funct_on_subcats} together,
we obtain Theorem~\ref{thm:morita_cat_equiv}.

\begin{remark}
Though the categories $\cC^I$ and $\cD^J$ are equivalent,
their exact structures may differ.
For example, let $A$ and $B$ be the upper triangle matrix algebras
\begin{align*}
A&\coloneqq\left\{\begin{pmatrix}*&*&*\\0&*&*\\0&0&*\end{pmatrix}\right\},&
B&\coloneqq\left\{\begin{pmatrix}*&*\\0&*\end{pmatrix}\right\}
\end{align*}
over $\kk$. Let $M$ and $N$ be the bimodules
\begin{align*}
M&\coloneqq\left\{\begin{pmatrix}*&*\\0&*\\0&*\end{pmatrix}\right\},&
N&\coloneqq\left\{\begin{pmatrix}*&*&*\\0&0&*\end{pmatrix}\right\}
\end{align*}
and define
$\eta\colon M\otimes_B N\to A$ and $\rho\colon N\otimes_A M\to B$
by matrix multiplication.
These data define a Morita context between
$A\text{-}\Mod$ and $B\text{-}\Mod$.
$\rho$ is surjective and the image of $\eta$ is
\[I\coloneqq\left\{\begin{pmatrix}*&*&*\\0&0&*\\0&0&*\end{pmatrix}\right\}\subset A,\]
so the Morita context functors induce a category equivalence
$(A\text{-}\Mod)^I\simeq B\text{-}\Mod$.
However it sends a short exact sequence
\[0\longrightarrow
\left\{\begin{pmatrix}*\\0\end{pmatrix}\right\}\longrightarrow
\left\{\begin{pmatrix}{*}\\{*}\end{pmatrix}\right\}\longrightarrow
\left\{\begin{pmatrix}{-}\\{*}\end{pmatrix}\right\}\longrightarrow
0\]
in $B\text{-}\Mod$ to the sequence
\[0\longrightarrow
\left\{\begin{pmatrix}*\\0\\0\end{pmatrix}\right\}\longrightarrow
\left\{\begin{pmatrix}{*}\\{*}\\{*}\end{pmatrix}\right\}\longrightarrow
\left\{\begin{pmatrix}{-}\\{-}\\{*}\end{pmatrix}\right\}\longrightarrow
0\]
in $A\text{-}\Mod$, which is obviously not exact at the middle term.
\end{remark}

As we have seen in this remark, the category equivalence does not
preserve extensions in general.
However, it is true if one of the categories is semisimple.
\begin{lemma}\label{lem:morita_with_semisimple}
If $\cD$ is semisimple, then $\Ext^1_\cC(X,Y)=0$ for any $X,Y\in\cC^I$.
\end{lemma}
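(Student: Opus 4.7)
The plan is to take an arbitrary short exact sequence $0\to Y\to E\to X\to0$ in $\cC$ with $X,Y\in\cC^I$, show that $E\in\cC^I$ as well, then apply the functor $D$ to transport the extension across the equivalence $\cC^I\simeq\cD^J$ of Theorem~\ref{thm:morita_cat_equiv} to a short exact sequence in $\cD$. Since $\cD$ is semisimple the transported sequence splits, and transporting the splitting back through $D$ yields a splitting of the original extension.

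To see that $E\in\cC^I$, I would apply the right exact functor $T_I$ (which is left adjoint to $T_I^\vee$) to obtain $T_IY\to T_IE\to T_IX\to0$, and apply the left exact functor $T_I^\vee$ to obtain $0\to T_I^\vee Y\to T_I^\vee E\to T_I^\vee X$. By hypothesis all four outer terms vanish, so by exactness $T_IE=0=T_I^\vee E$, placing $E$ in $\cC^I$.

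The core step is to show that $0\to DY\to DE\to DX\to0$ is exact in $\cD$. Outer exactness is immediate: $D$ preserves monomorphisms as a subfunctor of the left exact $F^\vee$ and preserves epimorphisms as a quotient of the right exact $G$, as remarked just before Theorem~\ref{thm:morita_cat_equiv}. For exactness at $DE$, regard every term as a subobject of $F^\vee E$ via the inclusion $D\hookrightarrow F^\vee$ and the naturality square; left exactness of $F^\vee$ yields $\Ker(F^\vee E\to F^\vee X)=F^\vee Y$, so $\Ker(DE\to DX)=DE\cap F^\vee Y$, which plainly contains $DY$. For the reverse inclusion, $DE\cap F^\vee Y$ is a subobject of the $J$-accessible object $DE$ (Lemma~\ref{lem:morita_funct_on_subcats}); since $\cD$ is semisimple this subobject is a direct summand of $DE$, hence itself $J$-accessible by the closure properties in Proposition~\ref{prop:limit_properties_of_subcats}. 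Thus $DE\cap F^\vee Y$ is a $J$-accessible subobject of $F^\vee Y$, and the characterization of $DY$ as the largest such (the proposition preceding Theorem~\ref{thm:morita_cat_equiv}) gives $DE\cap F^\vee Y\subset DY$.

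The resulting short exact sequence $0\to DY\to DE\to DX\to0$ in semisimple $\cD$ then splits, furnishing $s\colon DX\to DE$ in $\cD^J$. By the full faithfulness of the equivalence of Theorem~\ref{thm:morita_cat_equiv}, $s$ descends to a unique $\tilde s\colon X\to E$ in $\cC^I\subset\cC$, and this $\tilde s$ splits the original extension. The main obstacle is the middle exactness step; it is precisely here that the semisimplicity of $\cD$ is used essentially, since without it a subobject of a $J$-accessible object need not be $J$-accessible and $D$ would generally fail to preserve exact sequences even with endpoints in $\cC^I$.
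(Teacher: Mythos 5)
Your proof is correct, but it takes a genuinely different route from the paper's. The paper's argument is a two-liner: $E\in\cC^I$ because $\cC^I$ is closed under extensions (Proposition~\ref{prop:limit_properties_of_subcats}), and then ``$\cC^I\simeq\cD^J$ is semisimple, so the sequence splits'' --- i.e.\ it transports \emph{objects} across the equivalence of Theorem~\ref{thm:morita_cat_equiv}, concluding that every object of $\cC^I$, in particular $E$, is a direct sum of simple objects of $\cC$ (via Theorem~\ref{thm:morita_simple_1:1}), whence any short exact sequence ending in $\cC^I$ splits. You instead transport the \emph{sequence}: you verify by hand that $D$ carries $0\to Y\to E\to X\to0$ to a short exact sequence in $\cD$, split it there, and pull the splitting back by full faithfulness. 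The middle-exactness argument --- identifying $\Ker(DE\to DX)$ with $DE\cap F^\vee Y$ inside $F^\vee E$, observing that this intersection is a direct summand of the $J$-accessible object $DE$ by semisimplicity of $\cD$, hence $J$-accessible, hence contained in $DY$ by the maximality characterization of $DY\subset F^\vee Y$ --- is sound, and each closure property and characterization you invoke is indeed available at this point in the paper. What your approach buys is that it engages honestly with the warning in the Remark immediately preceding the lemma (the equivalence $\cC^I\simeq\cD^J$ need not respect exact structures), whereas the paper's one-line appeal to ``semisimplicity of $\cC^I$'' quietly needs the extra step of converting semisimplicity of objects of $\cD$ into semisimplicity of the corresponding objects of $\cC$ via the bijection on simples. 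What it costs is length: the paper's route is essentially immediate once one accepts that objectwise semisimplicity transports.
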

\begin{proof}
Let $0\to Y\to E\to X\to 0$
be a short exact sequence in $\cC$.
Since $\cC^I$ is closed under extensions, $E$ is also in $\cC^I$.
Now $\cC^I\simeq\cD^J$ is semisimple, so that this sequence splits.
\end{proof}

On the other hand, the Ext preserving property for the other category
is induced from the following condition.
\begin{lemma}
Suppose that $\rho\colon GF\to\Id_\cD$ is surjective.
Then $\Ext^1_\cC(X,Y)\simeq\Ext^1_{\cC_I}(X,Y)$ for any $X,Y\in\cC_I$.
\end{lemma}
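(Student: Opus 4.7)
The plan is to reduce the claim to the idempotency criterion of Proposition~\ref{prop:ext_1}: that proposition says $\Ext^1_{\cC_I}\simeq\Ext^1_\cC$ on objects of $\cC_I$ precisely when the trace ideal $I=\Image(\eta\colon FG\to\Id_\cC)$ satisfies $I^2=I$. So the whole task is to show that surjectivity of $\rho$ forces $I$ to be idempotent.

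Fix $X\in\cC$ and factor $\eta X\colon FGX\to X$ as $\pi\colon FGX\twoheadrightarrow IX$ followed by $i\colon IX\hookrightarrow X$. By definition $I^2X=I(IX)$ is the image of $\eta(IX)\colon FG(IX)\to IX$, and naturality of $\eta$ applied to $i$ yields the identity $i\circ\eta(IX)=\eta X\circ FG(i)$. Since $i$ is monic and $\eta X$ surjects onto $IX$ by construction, the desired equality $I^2X=IX$ reduces to showing that $FG(i)$ is an epimorphism.

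To prove this, apply $G$ to the factorization $\eta X=i\circ\pi$ to get $G(\eta X)=G(i)\circ G(\pi)$. The associativity law $\rho G=G\eta$ identifies $G(\eta X)$ with $\rho(GX)$, which is epic by hypothesis. A composite that ends in $G(i)$ and is surjective forces $G(i)$ itself to be surjective. Finally, $F$ is a left adjoint, hence preserves epimorphisms, so $FG(i)$ is epic. Tracing back through the naturality square yields $I^2X=IX$ for every $X$, so $I^2=I$ as subfunctors of $\Id_\cC$, and Proposition~\ref{prop:ext_1} gives the conclusion.

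The only conceptual subtlety is that ideal functors are not right exact in general, so one cannot simply ``apply $I$'' to the inclusion $i\colon IX\hookrightarrow X$ and read off $I^2X=IX$ from some exact sequence. The argument sidesteps this by staying within the explicit epi--mono factorization of $\eta X$ and invoking only the fact that the two left adjoints $F$ and $G$ preserve epimorphisms, together with the single associativity identity $\rho G=G\eta$. I do not expect further obstacles beyond correctly bookkeeping this chain of surjections.
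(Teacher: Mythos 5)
Your proof is correct and follows essentially the same route as the paper's: reduce the statement to the idempotency criterion $I^2=I$ of Proposition~\ref{prop:ext_1}, and deduce $I^2=I$ from the surjectivity of $\rho$ via the associativity law $\rho G=G\eta$ together with the fact that the left adjoints $F$ and $G$ preserve epimorphisms. The paper phrases this in one line at the level of functors (surjectivity of $FGFG\to FG$), while you carry out the same computation object-wise through the epi--mono factorization of $\eta X$; the content is identical.
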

\begin{proof}
By the assumption $FGFG\to FG$ is also surjective.
This implies $I^2=I$, so that we can use Proposition~\ref{prop:ext_1}.
\end{proof}

\subsection{Correspondence on simple objects}

Next we prove the correspondence between the simple objects
in the respective subcategories.

\begin{theorem}\label{thm:morita_simple_1:1}
$D$ and $D'$ induce
a one-to-one correspondence $\Irr\cC^I\onetoone\Irr\cD^J$.
\end{theorem}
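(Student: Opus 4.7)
The plan is to exploit the equivalence of Theorem~\ref{thm:morita_cat_equiv}. By the symmetry of the Morita context (swapping $(\cC,I,F,G,\eta)\leftrightarrow(\cD,J,G,F,\rho)$) it suffices to prove that $X\in\Irr\cC^I$ implies $DX\in\Irr\cD^J$; the reverse assignment and the fact that the two are mutually inverse then follow from Theorem~\ref{thm:morita_cat_equiv}. Since $DX\in\cD^J$ is automatic, the real content is the simplicity of $DX$ in the ambient category $\cD$.

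The first ingredient is a vanishing criterion: for any $Y\in\cC$, $DY=\Image(\bar\eta Y\colon GY\to F^\vee Y)$, and $\bar\eta Y$ corresponds under the adjunction $(F,F^\vee)$ to $\eta Y\colon FGY\to Y$, whose image is by definition $IY$. Hence $DY=0$ iff $Y$ is $I$-annihilated. Combined with Lemma~\ref{lem:annihilated}, an object which is both $I$-annihilated and $I$-torsion-free is zero, so any nonzero subobject of an $I$-torsion-free object has $DY\neq 0$. Symmetrically, $D'W\neq 0$ for any nonzero subobject $W$ of a $J$-torsion-free object.

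Now take $X\in\Irr\cC^I$ and let $0\neq W\hookrightarrow DX$ be a subobject in $\cD$. Since $DX\in\cD^J$ is $J$-torsion-free, so is $W$, hence $D'W\neq 0$. Because $D'$ preserves images, it preserves monomorphisms, so $D'W\hookrightarrow D'DX\simeq X$ is a nonzero subobject of the simple object $X$; therefore $D'W=X$ as subobjects, and applying $D$ yields $DD'W\simeq DX$ via the equivalence.

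The last step identifies $DD'W$ inside $W$. The symmetric analogue of the identity $D'D\simeq\Image(I\hookrightarrow\Id_\cC\twoheadrightarrow I^\circ)$ recorded just before Theorem~\ref{thm:morita_cat_equiv} reads $DD'\simeq\Image(J\hookrightarrow\Id_\cD\twoheadrightarrow J^\circ)$; since $W$ is $J$-torsion-free we have $J^\circ W=W$, and so $DD'W=JW$ as a subobject of $W$ via the natural inclusion. Thus the composite $JW\hookrightarrow W\hookrightarrow DX$ is an isomorphism, which forces $JW=DX$ and hence $W=DX$, showing $DX$ is simple. The main obstacle is that a subobject of an object of $\cD^J$ need not itself lie in $\cD^J$ (it is automatically $J$-torsion-free but may fail to be $J$-accessible), so $W$ cannot be transported directly through the equivalence; the identification $DD'W\simeq JW$ for $J$-torsion-free $W$ is precisely what bypasses this difficulty.
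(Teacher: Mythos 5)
Your proof is correct, but it takes a genuinely different route from the paper's. The paper proves a stronger intermediate statement---for $X\in\Irr\cC^I$, the object $DX$ is the simple \emph{socle} of $F^\vee X$ (and dually the simple top of $GX$)---by a direct diagram chase: for any nonzero $Y\hookrightarrow F^\vee X$ the adjoint map $FY\to X$ is nonzero, hence epic by simplicity of $X$, and the associativity law then forces $JY=DX$, so every nonzero subobject of $F^\vee X$ contains $DX$. You instead derive the theorem as a formal consequence of the already-established equivalence $\cC^I\simeq\cD^J$ (Theorem~\ref{thm:morita_cat_equiv}) together with the identification $DD'=\Image(J\hookrightarrow\Id_\cD\twoheadrightarrow J^\circ)$: for nonzero $W\hookrightarrow DX$ you get $D'W\simeq X$ by simplicity, hence $JW=DD'W\simeq DX$, and this forces $W=DX$. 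This is precisely the point the paper flags when it says the theorem must be proved ``independently'' of the equivalence; your torsion-free trick shows the equivalence does suffice after all. Two remarks. First, the one step needing care is the last: an abstract isomorphism $JW\simeq DX$ with $JW\subseteq W\subseteq DX$ would not by itself give $W=DX$ (think of $2\ZZ\subseteq\ZZ$); what saves you is that the isomorphism $DD'W\to DD'DX$ is $DD'$ applied to the inclusion, and by naturality of the identification on $J$-torsion-free objects it is realized by the composite $JW\hookrightarrow W\hookrightarrow DX$, so that composite is epic and $W=DX$ follows. You gesture at this with ``via the natural inclusion,'' but it deserves to be said explicitly. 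Second, what the paper's route buys is the socle/top statement itself, which is used afterwards for the multiplicity formula $[GX:Y]=[F^\vee X:Y]$ and for Proposition~\ref{prop:projective_cover}; your argument yields the bijection but not those refinements.
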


Though we have proven the category equivalence $\cC^I\simeq\cD^J$,
we have to prove this theorem independently since
we do not know how to characterize the set $\Irr\cC^I$ from
the category $\cC^I$ itself.
Actually, using Lemma~\ref{lem:morita_funct_on_subcats} again,
this theorem is obtained as an immediate corollary
of the next theorem.

\begin{theorem}
Let $X\in\Irr\cC^I$.
Then $DX$ is the simple socle of $F^\vee X$ as well as the simple top of $GX$.
\end{theorem}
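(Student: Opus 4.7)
The plan is to prove simplicity of $DX$ first; the socle and top statements then follow quickly from simplicity and facts already established. The main technical input is the Morita associativity $\eta F = F\rho$.

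I would show simplicity of $DX$ as follows. Take a nonzero subobject $Y \hookrightarrow DX$ in $\cD$. The composite $Y \hookrightarrow DX \hookrightarrow F^\vee X$ is adjoint under $F \dashv F^\vee$ to some $g \colon FY \to X$, which is nonzero (bijectivity of the adjunction) and hence epic by simplicity of $X$. Applying the right-exact $G$ yields $Gg \colon GFY \twoheadrightarrow GX$, and the key step is to compare two morphisms $GFY \rightrightarrows F^\vee X$: the composite $\bar\eta X \circ Gg$ on one hand, and on the other hand $(Y \hookrightarrow F^\vee X) \circ \rho Y$. By naturality of $\bar\eta$, the coincidence reduces to the identity $\bar\eta FY = \delta_F Y \circ \rho Y$ (with $\delta_F$ the unit of $F \dashv F^\vee$), which itself follows from the associativity $\eta F = F\rho$ combined with naturality of $\delta_F$. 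The two composites therefore agree as maps into $F^\vee X$, hence into the monomorphism $DX \hookrightarrow F^\vee X$; the first is manifestly epic, forcing the second to be epic, and in turn forcing $Y \hookrightarrow DX$ to be epic. Hence $Y = DX$.

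For the socle, any simple subobject $S \subset F^\vee X$ cannot be $J$-annihilated: that would force $\eta FS = F\rho S = 0$ by associativity, whence $FS$ is $I$-annihilated, whence $\Hom_\cC(FS, X) = 0$ as $X$ is $I$-torsion-free, whence $S \hookrightarrow F^\vee X$ is zero by adjunction — a contradiction. So $S$ is $J$-accessible, and $S \subset DX$ by the proposition characterising $DX$ as the largest $J$-accessible subobject of $F^\vee X$; since $DX$ is simple, $S = DX$. Dually for the top: a simple quotient $p \colon GX \twoheadrightarrow Q$ is $J$-accessible (since $GX$ is $J$-accessible and there are no nonzero morphisms to $J$-annihilated objects), its adjoint $h \colon X \to G^\vee Q$ is monic by simplicity of $X$, and $h$ factors through $D'Q \hookrightarrow G^\vee Q$ because $G^\vee Q / D'Q$ is $I$-annihilated (dual of Lemma~\ref{lem:rest_is_annihilated}) and $X$ is $I$-accessible. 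Translating back through $G \dashv G^\vee$ — equivalently, applying $D$ and using $DD'Q \simeq Q$ from the category equivalence of Theorem~\ref{thm:morita_cat_equiv} — shows $p$ factors through $GX \twoheadrightarrow DX$, so $DX$ is the top of $GX$.

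The main obstacle I expect is the compatibility identity $\bar\eta FY = \delta_F Y \circ \rho Y$ in the simplicity step, which is where the Morita associativity is consumed; the rest of the proof is a careful but essentially formal application of the adjunctions and the subcategory machinery already developed.
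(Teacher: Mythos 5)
Your simplicity argument is correct and is, in substance, the paper's own diagram chase written out in terms of units and counits: the identity $\bar\eta F=\delta\circ\rho$ (with $\delta\colon\Id_\cD\to F^\vee F$ the unit of $F\dashv F^\vee$) is exactly the adjoint transcription of the associativity $\eta F=F\rho$, and it does hold by the computation you indicate. Note, however, that you restricted the chase to subobjects $Y\subseteq DX$ unnecessarily: the same comparison of the two composites $GFY\rightrightarrows F^\vee X$ works verbatim for an \emph{arbitrary} nonzero $Y\hookrightarrow F^\vee X$ and yields $JY=DX$, hence $DX\subseteq Y$. That is the paper's proof, and it delivers simplicity, the socle statement, and even essentiality of the socle in a single stroke. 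Your separate socle argument (a simple subobject cannot be $J$-annihilated, hence is $J$-accessible, hence is contained in the largest $J$-accessible subobject $DX$) is a correct alternative route, though it only identifies $\mathrm{Soc}(F^\vee X)$ and does not show $DX$ is essential.

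The top half contains a genuine gap. What your argument actually establishes is $X\simeq D'Q$ and hence $Q\simeq DD'Q\simeq DX$, i.e.\ that every simple quotient of $GX$ is \emph{isomorphic} to $DX$. That is strictly weaker than the assertion that $DX$ is the simple top: $\mathrm{Top}(GX)=GX/\mathrm{Rad}(GX)$ is simple only if $GX$ has a \emph{unique} maximal subobject, and a priori $GX$ could have two distinct maximal subobjects with isomorphic simple quotients, in which case $\mathrm{Top}(GX)\simeq DX\oplus DX$. The sentence ``translating back through $G\dashv G^\vee$ \dots\ shows $p$ factors through $GX\twoheadrightarrow DX$'' is precisely the missing step, and nothing preceding it proves the factorization: the fact that the adjoint $h$ lands in $D'Q\subseteq G^\vee Q$ does not by itself transport across the adjunction to a factorization of $p$ through $\bar\eta X$.

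Two ways to close the gap. The clean one is the honest dualization the paper intends by ``dually'': $((G^\vee)^\op,(F^\vee)^\op)$ with $\eta^{\vee\op}$, $\rho^{\vee\op}$ is a Morita context between $\cC^\op$ and $\cD^\op$ whose trace ideal on $\cC^\op$ is $(I^\circ)^\op$ and whose Morita context functor is $D^\op$; applying the already-proved socle statement there to $X$ says exactly that $\Ker(GX\twoheadrightarrow DX)$ contains every proper subobject of $GX$, i.e.\ is the unique maximal subobject. Alternatively, count homomorphisms: since $\Coker(D'DX\hookrightarrow G^\vee DX)$ is $I$-annihilated and $X$ is $I$-accessible, one gets
$\Hom_\cD(GX,DX)\simeq\Hom_\cC(X,G^\vee DX)\simeq\Hom_\cC(X,D'DX)\simeq\End_\cC(X)$,
a division algebra by Schur's lemma, which cannot receive $\End_\cD(DX)^{\oplus2}$; so $GX$ cannot surject onto $DX\oplus DX$. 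This is the mechanism used later in the proof of Proposition~\ref{prop:projective_cover}, and either repair completes your argument.
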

\begin{proof}
By the assumption $X\notin\cC_I$, we have $DX\neq0$.
Take any non-zero subobject $Y\hookrightarrow F^\vee X$.
Then the corresponding morphism $FY\to X$ is also non-zero,
so it must be epic since $X$ is simple.
Now consider the commutative diagram
\[\begin{tikzcd}
GFY \arrow{r} \arrow[bend left=20,two heads]{rr} \arrow[two heads]{d} &
GFF^\vee X \arrow{r} \arrow{d} \arrow{rd}{\rho F^\vee X} &
GX \arrow{d}{\bar\eta X} \\
JY \arrow[hook]{r} & JF^\vee X \arrow[hook]{r} & F^\vee X\rlap{.}
\end{tikzcd}\]
Since $G$ is right exact $GFY\to GX$ is also epic.
So we have
\[JY=\Image(GFY\to F^\vee X)=\Image(GX\to F^\vee X)=DX.\]
This implies that $DX$ is contained in an arbitrary non-zero subobject
$Y\hookrightarrow F^\vee X$,
so it must be a simple socle of $F^\vee X$.
Dually it is also a simple top of $GX$.
\end{proof}

Putting it together with Lemma~\ref{lem:morita_funct_on_subcats}
and Lemma~\ref{lem:rest_is_annihilated}
we obtain the next corollary.
\begin{corollary*}\label{cor:multiplicity_on_Morita_context}
Let $X\in\Irr\cC$ and $Y\in\Irr\cD^J$. Then
\[[GX:Y]=[F^\vee X:Y]=\begin{cases*}
1 & $Y\simeq DX$,\\
0 & otherwise.
\end{cases*}
\qedhere\]
\end{corollary*}
Here $[M:S]$ is the multiplicity of a simple object $S$
in the composition factors of $M$.
If $M$ is not of finite length this symbol does not make sense in general,
but the formula above can be always read
in an appropriate manner.

When we replace $X$ above to
its injective hull or its projective cover,
we obtain similar statements.
\begin{proposition}\label{prop:projective_cover}
Let $X\in\Irr\cC^I$ and
suppose that it has a projective cover $P\twoheadrightarrow X$.
Then $GP\twoheadrightarrow GX\twoheadrightarrow DX$
is the top of $GP$.
\end{proposition}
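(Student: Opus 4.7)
My plan is to verify that $\mathrm{Top}(GP)$ is a simple object isomorphic to $DX$ via the given map, which amounts to checking that $\Hom_\cD(GP,S)=0$ for every simple $S\in\cD$ with $S\not\simeq DX$ and that $\Hom_\cD(GP,DX)\simeq\End(DX)$ with the identity realized by the composite $GP\twoheadrightarrow GX\twoheadrightarrow DX$. First I establish two preliminary accessibility properties. Since an ideal functor preserves epimorphisms, applying $I$ to the essential surjection $\pi\colon P\twoheadrightarrow X$ yields an epi $IP\twoheadrightarrow IX=X$ (using $X\in\cC^I$), and essentiality of the projective cover forces $IP=P$, so $P$ is $I$-accessible. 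Using the Morita associativity $\rho G=G\eta$ together with right exactness of $G$, I then compute $J(GP)=\Image(\rho GP)=\Image(G\eta P)=G(IP)=GP$, so $GP$ is $J$-accessible. Consequently $\Hom_\cD(GP,S)=0$ for every $J$-annihilated simple $S$ (by the vanishing lemma in Section~\ref{sec:ideal}), and it remains to treat $S\in\Irr\cD^J$, which by Theorem~\ref{thm:morita_cat_equiv} has the form $S\simeq DY$ for a unique $Y\in\Irr\cC^I$.

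For such $DY$ I chain adjunctions through the exact sequence $0\to DY\to F^\vee Y\to F^\vee Y/DY\to 0$. Since $F^\vee Y/DY$ is $J$-annihilated by Lemma~\ref{lem:rest_is_annihilated} and $GP$ is $J$-accessible, the quotient contributes nothing, giving $\Hom_\cD(GP,DY)\simeq\Hom_\cD(GP,F^\vee Y)\simeq\Hom_\cC(P,G^\vee F^\vee Y)$ via $G\dashv G^\vee$. The heart of the proof is to collapse the right-hand side to $\Hom_\cC(P,Y)$ using the canonical filtration $Y\hookrightarrow G^\vee DY\hookrightarrow G^\vee F^\vee Y$, where the first inclusion is $\eta^\vee Y$ (mono since $Y$ is $I$-torsion-free, and its image lands in $G^\vee DY$ because $\bar\eta Y$ factors through $DY$) and the second is obtained by applying left exact $G^\vee$ to $DY\hookrightarrow F^\vee Y$. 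Both successive quotients are $I$-annihilated: $G^\vee DY/Y$ by the version of Lemma~\ref{lem:rest_is_annihilated} for the opposite Morita context applied to $S=DY$ (using $D'DY=Y$), and $G^\vee F^\vee Y/G^\vee DY$ because it embeds into $G^\vee(F^\vee Y/DY)$, which is $I$-annihilated by the swap of Lemma~\ref{lem:morita_funct_on_subcats}(3). Since $P$ is projective and $I$-accessible, $\Hom_\cC(P,-)$ is exact and vanishes on $I$-annihilated objects, so $\Hom_\cC(P,G^\vee F^\vee Y/Y)=0$ and hence $\Hom_\cC(P,G^\vee F^\vee Y)\simeq\Hom_\cC(P,Y)$.

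To finish, because $Y$ is simple, every morphism $P\to Y$ sends $\mathrm{Rad}(P)$ into $\mathrm{Rad}(Y)=0$, and so factors through $X=P/\mathrm{Rad}(P)$, giving $\Hom_\cC(P,Y)\simeq\Hom_\cC(X,Y)$. By Schur this is zero unless $Y\simeq X$, in which case it equals $\End(X)$, which is identified with $\End(DX)$ by the category equivalence $\cC^I\simeq\cD^J$ of Theorem~\ref{thm:morita_cat_equiv}. Tracing $\id_X$ back through each adjunction step recovers $GP\twoheadrightarrow GX\twoheadrightarrow DX$ as the distinguished generator of $\Hom_\cD(GP,DX)$, so $DX$ is the simple top of $GP$ realized by this map. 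The delicate part of the argument is the verification that the successive quotients $G^\vee DY/Y$ and $G^\vee F^\vee Y/G^\vee DY$ are $I$-annihilated, which relies on carefully dualizing Lemma~\ref{lem:rest_is_annihilated} and Lemma~\ref{lem:morita_funct_on_subcats}(3) under the symmetry of the Morita context, together with keeping track of canonical isomorphisms so that the final image is precisely the given composite.
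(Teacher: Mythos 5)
Your proof is correct, but it follows a genuinely different route from the paper's. The paper handles all $Y\in\Irr\cD$ at once: by adjunction $\Hom_\cD(GP,Y)\simeq\Hom_\cC(P,G^\vee Y)$, and then the single short exact sequence $0\to D'Y\to G^\vee Y\to C\to0$, with $C$ $I$-annihilated by (the swap of) Lemma~\ref{lem:rest_is_annihilated}, kills the cokernel term via the projective-cover property $\Hom_\cC(P,C)=0$ ($C$ has no subquotient $\simeq X$); since $D'Y$ is simple or zero, the conclusion is immediate. You instead first prove that $P$ is $I$-accessible (a nice observation, correctly deduced from $I$ preserving epimorphisms and the superfluousness of $\Ker(P\twoheadrightarrow X)$; the consequence that $GP$ is $J$-accessible is already Lemma~\ref{lem:morita_funct_on_subcats}(1)), dispose of the $J$-annihilated simples by the orthogonality lemma, and then for $S\simeq DY$ travel the longer chain $\Hom_\cD(GP,DY)\simeq\Hom_\cD(GP,F^\vee Y)\simeq\Hom_\cC(P,G^\vee F^\vee Y)\simeq\Hom_\cC(P,Y)$, using the filtration $Y\subset G^\vee DY\subset G^\vee F^\vee Y$ whose two subquotients you correctly identify as $I$-annihilated (note the factorization $\Id_\cC\twoheadrightarrow I^\circ\hookrightarrow G^\vee D$ from the diagram preceding Theorem~\ref{thm:morita_cat_equiv} is exactly what guarantees $\eta^\vee Y$ lands in $G^\vee DY$). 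Your route isolates cleanly where accessibility orthogonality versus the projective-cover property is used — the latter appears only in the final reduction $\Hom_\cC(P,Y)\simeq\Hom_\cC(X,Y)$ — at the cost of dualizing two lemmas and tracking a three-term filtration where the paper needs only one dualization and a two-term sequence. One small point of care: your phrase that $\Hom_\cC(P,{-})$ ``vanishes on $I$-annihilated objects, so $\Hom_\cC(P,G^\vee F^\vee Y/Y)=0$'' should be read as vanishing on each filtration subquotient plus exactness, since $I$-annihilated objects are not closed under extensions in general (Proposition~\ref{prop:ext_1}); as written your filtration argument does supply exactly this, so no gap results.
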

\begin{proof}
Take $Y\in\Irr\cD$ and let $C\coloneqq\Coker(D'Y\hookrightarrow G^\vee Y)$.
By the projectiveness of $P$, the sequence
\[0\to\Hom_\cC(P,D'Y)\to\Hom_\cC(P,G^\vee Y)\to\Hom_\cC(P,C)\to0\]
is exact. By Lemma~\ref{lem:rest_is_annihilated},
$C$ is $I$-annihilated.
Hence it has no subquotients isomorphic to $X$,
so $\Hom_\cC(P,C)=0$ by a property of projective cover. Thus
\[
\Hom_\cD(GP,Y)\simeq\Hom_\cC(P,G^\vee Y)\simeq\Hom_\cC(P,D'Y).
\]
Now $D'Y$ is simple or zero, so
there is a non-zero morphism $GP\twoheadrightarrow Y$
if and only if $D'Y\simeq X$,
or equivalently, $Y\simeq DX$. Moreover
\[
\Hom_\cD(GP,DX)\simeq\Hom_\cC(P,X)\simeq\End_\cC(X)\simeq\End_\cD(DX).
\]
Thus $GP\twoheadrightarrow DX$ is the unique its simple quotient.
\end{proof}

Remark that $\Ker(GP\twoheadrightarrow DP)$
is $J$-annihilated by the dual of Lemma~\ref{lem:rest_is_annihilated},
but in general $\Ker(DP\twoheadrightarrow DX)$ is not,
so may contains a composition factor in $\cC^J$.

\subsection{Morita context among multiple categories}
We here generalize the notion of Morita context,
from that between two categories to that among more than two categories.
Let us take an index set $\Lambda$ which is not necessarily finite.
We assume that every category appears in this subsection
is closed under sums and intersections with
cardinality $\#\Lambda$.

\begin{definition}
Let $\{\cC_\lambda\}_{\lambda\in\Lambda}$ be a family of abelian categories
indexed by a set $\Lambda$.
A \term{Morita context} among $\{\cC_\lambda\}$
is a family of
adjunctions $F_{\lambda\mu}\colon\cC_\mu\to\cC_\lambda$
indexed by a pair of $\lambda,\mu\in\Lambda$,
equipped with a family of degree-zero natural transformations
$\eta_{\lambda\mu\nu}\colon F_{\lambda\mu}F_{\mu\nu}\to F_{\lambda\nu}$
indexed by a triple of $\lambda,\mu,\nu\in\Lambda$
which satisfies the following conditions.
\begin{enumerate}
\item (The associativity law)
For each $\lambda,\mu,\nu,\pi\in\Lambda$,
the square
\[\begin{tikzcd}
F_{\lambda\mu}F_{\mu\nu}F_{\nu\pi} \arrow{r}{\eta_{\lambda\mu\nu}F_{\nu\pi}} \arrow{d}[swap]{F_{\lambda\mu}\eta_{\mu\nu\pi}} &
F_{\lambda\nu}F_{\nu\pi} \arrow{d}{\eta_{\lambda\nu\pi}} \\
F_{\lambda\mu}F_{\mu\pi} \arrow{r}[swap]{\eta_{\lambda\mu\pi}} &
F_{\lambda\pi}
\end{tikzcd}\]
commutes.
\item (The unit law)
For each $\lambda$, there is a fixed isomorphism $F_{\lambda\lambda}\simeq\Id_{\cC_\lambda}$
such that $\eta_{\lambda\lambda\mu}$ and $\eta_{\lambda\mu\mu}$ are respectively equal to
\begin{align*}
F_{\lambda\lambda}F_{\lambda\mu}&\simeq\Id_{\cC_\lambda}F_{\lambda\mu}\simeq F_{\lambda\mu},&
F_{\lambda\mu}F_{\mu\mu}&\simeq F_{\lambda\mu}\Id_{\cC_\mu}\simeq F_{\lambda\mu}.
\end{align*}
\end{enumerate}
\end{definition}

One can easily verify that when $\#\Lambda=2$
this definition is equivalent to the previous one.

\begin{remark}
Let $\mathbf{A}$ be a 2-category which consists of
abelian categories as 0-cells, adjunctions as 1-cells and
natural transformation as 2-cells.
Consider $\Lambda$ as a codiscrete category,
that is, we regard that there exists a unique morphism
$\mu\to\lambda$ for each $\lambda,\mu\in\Lambda$.
Then a Morita context is just a lax functor
$\mathcal{F}\colon\Lambda\to\mathbf{A}$
(where $\cC_\lambda=\mathcal{F}(\lambda)$, $F_{\lambda\mu}=\mathcal{F}(\mu\to\lambda)$)
such that the unit $\Id_{\mathcal{F}(\lambda)}\to\mathcal{F}(\lambda\to\lambda)$
is an isomorphism for every $\lambda\in\Lambda$.
\end{remark}

\begin{example}\label{ex:category_is_morita}
Let $\cA$ be a category,
and take an object $X_\lambda\in\cA$ for each $\lambda$.
Let $A_\lambda\coloneqq\End_{\cA}(X_\lambda)$ be its endomorphism algebra.
Then for each pair of $\lambda,\mu$,
$\Hom_\cA(X_\mu,X_\lambda)$ is a $(A_\lambda,A_\mu)$-bimodule
so it induces an adjunction $\lMod{A_\mu}\to\lMod{A_\lambda}$.
Moreover the composition of morphisms
\[\Hom_\cA(X_\mu,X_\lambda)\otimes\Hom_\cA(X_\nu,X_\mu)
\to\Hom_\cA(X_\nu,X_\lambda)\]
gives a natural transformations between these adjunctions
which is associative and unital.
Hence these define a Morita context among the categories $\{\lMod{A_\lambda}\}$.
Conversely all Morita contexts among module categories
are obtained in this way.
\end{example}
\begin{example}
More generally, take a small full subcategory
$\cA_\lambda\subset\cA$ for each $\lambda$.
Then a collection of an $(\cA_\lambda,\cA_\mu)$-module
\begin{align*}
\cA_\mu^\op\boxtimes\cA_\lambda&\to\cV\\
X\boxtimes Y&\mapsto\Hom_\cA(X,Y)
\end{align*}
also defines a Morita context among $\{\lMod{\cA_\lambda}\}$.
\end{example}

Suppose that $\{F_{\lambda\mu}\}_{\lambda,\mu\in\Lambda}$ is
a Morita context among categories $\{\cC_\lambda\}_{\lambda\in\Lambda}$.
For each triple of $\alpha,\lambda,\mu\in\Lambda$,
let $I^\alpha_{\lambda\mu}$
be a subfunctor of $F_{\lambda\mu}$ defined by
\[I^\alpha_{\lambda\mu}\coloneqq
\Image(\eta_{\lambda\alpha\mu}\colon F_{\lambda\alpha}F_{\alpha\mu}\to F_{\lambda\mu}).\]
In particular, $I^\alpha_{\lambda\lambda}\subset F_{\lambda\lambda}\simeq\Id_{\cC_\lambda}$
is an ideal functor on $\cC_\lambda$.
The unit law implies that $I^\lambda_{\lambda\mu}=I^\mu_{\lambda\mu}=F_{\lambda\mu}$.
By the associativity law there are natural transformations
\[I^\alpha_{\lambda\mu}F_{\mu\nu}\to I^\alpha_{\lambda\nu}
\quad\text{and}\quad
F_{\lambda\mu}I^\beta_{\mu\nu}\to I^\beta_{\lambda\nu}\]
induced by $\eta_{\lambda\mu\nu}$.
Since $F_{\lambda\mu}$ is right exact, they induce
\[(F_{\lambda\mu}/I^\alpha_{\lambda\mu})(F_{\mu\nu}/I^\beta_{\mu\nu})\to
F_{\lambda\nu}/(I^\alpha_{\lambda\nu}+I^\beta_{\lambda\nu}).\]

Now take a subset $\Lambda'\subset\Lambda$.
Then clearly the restriction $\{F_{\lambda\mu}\}_{\lambda,\mu\in\Lambda'}$
gives a Morita context among the subcollection $\{\cC_\lambda\}_{\lambda\in\Lambda'}$.
In contrast, we can also take a ``quotient'' of this Morita context
with respect to $\Lambda'$ as follows.
\begin{proposition}
For each $\lambda,\mu\in\Lambda$, let
\[I'_{\lambda\mu}\coloneqq\sum_{\alpha\in\Lambda'}I^\alpha_{\lambda\mu}\]
and $\cC'_\lambda\coloneqq(\cC_\lambda)_{I'_{\lambda\lambda}}$.
Then there exists a Morita context $\{F'_{\lambda\mu}\}$
among the abelian categories $\{\cC'_\lambda\}$ defined by
\[F'_{\lambda\mu}\coloneqq
\Phi^\wedge_{I'_{\lambda\lambda}}(F_{\lambda\mu}/I'_{\lambda\mu})
\Phi_{I'_{\mu\mu}}.\]
\end{proposition}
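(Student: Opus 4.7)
The plan is to check the three pieces of data needed for a Morita context: that each $\cC'_\lambda$ is an abelian category, that each $F'_{\lambda\mu}$ is a well-defined adjunction from $\cC'_\mu$ to $\cC'_\lambda$, and that there are natural transformations $\eta'_{\lambda\mu\nu}$ satisfying the associativity and unit laws. The first is almost immediate: each $I^\alpha_{\lambda\lambda}$ is an ideal functor on $\cC_\lambda$ (by Proposition~\ref{prop:image_of_adjunction_is_ideal}, since $F_{\lambda\alpha}F_{\alpha\lambda}$ is an adjunction as composition of adjunctions), and ideal functors are closed under summation of cardinality $\#\Lambda$, so $I'_{\lambda\lambda}$ is an ideal functor and $\cC'_\lambda$ is an abelian subcategory of $\cC_\lambda$.

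For the second step, I would first show that $F_{\lambda\mu}/I'_{\lambda\mu}$ is itself an adjunction from $\cC_\mu$ to $\cC_\lambda$: it is the cokernel (in $\cHom(\cC_\mu,\cC_\lambda)$) of the canonical morphism $\bigoplus_{\alpha\in\Lambda'} F_{\lambda\alpha}F_{\alpha\mu}\to F_{\lambda\mu}$ between adjunctions, and by Proposition~\ref{prop:colim_adj} adjunctions are closed under such colimits. Next I would verify the compatibility
\[F_{\lambda\mu}/I'_{\lambda\mu} = T_{I'_{\lambda\lambda}}\,(F_{\lambda\mu}/I'_{\lambda\mu})\,T_{I'_{\mu\mu}},\]
which by the corollary after Lemma~\ref{lem:annihilated} is exactly what it takes for the descent to $\cC'_\mu\to\cC'_\lambda$ to produce an adjunction. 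Both required containments $I'_{\lambda\lambda}F_{\lambda\mu}\subset I'_{\lambda\mu}$ and $F_{\lambda\mu}I'_{\mu\mu}\subset I'_{\lambda\mu}$ follow from the associativity square of $\{\eta_{\lambda\mu\nu}\}$: factoring $\eta_{\lambda\alpha\mu}F_{\mu\mu}$ through $F_{\lambda\alpha}F_{\alpha\mu\mu}\to F_{\lambda\alpha}F_{\alpha\mu}\to F_{\lambda\mu}$ lands in $I^\alpha_{\lambda\mu}$, and symmetrically for the other side; both use that $F_{\lambda\mu}$ and the ideal functors preserve images. Hence the corollary gives that $F'_{\lambda\mu}$ is an adjunction $\cC'_\mu\to\cC'_\lambda$.

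For the third step, the same associativity argument shows more generally that $\eta_{\lambda\mu\nu}$ carries $F_{\lambda\mu}I'_{\mu\nu}+I'_{\lambda\mu}F_{\mu\nu}$ into $I'_{\lambda\nu}$, so it descends to a natural transformation
\[\bar\eta_{\lambda\mu\nu}\colon(F_{\lambda\mu}/I'_{\lambda\mu})(F_{\mu\nu}/I'_{\mu\nu})\to F_{\lambda\nu}/I'_{\lambda\nu},\]
after identifying the source via the right exactness of $F_{\lambda\mu}/I'_{\lambda\mu}$. Using the already-established equality $(F_{\lambda\mu}/I'_{\lambda\mu})T_{I'_{\mu\mu}}=F_{\lambda\mu}/I'_{\lambda\mu}$, the composite $F'_{\lambda\mu}F'_{\mu\nu}$ simplifies to $\Phi^\wedge_{I'_{\lambda\lambda}}(F_{\lambda\mu}/I'_{\lambda\mu})(F_{\mu\nu}/I'_{\mu\nu})\Phi_{I'_{\nu\nu}}$, and one defines $\eta'_{\lambda\mu\nu}$ as $\Phi^\wedge_{I'_{\lambda\lambda}}\bar\eta_{\lambda\mu\nu}\Phi_{I'_{\nu\nu}}$.

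The final step, checking the associativity and unit axioms for $\{\eta'_{\lambda\mu\nu}\}$, is routine: both sides of each axiom are descents of the corresponding equality for $\{\eta_{\lambda\mu\nu}\}$, and the fully-faithful embedding from the corollary after Lemma~\ref{lem:annihilated} lets us transport equalities of natural transformations between the two pictures. The only subtle point that I expect to require care is the bookkeeping in Step~2 verifying the bilateral compatibility $F_{\lambda\mu}/I'_{\lambda\mu}=T_{I'_{\lambda\lambda}}(F_{\lambda\mu}/I'_{\lambda\mu})T_{I'_{\mu\mu}}$, because one has to use the associativity square twice (once from each side) together with right exactness of the adjunctions and the fact that ideal functors commute with summation.
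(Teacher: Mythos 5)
Your proposal is correct and follows essentially the same route as the paper: descend the structure maps $\eta_{\lambda\mu\nu}$ to the quotient functors via cocontinuity, verify the bilateral compatibility $T_{I'_{\lambda\lambda}}(F_{\lambda\mu}/I'_{\lambda\mu})T_{I'_{\mu\mu}}=F_{\lambda\mu}/I'_{\lambda\mu}$ from the associativity and unit laws, and transport everything into the subcategories $\cC'_\lambda$ through the fully faithful embedding. You simply make explicit several steps the paper leaves implicit (that $I'_{\lambda\lambda}$ is an ideal functor, that $F_{\lambda\mu}/I'_{\lambda\mu}$ is an adjunction as a cokernel of adjunctions, and the routine verification of the axioms), which is fine.
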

\begin{proof}
Since $F_{\lambda\mu}$ is cocontinuous,
by taking colimits of natural transformation above we obtain
\[(F_{\lambda\mu}/I'_{\lambda\mu})(F_{\mu\nu}/I'_{\mu\nu})\to
F_{\lambda\nu}/I'_{\lambda\nu}.\]
Moreover, by the unit law we have
$T_{I'_{\lambda\lambda}}(F_{\lambda\mu}/I'_{\lambda\mu})T_{I'_{\mu\mu}}
=F_{\lambda\mu}/I'_{\lambda\mu}$.
Thus there are a natural transformation
\[\eta'_{\lambda\mu\nu}\colon F'_{\lambda\mu}F'_{\mu\nu}
=\Phi^\wedge_{I'_{\lambda\lambda}}(F_{\lambda\mu}/I'_{\lambda\mu})
(F_{\mu\nu}/I'_{\mu\nu})\Phi_{I'_{\nu\nu}}\to
\Phi^\wedge_{I'_{\lambda\lambda}}(F_{\lambda\mu}/I'_{\lambda\nu})\Phi_{I'_{\nu\nu}}
=F'_{\lambda\nu}\]
and an isomorphism
\[F'_{\lambda\lambda}\simeq
\Phi^\wedge_{I'_{\lambda\lambda}}T_{I'_{\lambda\lambda}}\Phi_{I'_{\lambda\lambda}}
=\Id_{\cC_\lambda}\]
which form a Morita context.
\end{proof}

Note that $\cC'_\alpha=\{0\}$ for every $\alpha\in\Lambda'$,
so the quotient Morita context above should be considered
as parameterized by the complement set $\Lambda\setminus\Lambda'$.
When $\Lambda'$ has a decomposition $\Lambda'=\Lambda'_1\sqcup\Lambda'_2$,
taking the quotient by $\Lambda'$ is equal to
first taking by $\Lambda'_1$, then by $\Lambda'_2$.

\subsection{Morita context with a partial order}
As a special case of quotient,
let us consider the case that $\Lambda'$ in the previous subsection
consists of a single element $\alpha$.
For each $\lambda\in\Lambda\setminus\{\alpha\}$,
the pair $(F_{\lambda\alpha},F_{\alpha\lambda})$ is a Morita context between
two categories $\cC_\lambda$ and $\cC_\alpha$.
Hence we have a category equivalence
\[(\cC_\lambda)^{I^\alpha_{\lambda\lambda}}
\simeq(\cC_\alpha)^{I^\lambda_{\alpha\alpha}}\]
and a one-to-one correspondence
\[\Irr(\cC_\lambda)^{I^\alpha_{\lambda\lambda}}
\onetoone\Irr(\cC_\alpha)^{I^\lambda_{\alpha\alpha}}.\]
In practice we should choose $\alpha$ such that the structure of $\cC_\alpha$ is very simple
so that we can describe a part of $\cC_\lambda$, which may be hard to study,
by terms of $\cC_\alpha$.
Now on the collection of the rest part
$\cC'_\lambda=(\cC_\lambda)_{I^\alpha_{\lambda\lambda}}$ we have
a new Morita context, so we can recursively continue
this process for $\cC'_\lambda$ by choosing another $\beta\in\Lambda$
to decompose $\cC_\lambda$ into small parts.
In order to perform this strategy at one time,
we introduce a partial order on the set $\Lambda$
as we do before in the previous subsection.
Intuitively it indicates the order of $\alpha,\beta,\dotsc$
we pick up from $\Lambda$.

\begin{definition}
Let $\{F_{\lambda\mu}\}$ be a Morita context among
the categories $\{\cC_{\lambda}\}$.
A partial order $\le$ on the set $\Lambda$
is said to be \term{compatible} with $\{F_{\lambda\mu}\}$ if it satisfies
\[F_{\lambda\mu}=\sum_{\nu\le\lambda,\mu}I^\nu_{\lambda\mu},
\quad\text{where}\quad
I^\nu_{\lambda\mu}\coloneqq
\Image(F_{\lambda\nu}F_{\nu\mu}\to F_{\lambda\mu})\]
for each pair of $\lambda,\mu\in\Lambda$.
\end{definition}
When $\lambda$ and $\mu$ are comparable then the condition above is trivially satisfied.
Hence every total order on $\Lambda$ is compatible.

\begin{lemma}\label{lem:well_founded_Morita}
If $\Lambda$ is well-founded, then the condition above is equivalent
to that
\[F_{\lambda\mu}=\sum_{\nu<\lambda}I^\nu_{\lambda\mu}\]
is satisfied for each pair of $\lambda,\mu\in\Lambda$ such that $\lambda\not\leq\mu$.
\end{lemma}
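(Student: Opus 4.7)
The forward direction is immediate: if $\lambda\not\le\mu$, then no $\nu\le\lambda,\mu$ can equal $\lambda$ (else $\lambda\le\mu$ by transitivity), so every such $\nu$ satisfies $\nu<\lambda$; hence $\sum_{\nu\le\lambda,\mu}I^\nu_{\lambda\mu}\subseteq\sum_{\nu<\lambda}I^\nu_{\lambda\mu}\subseteq F_{\lambda\mu}$, and the compatibility hypothesis collapses both inclusions to equalities.

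For the converse, assume the restricted identity for every $(\lambda,\mu)$ with $\lambda\not\le\mu$. The cases $\lambda\le\mu$ or $\mu\le\lambda$ are handled by the unit law, which gives $I^\lambda_{\lambda\mu}=F_{\lambda\mu}$ or $I^\mu_{\lambda\mu}=F_{\lambda\mu}$ respectively, so I may assume $\lambda$ and $\mu$ incomparable. The technical heart is a reduction lemma that I plan to establish first: \emph{if $\nu\not\le\mu$, then $I^\nu_{\lambda\mu}\subseteq\sum_{\pi<\nu}I^\pi_{\lambda\mu}$.} The idea is to apply the right-exact functor $F_{\lambda\nu}$ (a left adjoint, hence cocontinuous) to the surjection $\bigoplus_{\pi<\nu}I^\pi_{\nu\mu}\twoheadrightarrow F_{\nu\mu}$ provided by the hypothesis at $(\nu,\mu)$, and then postcompose with $\eta_{\lambda\nu\mu}$, realising $I^\nu_{\lambda\mu}$ as the sum of images of $F_{\lambda\nu}F_{\nu\pi}F_{\pi\mu}\to F_{\lambda\mu}$. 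Associativity of the Morita context rewrites each such composite as $F_{\lambda\nu}F_{\nu\pi}F_{\pi\mu}\to F_{\lambda\pi}F_{\pi\mu}\to F_{\lambda\mu}$, whose image lies inside $I^\pi_{\lambda\mu}$, yielding the stated inclusion.

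With the reduction lemma in hand, I will prove by well-founded induction on $\nu<\lambda$ that $I^\nu_{\lambda\mu}\subseteq\sum_{\sigma\le\lambda,\mu}I^\sigma_{\lambda\mu}$: if $\nu\le\mu$, the left side is literally a summand; if $\nu\not\le\mu$, the reduction lemma bounds $I^\nu_{\lambda\mu}$ by $\sum_{\pi<\nu}I^\pi_{\lambda\mu}$, and each index $\pi<\nu<\lambda$ is covered by the inductive hypothesis (well-foundedness of $\Lambda$ legitimises this induction). Summing over $\nu<\lambda$ and invoking the restricted hypothesis at $(\lambda,\mu)$ yields $F_{\lambda\mu}=\sum_{\nu<\lambda}I^\nu_{\lambda\mu}\subseteq\sum_{\sigma\le\lambda,\mu}I^\sigma_{\lambda\mu}\subseteq F_{\lambda\mu}$, finishing the argument. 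The only delicate step is the reduction lemma, which hinges on the interplay between right exactness of the left adjoint $F_{\lambda\nu}$ and the associativity square for $\eta$; the rest is bookkeeping, with the unit law disposing of comparable pairs and well-founded induction disposing of incomparable ones.
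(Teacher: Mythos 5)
Your proof is correct and follows essentially the same route as the paper: the key step in both is to use cocontinuity of $F_{\lambda\nu}$ together with the associativity law to realise $I^\nu_{\lambda\mu}$ as a sum of images of $F_{\lambda\nu}F_{\nu\pi}F_{\pi\mu}\to F_{\lambda\pi}F_{\pi\mu}\to F_{\lambda\mu}$, each landing in $I^\pi_{\lambda\mu}$ for smaller $\pi$, with well-foundedness terminating the descent. The only difference is organisational — you run the well-founded induction on the inner index $\nu$ using the restricted hypothesis at $(\nu,\mu)$, whereas the paper does transfinite induction on $\lambda$ with the full compatibility condition for all $\nu<\lambda$ as the inductive hypothesis, so a single reduction step suffices there.
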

\begin{proof}
Clearly the first condition implies the second.
Suppose the second one.
We prove the first condition for a fixed $\mu$
by transfinite induction on $\lambda$. So assume that for every $\nu<\lambda$
we have $F_{\nu\mu}=\sum_{\pi\le\nu,\mu}I^\pi_{\nu\mu}$.
If $\lambda\le\mu$ then the condition is trivially satisfied
so assume $\lambda\not\le\mu$. Then by the assumption we have
$F_{\lambda\mu}=\sum_{\nu<\lambda}I^\nu_{\lambda\mu}$.
Each $I^\nu_{\lambda\mu}$ is contained in
\[\sum_{\pi\le\nu,\mu}\Image(F_{\lambda\nu}F_{\nu\pi}F_{\pi\mu}\to F_{\lambda\mu})
\subset\sum_{\pi\le\lambda,\mu}I^\pi_{\lambda\mu}\]
since $F_{\lambda\mu}$ is cocontinuous.
Thus the condition is also satisfied for $\lambda$.
\end{proof}

Now assume that a partial order $\le$ is compatible with $\{F_{\lambda\mu}\}$.
Let us denote
\[I^{\le\lambda}_{\alpha\beta}\coloneqq\sum_{\mu\le\lambda}I^\lambda_{\alpha\beta}
\quad\text{and}\quad
I^{<\lambda}_{\alpha\beta}\coloneqq\sum_{\mu<\lambda}I^\lambda_{\alpha\beta}.\]

\begin{proposition}
For each $\omega\in\Lambda$
the family $\{I^{\le\lambda}_{\omega\omega}\}_{\lambda\in\Lambda}$
is an ideal filter on $\cC_\omega$.
\end{proposition}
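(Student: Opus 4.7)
The plan is to verify directly the three axioms of Definition~\ref{def:ideal_filter} for the family $\{I^{\le\lambda}_{\omega\omega}\}_{\lambda \in \Lambda}$. First I would confirm that each member is already an ideal functor on $\cC_\omega$: the composite $F_{\omega\mu}F_{\mu\omega}$ is an adjunction, and Proposition~\ref{prop:image_of_adjunction_is_ideal} applied to its canonical morphism into $F_{\omega\omega} \simeq \Id_{\cC_\omega}$ shows that $I^\mu_{\omega\omega}$ is an ideal functor, after which the standing closure hypothesis on $\cC_\omega$ lets us take the sum $I^{\le\lambda}_{\omega\omega} = \sum_{\mu \le \lambda} I^\mu_{\omega\omega}$ and still obtain an ideal functor.

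The monotonicity axiom is immediate from the definition. The generating axiom $\Id_{\cC_\omega} = \sum_\lambda I^{\le\lambda}_{\omega\omega}$ is a direct consequence of the compatibility of $\le$ with the Morita context: specialising the compatibility identity to $\lambda = \mu = \omega$ gives $\Id_{\cC_\omega} \simeq F_{\omega\omega} = \sum_{\nu \le \omega} I^\nu_{\omega\omega} = I^{\le\omega}_{\omega\omega}$, which already appears as a single term of the total sum.

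The substance lies in the multiplicative axiom $I^{\le\lambda}_{\omega\omega} I^{\le\mu}_{\omega\omega} \subset \sum_{\nu \le \lambda, \mu} I^{\le\nu}_{\omega\omega}$. Since ideal functors commute with summation, it suffices to show $I^\alpha_{\omega\omega} I^\beta_{\omega\omega} \subset \sum_{\nu \le \alpha, \beta} I^\nu_{\omega\omega}$ for arbitrary $\alpha \le \lambda$ and $\beta \le \mu$. For each $X \in \cC_\omega$ I would identify $I^\alpha_{\omega\omega} I^\beta_{\omega\omega} X$ with the image in $X$ of the natural composite $F_{\omega\alpha}F_{\alpha\omega}F_{\omega\beta}F_{\beta\omega} X \to X$, use the associativity law to route this through $F_{\omega\alpha}F_{\alpha\beta}F_{\beta\omega} X$ via the natural transformation $\eta_{\alpha\omega\beta}$ inserted in the middle, and then invoke the compatibility identity $F_{\alpha\beta} = \sum_{\nu \le \alpha, \beta} I^\nu_{\alpha\beta}$. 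Cocontinuity of $F_{\omega\alpha}$ and $F_{\beta\omega}$ supplies a surjection $\bigoplus_{\nu \le \alpha, \beta} F_{\omega\alpha}F_{\alpha\nu}F_{\nu\beta}F_{\beta\omega} X \twoheadrightarrow F_{\omega\alpha}F_{\alpha\beta}F_{\beta\omega} X$, and a further application of associativity factors the composite of each summand with the projection to $X$ through $F_{\omega\nu}F_{\nu\omega} X \to X$, whose image in $X$ is precisely $I^\nu_{\omega\omega} X$. Summing over $\nu$ produces the required inclusion.

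The main obstacle is bookkeeping in this last step. Since $F_{\omega\alpha}$ and $F_{\beta\omega}$ are cocontinuous but not necessarily left exact, the subfunctors $I^\nu_{\alpha\beta} \hookrightarrow F_{\alpha\beta}$ need not remain subfunctors after sandwiching by $F_{\omega\alpha}$ and $F_{\beta\omega}$, so one cannot chase subobject inclusions inside $F_{\omega\alpha}F_{\alpha\beta}F_{\beta\omega} X$ directly. The entire argument must be phrased in terms of images of morphisms landing in $X$ itself, which the factorisation through the direct sum above is designed to accommodate.
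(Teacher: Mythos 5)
Your proof is correct and follows essentially the same route as the paper's: the first two axioms are dispatched as you describe, and for the multiplicative axiom the paper likewise reduces to single indices $\nu\le\lambda$, $\pi\le\mu$, factors the relevant composite through $F_{\omega\nu}F_{\nu\pi}F_{\pi\omega}\to\Id_{\cC_\omega}$ via associativity, and then applies the compatibility identity $F_{\nu\pi}=\sum_{\rho\le\nu,\pi}I^\rho_{\nu\pi}$ together with cocontinuity of $F_{\omega\nu}$. Your closing remark about working with images of morphisms into $X$ rather than chasing subfunctor inclusions through the non-left-exact sandwich is exactly the right way to make the paper's terse argument rigorous.
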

\begin{proof}
For simplicity let us write $I^\lambda\coloneqq I^\lambda_{\omega\omega}$
and $I^{\le\lambda}\coloneqq I^{\le\lambda}_{\omega\omega}$.
The first two conditions in Definition~\ref{def:ideal_filter} are obvious.
So we prove
$I^{\le\lambda}I^{\le\mu}\subset\sum_{\rho\le\lambda,\mu}I^{\le\rho}$
for each $\lambda,\mu\in\Lambda$.
Let us take $\nu\le\lambda$ and $\pi\le\mu$.
Since $F_{\omega\nu}$ is cocontinuous,
we have
\[I^\nu I^\pi\subset\Image(F_{\omega\nu}F_{\nu\pi}F_{\pi\omega}\to\Id_{\cC_\omega})
\subset\sum_{\rho\le\nu,\pi}I^\rho.\]
Hence by taking sum we obtain the inclusion as desired.
\end{proof}

Using this ideal filter
the category $\cC_\omega$ is divided into
$\cC_\omega[\lambda]=(\cC_\omega)^{\leq\lambda}\cap(\cC_\omega)_{<\lambda}$.
For each $\lambda$, by taking the quotient with respect to
the subset $\Lambda'=\set{\mu\in\Lambda}{\mu<\lambda}$
we have a Morita context between $(\cC_\omega)_{<\lambda}$ and $(\cC_\lambda)_{<\lambda}$
whose trace ideal in $(\cC_\omega)_{<\lambda}$ is just
$(I^{\le\lambda}_{\omega\omega})_{I^{<\lambda}_{\omega\omega}}$.
The corresponding trace ideal in $(\cC_\lambda)_{<\lambda}$
is $(I^\omega_{\lambda\lambda}+I^{<\lambda}_{\lambda\lambda})_{I^{<\lambda}_{\lambda\lambda}}$.
Thus by letting
\[\cC_\lambda\langle\omega\rangle\coloneqq
(\cC_\lambda)^{I^\omega_{\lambda\lambda}}\cap(\cC_\lambda)_{<\lambda}\]
and $\Irr\cC_\lambda\langle\omega\rangle\coloneqq\set{V\in\Irr\cC}{V\in\cC_\lambda\langle\omega\rangle}$
we obtain the following theorem.
\begin{theorem*}\label{thm:multi_morita_context}
For each $\lambda\le\omega$, there is a Morita context between
$(\cC_\omega)_{<\lambda}$ and $(\cC_\lambda)_{<\lambda}$
which induces a category equivalence
$\cC_\omega[\lambda]\simeq\cC_\lambda\langle\omega\rangle$
and a one-to-one correspondence 
$\Irr\cC_\omega[\lambda]\onetoone\Irr\cC_\lambda\langle\omega\rangle$.
If $\lambda\not\le\omega$, then $\cC_\omega[\lambda]=0$.
\end{theorem*}
\begin{corollary*}
If $\Lambda$ is well-founded, we have
\[\Irr\cC_\omega=\bigsqcup_{\lambda\le\omega}\Irr\cC_\omega[\lambda]
\onetoone\bigsqcup_{\lambda\le\omega}\Irr\cC_\lambda\langle\omega\rangle.
\qedhere\]
\end{corollary*}

\section{Generalized cellular algebras}\label{sec:cellular}
Now we concentrate on representation theory of algebras.
Here continuously the term ``an algebra'' means a $\cV$-algebra.
With the help of the category equivalence $\cAdj(\lMod{B},\lMod{A})\simeq\bMod{A}{B}$,
we can interpret all the notions we have introduced in the previous sections
into the language of modules.
For example, ideal functors on the category are replaced by 2-sided ideals in an algebra.
So an ideal filter is just a collection of 2-sided ideals
which satisfies the similar conditions.

In this section we fix a partially ordered set $(\Lambda,\le)$
and an indexed family $\{B_\lambda\}_{\lambda\in\Lambda}$ of algebras.
We introduce a generalized notion of standardly based algebra
and that of cellular algebra over the family $\{B_\lambda\}$,
not over the single base algebra $\kk$.
We also study its Morita invariance motivated by the work of
K\"onig and Xi~\cite{KonigXi99}.

\subsection{Standard filter}
We start from a very general setting.
In the last of previous section
we decompose a category into small parts in order to study them one by one.
A standardly filtered algebra is defined so that
we can perform similar strategy for its module category.
\begin{definition}\label{def:standardly_filtered_algebra}
Let $A$ be an algebra.
A \term{prestandard filter} of $A$ over $\{B_\lambda\}$
is a datum consisting of:
\begin{itemize}
\item an ideal filter $\{A^{\le\lambda}\}_{\lambda\in\Lambda}$ on $A$,
\item for each $\lambda\in\Lambda$, a 2-sided ideal
$B'_\lambda\subset B_\lambda$,
\item for each $\lambda\in\Lambda$, a Morita context $(M_\lambda,N_\lambda)$
between $A/A^{<\lambda}$ and $B_\lambda/B'_\lambda$ whose trace ideal in $A$
is $A^{\le\lambda}/A^{<\lambda}$.
\end{itemize}
Moreover if it satisfies
$A^{\le\mu}M_\lambda=0$ and $N_\lambda A^{\le\mu}=0$
for each pair of $\lambda,\mu$ such that $\lambda\not\le\mu$,
we call it a \term{standard filter}.
An algebra equipped with a standard filter
is called a \term{standardly filtered algebra}.
If each $B_\lambda$ is just the base ring $\kk$,
we simply say it is a standard filter over $\kk$ instead of over the family $\{\kk\}$.
\end{definition}

Now Lemma~\ref{lem:category_produces_filter} in the introduction is
just a reformulation of Theorem~\ref{thm:multi_morita_context}
on Example~\ref{ex:category_is_morita}.
\begin{remark}\label{rem:well_founded_filtered_algebra}
By Lemma~\ref{lem:well_founded_Morita}, if $\Lambda$ is well-founded
the first assumption of Lemma~\ref{lem:category_produces_filter}
can be weakened to
\[\Hom_\cA(X_\mu,X_\lambda)=\cA^{<\lambda}(X_\mu,X_\lambda).\]
\end{remark}
\begin{remark}
In the settings of the lemma, for $\omega_1,\dots,\omega_n\in\Lambda$,
the algebra
\[\End_\cA\Bigl(\bigoplus_i X_{\omega_i}\Bigr)=\bigoplus_{i,j}\Hom_\cA(X_{\omega_i},X_{\omega_j})\]
is also standardly filtered.
It can be proven by adding a new index $\infty$
which is greater than any element of $\Lambda$ so that
$X_\infty=\bigoplus_i X_{\omega_i}$,
then remove it since it is needless by that $\cA^{\le\infty}=\cA^{<\infty}$.
\end{remark}

Actually the condition for being a standardly filtered algebra
can be weakened as follows.

\begin{lemma}\label{lem:reducing_morita}
Suppose that $(M,N)$ is a Morita context between algebras $A$ and $B$,
and let us write its equipped maps as $\eta\colon M\otimes_B N\to A$ and
$\rho\colon N\otimes_A M\to B$.
Let
\begin{align*}
B'&\coloneqq\set{b\in B}{\eta(mb\otimes n)=0\text{ for all }m\in M,n\in N},\\
M'&\coloneqq\set{m\in M}{\eta(m\otimes n)=0\text{ for all }n\in N},\\
N'&\coloneqq\set{n\in N}{\eta(m\otimes n)=0\text{ for all }m\in M}.
\end{align*}
Then $(M/M',N/N')$ is a Morita context between algebras $A$ and $B/B'$
with the same trace ideal in $A$.
\end{lemma}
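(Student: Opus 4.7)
The plan is to verify each piece of the claim in order: that $B'$ is a 2-sided ideal of $B$, that $M'$ and $N'$ are sub-bimodules, that the right action of $B$ on $M/M'$ and the left action on $N/N'$ factor through $B/B'$, and then that $\eta$ and $\rho$ both descend and still obey the associativity laws. Most of this is a direct unfolding of definitions; the only point that requires an idea is that $\rho$ actually lands in $B'$ after the quotient.

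First I would show that $B'$ is a 2-sided ideal. Using the $B$-balancedness $\eta(mb\otimes n)=\eta(m\otimes bn)$, the set $B'$ can be equivalently described as $\set{b\in B}{\eta(m\otimes bn)=0\ \forall m,n}$. Both of these descriptions make it immediate that $B'$ is stable under left and right multiplication by $B$: for instance $\eta(m(b'b)\otimes n)=\eta((mb')b\otimes n)=0$ for $b\in B'$. Likewise $M'$ is an $(A,B)$-sub-bimodule since $\eta$ is $A$-linear on the left and since $\eta(mb\otimes n)=\eta(m\otimes bn)$, and similarly for $N'$. Straight from the definition, $MB'\subset M'$ and $B'N\subset N'$, so $M/M'$ becomes an $(A,B/B')$-bimodule and $N/N'$ becomes a $(B/B',A)$-bimodule.

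Next, $\eta$ vanishes on $M'\otimes N$ and on $M\otimes N'$ by the very definition of $M'$ and $N'$, and it is already $B$-balanced, hence it descends to $\bar\eta\colon M/M'\otimes_{B/B'}N/N'\to A$ whose image coincides with that of the original $\eta$. This is where the trace ideal in $A$ is preserved. For $\rho$ to descend to $\bar\rho\colon N/N'\otimes_A M/M'\to B/B'$, I need to show $\rho(n\otimes m')\in B'$ whenever $m'\in M'$, and $\rho(n'\otimes m)\in B'$ whenever $n'\in N'$. This is where the associativity law enters: for any $m_0\in M$, $n_0\in N$,
\[
\eta\bigl(m_0\rho(n\otimes m')\otimes n_0\bigr)
=\eta\bigl(\eta(m_0\otimes n)\,m'\otimes n_0\bigr)
=\eta(m_0\otimes n)\,\eta(m'\otimes n_0)=0,
\]
using $m\rho(y\otimes x')=\eta(m\otimes y)x'$ followed by left $A$-linearity of $\eta$ and $m'\in M'$. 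The computation for $n'\in N'$ is symmetric. This is the step I would call the main (minor) obstacle, though it reduces to writing down the associativity law.

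Finally I would check the associativity laws for the quotient data. Since both sides of each law are computed as images of $\eta(m\otimes n)m'$ and $m\rho(n\otimes m')$ (resp.\ their mirror) under the quotient maps, and since the original laws hold on the nose in $M$ and $N$, the induced laws on $M/M'$ and $N/N'$ follow automatically. This completes the verification that $(M/M',N/N')$ is a Morita context between $A$ and $B/B'$ with the same image of $\eta$, i.e., the same trace ideal in $A$.
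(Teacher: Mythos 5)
Your proof is correct and follows essentially the same route as the paper's: the paper likewise observes that $MB'\subset M'$ and $B'N\subset N'$, that $\eta$ descends by the very definition of $M'$ and $N'$, and that $\rho(N\otimes_A M')$ and $\rho(N'\otimes_A M)$ land in $B'$ by the associativity law — which is exactly your key computation. You simply spell out more of the routine bimodule verifications than the paper does.
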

\begin{proof}
First by definition $\eta\colon M/M'\otimes_B N/N'\to A$ is well-defined.
In addition we have $MB'\subset M'$ and $B'N\subset N'$
so that $M/M'$ and $N/N'$ can be considered as modules over $B/B'$.
Moreover $\rho(M'\otimes_A N),\rho(M\otimes_A N')\subset B'$ by the associativity,
so that $\rho\colon N/N'\otimes_A M/M'\to B/B'$ is also well-defined.
Now it is clear that these data form a Morita context between $A$ and $B/B'$.
\end{proof}
Note that $B'$ above is the common annihilator
of $M/M'$ and $N/N'$, so that these are faithful modules over $B/B'$.

\begin{proposition}\label{prop:prestandard_filter_induces_standard}
If an algebra $A$ has a prestandard filter, it also has a standard filter.
\end{proposition}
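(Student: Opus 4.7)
The plan is to apply Lemma~\ref{lem:reducing_morita} to each Morita context $(M_\lambda,N_\lambda)$ of the given prestandard filter, obtaining a refined context $(M_\lambda/M'_\lambda,\,N_\lambda/N'_\lambda)$ between $A/A^{<\lambda}$ and $B_\lambda/B''_\lambda$, where $B''_\lambda\supset B'_\lambda$ is the enlarged two-sided ideal of $B_\lambda$ produced by that lemma. The ideal filter $\{A^{\le\lambda}\}$ itself is kept unchanged. Since $\eta$ vanishes on $M'_\lambda\otimes N_\lambda$ and on $M_\lambda\otimes N'_\lambda$, the image of $\eta$ in $A/A^{<\lambda}$ is not altered by the reduction; in particular the trace ideal of the refined context is still $A^{\le\lambda}/A^{<\lambda}$, so the prestandard filter axioms remain in force for the new data.

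The essential step is to verify that this reduction automatically produces the additional axiom required of a standard filter. Fix $\lambda,\mu\in\Lambda$ with $\lambda\not\le\mu$; the claim is $A^{\le\mu}M_\lambda\subset M'_\lambda$ and $N_\lambda A^{\le\mu}\subset N'_\lambda$. The third axiom of an ideal filter gives
\[A^{\le\mu}A^{\le\lambda}\subset\sum_{\nu\le\mu,\,\nu\le\lambda}A^{\le\nu},\]
and because $\lambda\not\le\mu$ forces every index $\nu$ appearing on the right to satisfy $\nu<\lambda$ (the case $\nu=\lambda$ would yield $\lambda\le\mu$), we conclude $A^{\le\mu}A^{\le\lambda}\subset A^{<\lambda}$. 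Since the trace ideal of the original Morita context is $A^{\le\lambda}/A^{<\lambda}$, for any $a\in A^{\le\mu}$, $m\in M_\lambda$ and $n\in N_\lambda$ we have
\[\eta(am\otimes n)=a\cdot\eta(m\otimes n)\in A^{\le\mu}\cdot(A^{\le\lambda}/A^{<\lambda})=0\]
inside $A/A^{<\lambda}$, so $am\in M'_\lambda$ by the very definition of $M'_\lambda$ in Lemma~\ref{lem:reducing_morita}. A symmetric computation using $\eta(m\otimes na)=\eta(m\otimes n)\cdot a$ handles $N_\lambda A^{\le\mu}\subset N'_\lambda$.

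Consequently the quotient bimodules satisfy $A^{\le\mu}(M_\lambda/M'_\lambda)=0$ and $(N_\lambda/N'_\lambda)A^{\le\mu}=0$ whenever $\lambda\not\le\mu$, which is precisely the missing axiom, so the refined data $(\{A^{\le\lambda}\},\,\{B''_\lambda\},\,\{(M_\lambda/M'_\lambda,\,N_\lambda/N'_\lambda)\})$ constitutes a standard filter on $A$ over $\{B_\lambda\}$. The only genuinely non-trivial step has already been carried out inside Lemma~\ref{lem:reducing_morita} — namely, verifying that quotienting a Morita context by the $\eta$-annihilators yields a well-defined Morita context with the same trace ideal — so, beyond the short ideal filter computation above, I do not foresee any additional obstacle.
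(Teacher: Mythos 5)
Your proposal is correct and follows essentially the same route as the paper: both apply Lemma~\ref{lem:reducing_morita} to pass to the quotient Morita context $(M_\lambda/M'_\lambda,N_\lambda/N'_\lambda)$, and both use the ideal filter axiom together with $\lambda\not\le\mu$ to deduce $A^{\le\mu}A^{\le\lambda}\subset A^{<\lambda}$, hence $A^{\le\mu}M_\lambda\subset M'_\lambda$ and $N_\lambda A^{\le\mu}\subset N'_\lambda$. The only cosmetic difference is that the paper bundles all offending indices into the single ideal $A^{\not\ge\lambda}=\sum_{\mu\not\ge\lambda}A^{\le\mu}$, whereas you check each $\mu$ separately.
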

\begin{proof}
Take a prestandard filter of $A$ as above.
For each $\lambda$, let $A^{\not\ge\lambda}\coloneqq\sum_{\mu\not\ge\lambda}A^{\le\mu}$.
Then
\begin{align*}
\eta(A^{\not\ge\lambda}M_\lambda\otimes_{B_\lambda} N_\lambda)
&=A^{\not\ge\lambda}A^{\le\lambda}+A^{<\lambda}=A^{<\lambda},\\
\eta(M_\lambda\otimes_{B_\lambda} N_\lambda A^{\not\ge\lambda})
&=A^{\le\lambda}A^{\not\ge\lambda}+A^{<\lambda}=A^{<\lambda}.
\end{align*}
Hence taking $M'_\lambda\subset M_\lambda$ and $N'_\lambda\subset N_\lambda$
as in the lemma above, we have inclusions $A^{\not\ge\lambda}M_\lambda\subset M'_\lambda$
and $N_\lambda A^{\not\ge\lambda}\subset N'_\lambda$.
Thus by replacing $(M_\lambda,N_\lambda)$ with the quotients
$(M_\lambda/M'_\lambda,N_\lambda/N'_\lambda)$
we obtain a standard filter.
\end{proof}

The notion of standardly filtered algebra is a Morita invariant
and inherited by Peirce decomposition.

\begin{proposition}\label{prop:standardly_filtered_is_morita_invariant}
Let $A$ be a standardly filtered algebra over $\{B_\lambda\}$.
\begin{enumerate}
\item For any idempotent $e\in A$, the algebra $eAe$ is also standardly filtered.
\item If an algebra $A'$ is Morita equivalent to $A$ (i.e.\ $\lMod{A}\simeq\lMod{A'}$),
$A'$ is also standardly filtered.
\end{enumerate}
\end{proposition}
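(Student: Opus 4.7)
The plan for (1) is to transport the data from $A$ to $eAe$ directly. For each $\lambda\in\Lambda$ I would define $(eAe)^{\le\lambda}\coloneqq eA^{\le\lambda}e$ and verify that this forms an ideal filter on $eAe$: each is a two-sided ideal, the decomposition $eAe=\sum_\lambda eA^{\le\lambda}e$ follows from $A=\sum_\lambda A^{\le\lambda}$, and
\[(eA^{\le\lambda}e)(eA^{\le\mu}e)\subset eA^{\le\lambda}A^{\le\mu}e\subset\sum_{\nu\le\lambda,\mu}eA^{\le\nu}e\]
using that the $A^{\le\lambda}$ are two-sided ideals in $A$. Since $eAe\cap A^{<\lambda}=eA^{<\lambda}e$, one may identify $eAe/eA^{<\lambda}e$ with the corner subalgebra $e(A/A^{<\lambda})e$ of $A/A^{<\lambda}$.

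The Morita contexts I would take to be $(eM_\lambda,\,N_\lambda e)$, regarded respectively as an $(eAe/eA^{<\lambda}e,\,B_\lambda/B'_\lambda)$-bimodule and its opposite, with structure maps inherited by restriction from $\eta$ and $\rho$; note that $eA^{<\lambda}e$ acts trivially because $A^{<\lambda}M_\lambda=0=N_\lambda A^{<\lambda}$. The associativity laws transfer automatically from those on $(M_\lambda,N_\lambda)$. The image of the restricted $\eta$ is $e\eta(M_\lambda\otimes_{B_\lambda}N_\lambda)e=eA^{\le\lambda}e/eA^{<\lambda}e$, that is, $(eAe)^{\le\lambda}/(eAe)^{<\lambda}$, so the trace-ideal condition holds. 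For $\lambda\not\le\mu$, the standard condition on $A$ yields $(eAe)^{\le\mu}\cdot eM_\lambda\subset eA^{\le\mu}M_\lambda=0$ and the symmetric statement $N_\lambda e\cdot(eAe)^{\le\mu}=0$, so the resulting filter is standard, not merely prestandard.

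For (2), I would invoke the classical Morita theorem to write $A'\simeq eM_n(A)e$ for some integer $n\ge1$ and some full idempotent $e\in M_n(A)$; thanks to (1) it then suffices to show that $M_n(A)$ itself is standardly filtered over $\{B_\lambda\}$. The filter $M_n(A)^{\le\lambda}\coloneqq M_n(A^{\le\lambda})$ is an ideal filter by the same elementary computations, with $M_n(A)/M_n(A^{<\lambda})=M_n(A/A^{<\lambda})$. Tensoring the original Morita context over $A/A^{<\lambda}$ with the column-vector bimodule $P\coloneqq(A/A^{<\lambda})^n$ and the row-vector bimodule $Q$, which implement the standard Morita equivalence $A/A^{<\lambda}\sim M_n(A/A^{<\lambda})$, produces
\[\tilde M_\lambda\coloneqq P\otimes_{A/A^{<\lambda}}M_\lambda,\qquad
\tilde N_\lambda\coloneqq N_\lambda\otimes_{A/A^{<\lambda}}Q,\]
a Morita context between $M_n(A/A^{<\lambda})$ and $B_\lambda/B'_\lambda$ whose trace ideal is $M_n(A^{\le\lambda})/M_n(A^{<\lambda})$; the standard condition transports by the same argument as in (1).

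The main technical care will lie in the bookkeeping of bimodule structures and in checking that the restricted structure maps (in (1)) and the tensored ones (in (2)) land in the correct quotient algebras and respect associativity on both sides; all the identities needed will reduce, after unwinding definitions, to the original Morita context axioms on $A$ combined with the elementary fact that tensoring against the progenerator $(A/A^{<\lambda})^n$ is an exact equivalence of module categories.
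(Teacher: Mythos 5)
Your part (1) is exactly the paper's argument (the paper states it in one line: the pair $(eM_\lambda,N_\lambda e)$ is a Morita context between $eAe/eA^{<\lambda}e$ and $B_\lambda/B'_\lambda$), and your detailed verification of the ideal-filter axioms, the identification $eAe\cap A^{<\lambda}=eA^{<\lambda}e$, the trace-ideal computation $\eta(eM_\lambda\otimes N_\lambda e)=e\,\Image(\eta)\,e$, and the transport of the standardness condition are all correct.

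For part (2) you take a genuinely different route. The paper disposes of (2) abstractly: a standard filter is equivalent to data living entirely in the module category (an ideal filter of ideal functors on $\lMod{A}$ together with Morita contexts between the subquotient categories and $\lMod{(B_\lambda/B'_\lambda)}$, as in Sections 2--4), so it is manifestly invariant under an equivalence $\lMod{A}\simeq\lMod{A'}$. Your proof instead makes the invariance explicit by factoring the equivalence through a matrix algebra and a corner, and the composition of Morita contexts with the column/row progenerator bimodules is carried out correctly; this buys a concrete description of the transported filter and contexts on $A'$, at the cost of choosing a progenerator. The one point that needs repair is the invocation of the classical structure theorem $A'\simeq eM_n(A)e$: the paper works with $\cV$-algebras, and in the super (or graded) case a finitely generated projective generator of $\lMod{A}$ need not be a direct summand of $A^n$ --- for example $\Pi\kk$ is a progenerator of $\lMod{\kk}$ but admits no even epimorphism from $\kk^n$, and $M_{1|1}(\kk)=\End_\kk(\kk\oplus\Pi\kk)$ is Morita equivalent to $\kk$ without being a corner of any trivially graded $M_n(\kk)$. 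The fix is cosmetic: replace $M_n(A)$ throughout by $\End_A(F)$ for a finite free module $F=\bigoplus_i\Pi^{\epsilon_i}A$ (respectively $\bigoplus_i\Sigma^{k_i}A$ in the graded case); the column/row bimodule argument and the reduction to part (1) go through verbatim for such $F$.
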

\begin{proof}
(1) follows from that the pair $(e M_\lambda,N_\lambda e)$ forms
a Morita context between $eAe/eA^{<\lambda}e$ and $B_\lambda/B'_\lambda$.
(2) is a consequence of that the definition of standard filter on an algebra
can be translated into the language of its module category.
\end{proof}

For an algebra $A$ and a 2-sided ideal $I\subset A$, let us write
\[\Irr(A)\coloneqq\Irr(\lMod{A})\quad\text{and}\quad
\Irr^I(A)\coloneqq\Irr(\lMod{A}^I)=\Irr(A)\setminus\Irr(A/I)\]
for short. More generally, for $J\subset I\subset A$ let
\[\Irr^I_J(A)\coloneqq\Irr^{I/J}(A/J)=\Irr(A/J)\setminus\Irr(A/I).\]
Then Proposition~\ref{prop:dividing_simples} and Theorem~\ref{thm:morita_simple_1:1}
immediately bring us
the following classification of simple $A$-modules.
This is a generalization of \cite[Theorem~3.4]{GrahamLehrer96}.

\begin{theorem*}
Suppose that $\Lambda$ is well-founded.
Let $A$ be a standardly filtered algebra over $\{B_\lambda\}$ and
take its prestandard filter as above.
For each $\lambda$, let $B''_\lambda/B'_\lambda\subset B_\lambda/B'_\lambda$
be the trace ideal of the Morita context.
Then there is a one-to-one correspondence
\[\Irr(A)=\bigsqcup_{\lambda\in\Lambda}
\Irr^{A^{\le\lambda}}_{A^{<\lambda}}(A)\onetoone
\bigsqcup_{\lambda\in\Lambda}\Irr^{B''_\lambda}_{B'_\lambda}(B_\lambda)\]
induced by Morita contexts.
\end{theorem*}

Let us write $[M:S]$ the multiplicity of a simple module $S$
in the composition factors of $M$.
The analogue of the decomposition matrix of cellular algebra
can be defined as follows.
It also satisfies the unitriangular property.
\begin{lemma}
Take a standard filter of $A$ as above.
Let $\lambda,\mu\in\Lambda$ and
take $S\in\Irr^{A^{\le\mu}}_{A^{<\mu}}(A)$, $T\in\Irr(B_\lambda)$.
Then unless $\lambda\le\mu$
\[[M_\lambda\otimes_{B_\lambda}T:S]=[\Hom_{B_\lambda}(N_\lambda,T):S]=0.\]
Moreover, if $\lambda=\mu$,
\[[M_\lambda\otimes_{B_\lambda}T:S]=[\Hom_{B_\lambda}(N_\lambda,T):S]=\begin{cases*}
1 & if $T\simeq D S$,\\
0 & otherwise.
\end{cases*}\]
Here $D$ is the Morita context functor which induces
$\Irr^{A^{\le\lambda}}_{A^{<\lambda}}(A)\onetoone\Irr^{B''_\lambda}_{B'_\lambda}(B_\lambda)$.
\end{lemma}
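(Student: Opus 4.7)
The plan is to split on the relation between $\lambda$ and $\mu$, using different structure in each case. In Case~1 ($\lambda\not\le\mu$), I would invoke the defining conditions of a standard filter, namely $A^{\le\mu}M_\lambda=0$ and $N_\lambda A^{\le\mu}=0$. These imply that $A^{\le\mu}$ annihilates both $M_\lambda\otimes_{B_\lambda}T$ (via the left factor) and $\Hom_{B_\lambda}(N_\lambda,T)$ (via $(a\phi)(n)=\phi(na)$), so both $A$-modules descend to $A/A^{\le\mu}$. But $S\in\Irr^{A^{\le\mu}}_{A^{<\mu}}(A)$ means $A^{\le\mu}S=S\neq0$, so $S$ is not an $A/A^{\le\mu}$-module and cannot appear as a composition factor of either.

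For Case~2 ($\lambda=\mu$), I would reduce to the Morita-context machinery of Section~\ref{sec:morita}. The degenerate sub-case $B'_\lambda T\neq 0$ is disposed of first: simplicity of $T$ forces $B'_\lambda T=T$, hence $T/B'_\lambda T=0$, which makes $M_\lambda\otimes_{B_\lambda}T=M_\lambda\otimes_{B_\lambda/B'_\lambda}(T/B'_\lambda T)=0$, and likewise $\Hom_{B_\lambda}(N_\lambda,T)=0$ because any $B_\lambda$-linear map to $T$ lands in the $B'_\lambda$-invariants of $T$, which vanish for such $T$ (a simple module with $B'_\lambda T=T$ has no nonzero $B'_\lambda$-annihilated elements). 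Also $T\not\simeq DS$, because $DS$ is always a $B_\lambda/B'_\lambda$-module, so the claimed formula holds trivially.

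In the remaining sub-case $B'_\lambda T=0$, the module $T$ descends to $\Irr(B_\lambda/B'_\lambda)$ and the two functors coincide with the genuine Morita-context functors $F$ and $G^\vee$ from $\lMod{B_\lambda/B'_\lambda}$ to $\lMod{A/A^{<\lambda}}$ applied to $T$. The symmetric form of Corollary~\ref{cor:multiplicity_on_Morita_context} (obtained by interchanging $\cC\leftrightarrow\cD$, $F\leftrightarrow G$, $D\leftrightarrow D'$) then gives $[FT:S]=[G^\vee T:S]=1$ iff $S\simeq D'T$, which under the mutually inverse bijections of Theorem~\ref{thm:morita_simple_1:1} is equivalent to $T\simeq DS$; when $T$ is $J$-annihilated one has $D'T=0\neq S$, so both multiplicities vanish as required. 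The main technical point is really just the case analysis on $T$; once the vanishing sub-case is isolated, the rest is a direct invocation of the Morita machinery already in place.
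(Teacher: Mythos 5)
Your proposal is correct and follows essentially the same route as the paper: the first part is the observation that the standard-filter conditions make both modules $A^{\le\mu}$-annihilated (the paper phrases this as $A^{\not\ge\lambda}$-annihilation) while $S$ satisfies $A^{\le\mu}S=S\neq0$, and the second part is exactly the paper's split into the case $T\in\Irr(B_\lambda/B'_\lambda)$, handled by Corollary~\ref{cor:multiplicity_on_Morita_context}, versus the degenerate case where both modules vanish. Your write-up merely supplies the routine details the paper leaves implicit.
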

\begin{proof}
The first equation follows from that $M_\lambda\otimes_{B_\lambda}T$ and
$\Hom_{B_\lambda}(N_\lambda,T)$ are $A^{\not\ge\lambda}$-annihilated.
The second follows from Corollary~\ref{cor:multiplicity_on_Morita_context}
if $T\in\Irr(B/B'_\lambda)$;
otherwise $M_\lambda\otimes_{B_\lambda}T=\Hom_{B_\lambda}(N_\lambda,T)=0$
so the formula also holds trivially.
\end{proof}

\subsection{Well-based standard filter}
Graham and Lehrer~\cite[Theorem~3.7]{GrahamLehrer96} also proved that
for a cellular algebra we can compute its Cartan matrix
by its decomposition matrix.
A general standardly filtered algebra does not have this property,
so we strengthen its conditions to prove an analogue of the theorem.

\begin{definition}
Let $(M,N)$ be a Morita context between algebras $A$ and $B$.
We say that $(M,N)$ is \term{well-based} over $B$
if $M$ and $N$ are both finitely generated
and projective over $B$ and the map $M\otimes_B N\to A$ is injective.
\end{definition}

\begin{definition}
A prestandard filter of $A$ is said to be \term{well-based} if
\begin{enumerate}
\item the ideal filter $\{A^{\le\lambda}\}$ is rigid,
\item the Morita context $(M_\lambda,N_\lambda)$ is well-based over $B_\lambda/B'_\lambda$
for every $\lambda\in\Lambda$.
\end{enumerate}
An algebra equipped with a well-based standard filter
is called a \term{weakly standardly based algebra}.
\end{definition}

We can prove a statement similar to Proposition~\ref{prop:prestandard_filter_induces_standard}
for weakly standardly based algebras.
The proof is clear from the lemmas below.
\begin{lemma}
Let $B$ be an algebra.
Let $M$ be a finitely generated projective right $B$-module and
$N$ be a left $B$-module.
Then $x\in M$ satisfies $0=x\otimes y\in M\otimes_B N$ for all $y\in N$
if and only if $x\in M\cdot\Ann_B(N)$.
Here $\Ann_B(N)\coloneqq\set{b\in B}{bN=0}$ denotes the (left) annihilator of $N$.
\end{lemma}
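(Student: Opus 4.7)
The plan is to treat the two directions separately. The ``if'' direction is routine: if $x = \sum_i m_i a_i$ with $m_i \in M$ and $a_i \in \Ann_B(N)$, then for any $y \in N$,
\[ x \otimes y = \sum_i m_i \otimes a_i y = 0 \]
in $M \otimes_B N$, since $a_i y = 0$.

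For the nontrivial converse the key tool is the dual basis lemma for the finitely generated projective right $B$-module $M$: there exist $m_1,\dots,m_n \in M$ and right $B$-linear maps $\phi_1,\dots,\phi_n \colon M \to B$ such that $x = \sum_i m_i \phi_i(x)$ for every $x \in M$. Equivalently, setting $\iota(x) \coloneqq (\phi_1(x),\dots,\phi_n(x))$ and $\pi(b_1,\dots,b_n) \coloneqq \sum_i m_i b_i$ realizes $M$ as a direct summand of the free right $B$-module $B^n$, with $\pi \circ \iota = \id_M$.

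Next I would verify the statement in the free case $M = B^n$: under the canonical identification $B^n \otimes_B N \simeq N^n$ sending $(b_1,\dots,b_n) \otimes y$ to $(b_1 y,\dots,b_n y)$, the vanishing of this element for every $y \in N$ forces $b_i \in \Ann_B(N)$ for each $i$. For a general $M$, since $\iota$ is split injective, the induced map $\iota \otimes \id_N \colon M \otimes_B N \to B^n \otimes_B N$ remains injective. Hence if $x \otimes y = 0$ for every $y$, then $\iota(x) \otimes y = 0$ for every $y$, and the free case yields $\phi_i(x) \in \Ann_B(N)$ for each $i$. Applying $\pi$,
\[ x = \pi(\iota(x)) = \sum_i m_i \phi_i(x) \in M \cdot \Ann_B(N), \]
which is the desired conclusion.

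No step really qualifies as a genuine obstacle; the whole argument is the standard ``reduce to the free case via dual basis'' pattern for finitely generated projective modules. The only point requiring care is keeping left/right conventions consistent, both in identifying $B^n \otimes_B N$ with $N^n$ and in choosing the direction of $\Ann_B(N)$ so that $a_i y = 0$ gives the vanishing in the first step. I expect the lemma will be invoked to compute the submodules $M'$, $N'$ of Lemma~\ref{lem:reducing_morita} in the well-based setting, as a key ingredient in the analogue of Proposition~\ref{prop:prestandard_filter_induces_standard} for weakly standardly based algebras.
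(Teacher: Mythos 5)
Your proof is correct and follows essentially the same route as the paper: reduce to the free case by realizing $M$ as a direct summand of $B^m$ (the paper writes $M=eB^m$ for an idempotent matrix $e$ and uses $x=ex$, which is exactly your dual-basis identity $x=\sum_i m_i\phi_i(x)$), observe that $M\otimes_B N\hookrightarrow B^m\otimes_B N\simeq N^m$ by split injectivity, and read off that the coordinates of $x$ lie in $\Ann_B(N)$.
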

\begin{proof}
The ``if'' part is obvious so we prove the ``only if'' part.
We may assume that there is an $m\times m$ idempotent matrix $e=(e_{ij})$
such that $M=e B^m$.
Since $M\subset B^m$ is an direct summand,
we can regard $M\otimes_B N\subset B^m\otimes_B N=N^m$.
Suppose $x={^t}(x_1,\dots,x_m)\in M$ satisfies $x\otimes N=0$.
This means that $0=x\otimes n={}^t(x_1n,\dots,x_m n)\in N^m$ for all $n\in N$, that is,
$x_1,\dots,x_m\in\Ann_B(N)$. Thus
\[x=ex={^t}(e_{11},\dots,e_{m1})x_1+\dots+{^t}(e_{1m},\dots,e_{mm})x_m
\in M\cdot\Ann_B(N).
\qedhere\]
\end{proof}
\begin{lemma}
Let $A$, $B$, $M$ and $N$ as in Lemma~\ref{lem:reducing_morita}.
If $(M,N)$ is well-based over $B$, then so is $(M/M',N/N')$
over $B/B'$.
\end{lemma}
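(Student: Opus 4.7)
The plan is to verify the two defining properties of well-basedness---that $M/M'$ and $N/N'$ are finitely generated projective over $B/B'$, and that the induced pairing $M/M'\otimes_{B/B'}N/N'\to A$ is injective---by first pinning down the three subobjects $M'$, $N'$, $B'$ using the preceding lemma in combination with the injectivity of $\eta$.

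The key reduction will be the identity $M'=MB'$ (and symmetrically $N'=B'N$). Since $\eta$ is injective, the defining conditions of $M'$, $N'$, $B'$ all become statements about vanishing of elementary tensors inside $M\otimes_B N$. Applying the preceding lemma to $M$ as a finitely generated projective right $B$-module and $N$ as a left $B$-module yields $M'=M\cdot\Ann_B(N)$; applying its left-right dual (which follows by passing to $B^\op$) gives $N'=\Ann_{B^\op}(M)\cdot N$. The same lemma, applied to $Mb$ in place of $x$, rewrites $B'=\set{b\in B}{Mb\subseteq M'}$. In particular $\Ann_B(N)\subseteq B'$, and hence $M'=M\Ann_B(N)\subseteq MB'\subseteq M'$, proving $M'=MB'$; the $N$-side is symmetric.

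From here the projectivity part is formal: $M/M'\simeq M\otimes_B(B/B')$, and tensoring a finitely generated projective $B$-module with $B/B'$ produces a finitely generated projective $B/B'$-module, so $M/M'$ is finitely generated projective over $B/B'$, and likewise for $N/N'$. For the injectivity of the induced $\bar\eta$, I observe that $\eta$ kills the image of $M'\otimes_B N\to M\otimes_B N$ by the very definition of $M'$, so the injectivity of $\eta$ forces that image to be zero; the same argument handles $M\otimes_B N'\to M\otimes_B N$. By right-exactness this makes $M\otimes_B N\to M/M'\otimes_B N/N'$ an isomorphism, and since $B'$ annihilates $M/M'$ on the right and $N/N'$ on the left, the further natural map $M/M'\otimes_B N/N'\to M/M'\otimes_{B/B'}N/N'$ is also an isomorphism. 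Under these identifications $\bar\eta$ is literally $\eta$, hence is injective.

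The main obstacle is therefore the identity $M'=MB'$: without it, $M/M'$ is not visibly a quotient of $M$ by a right $B'$-submodule, and the projectivity argument collapses. Once the preceding lemma supplies the annihilator description of $M'$ and $\eta$-injectivity forces $B'\supseteq\Ann_B(N)$, the remainder of the verification is a routine diagram chase using only right-exactness of tensor.
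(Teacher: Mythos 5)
Your proof is correct and follows essentially the same route as the paper's: the key identity $M'=M\cdot\Ann_B(N)=MB'$ obtained from the preceding annihilator lemma together with $\Ann_B(N)\subseteq B'$, then $M/M'\simeq M\otimes_B(B/B')$ for projectivity, with the dual argument for $N$. Your spelled-out verification that the induced map $M/M'\otimes_{B/B'}N/N'\to A$ is injective is just a more explicit version of the step the paper dismisses as clear.
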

\begin{proof}
By the lemma above, we have $M'=M\cdot\Ann_B(N)\subset MB'$.
We already has the other inclusion so $M'=MB'$,
hence $M/M'\simeq M\otimes_B (B/B')$ is finitely generated and projective over $B/B'$.
The same holds for $N/N'$.
It is clear that $\eta\colon M/M'\otimes_B N/N'\to A$ is also injective.
\end{proof}

\begin{proposition*}\label{prop:prestandard_basis_induces_standard}
If an algebra $A$ has a well-based prestandard filter,
it also has a well-based standard filter.
\end{proposition*}

We also prove the statements similar to
Proposition~\ref{prop:standardly_filtered_is_morita_invariant}.
\begin{proposition}\label{prop:standardly_based_is_morita_invariant}
Let $A$ be a weakly standardly based algebra over $\{B_\lambda\}$.
\begin{enumerate}
\item For any idempotent $e\in A$, the algebra $eAe$ is also weakly standardly based.
\item If an algebra $A'$ is Morita equivalent to $A$,
$A'$ is also weakly standardly based.
\end{enumerate}
\end{proposition}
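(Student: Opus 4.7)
The plan is to imitate the proof of Proposition~\ref{prop:standardly_filtered_is_morita_invariant} while additionally checking the two extra conditions that distinguish \emph{well-based} from merely \emph{prestandard}: rigidity of the ideal filter and well-basedness of each Morita context.

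For (1), as in the previous proof, take the ideal filter $(eAe)^{\le\lambda}\coloneqq eA^{\le\lambda}e$ on $eAe$ together with the Morita contexts $(eM_\lambda,N_\lambda e)$ between $eAe/eA^{<\lambda}e$ and $B_\lambda/B'_\lambda$. Rigidity: since $eA^{\le\lambda}e\subset A^{\le\lambda}$ and $eA^{\not\ge\lambda}e\subset A^{\not\ge\lambda}$, any element $x$ of the intersection $eA^{\le\lambda}e\cap eA^{\not\ge\lambda}e$ lies in $A^{\le\lambda}\cap A^{\not\ge\lambda}=A^{<\lambda}$ by the rigidity of $\{A^{\le\lambda}\}$; together with $x=exe$ this gives $x\in eA^{<\lambda}e$, which is the nontrivial inclusion needed. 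Well-basedness: multiplication by $e$ defines an idempotent $B_\lambda$-linear endomorphism of $M_\lambda$ (the left $A$-action commutes with the right $B_\lambda$-action), so $eM_\lambda$ is a direct summand of $M_\lambda$ as a right $B_\lambda/B'_\lambda$-module, hence finitely generated and projective; similarly for $N_\lambda e$. Finally, the identification $eM_\lambda\otimes_{B_\lambda}N_\lambda e\simeq e(M_\lambda\otimes_{B_\lambda}N_\lambda)e$ exhibits the Morita context map for $eAe$ as the restriction of the Morita context map for $A$ to a direct summand cut out by the idempotent $e\otimes e$, so injectivity is inherited.

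For (2), invoke Morita's theorem: any algebra Morita equivalent to $A$ is isomorphic to $eM_n(A)e$ for some full idempotent $e\in M_n(A)$. The statement thus reduces to (1), provided $M_n(A)$ itself is weakly standardly based. To see the latter, use the ideal filter $\{M_n(A^{\le\lambda})\}$ on $M_n(A)$ together with the Morita contexts $(M_\lambda^{\oplus n},N_\lambda^{\oplus n})$ (column and row modules respectively) between $M_n(A/A^{<\lambda})\simeq M_n(A)/M_n(A^{<\lambda})$ and $B_\lambda/B'_\lambda$: rigidity passes through trivially, finite direct sums preserve finite generation and projectivity, and the bilinear map decomposes into an $n\times n$ array of copies of the original injective map.

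The only delicate point is the tensor product identification $eM_\lambda\otimes_{B_\lambda}N_\lambda e\simeq e(M_\lambda\otimes_{B_\lambda}N_\lambda)e$, which underpins the injectivity argument in (1). Once this is in hand, the rest is straightforward manipulation with idempotents, and if necessary one may appeal to Proposition~\ref{prop:prestandard_basis_induces_standard} to upgrade the resulting well-based prestandard filter to a well-based standard filter.
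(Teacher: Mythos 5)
Your part (1) is essentially the paper's argument: the paper likewise observes that $eM_\lambda$ and $N_\lambda e$ remain finitely generated projective over $B_\lambda/B'_\lambda$ and that $eM_\lambda\otimes_{B_\lambda}N_\lambda e\simeq e(M_\lambda\otimes_{B_\lambda}N_\lambda)e$, from which injectivity of $\eta$ is inherited; your explicit check of rigidity (using $x=exe$ together with $A^{\le\lambda}\cap A^{\not\ge\lambda}=A^{<\lambda}$) is a detail the paper leaves implicit, and it is correct.

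For part (2) you take a genuinely different route. The paper transports the well-based conditions directly across the category equivalence, using the fact that being finitely generated projective is a categorical property (the tensor functor $M\otimes_B{\bullet}$ admits a left adjoint, resp.\ $\Hom_B(M,{\bullet})$ a right adjoint), combined with the observation from Proposition~\ref{prop:standardly_filtered_is_morita_invariant} that the whole filter datum can be phrased inside the module category. You instead realize $A'$ concretely as a corner $e\,\mathrm{Mat}_n(A)\,e$, verify that $\mathrm{Mat}_n(A)$ is weakly standardly based via the column/row modules $M_\lambda^{\oplus n}$, $N_\lambda^{\oplus n}$, and then apply part (1). Both arguments are valid; yours is more elementary and makes every verification explicit (the $n\times n$ array decomposition of the injective bilinear map, the preservation of rigidity under $\mathrm{Mat}_n(-)$), while the paper's is shorter and avoids choosing a progenerator. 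The one caveat for your version is that in the super or graded setting a progenerator may involve parity shifts, so the matrix realization should read $e\,\mathrm{Mat}_{p|q}(A)\,e$ with $e$ a degree-zero idempotent; since $\Pi M_\lambda$ is still finitely generated projective over $B_\lambda$ and the subcategories in question are closed under $\Pi$, this changes nothing of substance. Your final appeal to the proposition upgrading a well-based prestandard filter to a well-based standard filter is exactly the right closing step.
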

\begin{proof}
(1) follows from that $eM_\lambda$ and $N_\lambda e$ are also
finitely generated and projective, and that
$eM_\lambda\otimes_{B_\lambda}N_\lambda e\simeq e(M_\lambda\otimes_{B_\lambda}N_\lambda)e$.
(2) follows from the following categorical characterization
of being finitely generated and projective:
a right (resp.\ left) $B$-module $M$ is finitely generated projective
if and only if the functor $M\otimes_B{\bullet}$
(resp. $\Hom_B(M,{\bullet})$) also have its left (resp.\ right) adjoint functor.
\end{proof}

Now \cite[Theorem~3.7]{GrahamLehrer96} can be generalized as follows.
\begin{theorem}
Suppose that $\kk$ is a field, $\Lambda$ is well-founded
and each $B_\lambda$ is finite dimensional and semisimple.
Let $A$ be a weakly standardly based algebra over $\{B_\lambda\}$
and take its well-based prestandard filter.
Let $S_1,S_2\in\Irr(A)$ and suppose that they have projective covers
$P_i\twoheadrightarrow S_i$.
Then
\[[P_2:S_1]=\dim_\kk\End_A(S_2)\sum_\lambda
\sum_{T\in\Irr(B_\lambda)}\frac{[M_\lambda\otimes_{B_\lambda}T:S_1][\Hom_{B_\lambda}(N_\lambda,T):S_2]}{\dim_\kk\End_{B_\lambda}(T)}.\]
\end{theorem}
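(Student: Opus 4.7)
The plan is to compute $[P_2:S_1]$ by filtering $P_2$ with the ideal filter $\{A^{\le\lambda}\}$, identifying each associated graded piece via the Morita context $(M_\lambda, N_\lambda)$, and evaluating the resulting multiplicities by combining the semisimplicity of $B_\lambda/B'_\lambda$ with the projective cover property of $P_2\twoheadrightarrow S_2$. Since the filter is well-based---hence rigid---and $\Lambda$ is well-founded, Proposition~\ref{prop:composition_factor} applied to $P_2$ reduces the task to computing $[Q_\lambda:S_1]$ for each $\lambda$, where $Q_\lambda:=A^{\le\lambda}P_2/A^{<\lambda}P_2$, viewed as a left $A/A^{<\lambda}$-module.

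The heart of the argument is to describe $Q_\lambda$ in terms of the Morita context. Writing $P_2^{<\lambda}:=P_2/A^{<\lambda}P_2$, the projectivity of $P_2$ over $A$ makes $P_2^{<\lambda}$ projective, hence flat, over $A/A^{<\lambda}$, so the natural surjection $(A^{\le\lambda}/A^{<\lambda})\otimes_{A/A^{<\lambda}}P_2^{<\lambda}\twoheadrightarrow Q_\lambda$ is in fact an isomorphism. Well-basedness identifies $M_\lambda\otimes_{B_\lambda/B'_\lambda}N_\lambda\cong A^{\le\lambda}/A^{<\lambda}$ as $(A,A)$-bimodules, and substituting yields
\[
Q_\lambda \;\cong\; M_\lambda\otimes_{B_\lambda/B'_\lambda} W_2^\lambda, \qquad W_2^\lambda := N_\lambda\otimes_A P_2,
\]
a left $B_\lambda/B'_\lambda$-module. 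Since $B_\lambda/B'_\lambda$ is finite-dimensional and semisimple, $W_2^\lambda$ decomposes as $\bigoplus_T T^{\oplus n_T}$, where $T$ ranges over $\Irr(B_\lambda)$ and $n_T\cdot\dim_\kk\End_{B_\lambda}(T)=\dim_\kk\Hom_{B_\lambda}(T,W_2^\lambda)$; one has $n_T=0$ unless $B'_\lambda T=0$, in which case also $M_\lambda\otimes_{B_\lambda}T=0$, so the sum is a finite sum of nonzero terms indexed effectively by $\Irr(B_\lambda/B'_\lambda)$. Thus
\[
[Q_\lambda:S_1] \;=\; \sum_{T}\frac{\dim_\kk\Hom_{B_\lambda}(T,W_2^\lambda)}{\dim_\kk\End_{B_\lambda}(T)}\cdot[M_\lambda\otimes_{B_\lambda}T:S_1].
\]

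It remains to evaluate $\dim_\kk\Hom_{B_\lambda}(T,W_2^\lambda)$. Semisimplicity of $B_\lambda/B'_\lambda$ gives the symmetry $\dim\Hom_{B_\lambda}(T,W_2^\lambda)=\dim\Hom_{B_\lambda}(W_2^\lambda,T)$, and the tensor-hom adjunction for the $(B_\lambda,A)$-bimodule $N_\lambda$ yields
\[
\Hom_{B_\lambda}(W_2^\lambda,T) \;=\; \Hom_{B_\lambda}(N_\lambda\otimes_A P_2,T) \;\cong\; \Hom_A(P_2,\Hom_{B_\lambda}(N_\lambda,T)).
\]
Because $N_\lambda$ is finitely generated projective over $B_\lambda/B'_\lambda$, the $A$-module $V:=\Hom_{B_\lambda}(N_\lambda,T)$ is finite-dimensional, and the defining property of a projective cover gives $\dim_\kk\Hom_A(P_2,V)=[V:S_2]\cdot\dim_\kk\End_A(S_2)$. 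Assembling these identities and summing over $\lambda$ produces the claimed formula.

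The main obstacle is the identification $Q_\lambda\cong M_\lambda\otimes_{B_\lambda/B'_\lambda}W_2^\lambda$ in the second paragraph: both halves of well-basedness enter there, namely that $M_\lambda, N_\lambda$ are finitely generated projective over $B_\lambda/B'_\lambda$ (so that $V$ is finite-dimensional and the decomposition $W_2^\lambda=\bigoplus T^{\oplus n_T}$ is meaningful), and that $\eta_\lambda\colon M_\lambda\otimes_{B_\lambda/B'_\lambda}N_\lambda\to A/A^{<\lambda}$ is injective (so the associated graded piece is genuinely a tensor product rather than a mere quotient of one), together with the flatness of $P_2^{<\lambda}$ over $A/A^{<\lambda}$ needed to commute the tensor product with the inclusion $A^{\le\lambda}/A^{<\lambda}\hookrightarrow A/A^{<\lambda}$.
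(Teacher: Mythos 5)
Your proof is correct, and its skeleton coincides with the paper's: both reduce to $[P_2:S_1]=\sum_\lambda[A^{\le\lambda}P_2/A^{<\lambda}P_2:S_1]$ via rigidity and Proposition~\ref{prop:composition_factor}, identify the graded piece with $M_\lambda\otimes_{B_\lambda}N_\lambda\otimes_A P_2$ using flatness of $P_2$ and injectivity of $\eta_\lambda$, and then exploit semisimplicity of $B_\lambda$. The one genuine divergence is in how the multiplicity of $S_1$ in that graded piece is extracted: the paper computes $\dim_\kk\Hom_A(P_1,\,\cdot\,)$, passes through $\Hom_{B_\lambda}(M^\vee_\lambda\otimes_A P_1,\,N_\lambda\otimes_A P_2)$ with $M^\vee_\lambda=\Hom_{B_\lambda}^{\op}(M_\lambda,B_\lambda)$, and counts simples on both sides symmetrically, whereas you decompose $N_\lambda\otimes_A P_2$ into simple $B_\lambda$-modules directly and use only the additivity of $[M_\lambda\otimes_{B_\lambda}T:S_1]$ over that decomposition. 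Your route avoids dualizing $M_\lambda$ and, more notably, never invokes the projective cover of $S_1$, so it establishes the formula under the slightly weaker hypothesis that only $S_2$ admits a projective cover; the paper's symmetric pairing buys nothing extra here beyond making the two factors in the numerator appear on an equal footing.
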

Note that we can take $\mu\in\Lambda$ such that $S_2\in\Irr_{A^{<\mu}}^{A^{\le\mu}}(A)$
then by the Morita context we have an isomorphism $\End_A(S_2)\simeq\End_{B_\mu}(DS_2)$,
so that its dimension is also easy to compute.
\begin{proof}
Since the ideal filter is rigid,
we have $[P_2:S_1]=\sum_\lambda[A^{\le\lambda}P_2/A^{<\lambda}P_2:S_1]$
by Proposition~\ref{prop:composition_factor}.
Then for each $\lambda$, we have
\[[A^{\le\lambda}P_2/A^{<\lambda}P_2:S_1]=\dim_\kk\Hom_A(P_1,A^{\le\lambda}P_2/A^{<\lambda}P_2)/\dim_\kk\End_A(S_1)\]
and by using $A^{\le\lambda}/A^{<\lambda}\simeq M_\lambda\otimes_{B_\lambda}N_\lambda$
and that $P_2$ is flat,
\begin{align*}
\Hom_A(P_1,A^{\le\lambda}P_2/A^{<\lambda}P_2)&\simeq
\Hom_A(P_1,M_\lambda\otimes_{B_\lambda}N_\lambda\otimes_A P_2)\\
&\simeq\Hom_{B_\lambda}(M^\vee_\lambda\otimes_A P_1,N_\lambda\otimes_A P_2)
\end{align*}
where $M^\vee_\lambda\coloneqq\Hom^\op_{B_\lambda}(M_\lambda,B_\lambda)$.
Since $B_\lambda$ is semisimple,
\[
\dim_\kk\Hom_{B_\lambda}(M^\vee_\lambda\otimes_A P_1,N_\lambda\otimes_A P_2)
=\sum_{T\in\Irr(B_\lambda)}[M^\vee_\lambda\otimes_A P_1:T][N_\lambda\otimes_A P_2:T]\dim_\kk\End_{B_\lambda}(T).
\]
Moreover we have
\[\Hom_{B_\lambda}(M^\vee_\lambda\otimes_A P_1,T)\simeq\Hom_A(P_1,M_\lambda\otimes_{B_\lambda}T)\]
which implies
\[[M^\vee_\lambda\otimes_A P_1:T]\dim_\kk\End_{B_\lambda}(T)=
[M_\lambda\otimes_{B_\lambda}T:S_1]\dim_\kk\End_A(S_1).\]
Similarly we have
\[[N_\lambda\otimes_A P_2:T]\dim_\kk\End_{B_\lambda}(T)=
[\Hom_{B_\lambda}(N_\lambda,T):S_2]\dim_\kk\End_A(S_2).\]
Putting them all together, we obtain the equation.
\end{proof}

Quasi-hereditary algebra is an important class  of algebra
introduced by Cline, Parshall and Scott~\cite{ClineParshallScott88}.
The condition for a non-generalized standardly based algebra to be quasi-hereditary
is given by Graham and Lehrer~\cite{GrahamLehrer96}, and
Du and Rui~\cite{DuRui98}.
We can prove an analogous partial result for our generalized standardly based algebra.

\begin{lemma}
Let $(M,N)$ be a well-based Morita context between $A$ and $B$,
and suppose that the algebra $B$ is semisimple.
If $\rho\colon N\otimes_A M\to B$ is surjective,
then the trace ideal $I\coloneqq\eta(M\otimes_B N)\subset A$
is generated by an idempotent,
and finitely generated and projective as both a left and a right $A$-module.
\end{lemma}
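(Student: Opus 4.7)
My plan is to use the Wedderburn structure of $B$ to reduce to the case where the ``structure algebra'' is a division ring, where a single normalized pair in the Morita context produces the idempotent, and then reassemble using orthogonality.

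First I would decompose $B=\prod_{j}B_{j}$ into simple factors with central idempotents $e'_{j}\in B$. The well-based Morita context splits into sub-contexts $(Me'_{j},e'_{j}N)$ between $A$ and $B_{j}$, yielding $I=\bigoplus_{j}I_{j}$ where $I_{j}=\eta(Me'_{j}\otimes_{B_{j}}e'_{j}N)$, with the restricted $\rho$ still surjective onto each $B_{j}$. Using Wedderburn, $B_{j}\cong M_{n_{j}}(D_{j})$; I pick a primitive idempotent $f_{j}\in B_{j}$ with $f_{j}B_{j}f_{j}=D_{j}$. Morita equivalence within $B_{j}$ identifies $Me'_{j}\otimes_{B_{j}}e'_{j}N$ canonically with $Me'_{j}f_{j}\otimes_{D_{j}}f_{j}e'_{j}N$, preserving both well-basedness and the surjectivity of the restricted $\rho\colon f_{j}e'_{j}N\otimes_{A}Me'_{j}f_{j}\to D_{j}$.

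The key construction exploits that $D_{j}$ is a division ring: I choose $m_{j}\in Me'_{j}f_{j}$ and $n_{j}\in f_{j}e'_{j}N$ with $\rho(n_{j}\otimes m_{j})=f_{j}$ (rescale any nonzero value by its inverse in $D_{j}$), and set $e_{j}\coloneqq\eta(m_{j}\otimes n_{j})\in A$. The associativity law then gives
\[
e_{j}^{2}=\eta\bigl(m_{j}\otimes\rho(n_{j}\otimes m_{j})n_{j}\bigr)=\eta(m_{j}\otimes f_{j}n_{j})=\eta(m_{j}\otimes n_{j})=e_{j}
\]
using $f_{j}n_{j}=n_{j}$, so $e_{j}$ is idempotent; the same manipulation shows $\eta(m\otimes n_{j})\cdot e_{j}\cdot\eta(m_{j}\otimes n)=\eta(m\otimes n)$ for all admissible $m,n$, giving $I_{j}=Ae_{j}A$. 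For $j\neq k$, $\rho(n_{j}\otimes m_{k})\in e'_{j}Be'_{k}=0$ forces $e_{j}e_{k}=0$, so $e\coloneqq\sum_{j}e_{j}$ is an idempotent with $AeA=\sum_{j}I_{j}=I$.

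For projectivity I first observe that each $Ae_{j}$ is a direct summand of $A$ as a left $A$-module, hence finitely generated projective (and similarly $e_{j}A$ on the right). I would then verify that the maps $m\mapsto\eta(m\otimes n_{j})$, $n\mapsto\eta(m_{j}\otimes n)$, and $d\mapsto\eta(m_{j}d\otimes n_{j})$ are bimodule/algebra isomorphisms $Me'_{j}f_{j}\cong Ae_{j}$, $f_{j}e'_{j}N\cong e_{j}A$, and $D_{j}\cong e_{j}Ae_{j}$ respectively, with inverses given by acting on $m_{j}$ or $n_{j}$. Under these identifications the natural surjection $Ae_{j}\otimes_{e_{j}Ae_{j}}e_{j}A\twoheadrightarrow Ae_{j}A=I_{j}$ matches the restriction of $\eta$ to $Me'_{j}f_{j}\otimes_{D_{j}}f_{j}e'_{j}N$, which is injective by well-basedness; hence it is an isomorphism, and since $f_{j}e'_{j}N$ is a finite-rank free left $D_{j}$-module, $I_{j}\cong(Ae_{j})^{\oplus s_{j}}$ is finitely generated projective. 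The analogous right-module argument gives the same on the other side, and summing yields $I=\bigoplus_{j}I_{j}$ finitely generated projective on both sides. The main obstacle I expect is the bookkeeping in this last step: several canonical identifications have to be tracked through so that the multiplication $Ae_{j}\otimes_{e_{j}Ae_{j}}e_{j}A\to I_{j}$ is recognized as the restricted $\eta$, which is the unique place where the injectivity from well-basedness is used; the upstream reductions to the division-ring case are routine once one commits to Morita theory within $B$.
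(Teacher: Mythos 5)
Your proposal is correct and follows essentially the same route as the paper's proof: reduce via Artin--Wedderburn and Morita theory to the case where $B$ is a product of (super) division algebras, normalize pairs $m_i,n_i$ with $\rho(n_i\otimes m_i)$ equal to the component identities, define the idempotent as $\sum_i\eta(m_i\otimes n_i)$ using associativity, and identify $M$ (resp.\ $N$) with $Ae$ (resp.\ $eA$) to get projectivity of $I\simeq M\otimes_BN$. The only difference is organizational -- you carry out the reduction and the bookkeeping per simple factor, while the paper does it globally after a one-sentence reduction -- and both use the injectivity from well-basedness at the same final step.
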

\begin{proof}
By the Artin--Wedderburn theorem and the Morita equivalence,
we may assume that $B$ is a product of finitely many division algebras:
\[B=D_1\times D_2\times\dots\times D_l\]
(here we mean that every non-zero \emph{homogeneous} element $x\in D_i$
is invertible).
Let us write $1_i=(0,\dots,1,\dots,0)\in B$ the identity element of each $D_i$.
Since $\rho$ is surjective, for each $i$ we can find $m_i\in M$ and $n_i\in N$
such that $\rho(n_i\otimes m_i)1_i\neq0$.
By multiplying elements in $D_i$ we may assume that
$\rho(n_i\otimes m_i)=1_i$, $1_i n_i=n_i$ and $m_i 1_i=m_i$.
Thus $\rho(n_i\otimes m_j)=0$ for $i\neq j$.
Let $e\coloneqq\sum_i\eta(m_i\otimes n_i)\in A$.
Then by the associativity $e$ is an idempotent.
Moreover the maps
\begin{align*}
M&\to Ae,&
Ae&\to M,\\
m&\mapsto\sum_i\eta(m\otimes n_i),&
a&\mapsto\sum_i am_i
\end{align*}
are inverses of each other.
Hence $M\simeq Ae$ is a finitely generated and projective left $A$-module.
Similarly $N\simeq eA$ as right $A$-modules
so that $I\simeq M\otimes_B N$
is finitely generated and projective from both sides.
By these isomorphisms we also have $I=AeA$.
\end{proof}
Hence in this case we have $\Ext^i_{A/I}\simeq\Ext^i_A$ for any $i$
by Proposition~\ref{prop:ext_preserving}.
We also have $\Ext^1_A(V,W)=0$ for $V,W\in\Irr^I(A)$
by Lemma~\ref{lem:morita_with_semisimple}.

\subsection{Standard basis}

We here give the definition of class of algebras
which is more closely related to the original one of cellular algebra.

\begin{definition}
A \term{(generalized) standard basis} of an algebra $A$ is
a direct sum decomposition
\[A=\bigoplus_{\lambda\in\Lambda}A^\lambda\]
as a $\kk$-module (not as a left or right $A$-module)
such that for each $\lambda$
\[A^{\le\lambda}\coloneqq\bigoplus_{\mu\le\lambda}A^\mu\quad\text{and}\quad
A^{<\lambda}\coloneqq\bigoplus_{\mu<\lambda}A^\mu\]
are both 2-sided ideals of $A$, equipped with for each $\lambda$ an isomorphism
of $(A,A)$-bimodules
\[M_\lambda\otimes_{B_\lambda}N_\lambda\simeq A^{\le\lambda}/A^{<\lambda}\]
for a pair of an $(A,B_\lambda)$-bimodule $M_\lambda$ and
a $(B_\lambda,A)$-bimodule $N_\lambda$ which are both
finitely generated and free over $B_\lambda$.
An algebra equipped with a standard basis is called a
\term{(generalized) standardly based algebra}.
\end{definition}

When every $B_\lambda$ is the base algebra $\kk$,
this definition coincides with the original we given at the beginning.

\begin{proposition}
A standardly based algebra is a weakly standardly based algebra.
\end{proposition}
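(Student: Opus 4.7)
The plan is to build a well-based prestandard filter out of the standard basis and then invoke Proposition~\ref{prop:prestandard_basis_induces_standard}. Concretely, the ideal filter is $\{A^{\le\lambda}\}$, the two-sided ideals $B'_\lambda$ are taken to be $0$, and the proposed Morita context uses the given $(A,B_\lambda)$-bimodule $M_\lambda$ and $(B_\lambda,A)$-bimodule $N_\lambda$, with $\eta_\lambda$ the prescribed bimodule isomorphism composed with the inclusion $A^{\le\lambda}/A^{<\lambda}\hookrightarrow A/A^{<\lambda}$.

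The ideal filter axioms and rigidity are pure direct-sum bookkeeping. Monotonicity and $\sum_\lambda A^{\le\lambda}=A$ are immediate. Since every $A^{\le\nu}$ is two-sided, $A^{\le\lambda}A^{\le\mu}\subset A^{\le\lambda}\cap A^{\le\mu}=\bigoplus_{\nu\le\lambda,\,\nu\le\mu}A^\nu=\sum_{\nu\le\lambda,\mu}A^{\le\nu}$, and an identical calculation gives $A^{\le\lambda}\cap A^{\not\ge\lambda}=\bigoplus_{\mu\le\lambda,\,\mu\not\ge\lambda}A^\mu=A^{<\lambda}$. Separately, $A^{<\lambda}$ annihilates $M_\lambda$ (and similarly $N_\lambda$) because, by the bimodule isomorphism, $A^{<\lambda}\cdot(M_\lambda\otimes_{B_\lambda}N_\lambda)\subset A^{<\lambda}\cdot A^{\le\lambda}/A^{<\lambda}=0$, and the freeness of $N_\lambda$ lets one cancel a basis element.

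The substantive step is extracting the partner map $\rho_\lambda\colon N_\lambda\otimes_{A/A^{<\lambda}}M_\lambda\to B_\lambda$ from the algebra multiplication. Choose $B_\lambda$-bases $\{m_i\}$ of $M_\lambda$ as a free right $B_\lambda$-module and $\{n_j\}$ of $N_\lambda$ as a free left $B_\lambda$-module, and write $a_{ij}:=\eta_\lambda(m_i\otimes n_j)$. Computing the product $a_{ij}a_{kl}$ in $A/A^{<\lambda}$ in two ways via the $(A,A)$-bimodule isomorphism gives
\[(a_{ij}\cdot m_k)\otimes n_l \;=\; a_{ij}\cdot a_{kl}\;=\;m_i\otimes(n_j\cdot a_{kl})\]
inside $M_\lambda\otimes_{B_\lambda}N_\lambda$. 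Expanding both sides against the bases, the left side is a $B_\lambda$-combination of elements $m_p\otimes n_l$ with $p$ varying, while the right side is a combination of $m_i\otimes n_q$ with $q$ varying; equality in the free module forces both to be concentrated at $(p,q)=(i,l)$, so that $a_{ij}\cdot m_k=m_i\cdot b$ and $n_j\cdot a_{kl}=b\cdot n_l$ for a common scalar $b\in B_\lambda$, unique by freeness. Setting $\rho_\lambda(n_j\otimes m_k):=b$ and extending bilinearly, these two formulas become the associativity laws of the Morita context, and the $A$-balance of $\rho_\lambda$ (so that it factors through $N_\lambda\otimes_{A/A^{<\lambda}}M_\lambda$) is forced by the associativity of the product in $A/A^{<\lambda}$.

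With $(M_\lambda,N_\lambda,\eta_\lambda,\rho_\lambda)$ in hand, well-basedness is transparent: $M_\lambda$ and $N_\lambda$ are finitely generated free (hence projective) over $B_\lambda$ by hypothesis, and $\eta_\lambda$ is injective since it is an isomorphism onto its image. Proposition~\ref{prop:prestandard_basis_induces_standard} then converts this well-based prestandard filter into a well-based standard filter, making $A$ weakly standardly based. The only delicate step is the coordinate argument for $\rho_\lambda$: it is the freeness of $M_\lambda$ and $N_\lambda$ over $B_\lambda$ (rather than mere projectivity) that forces the product $a_{ij}a_{kl}$ into the ``matrix-unit'' form needed for $\rho_\lambda$ to be well-defined.
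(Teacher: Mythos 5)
Your proof is correct and follows essentially the same route as the paper: the ideal-filter and rigidity checks are the same direct-sum bookkeeping, and your coordinate construction of $\rho_\lambda$ (computing $a_{ij}a_{kl}$ in two ways and extracting the common scalar $b$ via freeness) is precisely the argument of \cite[Proposition~2.4]{GrahamLehrer96} that the paper cites, generalized to noncommutative $B_\lambda$. The paper's later Lemma on completing Morita contexts gives the same construction more abstractly via injectivity of $\eta$, but for free $M_\lambda$, $N_\lambda$ the two are, as the paper itself notes, essentially identical.
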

\begin{proof}
Since $A^{\le\lambda}A^{\le\mu}\subset A^{\le\lambda}\cap A^{\le\mu}
=\bigoplus_{\nu\le\lambda,\mu}A^\nu$,
the collection $\{A^{\le\lambda}\}_{\lambda\in\Lambda}$ is an ideal filter on $A$.
As proved in~\cite[Proposition~2.4]{GrahamLehrer96},
we can construct a suitable $(B_\lambda,B_\lambda)$-homomorphism
$\rho\colon M_\lambda\otimes_A N_\lambda\to B_\lambda$ for each $\lambda$
which completes a Morita context between $A/A^{<\lambda}$ and $B_\lambda$.
\end{proof}

The converse is also holds when the following assumptions are satisfied.

\begin{proposition}\label{prop:weakly_standardly_based_implies_standardly_based}
Suppose that $\Lambda$ is well-founded and
every $B_\lambda$ is projective over $\kk$.
Then a weakly standardly based algebra
is a standardly based algebra if
$M_\lambda$ and $N_\lambda$ are free over $B_\lambda$ for every $\lambda$.
\end{proposition}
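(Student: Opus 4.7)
The plan is to produce the $\kk$-module decomposition $A=\bigoplus_{\lambda\in\Lambda}A^\lambda$ by well-founded induction along $\Lambda$, splitting off one layer of the filtration $\{A^{\le\lambda}\}$ at a time. The core observation is that the hypothesis forces each successive quotient $A^{\le\lambda}/A^{<\lambda}$ to be $\kk$-projective, so each inductive step reduces to splitting a short exact sequence of $\kk$-modules.

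First I would dispose of the ideal $B'_\lambda$. Since $M_\lambda$ and $N_\lambda$ are naturally $B_\lambda$-modules through the quotient $B_\lambda\twoheadrightarrow B_\lambda/B'_\lambda$, the ideal $B'_\lambda$ annihilates every element; freeness over $B_\lambda$ then forces $B'_\lambda=0$ unless $M_\lambda=0$ or $N_\lambda=0$, in which case $A^{\le\lambda}/A^{<\lambda}\simeq M_\lambda\otimes_{B_\lambda}N_\lambda$ vanishes and one may take $A^\lambda=0$. Hence WLOG the Morita context lives over $B_\lambda$ itself, and picking finite $B_\lambda$-bases $\{m_i\}_{i\in I(\lambda)}$ and $\{n_j\}_{j\in J(\lambda)}$ of $M_\lambda$ and $N_\lambda$ gives
\[
A^{\le\lambda}/A^{<\lambda}\simeq M_\lambda\otimes_{B_\lambda}N_\lambda\simeq B_\lambda^{I(\lambda)\times J(\lambda)}
\]
as $\kk$-modules. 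Since $B_\lambda$ is $\kk$-projective by hypothesis and $I(\lambda)\times J(\lambda)$ is finite, this quotient is $\kk$-projective.

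Next I would run the well-founded induction. At stage $\lambda$, assume that $A^\mu$ has been defined for every $\mu<\lambda$ with $A^{\le\mu}=\bigoplus_{\nu\le\mu}A^\nu$ as $\kk$-modules. Combined with the filter axiom $A^{<\lambda}=\sum_{\mu<\lambda}A^{\le\mu}$ and a maximal-element argument applied to the finite support of any candidate relation, this propagates to $A^{<\lambda}=\bigoplus_{\mu<\lambda}A^\mu$. The short exact sequence of $\kk$-modules
\[
0\to A^{<\lambda}\to A^{\le\lambda}\to A^{\le\lambda}/A^{<\lambda}\to 0
\]
then splits by $\kk$-projectivity of its right-hand term, and I define $A^\lambda$ to be the image of any chosen splitting, so that $A^{\le\lambda}=\bigoplus_{\mu\le\lambda}A^\mu$ as $\kk$-modules (and $A^{\le\lambda}$, $A^{<\lambda}$ are automatically 2-sided ideals, either as members of the ideal filter or as sums of such).

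Finally I would verify that the resulting family gives $A=\bigoplus_{\lambda\in\Lambda}A^\lambda$ globally. The inclusion $A\subset\sum_\lambda A^\lambda$ follows from $A=\sum_\lambda A^{\le\lambda}$ together with $A^{\le\lambda}\subset\sum_{\mu\le\lambda}A^\mu$; for directness, given any finitely-supported relation $\sum_\lambda a^\lambda=0$, pick a maximal $\mu$ in the support, observe that the remaining terms all lie in $A^{<\mu}$ while $a^\mu\in A^\mu$, and invoke the already-established decomposition $A^{\le\mu}=A^{<\mu}\oplus A^\mu$ to conclude $a^\mu=0$. I expect the main obstacle to be the routine but fiddly propagation of the direct-sum decomposition through limit stages of the induction, where only the axiom $A^{<\lambda}=\sum_{\mu<\lambda}A^{\le\mu}$ is available and one must argue that this sum is actually direct; everything else follows once the $\kk$-projectivity of $M_\lambda\otimes_{B_\lambda}N_\lambda$ has been noted.
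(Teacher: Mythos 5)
Your reduction of each layer to a split short exact sequence of $\kk$-modules is exactly the paper's strategy, and your observation that $M_\lambda\otimes_{B_\lambda}N_\lambda\simeq B_\lambda^{I(\lambda)\times J(\lambda)}$ is $\kk$-projective is correct and matches the paper. The gap is in your directness argument, and it is a real one: you never use the \emph{rigidity} of the ideal filter, which is part of the definition of a well-based standard filter and is precisely what the paper's proof invokes.

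Concretely, when you pick a maximal $\mu$ in the support of a relation $\sum_\lambda a^\lambda=0$, the claim that ``the remaining terms all lie in $A^{<\mu}$'' is false for indices $\nu$ that are \emph{incomparable} with $\mu$: such an $a^\nu$ lies in $A^{\le\nu}$, which need not be contained in $A^{<\mu}$. All you can conclude is $a^\mu\in A^{\le\mu}\cap\sum_{\nu\not\ge\mu}A^{\le\nu}=A^{\le\mu}\cap A^{\not\ge\mu}$, and the ideal filter axioms alone do not force this intersection to equal $A^{<\mu}$ --- that is exactly the rigidity condition, which the paper notes can fail (it holds automatically only for total orders). For instance, in $A=\kk[x,y]/(xy,x^2-y^2)$ the ideals $(x)$ and $(y)$ multiply to zero but intersect in $\kk x^2\neq0$, so the analogous sum $(x)+(y)$ is not direct even though the filter axioms hold. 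The same problem infects your ``propagation'' step $A^{<\lambda}=\bigoplus_{\mu<\lambda}A^\mu$ at limit stages. The fix is either to invoke rigidity directly at these two points, or to do what the paper does: use rigidity to replace $\le$ by a well-ordering extension without changing the successive quotients $A^{\le\lambda}/A^{<\lambda}$ (this invariance is itself a consequence of rigidity, proved earlier in the paper), after which the index set is totally ordered and your maximal-element argument becomes valid.
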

\begin{proof}
Let $A$ be a weakly standardly based algebra.
Since its ideal filter $\{A^{\le\lambda}\}$ is rigid,
by taking a well-ordering extension,
we obtain a well-ordered filtration of $A$ whose successive quotients
are $A^{\le\lambda}/A^{<\lambda}$.
Each of them is isomorphic to $M_\lambda\otimes_{B_\lambda}N_\lambda$
which is projective over $\kk$, so
we can lift $M_\lambda\otimes_{B_\lambda}N_\lambda\hookrightarrow A/A^{<\lambda}$
to some $\kk$-linear map
$\iota_\lambda\colon M_\lambda\otimes_{B_\lambda}N_\lambda\hookrightarrow A$.
Then as a $\kk$-module $A$ decompose into a direct sum of
$\kk$-modules $A^\lambda\coloneqq\iota_\lambda(M_\lambda\otimes_{B_\lambda}N_\lambda)$
as desired.
\end{proof}

It is a natural question to ask whether the freeness condition of the definition of
standardly based algebra can be weakened to the projectiveness.
So suppose that we are given an $(A,B)$-bimodule $M$ and
a $(B,A)$-bimodule $N$ which are both finitely generated and projective over $B$,
equipped with an injective $(A,A)$-homomorphism
$\eta\colon M\otimes_B N\hookrightarrow A$.
By replacing them with their quotients, we may assume that
$B'$, $M'$ and $N'$ taken as in lemma~\ref{lem:reducing_morita} are all zero.
The existence of $\rho\colon N\otimes_B M\to A$
fails in this general situation:
consider the following counterexample that
\[A=\kk,\qquad B=\left\{\begin{pmatrix}*&*\\0&*\end{pmatrix}\right\},\qquad
M=\left\{\begin{pmatrix}*&*\end{pmatrix}\right\},\qquad
N=\left\{\begin{pmatrix}* \\ *\end{pmatrix}\right\}\]
with natural isomorphism $\eta\colon M\otimes_B N\simeq A$.
We state a sufficient condition for its existence as follows.
This is a generalization of \cite[Proposition~2.4]{GrahamLehrer96} we quoted above.

\begin{lemma}\label{lem:completion_of_morita_context}
Let $A$, $B$, $M$ and $N$ as above.
Let $\Tr_{B^\op}(M),\Tr_B(N)\subset B$ be the trace ideals of $M$ and $N$ in $B$, that is,
\[\Tr_{B^\op}(M)\coloneqq\Image(M^\vee\otimes_A M\to B),\qquad
\Tr_B(N)\coloneqq\Image(N\otimes_A N^\vee\to B)\]
where $M^\vee\coloneqq\Hom_{B^\op}(M,B)$ and $N^\vee\coloneqq\Hom_B(N,B)$.
If $M$ is $\Tr_B(N)$-accessible and $N$ is $\Tr_{B^\op}(M)$-accessible,
then there exists a unique $(B,B)$-homomorphism
$\rho\colon N\otimes_A M\to B$
which makes $(M,N)$ into a Morita context between $A$ and $B$.
\end{lemma}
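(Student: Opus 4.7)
The plan is to construct $\rho$ by an explicit formula using the accessibility of $N$, exploiting an identity in $M\otimes_B N$ that comes from the injectivity of $\eta$.

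First I will record two preliminaries. The preamble condition $B'=0$ forces $\Ann_B(N)=0$ (and symmetrically $\Ann_{B^\op}(M)=0$): if $bN=0$ then $mb\otimes n=m\otimes bn=0$ in $M\otimes_B N$ for all $m,n$, hence $\eta(mb\otimes n)=0$ gives $b\in B'=0$. Second, for all $m,x\in M$ and $y,y''\in N$ the identity
\[m\otimes y\cdot\eta(x\otimes y'') \;=\; \eta(m\otimes y)\cdot x\otimes y''\]
holds in $M\otimes_B N$: applying $\eta$ to either side yields $\eta(m\otimes y)\cdot\eta(x\otimes y'')\in A$ (by right-$A$-linearity of $\eta$ on the left factor and left-$A$-linearity on the right), and $\eta$ is injective. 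Applying the map $\phi\otimes\id_N\colon M\otimes_B N\to N$, $m'\otimes n\mapsto\phi(m')n$, for any $\phi\in M^\vee$ then gives the crucial consequence $(*)$:
\[\phi(\eta(m\otimes y)\cdot x)\cdot y'' \;=\; \phi(m)\cdot y\cdot\eta(x\otimes y'')\quad\text{in }N.\]

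Now $N=\Tr_{B^\op}(M)\cdot N$ lets me write every $y\in N$ as $y=\sum_k\phi_k(m_k)\cdot y_k$ with $\phi_k\in M^\vee$, $m_k\in M$, $y_k\in N$, and I define
\[\rho(y\otimes x)\;:=\;\sum_k\phi_k\bigl(\eta(m_k\otimes y_k)\cdot x\bigr)\in B.\]
Applying $(*)$ to each summand gives $\rho(y\otimes x)\cdot y''=\sum_k\phi_k(m_k)\cdot y_k\cdot\eta(x\otimes y'')=y\cdot\eta(x\otimes y'')$ for every $y''\in N$. Combined with $\Ann_B(N)=0$, this single characterization simultaneously proves the independence of $\rho(y\otimes x)$ from the chosen decomposition, the uniqueness of $\rho$, and one associativity law $\rho(y\otimes x)\cdot y'=y\cdot\eta(x\otimes y')$. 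The $A$-balance $\rho(ya\otimes x)=\rho(y\otimes ax)$ and the $(B,B)$-bilinearity of $\rho$ read off mechanically from the definition applied to the decompositions of $ya$, $by$, and $y\otimes xb$ induced by that of $y$.

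It remains to verify the other associativity $\eta(x'\otimes y)\cdot x=x'\cdot\rho(y\otimes x)$ in $M$. I apply any dual-basis element $\phi_i\in M^\vee$ of $M$ to both sides: by $(*)$ on the left and the already-established associativity on the right, both $\phi_i(\eta(x'\otimes y)x)\cdot y''$ and $\phi_i(x'\cdot\rho(y\otimes x))\cdot y''=\phi_i(x')\cdot\rho(y\otimes x)\cdot y''$ equal $\phi_i(x')\cdot y\cdot\eta(x\otimes y'')$ for every $y''\in N$, so $\Ann_B(N)=0$ yields $\phi_i(\eta(x'\otimes y)x)=\phi_i(x'\cdot\rho(y\otimes x))$ in $B$. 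The reflexivity embedding $M\hookrightarrow B^r$, $m\mapsto(\phi_i(m))_i$, coming from the f.g.\ projective structure on $M$, then forces equality in $M$. The main obstacle is the well-definedness of the explicit formula --- the decomposition $y=\sum\phi_k(m_k)y_k$ is highly non-unique --- and it is overcome precisely by the uniqueness characterization above, made possible by the faithfulness of $N$ derived from $B'=0$.
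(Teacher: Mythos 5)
Your proof is correct, and it is the element-wise, dualized cousin of the paper's argument. Both proofs hinge on the same first step: the injectivity of $\eta$ forces the identity $m\otimes y\,\eta(x\otimes y'')=\eta(m\otimes y)x\otimes y''$ in $M\otimes_B N$ (in the paper this appears as the equality of the two parallel maps $M\otimes_BN\otimes_AM\otimes_BN\rightrightarrows M\otimes_BN$). From there the routes diverge. The paper constructs $\rho$ by descending a commutative square: the $\Tr_B(N)$-accessibility of $M$ makes $N\otimes_AM\otimes_BN\otimes_AN^\vee\twoheadrightarrow N\otimes_AM$ surjective and the faithfulness of $M$ embeds $B\hookrightarrow\End_A(M)^\op$, yielding first the law $\eta(m'\otimes n)m=m'\rho(n\otimes m)$; the second law is then obtained by running the dual construction to get a $\rho'$ and identifying $\rho=\rho'$ via uniqueness. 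You instead use the $\Tr_{B^\op}(M)$-accessibility of $N$ to write down $\rho$ by an explicit dual-basis formula, pin it down through the characterization $\rho(y\otimes x)y''=y\,\eta(x\otimes y'')$ together with $\Ann_B(N)=0$, and then verify the \emph{other} associativity law directly by composing with a dual basis of the finitely generated projective module $M$ and again invoking faithfulness of $N$ --- no appeal to duality or to a second candidate map. A pleasant by-product of your route is that the hypothesis ``$M$ is $\Tr_B(N)$-accessible'' is never used: only the accessibility of $N$, the finite generation and projectivity of $M$ and $N$, and $B'=M'=N'=0$ enter, so you have in fact proved the lemma under formally weaker assumptions (consistent with the paper's remark that its conditions are sufficient but not necessary). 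The only cosmetic caution is that in the enriched (super) setting the maps $\phi\in M^\vee$ may be odd, so Koszul signs should decorate identities such as $(*)$; this affects neither proof structurally, and the paper's own element-wise conclusion suppresses the same signs.
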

\begin{proof}
The uniqueness of $\rho$ is clear from that $B'=0$, so we prove its existence.
First consider the sequence
\[\begin{tikzcd}[column sep=large]
M\otimes_B N\otimes_A M\otimes_B N
\arrow[yshift=.7mm]{r}{\eta\otimes_A(M\otimes_B N)}
\arrow[yshift=-.7mm,swap]{r}{(M\otimes_B N)\otimes_A\eta} &
M\otimes_B N \arrow[hook]{r}{\eta} &
A\rlap{.}
\end{tikzcd}\]
Since the two parallel homomorphisms above are equalized by $\eta$ which is injective,
these are equal.
This implies that the diagram below is commutative:
\[\begin{tikzcd}[column sep=large]
N\otimes_A M\otimes_B N\otimes_A N^\vee
\arrow{r}{N\otimes_A\eta\otimes_A N^\vee} \arrow[two heads]{d} &
N\otimes_A N^\vee \arrow{r} &
B \arrow[hook]{d} \\
N\otimes_A M \arrow{rr} &&
\End_A(M)^\op\rlap{.}
\end{tikzcd}\]
Here the map at the bottom is given by $n\otimes m\mapsto(m'\mapsto\eta(m'\otimes n)m)$.
By the assumption that $M$ is $\Tr_B(N)$-accessible, the left vertical arrow
is surjective. On the other hand, since $M$ is faithful over $B$,
the right vertical arrow is injective.
Hence the diagram induces a $(B,B)$-homomorphism $\rho\colon N\otimes_A M\to B$
which satisfies $\eta(m'\otimes n)m=m'\rho(n\otimes m)$
for all $m,m'\in M$ and $n\in N$.
Dually we can prove the existence of another $\rho'\colon N\otimes_A M\to B$
such that $n\eta(m\otimes n')=\rho'(n\otimes m)n'$,
but we have $\rho=\rho'$ by the uniqueness.
\end{proof}
When $M$ and $N$ are free over $B$ the accessibility condition is trivially satisfied,
so that this proof is essentially the same as the original one by Graham and Lehrer.
Note that this condition is not necessary:
consider the example above with replacing
$A$ with $\kk\oplus\kk\epsilon$, $\epsilon^2=0$
so that $\epsilon M=0$, $N\epsilon=0$ and $\eta\colon M\otimes_B N\simeq A\epsilon$,
which clearly has $\rho=0$.
One necessary condition is that $\eta(M\otimes_B N)M\subset M\Tr_B(N)$
and $N\eta(M\otimes_B N)\subset \Tr_{B^\op}(M)N$, but
the author does not know it is sufficient for the existence of $\rho$ or not.

\subsection{Involution on algebras}

For an algebra $A$,
we call an algebra homomorphism $A\to A^\op$ whose square is equal
to the identity an \term{anti-involution} on $A$.
That is, it is a degree-zero map ${\bullet}^*\colon A\to A$ satisfying
\[1^*=1,\quad(ab)^*=(-1)^{\abs{a}\abs{b}}b^*a^*\quad\text{and}\quad a^{**}=a\]
(beware the Koszul sign).
If $A$ has an anti-involution, for each left $A$-module $M$
there is a corresponding right $A$-module $M^*$
whose underlying set is equal to $M$ and
action is defined by $x^*\cdot a^*\coloneqq(-1)^{\abs{a}\abs{x}}(ax)^*$,
where we write $x^*\in M^*$ the element corresponds to $x\in M$.
Similarly for a right $A$-module $N$ we denote by $N^*$
the corresponding left $A$-module, so that $M^{**}\simeq M$.

\begin{definition}
Let $A$ and $B$ be algebras with anti-involution.
A Morita context $(M,N)$ between $A$ and $B$ is said to be \term{involutive}
if there is an isomorphism $M\simeq N^*$ of $(A,B)$-bimodules 
(so $M^*\simeq N$) which satisfies
\[\eta(x\otimes y)^*=(-1)^{\abs{x}\abs{y}}\eta(y^*\otimes x^*),\quad
\rho(y\otimes x)^*=(-1)^{\abs{x}\abs{y}}\rho(x^*\otimes y^*)\]
for every $x\in M$, $y\in N$.
\end{definition}

Now we assume that each $B_\lambda$ has a fixed anti-involution.

\begin{definition}
A standardly filter on an algebra $A$ with anti-involution
is said to be \term{involutive}
if for each $\lambda$, $A^{\le\lambda}$ and $B'_\lambda$ are closed under anti-involution
and the Morita context $(M_\lambda,N_\lambda)$ between
$A/A^{<\lambda}$ and $B_\lambda/B'_\lambda$ is involutive.
An algebra equipped with an involutive well-based standard filter
is called a \term{weakly cellular algebra}.
\end{definition}

Note that in the settings of Lemma~\ref{lem:category_produces_filter},
when $\cA$ has an anti-involution $\cA\to\cA^\op$ which fixes
all $X_\lambda$ and $B_\lambda$,
it produces an involutive standard filter.
The statements below are clear from the definition.

\begin{lemma*}
Let $A$, $B$, $M$ and $N$ as in Lemma~\ref{lem:reducing_morita}.
If $A$ and $B$ have their anti-involutions and $(M,N)$ is involutive, then
$(B')^*=B'$ and $(M/M',N/N')$ is also involutive.
\end{lemma*}

\begin{proposition*}
If an algebra $A$ with anti-involution has a involutive (well-based) prestandard filter,
it also has an involutive (well-based) standard filter.
\end{proposition*}

To deal its Morita invariant property we should be careful
with the compatibility between Morita equivalence and anti-involution.
See the hypotheses $(*)$ and $(\dag)$ in \cite{KonigXi99}.
Recall that the Morita equivalence $\lMod{A}\simeq\lMod{A'}$
also induces $\rMod{A}\simeq\rMod{A'}$.
We here say that algebras $A$ and $A'$ with anti-involution
are \term{involutively Morita equivalent} if the category equivalence
makes the diagram
\[\begin{tikzcd}
\lMod{A} \arrow{r}{*} \arrow{d}[swap]{\sim} &
\rMod{A} \arrow{d}{\sim} \\
\lMod{A'} \arrow{r}[swap]{*} &
\rMod{A'}
\end{tikzcd}\]
commutes up to natural isomorphism.
Note that if $A'$ is equivalent to $A$ we can find an idempotent
$m\times m$ matrix $e=(e_{ij})$ over $A$ such that
$A'\simeq e\cdot\mathrm{Mat}_m(A)\cdot e$.
The condition above is equivalent to that we can also take $e$ so that
$e_{ij}^*=e_{ji}$.

The proofs of the statements below
are obvious by Proposition~\ref{prop:standardly_based_is_morita_invariant}.

\begin{proposition*}\label{prop:cellular_is_morita_invariant}
Let $A$ be a weakly cellular algebra over $\{B_\lambda\}$.
\begin{enumerate}
\item For any idempotent $e\in A$ such that $e^*=e$,
the algebra $eAe$ with the same anti-involution is also weakly cellular.
\item If an algebra $A'$ with anti-involution is involutively Morita equivalent to $A$,
$A'$ is also weakly standardly filtered.
\qedhere
\end{enumerate}
\end{proposition*}
In \cite{KonigXi99} it is also proved that even if we are not given
a such anti-involution on $A'$ we can construct it from that on $A$.
Thus their result is stronger than above.

Finally we give the definition of cellular algebra in terms of basis.
Note the next lemma which follows by the uniqueness of $\rho$.
\begin{lemma*}
Suppose that $M$, $N$ and $\eta$
in Lemma~\ref{lem:completion_of_morita_context} satisfies
$M\simeq N^*$ and $\eta(x\otimes y)^*=(-1)^{\abs{x}\abs{y}}\eta(y^*\otimes x^*)$.
Then the induced map $\rho\colon N\otimes_A M\to B$ also satisfies
$\rho(y\otimes x)^*=(-1)^{\abs{x}\abs{y}}\rho(x^*\otimes y^*)$ so that
the Morita context $(M,N)$ between $A$ and $B$ is involutive.
\end{lemma*}

\begin{definition}
A standardly based algebra $A$ over $\{\cB_\lambda\}$
with anti-involution is called a \term{(generalized) cellular algebra}
if each component $A^\lambda$ is closed under anti-involution and the isomorphism
$M_\lambda\otimes_{B_\lambda}N_\lambda\simeq A^{\le\lambda}/A^{<\lambda}$
satisfies the involutive property similar as above.
\end{definition}
Again, this definition is same as the original one when every $B_\lambda$ is
just the base ring $\kk$.
Then the next statement is obvious from the lemma above.

\begin{proposition*}
A cellular algebra is a weakly cellular algebra.
\end{proposition*}

We prove the converse in suitable conditions.

\begin{proposition}
Suppose that the assumptions in Proposition~\ref{prop:weakly_standardly_based_implies_standardly_based}
are satisfied, in addition to that $2\in\kk$ is invertible.
Then a weakly cellular algebra
is a cellular algebra if
$M_\lambda$ and $N_\lambda$ are free over $B_\lambda$ for every $\lambda$.
\end{proposition}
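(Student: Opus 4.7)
The plan is to start from the ordinary standard basis produced by Proposition~\ref{prop:weakly_standardly_based_implies_standardly_based} and then, invoking that $2 \in \kk^\times$, symmetrize each homogeneous piece so that it becomes stable under the anti-involution in the way required by the cellular definition. Concretely, Proposition~\ref{prop:weakly_standardly_based_implies_standardly_based} furnishes $\kk$-linear lifts $\iota_\lambda \colon M_\lambda \otimes_{B_\lambda} N_\lambda \xrightarrow{\sim} A^\lambda$ of the quotient isomorphism $\eta_\lambda \colon M_\lambda \otimes_{B_\lambda} N_\lambda \xrightarrow{\sim} A^{\le\lambda}/A^{<\lambda}$, together with a $\kk$-module decomposition $A = \bigoplus_\lambda A^\lambda$. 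The involutivity of the Morita context provides a $\kk$-linear involution $\tau_\lambda$ on $M_\lambda \otimes_{B_\lambda} N_\lambda$, which one may describe intrinsically as $\eta_\lambda^{-1} \circ {\bullet}^* \circ \eta_\lambda$ or by the explicit formula $x \otimes y \mapsto (-1)^{\abs{x}\abs{y}} y^* \otimes x^*$ using the identification $M_\lambda \simeq N_\lambda^*$.

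I would then replace each $\iota_\lambda$ by its symmetrization
\[
\tilde\iota_\lambda(z) \coloneqq \tfrac{1}{2}\bigl(\iota_\lambda(z) + \iota_\lambda(\tau_\lambda z)^*\bigr),
\]
and set $\tilde A^\lambda \coloneqq \tilde\iota_\lambda(M_\lambda \otimes_{B_\lambda} N_\lambda)$. The verifications are short: (i)~$\tilde\iota_\lambda$ is still a lift of $\eta_\lambda$, since $A^{<\lambda}$ is $*$-stable and modulo $A^{<\lambda}$ the second summand reads $\eta_\lambda(\tau_\lambda z)^* = \eta_\lambda(z)^{**} = \eta_\lambda(z)$; (ii)~the identities $\tau_\lambda^2 = \id$ and ${\bullet}^{**} = \id$ yield $\tilde\iota_\lambda(z)^* = \tilde\iota_\lambda(\tau_\lambda z)$, so $\tilde A^\lambda$ is $*$-stable and $\tilde\iota_\lambda$ intertwines $\tau_\lambda$ with ${\bullet}^*$, matching the involutive condition on the Morita context; (iii)~since $\tilde\iota_\lambda \equiv \iota_\lambda \pmod{A^{<\lambda}}$, one has $A^{\le\lambda} = \tilde A^\lambda + A^{<\lambda}$, and a transfinite induction over the well-founded set $\Lambda$ upgrades this to a direct sum decomposition $A^{\le\lambda} = \bigoplus_{\mu \le \lambda} \tilde A^\mu$, hence $A = \bigoplus_\lambda \tilde A^\lambda$ as required.

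The only real obstacle is properly pinning down $\tau_\lambda$: one needs that it descends from $M_\lambda \oplus N_\lambda$ to a genuine $\kk$-linear endomorphism of the balanced tensor product $M_\lambda \otimes_{B_\lambda} N_\lambda$, and that its interaction with $\iota_\lambda$ and with ${\bullet}^*$ on $A^{\le\lambda}/A^{<\lambda}$ is as claimed. Both facts are automatic from the intrinsic description $\tau_\lambda = \eta_\lambda^{-1} \circ {\bullet}^* \circ \eta_\lambda$, but the explicit formula $x \otimes y \mapsto (-1)^{\abs{x}\abs{y}} y^* \otimes x^*$ requires keeping track of Koszul signs and of the fact that the bimodule isomorphism $M_\lambda \simeq N_\lambda^*$ swaps the left and the right $B_\lambda$-actions through the anti-involution of $B_\lambda$. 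Once $\tau_\lambda$ is in place, the averaging construction proceeds as described and is essentially the classical $\tfrac12(f + f^*)$ trick producing a symmetric lift whenever $2$ is invertible.
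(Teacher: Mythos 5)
Your proposal is correct and is essentially identical to the paper's own proof: the paper also takes the lift $\iota_\lambda$ from Proposition~\ref{prop:weakly_standardly_based_implies_standardly_based} and replaces it by the symmetrization $\iota'_\lambda(x\otimes y)=\tfrac12\bigl(\iota_\lambda(x\otimes y)+(-1)^{\abs{x}\abs{y}}\iota_\lambda(y^*\otimes x^*)^*\bigr)$, which matches your $\tilde\iota_\lambda$ with $\tau_\lambda(x\otimes y)=(-1)^{\abs{x}\abs{y}}y^*\otimes x^*$. Your verifications (i)--(iii) are exactly the checks the paper leaves implicit, and your remark that $\tau_\lambda$ is well defined via $\eta_\lambda^{-1}\circ{\bullet}^*\circ\eta_\lambda$ is sound.
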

\begin{proof}
The problem is that $\iota_\lambda$
we chose in the proof of Proposition~\ref{prop:weakly_standardly_based_implies_standardly_based}
does not preserve anti-involution.
So we retake a new map
$\iota'_\lambda\colon M_\lambda\otimes_{B_\lambda}N_\lambda\hookrightarrow A$
defined by
\[\iota'_\lambda(x\otimes y)\coloneqq
\frac{\iota_\lambda(x\otimes y)+(-1)^{\abs{x}\abs{y}}\iota_\lambda(y^*\otimes x^*)^*}{2}.\]
Then $\iota'_\lambda$ is also a lift of
$M_\lambda\otimes_{B_\lambda}N_\lambda\hookrightarrow A/A^{<\lambda}$
which satisfies $\iota'_\lambda(x\otimes y)^*=(-1)^{\abs{x}\abs{y}}\iota'_\lambda(y^*\otimes x^*)$.
Thus by putting $A^\lambda\coloneqq\iota'_\lambda(M_\lambda\otimes_{B_\lambda}N_\lambda)$
we obtain a desired direct sum decomposition.
\end{proof}

\section{Cellular structure on the Iwahori--Hecke algebra}\label{sec:cellular_hecke}

In this section we review the definition and the representation
theory of the Iwahori--Hecke algebra.
We give a new proof for that the Iwahori--Hecke algebra and the associated
$q$-Schur algebra are cellular with respect to
Murphy's basis~\cite{Murphy92,Murphy95}
in a more simple and sophisticated way than his original one
or given in \cite{Mathas99}.
We give a generalized theorem that classify its simple modules
on a very few assumptions.

\subsection{The symmetric groups}
We here introduce standard notions on Young tableaux
used in representation theory of the symmetric group and the Iwahori--Hecke algebra,
and we briefly recall some of their basic facts.
We refer the standard textbooks \cite{Humphreys90}, \cite{Fulton97}
and \cite{Mathas99} for details.

We write $\NN=\{0,1,2,\dots\}$ the set of natural numbers.
We denote by $\fS_n$ the symmetric group
of rank $n\in\NN$ acting on the set $\{1,2,\dots,n\}$ from left.
For $1\le i\le n-1$, let $s_i$ be the basic transposition $(i,i+1)$.
As a Coxeter group, $\fS_n$ is generated by the elements $s_1,s_2,\dots,s_{i-1}$.
With respect to this generator set, the \term{length} of $w\in\fS_n$
is equal to its inversion number
\[\ell(w)=\#\set{(i,j)}{1\le i<j\le n\text{ and }w(j)<w(i)}.\]

A \term{composition} of $n\in\NN$ is an infinite sequence
$\lambda=(\lambda_1,\lambda_2,\dots)$
of natural numbers whose sum, written as $\abs\lambda\coloneqq\sum_i\lambda_i$,
is equal to $n$.
Alternatively we often represent a composition $\lambda$ as a finite tuple
$\lambda=(\lambda_1,\lambda_2,\dots,\lambda_r)$
if it satisfies $\lambda_i=0$ for all $r>i$.
For such $\lambda$,
the corresponding \term{parabolic subgroup} (also called the \term{Young subgroup})
\[\fS_\lambda\coloneqq\fS_{\lambda_1}\times\fS_{\lambda_2}\times\dots\times\fS_{\lambda_r}\subset\fS_n\]
is defined.
It is known that the quotient set $\fS_n/\fS_\lambda$ has
the minimal length coset representatives
\[\fD_\lambda\coloneqq\set{w\in\fS_n}{\ell(w s_i)>\ell(w)\text{ for every }s_i\in\fS_\lambda}.\]
With respect to this set,
every $w\in\fS_n$ is uniquely decomposed as $w=uv$ to a pair of
$u\in\fD_\lambda$ and $v\in\fS_\lambda$ which satisfies $\ell(w)=\ell(u)+\ell(v)$.
For another composition $\mu$, the sets
\[\fD_\mu^{-1}=\set{w\in\fS_n}{\ell(s_i w)>\ell(w)\text{ for every }s_i\in\fS_\mu}\]
and $\fD_\lambda\cap\fD_\mu^{-1}$
are the minimal length representatives of the left cosets $\fS_\mu\backslash\fS_n$
and the double cosets $\fS_\mu\backslash\fS_n/\fS_\lambda$ respectively.

The \term{Poincar\'e polynomial} of a subset $S\subset\fS_n$ is defined by
$P_S(q)\coloneqq\sum_{w\in S}q^{\ell(w)}\in\ZZ[q]$.
If $S$ has a decomposition $S=S_1\cdot S_2$ which preserves lengths,
it follows by definition that $P_S(q)=P_{S_1}(q)P_{S_2}(q)$.
We have a $q$-factorial as the Poincar\'e polynomial of whole $\fS_n$,
\[P_{\fS_n}(q)=[n]!\coloneqq[1][2]\dotsm[n],\]
where $[k]$ is a $q$-integer $[k]=1+q+\dots+q^{k-1}$,
which follows inductively from the decomposition
$\fS_n=\bigsqcup_{1\le k\le n}s_k s_{k+1}\dotsm s_{n-1}\fS_{n-1}$.
Then for a composition $\lambda=(\lambda_1,\lambda_2,\dots,\lambda_r)$ of $n$, we obtain
\[P_{\fS_\lambda}(q)=P_{\fS_{\lambda_1}}(q)P_{\fS_{\lambda_2}}(q)\dotsm P_{\fS_{\lambda_r}}(q)=
[\lambda_1]![\lambda_2]!\dotsm[\lambda_r]!\]
and
\[P_{\fD_\lambda}(q)=\frac{P_{\fS_n}(q)}{P_{\fS_\lambda}(q)}
=\frac{[n]!}{[\lambda_1]![\lambda_2]!\dotsm[\lambda_r]!}.\]
This polynomial
\[\qbinom{n}{\lambda}=\qbinom{n}{\lambda_1,\lambda_2,\dots,\lambda_r}
\coloneqq P_{\fD_\lambda}(q)\]
is called the \term{$q$-multinomial coefficient}.
In particular, when $\lambda=(n-k,k)$ is of length $2$,
the Poincar\'e polynomial of $\fD_{(n-k,k)}$ is given by
\term{a $q$-binomial coefficient}
\[\qbinom{n}{k}\coloneqq\qbinom{n}{n-k,k}=\frac{[n][n-1]\dotsm[n-k+1]}{[k]!}.\]

\subsection{Combinatorics on tableaux}
The \term{Young diagram} of a composition $\lambda$ is defined by
\[Y(\lambda)\coloneqq\set{(i,j)}{1\le i,\,1\le j\le\lambda_i}.\]
We represent it by boxes placed in the fourth quadrant
arranged as matrix indices (the English notation):
\[(3,2)=\yng(3,2),\quad(2,4,1)=\yng(2,4,1),\quad
(2,0,3)=\,\vcenter to 30pt{\hbox{\yng(2)}\vfil\hbox{\yng(3)}}\,.\]
A \term{tableau} of shape $\lambda$ is a function
$\sT\colon Y(\lambda)\to\{1,2,\dots\}$.
The \term{weight} of a tableau $\sT$ is a composition $\mu=(\mu_1,\mu_2,\dots)$
whose $i$-th component is $\mu_i\coloneqq\#\sT^{-1}(i)$.
$\sT$ is said to be \term{row-semistandard} if it satisfies
$\sT(i,j)\leq\sT(i,j+1)$ for each pair of adjacent boxes $(i,j),(i,j+1)\in Y(\lambda)$, that is, entries in each row of $\sT$ are weakly increasing.
We denote by $\Tab_{\lambda;\mu}$ the set of row-semistandard tableaux
of shape $\lambda$ and weight $\mu$.
For example,
\[\Tab_{(2,3);(3,1,0,1)}=\left\{
\young(11,124),\young(12,114),\young(14,112),\young(24,111)
\right\}.\]
A row-semistandard tableau is called a \term{row-standard tableau}
if its weight is $(1^n)=(1,1,\dots,1)$.
We denote by $\Tab_\lambda\coloneqq\Tab_{\lambda;(1^n)}$
the set of row-standard tableaux of shape $\lambda$.

The set $\fD_\lambda$ is in bijection with the set $\Tab_\lambda$
by the following correspondence:
for a row-standard tableau $\sT$,
we obtain a permutation $d(\sT)\in\fD_\lambda$
by reading its entries from left to right for each rows from top to bottom.
For example,
\[\sT=\young(1245,378,6)
\quad\text{corresponds to}\quad
d(\sT)=\begin{pmatrix}1&2&3&4&5&6&7&8\\1&2&4&5&3&7&8&6\end{pmatrix}=s_3s_4s_6s_7.\]
Actually any tableau of weight $(1^n)$ provides a permutation
in this manner, and the increasing condition on the rows just say that
this permutation is in $\fD_\lambda$.
We denote by $\varpi_\lambda$ the longest element in $\fD_\lambda$.
Its corresponding tableau is obtained by putting numbers
on $Y(\lambda)$ from bottom to top, conversely as before.
For each $\sT\in\Tab_\lambda$, let us write $\ell(\sT)\coloneqq\ell(d(\sT))$ for short
which we also call the \term{length} of $\sT$.
$\ell(\sT)$ can be also expressed as the inversion number
\[\ell(\sT)=\set[\big]{\bigl((i,j),(k,l)\bigr)}{i<k\text{ and }T(k,l)<T(i,j)}.\]

Next let us take another composition $\mu$ and consider
the action $\fS_\mu\curvearrowright\fS_n/\fS_\lambda$.
For $\sS\in\Tab_{\lambda;\mu}$,
we denote by $\Tab_\sS$ the set $\set{\sT\in\Tab_\lambda}{\sT|_\mu=\sS}$
where $\sT|_\mu$ is a row-semistandard tableau of weight $\mu$
obtained from $\sT$ by replacing its entries
$1,2,\dots,\mu_1$ by $1$, $\mu_1+1,\dots,\mu_1+\mu_2$ by $2$, and so forth.
For example, for
\[\sS=\young(1123,144,3),\]
we have
\[\Tab_\sS=\left\{\young(1245,378,6),\young(1345,278,6),\young(2345,178,6),
\young(1246,378,5),\young(1346,278,5),\young(2346,178,5)\right\}.\]
Then via the one-to-one correspondence $\Tab_\lambda\onetoone\fS_n/\fS_\lambda$,
each subset $\Tab_\sS\subset\Tab_\lambda$ clearly
corresponds to each orbit of the action above.
Hence the set $\Tab_{\lambda;\mu}$ is in bijection with the set
$\fD_\lambda\cap\fD_\mu^{-1}$.
Namely, for each $\sS\in\Tab_{\lambda;\mu}$,
there is a unique tableau $\sS_\downarrow\in\Tab_\sS$
which has the minimal length, so that
$d(\sS_\downarrow)\in\fD_\lambda\cap\fD_\mu^{-1}$.
We can construct $\sS_\downarrow$ from $\sS$ in the following manner:
first we mark subscripts $1,2,\dots,\mu_k$ to all $k$'s in $\sS$
for each number $k$ along with the above reading order.
Then $\sS_\downarrow$ is obtained by
replacing the entries of $\sS$ by $1,2,\dots,n$ with respect to the total order
\[1_1<1_2<\dots<1_{\mu_1}<2_1<2_2<\dots<2_{\mu_2}<\dotsb.\]
For example, the row-semistandard tableau $\sS$ above is marked as
\[\def\Xaa{1_1}\def\Xab{1_2}\def\Xba{2_1}\def\Xca{3_1}
\def\Xac{1_3}\def\Xda{4_1}\def\Xdb{4_2}\def\Xcb{3_2}
\young(\Xaa\Xab\Xba\Xca,\Xac\Xda\Xdb,\Xcb)\]
and gives the corresponding row-standard tableau
$\sS_\downarrow=\sT$ in the previous example; so $d(\sS_\downarrow)=d(\sT)=s_3 s_4 s_6 s_7$.
Other elements in $\Tab_\sS$ can be constructed from $\sS_\downarrow$ as follows:
let $\#_{ij}(\sS)$ be the number of $j$'s in the $i$-th row of $\sS$,
and $\sS[j]$ be the composition of $\mu_j$ defined by $\sS[j]_i\coloneqq\#_{ij}(\sS)$.
We define $\fD_\sS\subset\fS_n$ by
\[\fD_\sS\coloneqq\fD_{\sS[1]}\times\fD_{\sS[2]}\times\dots\times\fD_{\sS[r]}
\subset\fS_\mu\subset\fS_n.\]
Then we have a one-to-one correspondence
$\fD_\sS\to\Tab_\sS$; $w\mapsto w\sS_\downarrow$
which preserves lengths, that is, $\ell(w\sS_\downarrow)=\ell(w)+\ell(\sS_\downarrow)$.
Let $\varpi_\sS\in\fD_\sS$ be its longest element
$\varpi_\sS\coloneqq(\varpi_{\sS[1]},\varpi_{\sS[2]},\dots,\varpi_{\sS[r]})$.
The tableau $\sS^\uparrow\coloneqq\varpi_\sS\sS_\downarrow\in\Tab_\sS$
which has maximal length is obtained by
replacing the entries of $\sS$ from bottom to top, contrary to $\sS_\downarrow$.

The matrix $(\#_{ij}(\sS))_{i,j\ge1}$ uniquely determines a row-semistandard tableau $\sS$,
and its shape $\lambda$ and its weight $\mu$ are recovered from this matrix as
\[\lambda_i=\sum_j\#_{ij}(\sS)\quad\text{and}\quad
\mu_j=\sum_i\#_{ij}(\sS).\]
So for each $\sS$,
there exists a unique tableau $\sS^*$ of shape $\mu$ and of weight $\lambda$,
which satisfies $\#_{ij}(\sS^*)=\#_{ji}(\sS)$.
We call it the \term{dual tableau} of $\sS$.
For example, for $\sS$ above, its dual is
\[\sS^*=\young(112,1,13,22).\]
It easily follows that $d((\sS^*)_\downarrow)=d(\sS_\downarrow)^{-1}$.
So taking dual $\Tab_{\lambda;\mu}\to\Tab_{\mu;\lambda}$; $\sS\mapsto\sS^*$
corresponds to the inversion
$\fD_\lambda\cap\fD_\mu^{-1}\to\fD_\mu\cap\fD_\lambda^{-1}$; $w\mapsto w^{-1}$
via the bijection $d$.

\subsection{The Iwahori--Hecke algebra}
\label{sub:parabolic_modules}

Hereafter we fix a parameter $q\in\kk$.
For each $n\in\NN$, the \term{Iwahori--Hecke algebra} $H_n=H_n(q)$ of rank $n$
(or of type $\mathsf{A}_{n-1}$)
is an algebra generated by elements $T_1,T_2,\dots,T_{n-1}$
with defining relations
\[T_i T_j = T_j T_i \text{\quad if } |i-j|\ge2,\qquad
T_i T_{i+1} T_i = T_{i+1} T_i T_{i+1},\qquad
(T_i-q)(T_i+1) = 0.\]
Here for $n=0$ or $1$, it is defined as $H_0=H_1=\kk$.
For each $w\in\fS_n$, we take a reduced expression $w=s_{i_1}s_{i_2}\dotsm s_{i_r}$
and define an element $T_w\coloneqq T_{i_1}T_{i_2}\dotsm T_{i_r}$ of $H_n$.
Then it is known that it does not depend on choice of expression,
and that $H_n$ is a free $\kk$-module with basis $\set{T_w}{w\in\fS_n}$.
Thus $H_n$ can be considered as a $q$-deformation of $\kk\fS_n$,
the group ring of the symmetric group.
The element $T_w$ is invertible if and only if $q\in\kk^\times$;
in such a case, we have $T_i^{-1}=q^{-1}(T_i-q+1)$
and $T_w^{-1}=T_{i_r}^{-1}\dotsm T_{i_2}^{-1}T_{i_1}^{-1}$.
By definition, if $u,v\in\fS_n$ satisfy $\ell(uv)=\ell(u)+\ell(v)$
then $T_{uv}=T_u T_v$.
The algebra $H_n$ has an anti-involution defined by
$(T_w)^*\coloneqq T_{w^{-1}}$.
Thus the category of left $H_n$-modules is equivalent
to that of right modules.

For a composition $\lambda=(\lambda_1,\lambda_2,\dots,\lambda_r)$ of $n$,
let $H_\lambda$ be a subalgebra of $H_n$ spanned by $\set{T_w}{w\in\fS_\lambda}$.
Then $H_n$ is free as a right $H_\lambda$-module with basis $\set{T_w}{w\in\fD_\lambda}$
by the decomposition $\fS_n=\fD_\lambda\fS_\lambda$.
As an abstract algebra, we have an isomorphism
\[H_\lambda\simeq H_{\lambda_1}\otimes H_{\lambda_2}\otimes\dots\otimes H_{\lambda_r}.\]
It is called a \term{parabolic subalgebra} of $H_n$.

In representation theory of the symmetric groups and the Iwahori--Hecke algebra,
it is important to treat modules over these algebras for all ranks at once.
So it is better to consider the direct sum of all their module categories.
Convolution product of modules is defined as a binary operation on this category.

\begin{definition}
Let $\lambda=(\lambda_1,\lambda_2,\dots,\lambda_r)$ be a composition of $n$.
For each $i=1,2,\dots,r$, let $V_i$ be an $H_{\lambda_i}$-module.
We define an $H_n$-module
\[V_1*V_2*\dots*V_r\coloneqq H_n\otimes_{H_\lambda}(V_1\boxtimes V_2\boxtimes\dotsb\boxtimes V_r)\]
where $\boxtimes$ denotes the outer tensor product of modules.
It is called the \term{convolution product} of $V_1,V_2,\dots,V_r$.
\end{definition}
Obviously this product is associative up to natural isomorphism.
By the basis theorem, we have a direct sum decomposition
\[V_1*V_2*\dots*V_r=\bigoplus_{w\in\fD_\lambda}T_w(V_1\boxtimes V_2\boxtimes\dotsb\boxtimes V_r)\]
as a $\kk$-module.
The convolution product $*$ defines a structure of
tensor category on the direct sum of the module categories $\bigoplus_n(\lMod{H_n})$.
This tensor category also admits a braiding
\begin{align*}
\sigma(V,W)\colon V*W&\to W*V\\
x\boxtimes y&\mapsto T_{\varpi_{(n,m)}}(y\boxtimes x)
\end{align*}
in a weak sense;
it satisfies the hexagon axioms of braiding
but is not invertible unless $q\in\kk^\times$.
Here $\varpi_{(n,m)}$ is the longest element in $\fD_{(n,m)}$ defined by
\[\varpi_{(n,m)}(i)\coloneqq\begin{cases*}
i+m & if $1\le i\le n$,\\
i-n & if $n+1\le i\le m+n$.
\end{cases*}\]
The hexagon axioms follow from the decompositions
\begin{align*}
\varpi_{(n+p,m)}&=(\varpi_{(n,m)},1_p)\cdot(1_n,\varpi_{(p,m)}),&
\varpi_{(p,m+n)}&=(1_m,\varpi_{(p,n)})\cdot(\varpi_{(p,m)},1_n)
\end{align*}
which preserve lengths.
Here we denote by $1_n$ the unit element of $\fS_n$.

\subsection{Parabolic modules and the \texorpdfstring{$q$-Schur}{q-Schur} algebra}
Let $\lambda$ be a composition.
We define an element $m_\lambda\in H_\lambda$ by
\[m_\lambda\coloneqq\sum_{w\in\fS_\lambda}T_w.\]
Note that $T_i(1+T_i)=(1+T_i)T_i=q(1+T_i)$.
Hence $m_\lambda$ satisfies
$T_w m_\lambda=m_\lambda T_w=q^{\ell(w)}m_\lambda$ for all $w\in\fS_\lambda$
since it can be also written as
\[m_\lambda=\sum_{\substack{w\in\fS_\lambda,\\\ell(s_i w)>\ell(w)}}(1+T_i)T_w
=\sum_{\substack{w\in\fS_\lambda,\\\ell(w s_i)>\ell(w)}}T_w(1+T_i)\]
for each $s_i\in\fS_\lambda$.
In particular, $\kk m_\lambda$ is a 2-sided ideal of $H_\lambda$.

Let $M_\lambda\coloneqq H_n m_\lambda$ be a left ideal of $H_n$ generated by $m_\lambda$,
which we call a \term{parabolic module}.
In particular, the \term{trivial module} $\unit_n\coloneqq M_{(n)}$
is a free $\kk$-module of rank one spanned by $m_n\coloneqq m_{(n)}$,
on which every $T_w$ acts by a scalar $q^{\ell(w)}$.
Since the action $H_n\curvearrowleft H_\lambda$ is free,
$M_\lambda$ is isomorphic to $H_n\otimes_{H_\lambda}\kk m_\lambda$ as an $H_n$-module;
so it has a basis $\set{T_w m_\lambda}{w\in\fD_\lambda}$ over $\kk$.
Or equivalently, by using convolution product, we can also represent it as
$M_\lambda\simeq\unit_{\lambda_1}*\unit_{\lambda_2}*\dots*\unit_{\lambda_r}$.
Elements of $M_\lambda\subset H_n$ are characterized as
\[M_\lambda=\set{x\in H_n}{x T_w=q^{\ell(w)}x\text{ for all }w\in\fS_\lambda}\]
because for $x=\sum_{w\in\fS_n} x_w T_w$ ($x_w\in\kk$), $x T_i=qx$ is equivalent
to that $x_w=x_{w s_i}$ for all $w\in\fS_n$.
For each $w\in\fD_\lambda$, we take the corresponding row-standard tableau $\sT$
such that $w=d(\sT)$ and write $m_\sT\coloneqq T_w m_\lambda$.
The action of $H_n$ on it is described as follows:
suppose each number $i$ is contained in the $r(i)$-th row of $\sT$.
Then
\[T_i\cdot m_\sT=\begin{cases*}
q m_\sT & if $r(i)=r(i+1)$,\\
m_{s_i\sT} & if $r(i)<r(i+1)$,\\
q m_\sT + (q-1)m_{s_i\sT} & if $r(i)>r(i+1)$.
\end{cases*}\]
We similarly define right ideals $M^*_\lambda\coloneqq m_\lambda H_n$
and $\unit^*_n\coloneqq M^*_{(n)}$. Then we have
\[M^*_\lambda=\set{x\in H_n}{T_w x=q^{\ell(w)}x\text{ for all }w\in\fS_\lambda}.\]

Now take two compositions $\lambda,\mu$ of $n$.
Since $M_\mu$ is a cyclic module generated by $m_\mu$
with the relations $T_w m_\mu=q^{\ell(w)}m_\mu$
for every $w\in\fS_\mu$, by taking the image of the generator $m_\mu$
we have an isomorphism
\[\Hom_{H_n}(M_\mu,M_\lambda)\simeq\set{x\in M_\lambda}
{T_w x=q^{\ell(w)}x \text{ for all } w\in\fS_\mu}
= M_\lambda\cap M^*_\mu.\]
Let us write $M_{\lambda;\mu}\coloneqq M_\lambda\cap M^*_\mu$.
The collection of these $\kk$-modules has a natural product
\begin{align*}
\circ_\mu\colon M_{\mu;\nu}\otimes M_{\lambda;\mu}&\to M_{\lambda;\nu},\\
x m_\mu \otimes m_\mu y &\mapsto x m_\mu y.
\end{align*}
According to the isomorphism above, this product corresponds to
the opposite of the composition of homomorphisms.
Note that $M_{\lambda;\mu}$ naturally acts on the parabolic module $M_\mu$ from right,
so that the composition is given by the reversed product.
On the other hand, $M_{\lambda;\mu}$ is also isomorphic to
$\Hom_{H_n^\op}(M^*_\lambda,M^*_\mu)$, the set of homomorphisms between right modules.
In this view, the product is just same as the composition of such homomorphisms.
Anyway, the algebra with this product
\[\rS_{r,n}\coloneqq\bigoplus_{\lambda,\mu}M_{\lambda;\mu}\simeq
\End_{H_n}\Bigl(\bigoplus_\lambda M_\lambda\Bigr)^\op\simeq
\End_{H_n^\op}\Bigl(\bigoplus_\lambda M^*_\lambda\Bigr)\]
is called the \term{$q$-Schur algebra} introduced
by Dipper and James~\cite{DipperJames89}.
Here $\lambda=(\lambda_1,\dots,\lambda_r)$
and $\mu=(\mu_1,\dots,\mu_r)$ runs over all compositions of $n$
whose components are zero except for the first $r$ ones.
Note that the Iwahori--Hecke algebra itself can be obtained similarly:
\[H_n=M_{(1^n);(1^n)}\simeq\End_{H_n}(M_{(1^n)})^\op\simeq\End_{H_n^\op}(M^*_{(1^n)}).\]

Since we can write
\[M_{\lambda;\mu}=\set{x\in H_n}{T_v x T_w=q^{\ell(v)+\ell(w)}x\text{ for all }
v\in\fS_\mu,w\in\fS_\lambda},\]
it has a basis
$\set{\sum_{v\in\fS_\mu w\fS_\lambda}T_v}{w\in\fD_\lambda\cap\fD_\mu^{-1}}$
which corresponds to the double cosets $\fS_\mu\backslash\fS_n/\fS_\lambda$.
Similarly as before, for $w\in\fD_\lambda\cap\fD_\mu^{-1}$ we take the corresponding
row-semistandard tableau $\sS\in\Tab_{\lambda;\mu}$ such that $w=d(\sS_\downarrow)$
and write $m_\sS\coloneqq\sum_{v\in\fS_\mu w\fS_\lambda}T_v$.
As an element of $M_\lambda$, we can decompose it as
$m_\sS=\sum_{\sT\in\Tab_\sS}m_\sT$.
The anti-involution on $H_n$ induces a map
\[{\bullet}^*\colon M_{\lambda;\mu}\to M_{\mu;\lambda}\]
which induces that on $\rS_{r,n}$.
By definition we have $(m_\sS)^*=m_{\sS^*}$.

\subsection{Decomposing a tableau}
In this subsection we observe that for each $\sS\in\Tab_{\lambda;\mu}$,
$m_\sS\in M_{\lambda;\mu}$ has a canonical decomposition
\[m_\sS=m_\mu\circ_\nu m_{P_{w,\nu}}\circ_{w\nu} m_\lambda\]
into three tableaux.
We first explain each of these terms.

Let $\mu$ and $\nu$ be compositions of $n$.
We say that $\nu$ is a \term{refinement} of $\mu$
when there is an increasing sequence of indices $1\le a_1\le a_2\le\dotsb$
such that $\mu_i=\sum_{a_i\le j<a_{i+1}}\nu_j$.
Clearly it is equivalent to that $\fS_\nu\subset\fS_\mu$.
Hence $m_\mu$ is contained in both $M_{\mu;\nu}$ and $M_{\nu;\mu}$.
As elements of these sets, $m_\mu$ is respectively represented by tableaux $\sS$
and its dual $\sS^*$ defined by $\sS^*(j,k)\coloneqq i$ for $a_i\le j<a_{i+1}$,
such as
\[\sS=\young(1223,44455)\qquad\text{and}\qquad\sS^*=\young(1,11,1,222,22)\]
for $\mu=(4,5)$ and $\nu=(1,2,1,3,2)$.
For $\sT\in\Tab_{\lambda;\nu}$ of weight $\nu$,
let $\sT|_\mu\in\Tab_{\lambda;\mu}$ be a row-standard tableau of weight $\mu$
obtained by replacing each entry $j$ in $\sT$ such that $a_i\le j<a_{i+1}$ with $i$,
similarly as before.
Since $(1^n)$ is a refinement of every composition, this notation coincides
with the previous one.

\begin{lemma}\label{lem:multiply_refinement_tableau}
Let $\nu$ be a refinement of $\mu$, and take $a_1\le a_2\le\dotsb$ as above.
\begin{enumerate}
\item For $\sS\in\Tab_{\lambda;\mu}$, we have
\[m_\mu\circ_\mu m_\sS=\sum_{\sT\in\Tab_{\lambda;\nu},\sT|_\mu=\sS}m_\sT
\in M_{\lambda;\nu}\]
where $m_\mu$ is regarded as an element of $M_{\mu;\nu}$.
\item For $\sT\in\Tab_{\lambda;\nu}$, we have
\[m_\mu\circ_\nu m_\sT=\left(\prod_i q^{\ell_i}
\prod_k\qbinom{\#_{ki}(\sT|_\mu)}{\#_{k a_i}(\sT),\#_{k,a_i+1}(\sT),\dots,\#_{k,a_{i+1}-1}(\sT)}
\right)\,m_{\sT|_\mu}.\]
Here $\ell_i\coloneqq\#\set[\big]{\bigl((k,l),(k',l')\bigr)}
{k<k'\text{ and }a_i\le\sT(k',l')<\sT(k,l)<a_{i+1}}$
is the inversion number of $\sT$ for entries $j$ such that $a_i\le j<a_{i+1}$.
\end{enumerate}
\end{lemma}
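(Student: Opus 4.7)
The plan is to prove (1) by a direct double-coset computation, then reduce (2) to identifying a scalar via the one-dimensional character $\epsilon\colon H_n\to\kk$ defined by $\epsilon(T_v)=q^{\ell(v)}$.

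For (1): Since $\fS_\nu\subset\fS_\mu$, the $(\fS_\nu,\fS_\lambda)$-double cosets of $\fS_n$ refine the $(\fS_\mu,\fS_\lambda)$-double cosets, and under the bijection with row-semistandard tableaux this refinement is exactly the map $\sT\mapsto\sT|_\mu$. Writing $w=d(\sS_\downarrow)$, I therefore have the disjoint union
\[\fS_\mu w\fS_\lambda=\bigsqcup_{\sT\in\Tab_{\lambda;\nu},\,\sT|_\mu=\sS}\fS_\nu\,d(\sT_\downarrow)\,\fS_\lambda,\]
and summing characteristic functions gives $m_\sS=\sum_{\sT|_\mu=\sS}m_\sT$ as elements of $H_n$. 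On the other hand, the defining rule $xm_\mu\otimes m_\mu y\mapsto xm_\mu y$ of $\circ_\mu$, applied with $x=1$ and any decomposition $m_\sS=m_\mu y$ (available since $m_\sS\in M^*_\mu$), yields $m_\mu\circ_\mu m_\sS=m_\sS$ as elements of $M_{\lambda;\nu}$. Combining these two equalities proves (1).

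For (2): Using the length-preserving decomposition $\fS_\mu=(\fD_\nu\cap\fS_\mu)\cdot\fS_\nu$ I write $m_\mu=(\sum_{u\in\fD_\nu\cap\fS_\mu}T_u)m_\nu$; together with $m_\nu m_\sT=P_{\fS_\nu}(q)m_\sT$ and the rule for $\circ_\nu$, this gives $m_\mu\circ_\nu m_\sT=\sum_{u\in\fD_\nu\cap\fS_\mu}T_u m_\sT\in M_{\lambda;\mu}$. Its support in the $T_v$-basis of $H_n$ is contained in $\bigcup_u u\cdot\fS_\nu w\fS_\lambda\subset\fS_\mu w\fS_\lambda$, a single $(\fS_\mu,\fS_\lambda)$-double coset. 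Since $M_{\lambda;\mu}$ is the direct sum of the one-dimensional subspaces $\kk\,m_{\sS'}$, each spanned by the uniform sum over a single such coset, I conclude $m_\mu\circ_\nu m_\sT=c\cdot m_{\sT|_\mu}$ for some $c\in\kk$. To pin down $c$ I apply $\epsilon$. With $\epsilon(m_\sT)=P_{\fS_\nu w\fS_\lambda}(q)$, $\epsilon(\sum_u T_u)=P_{\fS_\mu}(q)/P_{\fS_\nu}(q)$, and the standard Coxeter-group identity $P_{\fS_\alpha z\fS_\beta}(q)=P_{\fS_\alpha}(q)P_{\fS_\beta}(q)q^{\ell(z)}/P_{\fS_\alpha\cap z\fS_\beta z^{-1}}(q)$ for $z$ of minimal length in its double coset (applied at $z=w$ and $z=\tilde w\coloneqq d((\sT|_\mu)_\downarrow)$), a short manipulation reduces to
\[c=q^{\ell(w)-\ell(\tilde w)}\cdot\frac{P_{\fS_\mu\cap\tilde w\fS_\lambda\tilde w^{-1}}(q)}{P_{\fS_\nu\cap w\fS_\lambda w^{-1}}(q)}.\]
The classical row-block description of the stabilizers gives $\fS_\mu\cap\tilde w\fS_\lambda\tilde w^{-1}\simeq\prod_{i,k}\fS_{\#_{ki}(\sT|_\mu)}$ and $\fS_\nu\cap w\fS_\lambda w^{-1}\simeq\prod_{j,k}\fS_{\#_{kj}(\sT)}$; grouping the $j$-factors by the block $j\in[a_i,a_{i+1})$ converts the resulting Poincar\'e-polynomial ratio into exactly $\prod_i\prod_k\qbinom{\#_{ki}(\sT|_\mu)}{\#_{k,a_i}(\sT),\dots,\#_{k,a_{i+1}-1}(\sT)}$, which is the multinomial part of the claimed scalar.

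The step I expect to be the main obstacle is the identification $\ell(w)-\ell(\tilde w)=\sum_i\ell_i$. I plan to argue it directly from the formula $\ell(d(\sU))=\#\set{((k,l),(k',l'))}{k<k',\ \sU(k',l')<\sU(k,l)}$ for a row-standard $\sU$. For a pair with $k<k'$: if the two entries lie in different $\mu$-blocks of $\sT|_\mu$ then the inversion status is the same for $\sT_\downarrow$ and $(\sT|_\mu)_\downarrow$ since both assignments respect the block ordering; if the entries lie in a common $\mu$-block $i$ then $(\sT|_\mu)_\downarrow$ orders them by pure reading order and produces no inversion, whereas $\sT_\downarrow$ inverts the pair precisely when $\sT(k,l)>\sT(k',l')$. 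Summing these same-block contributions over $i$ gives exactly $\sum_i\ell_i$ by definition, row-semistandardness of $\sT$ automatically suppressing any within-row contribution, and this completes the identification of $c$ with the scalar in the statement.
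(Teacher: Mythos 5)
Your argument is correct, and part (1) is essentially the paper's proof (both reduce to the observation that $m_\mu\circ_\mu m_\sS$ equals $m_\sS$ as an element of $H_n$ and then regroup the $(\fS_\mu,\fS_\lambda)$-double coset into $(\fS_\nu,\fS_\lambda)$-double cosets). For part (2) you take a genuinely different route. The paper computes the \emph{ordinary} product $m_\mu\cdot m_\sT$ in two ways: once via the factorization $m_\sT=\sum_{v\in\fD_\sT}m_{v\cdot\sT_\downarrow}$ with $\fD_\sT\subset\fS_\mu$, which reduces it to $q^{\ell(w)}P_{\fD_\sT}(q)\,m_\mu\cdot m_{\sS_\downarrow}$, and once via $m_\mu\cdot m_\sT=\bigl(\prod_j[\nu_j]!\bigr)\,m_\mu\circ_\nu m_\sT$; it then cancels the factor $\prod_j[\nu_j]!$. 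You instead observe that $m_\mu\circ_\nu m_\sT=\sum_u T_u m_\sT$ is supported on a single $(\fS_\mu,\fS_\lambda)$-double coset and hence is $c\cdot m_{\sT|_\mu}$, and you extract $c$ by evaluating the index character $\epsilon(T_v)=q^{\ell(v)}$ together with the Poincar\'e-polynomial factorization of a double coset through its minimal representative. This buys a cleaner separation between the qualitative step (the answer is a scalar multiple of one basis element) and the quantitative one (compute the scalar), at the cost of importing the stabilizer identity $P_{\fS_\alpha z\fS_\beta}(q)=P_{\fS_\alpha}(q)P_{\fS_\beta}(q)q^{\ell(z)}/P_{\fS_\alpha\cap z\fS_\beta z^{-1}}(q)$, which the paper never states. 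Your inversion-counting verification of $\ell(w)-\ell(\tilde w)=\sum_i\ell_i$ is in fact more detailed than the paper's, which simply asserts $\ell(w)=\sum_i\ell_i$ for the factorization $\sT_\downarrow=w\cdot\sS_\downarrow$. One point you should make explicit: the cancellation step --- dividing the identity $c\cdot P_{\fS_\mu\tilde w\fS_\lambda}(q)=\epsilon(\sum_u T_u)\,\epsilon(m_\sT)$ by $P_{\fS_\mu\tilde w\fS_\lambda}(q)$ --- is only legitimate after reducing to the universal case $\kk=\ZZ[q]$, where $M_{\lambda;\mu}$ is free and the Poincar\'e polynomial is a nonzerodivisor; the general case then follows by base change. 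The paper performs exactly this reduction (its own cancellation of $\prod_j[\nu_j]!$ needs it too), and without it your division is not justified over an arbitrary commutative ring.
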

\begin{proof}
(1). By definition, as an element of $M_\lambda$, $m_\mu\circ_\mu m_\sS$
is just $m_\sS=\sum_{\sR\in\Tab_\sS}m_\sR$.
Hence the formula is clear from that $(\sR|_\nu)|_\mu=\sR|_\mu$.

(2). Let us write $\sS\coloneqq\sT|_\mu$.
Since $M_{\lambda;\mu}$ is free over $\kk$,
it suffices to prove for the universal case $\kk=\ZZ[q]$ where $q$ is an indeterminate.
First we compute an ordinary product $m_\mu\cdot m_\sT$ in $M_\lambda$.
We can take $w\in\fS_\mu$ such that $\sT_\downarrow=w\cdot\sS_\downarrow$,
then $\ell(w)=\sum_i\ell_i$.
Since $m_\sT=\sum_{v\in\fD_\sT}m_{v\cdot\sT_\downarrow}$
and $\fD_\sT\subset\fS_\mu$,
\[m_\mu\cdot m_\sT
=q^{\ell(w)}P_{\fD_\sT}(q)m_\mu\cdot m_{\sS_\downarrow}
=q^{\ell(w)}P_{\fD_\sT}(q)\Bigl(\prod_{k,i}[\#_{ki}(\sS)]!\Bigr)m_\sS.\]
On the other hand, we have $m_\mu\cdot m_\sT=(\prod_j[\nu_j]!)m_\mu\circ_\nu m_\sT$.
Since $M_\lambda$ is a free module over an integral domain $\ZZ[q]$,
we can cancel this coefficient.
Thus the formula follows from
\[\frac{P_{\fD_\sT}(q)\prod_{k,i}[\#_{ki}(\sS)]!}{\prod_j[\nu_j]!}
=\frac{\prod_{k,i}[\#_{ki}(\sS)]!}{\prod_{k,j}[\#_{kj}(\sT)]!}
=\prod_k\qbinom{\#_{ki}(\sS)}{\#_{k a_i}(\sT),\dots,\#_{k,a_{i+1}-1}(\sT)}.
\qedhere\]
\end{proof}

Next we introduce the middle term of the decomposition.
\begin{definition}
Let $\nu=(\nu_1,\nu_2,\dots,\nu_r)$ be a composition of $n$ and
$w\in\fS_r$. Let us write $w\nu\coloneqq(\nu_{w(1)},\nu_{w(2)},\dots,\nu_{w(r)})$.
We define $\sP_{w,\nu}\in\Tab_{w\nu;\nu}$ by
\[\sP_{w,\nu}(i,j)\coloneqq w(i)\]
and call it the \term{permutation tableau} with respect to $w$.
\end{definition}

The composition with a permutation tableau is complicated in general,
so we prove a multiplication formula only for a special case.

\begin{lemma}\label{lem:multiply_permutation_tableau}
Let $\nu$ and $w$ as above.
Suppose $\sT\in\Tab_{\lambda;w\nu}$ satisfies that
for each pair of boxes $(i,j),(k,l)\in Y(\lambda)$,
$i\le k$ and $\sT(k,l)<\sT(i,j)$ implies $w(\sT(k,l))<w(\sT(i,j))$.
Then we have $m_{\sP_{w,\nu}}\circ_{w\nu}m_\sT=m_{w\sT}$.
\end{lemma}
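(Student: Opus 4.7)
The plan is to reduce the statement to the identity $T_\sigma\cdot m_\sT=m_{w\sT}$ in $H_n$, where $\sigma\coloneqq d((\sP_{w,\nu})_\downarrow)$, and then to prove this via a value-wise tableau bijection. First I observe that $\sigma$ is the ``block permutation'' sending block $j$ of $w\nu$ bijectively and order-preservingly onto block $w(j)$ of $\nu$; hence $\sigma\fS_{w\nu}\sigma^{-1}=\fS_\nu$ and the double coset collapses: $\fS_\nu\sigma\fS_{w\nu}=\sigma\fS_{w\nu}$. Using $\sigma\in\fD_{w\nu}$ (so that $T_\sigma T_u=T_{\sigma u}$ for $u\in\fS_{w\nu}$) one obtains $m_{\sP_{w,\nu}}=T_\sigma m_{w\nu}$, and the definition of $\circ_{w\nu}$ immediately gives $m_{\sP_{w,\nu}}\circ_{w\nu}m_\sT=T_\sigma\cdot m_\sT$.

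For the main identity I would expand $m_\sT=\sum_{\sR\in\Tab_\sT}T_{d(\sR)}m_\lambda$ and consider the value-wise action $\sR\mapsto\sigma\sR$ defined by $(\sigma\sR)(i,j)\coloneqq\sigma(\sR(i,j))$, for which $d(\sigma\sR)=\sigma\cdot d(\sR)$. Three facts need to be checked. (i)~$\sigma\sR$ is row-standard: for adjacent entries $\sR(i,j)<\sR(i,j+1)$ lying in the same $w\nu$-block order is preserved by $\sigma$; in the remaining case row-semistandardness of $\sT$ forces $\sT(i,j)<\sT(i,j+1)$ strictly, and the hypothesis applied to these two boxes in row $i$ gives $w(\sT(i,j))<w(\sT(i,j+1))$. (ii)~A direct check using $\sigma(\text{block }j\text{ of }w\nu)=\text{block }w(j)\text{ of }\nu$ shows $(\sigma\sR)|_\nu=w\sT$; combined with the injectivity of the action and the cardinality equality $\#\Tab_\sT=\#\fD_\sT=\#\fD_{w\sT}=\#\Tab_{w\sT}$ (which follows from $(w\sT)[w(j)]=\sT[j]$), this yields a bijection $\Tab_\sT\xrightarrow{\sim}\Tab_{w\sT}$. (iii)~The product is reduced, so that $T_\sigma T_{d(\sR)}=T_{d(\sigma\sR)}$.

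The main obstacle is the length additivity $\ell(\sigma\cdot d(\sR))=\ell(\sigma)+\ell(d(\sR))$ in (iii). By the standard identity $\ell(\alpha\beta)=\ell(\alpha)+\ell(\beta)-2\#(I(\alpha)\cap I(\beta^{-1}))$ with $I(\gamma)\coloneqq\set{(c,d)}{c<d,\gamma(c)>\gamma(d)}$, it suffices to check $I(\sigma)\cap I(d(\sR)^{-1})=\varnothing$. A pair in $I(d(\sR)^{-1})$ is $c<d$ with $c$ strictly below $d$ in $\sR$ (row-standardness rules out same-row inversions), while a pair in $I(\sigma)$ is $c<d$ whose $w\nu$-blocks $b(c)<b(d)$ satisfy $w(b(c))>w(b(d))$. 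A common element would allow the hypothesis to be applied with $(i,j)$ the position of $d$ and $(k,l)$ the position of $c$, so that $i<k$ and $\sT(k,l)=b(c)<b(d)=\sT(i,j)$, forcing $w(b(c))<w(b(d))$---a contradiction. Summing over the bijection then yields
\[T_\sigma m_\sT=\sum_{\sR\in\Tab_\sT}T_{d(\sigma\sR)}m_\lambda=\sum_{\sR'\in\Tab_{w\sT}}m_{\sR'}=m_{w\sT},\]
completing the proof.
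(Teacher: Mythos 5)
Your proposal is correct and follows essentially the same route as the paper: identify $m_{\sP_{w,\nu}}=T_\sigma m_{w\nu}$ with $\sigma=d((\sP_{w,\nu})_\downarrow)$, then show that $\sR\mapsto\sigma\sR$ carries $\Tab_\sT$ bijectively onto $\Tab_{w\sT}$ with $\ell(\sigma\, d(\sR))=\ell(\sigma)+\ell(d(\sR))$. The paper merely asserts these two facts, so your inversion-set verification of length additivity and the cardinality argument for bijectivity are exactly the omitted details, filled in correctly.
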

\begin{proof}
The tableau $w\sT$ is also row-standard by the assumption.
By the definition of permutation tableau,
there is a permutation $v\in\fS_n$ such that $m_{\sP_{w,\nu}}=T_v m_{w\nu}$.
The formula follows from that every $\sR\in\Tab_\sT$ satisfies
$v\sR\in\Tab_{w\sT}$ and $\ell(v\sR)=\ell(v)+\ell(\sR)$.
\end{proof}

\begin{proposition}\label{prop:canonical_tableau_decomposition}
For each $\sS\in\Tab_{\lambda;\mu}$,
there exists a unique pair $(\nu,w)$
of a composition $\nu=(\nu_1,\nu_2,\dots,\nu_r)$ with $\nu_1,\nu_2,\dots,\nu_r>0$
and a permutation $w\in\fS_r$ such that
$w\nu$ and $\nu$ are respectively refinements of $\lambda$ and $\mu$, and
\[m_\sS=m_\mu\circ_\nu m_{P_{w,\nu}}\circ_{w\nu} m_\lambda.\]
\end{proposition}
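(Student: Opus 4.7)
The plan is to produce $(\nu,w)$ explicitly from $\sS$ by reading the nonzero entries of the count matrix $(\#_{ij}(\sS))$ in two different orders, and then to chain Lemma~\ref{lem:multiply_permutation_tableau} together with Lemma~\ref{lem:multiply_refinement_tableau}(2) to verify the decomposition. First I would list the positions $(i,j)$ with $\#_{ij}(\sS)>0$ in two enumerations: the \emph{column-major} order $(i_1,j_1),\dotsc,(i_r,j_r)$ (sorted by $j$, ties broken by $i$) and the \emph{row-major} order $(i'_1,j'_1),\dotsc,(i'_r,j'_r)$. Setting $\nu_k\coloneqq\#_{i_k j_k}(\sS)$ and defining $w\in\fS_r$ by $(i'_k,j'_k)=(i_{w(k)},j_{w(k)})$, the column-sums $\mu_j$ of $\sS$ force $\nu$ to refine $\mu$ (each $\mu_j$ is the sum of a contiguous block of $\nu_k$'s), and the row-sums $\lambda_i$ analogously force $w\nu$ to refine $\lambda$.

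Next I would introduce the intermediate tableau $\sT^*\in\Tab_{\lambda;\nu}$ obtained by refining $\sS$: place $\nu_k$ copies of the label $k$ in the row $i_k$ at the cells of $\sS$ bearing the entry $j_k$. Distinct labels $k$ sharing the same row $i_k=i$ correspond to columns $j_k$ listed in column-major (hence $k$-ascending) order, so $\sT^*$ is row-semistandard, and by construction $\sT^*|_\mu=\sS$. Letting $\sS^\sharp\in\Tab_{\lambda;w\nu}$ denote the canonical tableau representing $m_\lambda\in M_{\lambda;w\nu}$, whose row $i$ contains consecutive labels $a_i,a_i+1,\dotsc,a_{i+1}-1$ with $a_i$ marking the refining block for row $i$ of $\lambda$, a direct inspection shows $w\sS^\sharp=\sT^*$.

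I would then invoke Lemma~\ref{lem:multiply_permutation_tableau} with $\sT=\sS^\sharp$ to obtain $m_{\sP_{w,\nu}}\circ_{w\nu}m_\lambda=m_{w\sS^\sharp}=m_{\sT^*}$. Its hypothesis reduces to $w$ being strictly increasing on each block $[a_i,a_{i+1})$, which holds because both $a_i,\dotsc,a_{i+1}-1$ and $w(a_i),\dotsc,w(a_{i+1}-1)$ list the same row-$i$ positions of $\sS$ in $j$-ascending order (the former via row-major indices, the latter via their column-major indices, which coincide within a single row). Then I would apply Lemma~\ref{lem:multiply_refinement_tableau}(2) to $\sT^*$: within each column $j$ of $\sS$, the refining labels in $\sT^*$ occupy pairwise distinct rows (one per nonzero $\#_{ij}(\sS)$), so every multinomial degenerates to $\qbinom{\#_{kj}(\sS)}{0,\dotsc,\#_{kj}(\sS),\dotsc,0}=1$; moreover, since column-major places those labels in row-ascending order, each inversion exponent $\ell_j$ vanishes. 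Thus $m_\mu\circ_\nu m_{\sT^*}=m_\sS$, yielding the asserted decomposition.

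For uniqueness, I would argue that any pair $(\nu',w')$ satisfying the hypotheses forces the middle product $m_{\sP_{w',\nu'}}\circ_{w'\nu'}m_\lambda$ to equal a single basis element $m_{\sT'}$ with $\sT'|_\mu=\sS$ and with unit coefficient under Lemma~\ref{lem:multiply_refinement_tableau}(2); the positivity of the parts of $\nu'$ together with the unit-coefficient condition forces $\nu'_k$ to record each $\#_{ij}(\sS)$ exactly once, and the refinement order compatible with $\mu$ pins down the column-major enumeration, whence $\nu'=\nu$; the requirement that $w'\sS^\sharp=\sT^*$ be row-semistandard then uniquely determines $w'=w$. The main obstacle will be verifying simultaneously the triviality of the multinomial coefficients and the vanishing of the $q$-power $\ell_j$ in Lemma~\ref{lem:multiply_refinement_tableau}(2): together these pin down the column-major convention as the only enumeration producing $m_\sS$ with coefficient exactly $1$, and hence underlie both the existence and the uniqueness statements.
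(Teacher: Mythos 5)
Your proposal is correct and follows essentially the same route as the paper: you read the count matrix $(\#_{ij}(\sS))$ in column-major order to get $\nu$ and in row-major order to get $w\nu$, and then chain Lemma~\ref{lem:multiply_permutation_tableau} with Lemma~\ref{lem:multiply_refinement_tableau}(2), which is exactly the paper's construction. Your explicit verification that the permutation-tableau hypothesis holds (because $w$ is increasing on each row block) and that all multinomial coefficients and inversion exponents are trivial usefully fills in details the paper only illustrates by example, and your uniqueness sketch is at least as complete as the paper's bare assertion that it is ``obvious from this construction.''
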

\begin{proof}
For such $\sS$, it suffices to put
\begin{align*}
\nu&\coloneqq(\#_{11}(\sS),\#_{21}(\sS),\dots,\#_{12}(\sS),\#_{22}(\sS),\dots,\#_{13}(\sS),\#_{23}(\sS),\dotsc),\\
w\nu&\coloneqq(\#_{11}(\sS),\#_{12}(\sS),\dots,\#_{21}(\sS),\#_{22}(\sS),\dots,\#_{31}(\sS),\#_{32}(\sS),\dotsc)
\end{align*}
with removing zero entries $\#_{ij}(\sS)=0$, and take the corresponding permutation $w$.
Then by the two lemmas above we have a desired decomposition. For example,
\[\young(22333,1111,133)=\young(1111,1,22,333,33)\circ_{(4,1,2,3,2)}
\young(33,444,1111,2,55)\circ_{(2,3,4,1,2)}\young(11222,3333,455)\]
where we represent an element $m_\sT$ by the tableau $\sT$ itself for short.
The uniqueness is obvious from this construction.
\end{proof}

\subsection{Good tableaux}

We introduce a partial order $\le$ on the set of compositions of $n\in\NN$
called the \term{dominance order}.
Here for two compositions $\lambda$ and $\mu$,
they are defined to be $\lambda\le\mu$ if and only if
\[\lambda_1+\lambda_2+\dots+\lambda_k\le\mu_1+\mu_2+\dots+\mu_k\]
is satisfied for each $k\in\NN$.
It is not a total order; for example, the compositions
$(3,3)$ and $(4,1,1)$ are incomparable.
According to the \emph{reversed} dominance order,
we make a filtration on the module category as we did in the previous part.
For each composition $\lambda$, the set $\set{\mu}{\mu>\lambda}$ is finite.
Hence the set of all compositions with the reversed dominance order
is a well-founded partially ordered set.

\begin{notation}
Let $X,Y\in\lMod{H_n}$. For a composition $\lambda$, let
\[\cH^\lambda(X,Y)\coloneqq\Hom_{H_n}(M_\lambda,Y)\circ\Hom_{H_n}(X,M_\lambda)\]
be the set of homomorphisms which factor through $M_\lambda$.
In other words, $\cH^\lambda$ is a 2-sided ideal of $\lMod{H_n}$ generated by $M_\lambda$.
By using the dominance order we define
\[\cH^{\ge\lambda}(X,Y)\coloneqq\sum_{\mu\ge\lambda}\cH^\mu(X,Y),\quad
\cH^{>\lambda}(X,Y)\coloneqq\sum_{\mu>\lambda}\cH^\mu(X,Y)\]
and
\[\Hom_{H_n}^{(\lambda)}(X,Y)\coloneqq\Hom_{H_n}(X,Y)\big/\cH^{>\lambda}(X,Y).\]
The last one is a hom set in the quotient category $(\lMod{H_n})/\cH^{>\lambda}$.

When $X$ and $Y$ above are parabolic modules, we write these
submodules or quotient modules of $M_{\lambda;\mu}$
as $M^\nu_{\lambda;\mu}$, $M^{\ge\nu}_{\lambda;\mu}$, $M^{>\nu}_{\lambda;\mu}$
and $M^{(\nu)}_{\lambda;\mu}$ respectively.
In particular,
\[M_{\lambda;\mu}^{(\nu)}\simeq\Hom_{H_n}^{(\nu)}(M_\mu,M_\lambda)\]
is the $\kk$-module equipped with the reversed composition as product.
As its special case we let
$S_{\lambda;\mu}\coloneqq M_{\lambda;\mu}^{(\lambda)}$.
Then $S_{\lambda;\lambda}$ is a quotient algebra of $M_{\lambda;\lambda}$
and $S_{\lambda;\mu}$ is a \emph{right} module over this algebra.
When $\mu=(1^n)$ we simply write $S_\lambda\coloneqq S_{\lambda;(1^n)}$.
$S_\lambda$ is also a \emph{left} module over $H_n\simeq M_{(1^n);(1^n)}$
and called the \term{Specht module}.
We denote equalities in the quotient set $S_{\lambda;\mu}$
by the symbol $\equiv$.
\end{notation}

Note that if a composition $\lambda=(\lambda_1,\lambda_2,\dotsc)$
has $\lambda_i=0$ such that $\lambda_{i+1}\neq0$,
letting $\tilde\lambda\coloneqq(\lambda_1,\dots,\lambda_{i-1},\lambda_{i+1},\dotsc)$
we have $\lambda<\tilde\lambda$ and $M_\lambda\simeq M_{\tilde\lambda}$.
Hence for such $\lambda$,
$M_\lambda$ is zero in the quotient category $(\lMod{H_n})/\cH^{>\lambda}$;
in particular we have $S_{\lambda;\mu}=0$ for all $\mu$.
We can remove such needless
compositions from the index set. Then the rest is now a finite set.

For a while we fix $n\in\NN$ and $\lambda,\mu$ denote compositions of $n$.
In order to study this quotient category,
we introduce a combinatorial notion on tableaux as follows.

\begin{definition}
Let $\sT\in\Tab_{\lambda;\mu}$ be a row-semistandard tableau.
We say that a box $(i,j)\in Y(\lambda)$ in the Young diagram
is \term{good} if it satisfies $\sT(i,j)\ge i$,
and $\sT$ is said to be \term{good}
if all boxes in $Y(\lambda)$ are good.
\end{definition}

\begin{lemma}\label{lem:good_tableaux}
$S_{\lambda;\mu}$
is spanned by $\set{m_\sT}{\sT\in\Tab_{\lambda;\mu}\textup{ which is good}}$.
\end{lemma}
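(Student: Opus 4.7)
The plan is to argue by well-founded induction on $\Tab_{\lambda;\mu}$ equipped with a partial order $\prec$ whose minimal elements are precisely the good tableaux; for each non-good $\sT$, the inductive step will produce a relation, modulo $M^{>\lambda}_{\lambda;\mu}$, expressing $m_\sT$ as a $\kk$-linear combination of $m_{\sT'}$ with $\sT'\prec\sT$.

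First I would set $b_k(\sT)\coloneqq\#\set{(i,j)\in Y(\lambda)}{i>k\text{ and }\sT(i,j)\le k}$ for each $k\ge1$ and declare $\sT'\prec\sT$ when the tuple $(b_1(\sT'),b_2(\sT'),\dots)$ is lex-smaller than $(b_1(\sT),b_2(\sT),\dots)$, breaking ties as necessary by a secondary lex comparison of the full matrix $(\#_{ij}(\sT))_{i>j}$. Since $\sT$ is good iff $b_k(\sT)=0$ for every $k$ (equivalently, all entries $\le k$ lie in rows $\le k$), good tableaux are the $\prec$-minimal elements, and $\prec$ is well-founded on the finite set $\Tab_{\lambda;\mu}$.

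For the inductive step, given a non-good $\sT$, I pick the least $k$ with $b_k(\sT)>0$ and the least $k'\ge k$ with $\mu_1+\dots+\mu_{k'}>\lambda_1$ (which exists except in the trivial case $\lambda=(n)$). Set $\mu'\coloneqq(\mu_1+\dots+\mu_{k'},\mu_{k'+1},\mu_{k'+2},\dots)$, so that $\mu$ refines $\mu'$ and $\mu'>\lambda$ in dominance, and let $\sS\coloneqq\sT|_{\mu'}\in\Tab_{\lambda;\mu'}$. Applying Lemma~5.3(1) with $\mu$ refining $\mu'$ yields
\[m_{\mu'}\circ_{\mu'}m_\sS=\sum_{\sR\in\Tab_{\lambda;\mu},\,\sR|_{\mu'}=\sS}m_\sR,\]
whose left side factors as $M_\mu\to M_{\mu'}\to M_\lambda$ through $M_{\mu'}$ and hence lies in the ideal $\cH^{\mu'}(M_\mu,M_\lambda)\subset M^{>\lambda}_{\lambda;\mu}$. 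Rearranging gives $m_\sT\equiv-\sum_{\sR\ne\sT}m_\sR\pmod{M^{>\lambda}_{\lambda;\mu}}$, reducing $m_\sT$ to $m_\sR$'s sharing the collapsed tableau $\sS$.

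The main obstacle is that not every such $\sR\ne\sT$ is $\prec$-smaller than $\sT$: some redistributions of the merged-value positions may move value-$k$ occurrences further into rows $>k$, which actually raises $b_k$. Overcoming this will require supplementing the above identity with further relations obtained by the same reduction applied to different coarsenings $\rho>\lambda$ — not just single-block merges of $\mu$ — including compositions $\rho$ incomparable with $\mu$ refinement-wise, in which case one must use general products $m_{\sR'}\circ_\rho m_{\sR''}$ with $\sR'\in\Tab_{\lambda;\rho}$ and $\sR''\in\Tab_{\rho;\mu}$, expanded via the $q$-binomial formula of Lemma~5.3(2). Arranging these relations so that their combined effect kills every non-good $m_\sT$ — or equivalently, refining $\prec$ so that for each non-good $\sT$ at least one applicable reduction strictly decreases the measure — is the delicate combinatorial heart of the proof.
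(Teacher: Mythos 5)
There is a genuine gap, on two counts. First, the specific relations you propose do not exist in general: merging the first $k'$ blocks of $\mu$ into $\mu'=(\mu_1+\dots+\mu_{k'},\mu_{k'+1},\dots)$ with $\mu'_1>\lambda_1$ does \emph{not} guarantee $\mu'>\lambda$ in the dominance order, because only the first partial sum is controlled. For example, with $\lambda=(3,3)$ and $\mu=(4,1,1)$ you get $\mu'=\mu=(4,1,1)$, which is incomparable with $(3,3)$, so the composite through $M_{\mu'}$ does not lie in $M^{>\lambda}_{\lambda;\mu}$ and yields no relation in $S_{\lambda;\mu}$ — yet this is exactly a case where non-good tableaux must be killed. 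Second, and more importantly, you explicitly leave open "the delicate combinatorial heart": showing that some system of such relations reduces every non-good $m_\sT$ modulo $\prec$-smaller terms. Without that, nothing is proved.

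The idea you are missing is that the auxiliary composition should be built from the non-good tableau itself, not from coarsenings of $\mu$. The paper sets $\sT_1(i,j)\coloneqq\min\{i,\sT(i,j)\}$ and lets $\nu$ be the weight of $\sT_1$; then $\nu_1+\dots+\nu_k=\lambda_1+\dots+\lambda_k+b_k(\sT)$ with your statistic $b_k$, so $\nu>\lambda$ precisely because $\sT$ is not good. Letting $\sT_2\in\Tab_{\nu;\mu}$ be the good tableau obtained by moving each ungood box with entry $i$ up to row $i$, the product $m_{\sT_2}\circ_\nu m_{\sT_1}$ factors through $M_\nu$ (hence is $\equiv0$ in $S_{\lambda;\mu}$) and, crucially, equals $m_\sT$ plus a $\kk$-linear combination of $m_\sS$ with $\ell(\sS^\uparrow)<\ell(\sT^\uparrow)$ — a unique leading term with coefficient $1$. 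This replaces your unresolved multi-relation bookkeeping by a clean induction on the single length statistic $\ell(\sT^\uparrow)$. You correctly identified the right quantities $b_k$, but used them only to define an ordering rather than to construct the composition $\nu$ through which the factorization must pass.
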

\begin{proof}
Suppose that $\sT$ is not good.
For such $\sT$, let us define a tableau $\sT_1$ of shape $\lambda$ by
\[\sT_1(i,j)\coloneqq\min\{i,\sT(i,j)\}.\]
Next let $\sT_2$ be a tableau
obtained by moving up all ungood boxes $\young(i)$ of $\sT$
to its $i$-th row, so that $\sT_2$ is good. For example, when
\[\sT=\young(111123,2233,123)\]
which has ungood $1$ and $2$ in the third row, we let
\[\sT_1=\young(111111,2222,123)\quad\text{and}\quad
\sT_2=\young(1111123,22233,3).\]
Let $\nu$ be the weight of $\sT_1$, which is equal to the shape of $\sT_2$.
For each $k$ we have
\[\nu_1+\nu_2+\dots+\nu_k=\lambda_1+\lambda_2+\dots+\lambda_k+
\#\set{(i,j)\in Y(\lambda)}{i>k\text{ and }T(i,j)\le k}.\]
Since $\sT$ is not good, we have $\nu>\lambda$
so that $m_{\sT_2}\circ_\nu m_{\sT_1}\equiv0$ in $S_{\lambda;\mu}$.

On the other hand, 
observe that the $i$-th row of $\sT_2$ is obtained by
reading entries of $\sT$ at boxes $(k,l)$
such that $\sT_1(k,l)=i$ from bottom to top.
So taking $w\coloneqq d(\sT_2^\uparrow)\in\fD_\nu$ we have
$\sT^\uparrow=w\sT_1^\uparrow$ and
$\ell(\sT^\uparrow)=\ell(w)+\ell(\sT_1^\uparrow)$.
This induces the following decomposition in $M_{\lambda;\mu}$:
\[m_{\sT_2}\circ_\nu m_{\sT_1}=m_\sT+
\sum_{\sS\in\Tab_{\lambda;\mu},\,\ell(\sS^\uparrow)<\ell(\sT^\uparrow)}c_\sS m_\sS
\qquad(c_\sS\in\kk).\]
Hence in $S_{\lambda;\mu}$
we can replace ungood $m_\sT$ by a linear combination of elements $m_\sS$
which has smaller lengths. Consequently it inductively follows that
any tableau can be written as a linear combination of good ones.
\end{proof}

\begin{lemma}\label{lem:good_tableaux_on_dominance_order}
\begin{enumerate}
\item $\Tab_{\lambda;\lambda}$ has only one good tableau.
\item There are no good tableau in $\Tab_{\lambda;\mu}$
unless $\lambda\ge\mu$.
\end{enumerate}
\end{lemma}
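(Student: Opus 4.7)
The plan is to establish both parts simultaneously through a single counting argument on prefixes of rows and prefixes of entry values. First I would rewrite the good-tableau condition $\sT(i,j)\ge i$ in its contrapositive form: any box of $Y(\lambda)$ whose entry is at most $k$ must lie in a row of index at most $k$. This is the only combinatorial input needed.

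For part (2), fix a positive integer $k$ and count the boxes of $Y(\lambda)$ which contain a value $\le k$ in $\sT$. On one hand this number equals $\mu_1+\mu_2+\dots+\mu_k$ since $\sT$ has weight $\mu$. On the other hand, by the observation above all such boxes lie in the first $k$ rows of $Y(\lambda)$, so this number is at most $\lambda_1+\lambda_2+\dots+\lambda_k$. Since the inequality holds for every $k$, we obtain $\lambda\ge\mu$ in the dominance order, proving (2).

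For part (1), the case $\mu=\lambda$ forces the inequality above to be an equality for every $k$, which means that the first $k$ rows of $Y(\lambda)$ contain \emph{only} entries $\le k$. Subtracting the equality for $k-1$ from that for $k$ shows that all $\lambda_k$ boxes in row $k$ are filled with the value $k$. Hence the good tableau in $\Tab_{\lambda;\lambda}$ is uniquely determined: it is the superstandard one whose $i$-th row consists entirely of $i$'s, which is manifestly row-semistandard and of weight $\lambda$.

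I do not expect any substantive obstacle, as the whole argument is a short self-contained counting; the only bookkeeping to be careful about is that a composition $\lambda$ may have zero components, but these add no constraints to the count and are in any case removed from the index set by the convention recalled just before the lemma statement.
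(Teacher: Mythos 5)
Your proof is correct and is essentially the same counting argument the paper uses: a good tableau forces every entry $\le k$ into the first $k$ rows, so $\mu_1+\dots+\mu_k\le\lambda_1+\dots+\lambda_k$ for all $k$, and equality throughout (the case $\mu=\lambda$) pins down the superstandard tableau. Your write-up just makes the uniqueness step in part (1) slightly more explicit via the subtraction of consecutive partial sums.
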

\begin{proof}
If $\sT\in\Tab_{\lambda;\mu}$ is good, then for each $k$,
all $i$'s in $\sT$ less than or equal to $k$ are placed in its $k$-th row or upper.
The number of such numbers ($=\mu_1+\dots+\mu_k$) must be
equal to or less than that of such boxes ($=\lambda_1+\dots+\lambda_k$)
so we have $\lambda\ge\mu$.
Moreover if $\lambda=\mu$, all $i$'s in $\sT$ must be in its $i$-th row.
\end{proof}

By these two lemmas, the statements below are obvious.

\begin{corollary*}\label{cor:hom_on_dominance_order}
\begin{enumerate}
\item $S_{\lambda;\lambda}$ is spanned by $m_\lambda$.
Hence it is isomorphic to a quotient ring of $\kk$.
\item $S_{\lambda;\mu}=0$ unless $\lambda\ge\mu$.
\qedhere
\end{enumerate}
\end{corollary*}

Hence the category $\lMod{H_n}$ with objects $\{M_\lambda\}$ and algebras
$\kk m_\lambda\subset M_{\lambda;\lambda}$
satisfies the assumptions in Lemma~\ref{lem:category_produces_filter}
(see Remark~\ref{rem:well_founded_filtered_algebra}),
so it produces several standardly filtered algebras.
\begin{theorem*}
The Iwahori--Hecke algebra $H_n=M_{(1^n;1^n)}$ and the $q$-Schur algebra
$\rS_{r,n}=\bigoplus_{\lambda,\mu}M_{\lambda;\mu}$
are standardly filtered algebras over $\kk$ on the set of compositions.
Here for each composition $\nu$,
their ideal filter and the attached Morita contexts is given by
\[H_n^{\ge\nu}\coloneqq M_{(1^n;1^n)}^{\ge\nu}\quad\text{with}\quad
(S_\nu,S^*_\nu)\]
and
\[\rS_{r,n}^{\ge\nu}\coloneqq\bigoplus_{\lambda,\mu}M_{\lambda;\mu}^{\ge\nu}\quad\text{with}\quad
\Bigl(\bigoplus_\lambda S_{\nu;\lambda},\bigoplus_\lambda S^*_{\nu;\lambda}\Bigr)\]
where $S^*_{\nu;\lambda}\coloneqq M^{(\nu)}_{\lambda;\nu}$
and $S^*_\nu\coloneqq S^*_{\nu;(1^n)}$.
These standard filters are involutive.
\end{theorem*}

It seems to be an interesting problem to determine
the $\kk$-module structure of $S_{\lambda;\mu}$ (or more general $M^{(\nu)}_{\lambda;\mu}$) in detail.
For the case that $q$ is invertible we can completely determine
its structure by taking its free basis as we will study in later subsections.
In the other case the situation is more complicated
so that these modules even need not to be free.
The author conjectures that each $S_{\lambda;\lambda}$
is isomorphic to $\kk$ or $\kk/q^{h(\lambda)}\kk$
for some $h(\lambda)\in\NN$ determined by the shape of $\lambda$,
but a general one is still unable to describe.

\subsection{Local transformations in Specht modules}
In this subsection we prove useful formulas for computation on Specht modules.

\begin{lemma}
Suppose we have an equation
$\sum_\sT c_\sT m_\sT\equiv0$ in $S_{\lambda;\mu}$
for some $c_\sT\in\kk$.
Take an arbitrary sequence $a_1\le a_2\le\dots\le a_k$.
For each $\sT\in\Tab_{\lambda;\mu}$ let $\sT^+\in\Tab_{(k,\lambda);\mu^+}$
be the tableau obtained by adding a new row
$\def\Xa{a_1}\def\Xb{a_2}\def\Xc{a_k}
\young(\Xa\Xb\scdots\Xc)$
at the top of $\sT$; here $(k,\lambda)\coloneqq(k,\lambda_1,\lambda_2,\dotsc)$
and $\mu^+_j=\mu_j+\#\set{i}{a_i=j}$.
Then we have an equation $\sum_\sT c_\sT m_{\sT^+}\equiv0$
in $S_{(k,\lambda);\mu^+}$.
\end{lemma}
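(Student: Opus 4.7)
The plan is to construct a $\kk$-linear map $\Phi \colon M_{\lambda;\mu} \to M_{(k,\lambda);\mu^+}$ satisfying $\Phi(m_\sT) = m_{\sT^+}$ and show that $\Phi$ descends to the Specht quotients, i.e.\ $\Phi(M^{>\lambda}_{\lambda;\mu}) \subset M^{>(k,\lambda)}_{(k,\lambda);\mu^+}$. Since $\{m_\sT\}_{\sT \in \Tab_{\lambda;\mu}}$ is a $\kk$-basis of $M_{\lambda;\mu}$, the map $\Phi$ exists and is unique as a $\kk$-linear map. The content of the lemma then reduces to the filtration compatibility statement.

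To establish filtration compatibility, I would compare the canonical decomposition of $m_{\sT^+}$ with that of $m_\sT$ given by Proposition~\ref{prop:canonical_tableau_decomposition}. For $m_\sT = m_\mu \circ_\nu m_{P_{w,\nu}} \circ_{w\nu} m_\lambda$, a direct combinatorial inspection of the multiplicities $\#_{ij}(\sT^+) = \#_{i-1,j}(\sT)$ (for $i \ge 2$) and $\#_{1j}(\sT^+) = \mu_{\sR,j}$ yields $m_{\sT^+} = m_{\mu^+} \circ_{\nu^+} m_{P_{w^+,\nu^+}} \circ_{w^+\nu^+} m_{(k,\lambda)}$, where $\nu^+$ is obtained from $\nu$ by inserting $\mu_{\sR,j}$ just before the block of $\nu$-entries coming from column $j$, and $w^+\nu^+ = \mu_\sR \sqcup w\nu$ is obtained by moving all inserted parts to the front. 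This shows that the construction $\sT \mapsto \sT^+$ respects the canonical decomposition in a uniform way: the permutation tableau piece $m_{P_{w^+,\nu^+}}$ factors through a natural extension of $m_{P_{w,\nu}}$ that depends on $\sR$ but not on the remaining data of $\sT$. I would then verify, using Lemma~\ref{lem:multiply_refinement_tableau} and Lemma~\ref{lem:multiply_permutation_tableau} to handle the interleaving and the permutation tableau, that for any factorization $\sum c_\sT m_\sT = y \circ_\nu z$ with $z \in M_{\nu;\mu}$, $y \in M_{\lambda;\nu}$ and $\nu > \lambda$, the induced expression $\sum c_\sT m_{\sT^+} = y^\sharp \circ_{(k,\nu)} z^+$ admits a factorization through $M_{(k,\nu)}$, where $z^+$ is obtained by the same row-addition with $\sR$ and $y^\sharp$ by prepending the trivial row $\young(1 1 \cdots 1)$ (this is exactly the image of $y$ under the exact induction functor $\unit_k \ast (-)$).

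Once this compatibility is in hand the conclusion is immediate: $(k,\nu) > (k,\lambda)$ in the dominance order whenever $\nu > \lambda$, so $y^\sharp \circ_{(k,\nu)} z^+ \in M^{>(k,\lambda)}_{(k,\lambda);\mu^+}$, which is precisely the statement $\sum c_\sT m_{\sT^+} \equiv 0$ in $S_{(k,\lambda);\mu^+}$. The hard part will be verifying the factorization identity $\Phi(y \circ_\nu z) = y^\sharp \circ_{(k,\nu)} z^+$; the subtlety is that the natural intermediate weight is $(k,\nu)$ (concatenation, which behaves well under the induction $\unit_k \ast (-)$) rather than $\nu^+$ (componentwise sum as in $\mu^+$), and reconciling these two patterns via the interleaving structure of the canonical decomposition is where the combinatorics becomes delicate.
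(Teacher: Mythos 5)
Your overall strategy is the right one and is the same as the paper's in spirit: the descent to the Specht quotients is driven by the induction functor $\unit_k*({\bullet})$ together with the observation that $\nu>\lambda$ implies $(k,\nu)>(k,\lambda)$, and the combinatorial input is Proposition~\ref{prop:canonical_tableau_decomposition} plus Lemmas~\ref{lem:multiply_refinement_tableau} and~\ref{lem:multiply_permutation_tableau}. The gap is that the pivotal computation is never isolated, and the form in which you propose to supply it makes it strictly harder than necessary. The paper does not verify compatibility of the row-addition map $\Phi$ with every factorization $y\circ_\nu z$; it writes $\Phi$ itself as a composite of two operations, each of which manifestly preserves the relevant two-sided ideal. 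Concretely, let $\sT^\#\in\Tab_{(k,\lambda);(k,\mu)}$ be the tableau with first row all $1$'s and $\sT^\#(i,j)=\sT(i-1,j)+1$ below it, so that $m_{\sT^\#}=\unit_k*m_\sT$, and let $\sR\in\Tab_{(k,\mu);\mu^+}$ be the \emph{single fixed} tableau with $\sR(1,j)=a_j$ and $\sR(i,j)=i-1$ for $i\ge2$ (depending only on $\mu$ and the sequence $a_1\le\dots\le a_k$, not on $\sT$). The one identity that has to be checked is
\[m_\sR\circ_{(k,\mu)}m_{\sT^\#}=m_{\sT^+},\]
and it is checked by applying Proposition~\ref{prop:canonical_tableau_decomposition} to $m_\sR$ (not to $m_{\sT^+}$, which is where your second paragraph heads) and then invoking the two multiplication lemmas. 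Once this is in hand, $\sum_\sT c_\sT m_{\sT^+}=m_\sR\circ_{(k,\mu)}\bigl(\unit_k*\sum_\sT c_\sT m_\sT\bigr)$; the inner factor lies in $\cH^{>(k,\lambda)}$ because $\unit_k*({\bullet})$ sends anything factoring through $M_\nu$ to something factoring through $M_{(k,\nu)}$, and post-composition with the fixed $m_\sR$ stays inside that ideal.

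Your naturality statement $\Phi(y\circ_\nu z)=y^\sharp\circ_{(k,\nu)}z^+$ is then an immediate corollary of the displayed identity together with $\unit_k*(y\circ_\nu z)=(\unit_k*y)\circ_{(k,\nu)}(\unit_k*z)$. Proved directly, however --- by expanding arbitrary products $y\circ_\nu z$ in the tableau basis, applying the basis-wise defined $\Phi$ term by term, and matching against the expansion of $y^\sharp\circ_{(k,\nu)}z^+$ --- it is exactly the ``delicate combinatorics'' you defer at the end, and I do not see how to carry that out without first isolating the factorization of $\Phi$ through the fixed element $m_\sR$. So the missing idea is not a new lemma but the reorganization: prove one identity for basis elements with one fixed $\sR$, and let functoriality and the ideal property do the rest.
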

\begin{proof}
First note that the convolution functor with trivial module
\[\unit_k*{\bullet}\colon(\lMod{H_n})/\cH^{>\lambda}\to
(\lMod{H_{k+n}})/\cH^{>(k,\lambda)}\]
is still well-defined,
because for any $V\to W$ which factors through
some $M_\nu$ for $\nu>\lambda$,
corresponding $\unit_k*V\to\unit_k*W$
factors through $M_{(k,\nu)}$
with $(k,\nu)>(k,\lambda)$.
For each $\sT$, let us define $\sT^\#\in\Tab_{(k,\lambda);(k,\mu)}$ by
\[\sT^\#(i,j)=\begin{cases*}
1 & if $i=1$,\\
\sT(i-1,j)+1 & otherwise,
\end{cases*}\]
so that $m_{\sT^\#}=\unit_k*m_{\sT}$.
On the other hand, let $\sR\in\Tab_{(k,\mu);\mu^+}$ be the tableau defined by
\[\sR(i,j)=\begin{cases*}
a_j & if $i=1$,\\
i-1 & otherwise.
\end{cases*}\]
Then we have $m_\sR\circ_{(k,\mu)}m_{\sT^\#}=m_{\sT^+}$ by the decomposition of $m_\sR$
according to Proposition~\ref{prop:canonical_tableau_decomposition}
and the formulas in Lemma~\ref{lem:multiply_refinement_tableau}
and Lemma~\ref{lem:multiply_permutation_tableau}.
Hence
\[\sum_\sT c_\sT m_{\sT^+}=m_\sR\circ_{(k,\mu)}\sum_\sT c_\sT m_{\sT^\#}
=m_\sR\circ_{(k,\mu)}\Bigl(\unit_k*\sum_\sT c_\sT m_\sT\Bigr)\equiv0.
\qedhere\]
\end{proof}

By the same argument, we can also add a new row to the bottom of tableaux.
For the bottom row of a tableau we have another kind of formula.

\begin{lemma}
Let $\sum_\sT c_\sT m_\sT\equiv0\in S_{\lambda;\mu}$ as above.
Take a number $a$ which is greater than or equal to
any entries of $\sT$ (so $\mu_i=0$ for $i>a$).
For each $\sT\in\Tab_{\lambda;\mu}$, let $\sT^+\in\Tab_{\lambda^+;\mu^+}$ be
the tableau obtained by joining
a bar $\young(aa\scdots a)$ of length $l$ at the right of the bottom row of $\sT$;
here $\lambda^+\coloneqq(\lambda_1,\dots,\lambda_{r-1},\lambda_r+l)$
and $\mu^+\coloneqq(\mu_1,\dots,\mu_{a-1},\mu_a+l)$.
Then we also have $\sum_\sT c_\sT \qbinom{\#_{ra}(\sT)+l}{l}m_{\sT^+}\equiv0$
in $S_{\lambda^+;\mu^+}$.
\end{lemma}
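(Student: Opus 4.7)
The plan is to follow the strategy of the previous lemma (right-convolving with $\unit_l$), but to add two new ingredients: a shape-merging morphism $\pi\colon M_{(\lambda,l)}\to M_{\lambda^+}$ and a dominance-incomparability argument, both needed to upgrade the relation from shape $(\lambda,l)$ to shape $\lambda^+$.

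First I would convolve with $\unit_l$ on the right. The functor $\bullet*\unit_l$ sends $\cH^{>\lambda}$ into $\cH^{>(\lambda,l)}$, and $m_\sT*\unit_l=m_{\sT^\sharp}$ where $\sT^\sharp\in\Tab_{(\lambda,l);(\mu,l)}$ is obtained from $\sT$ by attaching a new bottom row of $l$ cells all labelled $a+1$; hence $\sum_\sT c_\sT m_{\sT^\sharp}\equiv 0$ in $S_{(\lambda,l);(\mu,l)}$. Next I would set up the merging morphism: since $\fS_{(\lambda,l)}\subset\fS_{\lambda^+}$ with a length-additive coset decomposition $\fS_{\lambda^+}=\fS_{(\lambda,l)}\cdot\fD''$, we have $m_{\lambda^+}=m_{(\lambda,l)}\Xi$ for $\Xi\coloneqq\sum_{u\in\fD''}T_u$, so $\Ann(m_{(\lambda,l)})\subseteq\Ann(m_{\lambda^+})$ and the rule $h\,m_{(\lambda,l)}\mapsto h\,m_{\lambda^+}$ defines a surjective $H_{n+l}$-homomorphism $\pi\colon M_{(\lambda,l)}\to M_{\lambda^+}$. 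Writing $m_{\sT^\sharp}=T_w m_{(\lambda,l)}$ with $w=d(\sT^\sharp_\downarrow)$, a direct comparison of reading orders gives $d(\sU_\downarrow)=w$, so $\pi(m_{\sT^\sharp})=T_w m_{\lambda^+}=m_\sU$, where $\sU\in\Tab_{\lambda^+;(\mu,l)}$ has its row $r$ equal to the row $r$ of $\sT$ followed by $l$ copies of $a+1$.

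The main obstacle is to show $\sum_\sT c_\sT m_\sU\equiv 0$ in $S_{\lambda^+;(\mu,l)}$ rather than merely in $S_{(\lambda,l);(\mu,l)}$. Lifting $\sum_\sT c_\sT m_\sT=\sum_{\nu>\lambda}f_\nu\circ g_\nu$ through parabolic modules $M_\nu$, convolving and post-composing with $\pi$ produces
\[\sum_\sT c_\sT m_\sU=\sum_{\nu>\lambda}\bigl(\pi\circ(f_\nu*\id)\bigr)\circ(g_\nu*\id),\]
where each summand factors through $M_{(\nu,l)}$. The crucial observation is that $\lambda^+\not\ge(\nu,l)$ in dominance for every $\nu>\lambda$: the hypothetical dominance $\lambda^+\ge(\nu,l)$ at positions $k<r$ would pinch $\nu_k=\lambda_k$ (combining with $\nu\ge\lambda$), but then the strict inequality $\nu>\lambda$ could only hold at some $k\ge r$, where $\sum_{i\le k}\nu_i\le n=\sum_{i\le k}\lambda_i$ precludes strict dominance, a contradiction. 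By Corollary~\ref{cor:hom_on_dominance_order}(2), $M_{\lambda^+;(\nu,l)}=\cH^{>\lambda^+}(M_{(\nu,l)},M_{\lambda^+})$, so each $\pi\circ(f_\nu*\id)$ already lies in $\cH^{>\lambda^+}$; summing gives $\sum_\sT c_\sT m_\sU\in\cH^{>\lambda^+}$.

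Finally I would compose on the left with $m_{\mu^+}\in M_{(\mu,l);\mu^+}$ via $\circ_{(\mu,l)}$, an operation that preserves $\cH^{>\lambda^+}$. The refinement $(\mu,l)$ of $\mu^+$ splits only the label $a$ into $(a,a+1)$ with multiplicities $(\mu_a,l)$, so Lemma~\ref{lem:multiply_refinement_tableau}(2) applied to $\sU$ yields
\[m_{\mu^+}\circ_{(\mu,l)}m_\sU=\qbinom{\#_{ra}(\sT)+l}{l}\,m_{\sT^+},\]
because all row-wise $q$-multinomial factors collapse to $1$ except in row $r$, where the factor is $\qbinom{\#_{ra}(\sT)+l}{\#_{ra}(\sT),l}=\qbinom{\#_{ra}(\sT)+l}{l}$, and the inversion exponent vanishes since the $a+1$-cells of $\sU$ all sit in row $r$ with no $a$-cell below them. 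Summing over $\sT$ then produces the asserted relation $\sum_\sT c_\sT\qbinom{\#_{ra}(\sT)+l}{l}\,m_{\sT^+}\equiv 0$ in $S_{\lambda^+;\mu^+}$.
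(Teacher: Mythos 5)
Your proof is correct and follows essentially the same route as the paper's: convolve with $\unit_l$ to lift the relation to shape $(\lambda,l)$, merge into shape $\lambda^+$, and dispose of the error terms by combining the incomparability $(\nu,l)\not\le\lambda^+$ for $\nu>\lambda$ with Corollary~\ref{cor:hom_on_dominance_order}~(2). The only (harmless) difference is the order of the two merges: the paper labels the new row $a$ from the outset and extracts the $q$-binomial coefficient from the final shape-merging composition with $m_\sR$, whereas you label it $a+1$, merge the shape with coefficient $1$, and extract the coefficient from a final weight-merging composition with $m_{\mu^+}$, which lets you cite Lemma~\ref{lem:multiply_refinement_tableau}~(2) directly rather than its right-handed dual.
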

\begin{proof}
For a composition $\nu=(\nu_1,\dots,\nu_r)$,
we write $(\nu,l)\coloneqq(\nu_1,\dots,\nu_r,l)$.
We define $\sT^\#\in\Tab_{(\lambda,l);\mu^+}$
for each $\sT\in\Tab_{\lambda;\mu}$ by
\[\sT^\#(i,j)\coloneqq\begin{cases*}
\sT(i,j) & if $i\le r$,\\
a & if $i=r+1$
\end{cases*}\]
and $\sR\in\Tab_{\lambda^+;(\lambda,l)}$ by
\[\sR(i,j)\coloneqq\begin{cases*}
i & if $i<r$ or ($i=r$, $j\le\lambda_r$),\\
r+1 & if $i=r$, $j>\lambda_r$,
\end{cases*}\]
so that $m_{\sT^\#}\circ_{(\lambda,l)}m_\sR=\qbinom{\#_{ra}(\sT)+l}{l} m_{\sT^+}$
similarly to the previous proof.
By the similar argument we can prove
$\sum_\sT c_\sT m_{\sT^\#}\equiv0$, and more strongly,
this element can be written as a linear combination of elements
which factor through $M_{(\nu,l)}$ for $\nu>\lambda$.
This implies $(\nu,l)\not\le\lambda^+$;
thus by Corollary~\ref{cor:hom_on_dominance_order},
in $S_{\lambda^+;\mu^+}$ we have
\[\sum_\sT c_\sT \qbinom{\#_{ra}(\sT)+l}{l} m_{\sT^+}
=\sum_\sT c_\sT m_{\sT^\#}\circ_\nu m_\sR\equiv0.
\qedhere\]
\end{proof}

The formula below will be needed for a later computation.

\begin{lemma}\label{lem:scalar_lemma}
Let $k,l,n\in\NN$ such that $k\le l\le n$ and
let $\lambda\coloneqq(n-k,k)$ and $\mu\coloneqq(n-l,l)$.
For each $i$, let $\sT_i\in\Tab_{\lambda;\mu}$ be the tableau determined by
$\#_{21}(\sT_i)=i$, that is, it is in the form
\[\sT_i=\rlap{$\,\underbrace{\phantom{\textstyle\young(111,111)}}_i$}
\young(11\scdots122\scdots2,1\scdots12\scdots2).\]
Then we have $m_{\sT_i}\equiv(-1)^{i} q^{\binom{i}{2}}\qbinom{k}{i}m_{\sT_0}$
in $S_{\lambda;\mu}$.
\end{lemma}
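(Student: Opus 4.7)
The plan is to proceed by induction on $i$; the base case $i=0$ is trivial since $c_0=1$. For the inductive step I will establish the two-term recursion
\[[i]\, m_{\sT_i} \;+\; q^{i-1}[k-i+1]\, m_{\sT_{i-1}} \;\equiv\; 0 \pmod{M^{>\lambda}_{\lambda;\mu}}.\]
Granting this, iterating gives $c_i/c_{i-1} = -q^{i-1}[k-i+1]/[i]$, and combined with the elementary $q$-Pascal identity $[i]\qbinom{k}{i} = [k-i+1]\qbinom{k}{i-1}$ this telescopes to the closed form $c_i = (-1)^i q^{\binom{i}{2}}\qbinom{k}{i}$, exactly as claimed.

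To produce the recursion I consider $\lambda' \coloneqq (n-k+1, k-1)$, which strictly dominates $\lambda$ in the dominance order. Any element of $M_{\lambda;\mu}$ of the form $m_\sR\circ_{\lambda'}m_\sS$, with $\sS\in\Tab_{\lambda';\mu}$ and $\sR\in\Tab_{\lambda;\lambda'}$, factors through $M_{\lambda'}$ as a composition $M_\mu\to M_{\lambda'}\to M_\lambda$, hence lies in $M^{\ge\lambda'}_{\lambda;\mu}\subset M^{>\lambda}_{\lambda;\mu}$ and so vanishes in $S_{\lambda;\mu}$. I take $\sS$ to be the row-semistandard tableau of shape $\lambda'$ and weight $\mu$ with row $1 = 1^{n-l-i+1}\,2^{l-k+i}$ and row $2 = 1^{i-1}\,2^{k-i}$ — obtained by moving the rightmost $1$ of row $1$ of $\sT_i$ up into the extra box of $\lambda'$ — and $\sR$ to be the row-semistandard tableau of shape $\lambda$ and weight $\lambda'$ with row $1$ all $1$'s and row $2 = (1,2,\dots,2)$ (one leading $1$ followed by $k-1$ twos). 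To evaluate $m_\sR\circ_{\lambda'}m_\sS$ I apply the canonical factorization of Proposition \ref{prop:canonical_tableau_decomposition} to each of $m_\sR$ and $m_\sS$, unwinding the composition via Lemma \ref{lem:multiply_refinement_tableau}.(2) and Lemma \ref{lem:multiply_permutation_tableau}. After simplification only two basis terms survive: $m_{\sT_{i-1}}$ with coefficient $[k-i+1]$ and $m_{\sT_i}$ with coefficient $q^{i-1}[i]$, yielding precisely the recursion above.

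The main obstacle will be the careful bookkeeping of $q$-powers and $q$-binomial coefficients in this expansion. The factor $[k-i+1]$ emerges as a $q$-multinomial in Lemma \ref{lem:multiply_refinement_tableau}.(2) counting the $k-i+1$ possible positions of the single $1$ in row $2$ of $\sR$, the factor $q^{i-1}$ arises from the inversion count $\ell$ in the same lemma (the transferred entry must pass over the $i-1$ preexisting $1$'s in row $2$ of $\sT_i$), and the remaining factor $[i]$ comes from a parallel $q$-binomial on the $\sS$ side. That no $m_{\sT_j}$ with $j<i-1$ or $j>i$ appears in the expansion reflects the tight structure of the chosen $\sS$ and $\sR$: their canonical decompositions via Proposition \ref{prop:canonical_tableau_decomposition} each contain only a single adjacent transposition as their permutation-tableau component — namely the transposition corresponding to the one column in which the two-row shape $\lambda'$ redistributes a $1$ and a $2$ — so the multiplication formulas can produce at most two distinct row-semistandard basis elements of $M_{\lambda;\mu}$.
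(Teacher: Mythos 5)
Your construction of the relation is sound: the element $m_\sS\circ_{\lambda'}m_\sR$ (note the paper's convention puts the $M_{\lambda';\mu}$ factor in the first slot) does lie in $M^{\ge\lambda'}_{\lambda;\mu}\subset M^{>\lambda}_{\lambda;\mu}$ since $\lambda'=(n-k+1,k-1)>\lambda$, and as far as I can check your two-term recursion $[i]\,m_{\sT_i}+q^{i-1}[k-i+1]\,m_{\sT_{i-1}}\equiv0$ is exactly what comes out (I verified it for $n=4$, $k=l=2$, $i=1,2$). The gap is in the step ``iterating gives $c_i/c_{i-1}=-q^{i-1}[k-i+1]/[i]$.'' The lemma is stated over an arbitrary commutative ring $\kk$ --- at this point in the paper not even $q\in\kk^\times$ has been assumed --- and $S_{\lambda;\mu}$ is a quotient of $M_{\lambda;\mu}$ by a submodule that need not be saturated, so it can have torsion (the author explicitly conjectures $S_{\lambda;\lambda}\simeq\kk/q^{h(\lambda)}\kk$ in some non-invertible cases). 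Hence your congruence only yields
\[[i]\Bigl(m_{\sT_i}-(-1)^iq^{\binom i2}\qbinom ki m_{\sT_0}\Bigr)\equiv0,\]
which is strictly weaker than the statement unless $[i]$ acts injectively on $S_{\lambda;\mu}$. This is not a removable technicality: the lemma is used later (Corollary~\ref{cor:scalar_corollary}, Lemma~\ref{lem:ideal_identification}, Lemma~\ref{lem:circle_move}) precisely where $q$-integers vanish. For instance at $q=-1$ one has $[2]=0$, your $i=2$ recursion degenerates to $q\,m_{\sT_1}\equiv0$ and gives no information about $m_{\sT_2}$, while the lemma still asserts $m_{\sT_2}\equiv q\qbinom k2m_{\sT_0}$.

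The paper's proof is arranged so that no division ever occurs. It inducts on $k$: for $i<k$ the identity is first established for a truncated shape in the extreme case where the claimed coefficient is $\qbinom ii=1$, and is then transported to shape $(n-k,k)$ by the bar-adding lemma, whose correction factor $\qbinom{\#_{ra}(\sT)+l}{l}$ equals $1$ on the unknown term $m_{\sT_i}$ and contributes the $\qbinom ki$ on the $m_{\sT_0}$ side. The remaining case $i=k$ is extracted from the single $(k+1)$-term relation $\sum_{0\le i\le k}m_{\sT_i}\equiv0$ (factoring through $M_{(n)}$) combined with the identity $\sum_i(-1)^iq^{\binom i2}\qbinom ki=0$ --- again the unknown $m_{\sT_k}$ appears with coefficient $1$. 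To repair your argument you would need additional relations that eliminate the factors $[i]$ without inverting them, which in effect forces you back to something like the paper's scheme; alternatively you could restrict to the case where all $[i]$, $1\le i\le k$, are non-zero-divisors, but that is too weak for the later applications.
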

\begin{proof}
We prove it by an induction on $k$.
The case $i=0$ is trivial so assume that $0<i\le k$.
For $i<k$, using the assumption of induction, the formula is implied by the lemma above.
On the other hand, by Lemma~\ref{lem:multiply_refinement_tableau}~(1) we have
\[0\equiv\young(111\scdots12\scdots2)\circ_{(n)}\young(111\scdots\scdots\scdots\scdots1,11\scdots\scdots\scdots1)
=\sum_{0\le i\le k}m_{\sT_i},\]
so that the statement also holds for $i=k$ by the formula
\[\sum_{0\le i\le k}(-1)^i q^{\binom{i}{2}}\qbinom{k}{i}=0
\qquad\text{implied by}\qquad
\prod_{0\le i<k}(1+q^k t)=\sum_{0\le i\le k} q^{\binom{i}{2}}\qbinom{k}{i}t^i.
\qedhere\]
\end{proof}

Multiplying an element to the both-hand sides of this formula for $i=k=l$,
we obtain the following corollary by Lemma~\ref{lem:multiply_refinement_tableau}~(1).

\begin{corollary*}\label{cor:scalar_corollary}
Let $\lambda=(n-k,k)$ as above.
For arbitrary entries $a_1\le\dots\le a_k$, we have
\[\def\Xa{a_1}\def\Xb{a_k}
\young(11\scdots1\Xa\scdots\Xb,1\scdots1)
\equiv(-1)^k q^{\binom{k}{2}}
\young(11\scdots\scdots\scdots\scdots1,\Xa\scdots\Xb).
\qedhere\]
\end{corollary*}

\subsection{Semistandard tableaux}
Hereafter in this section we assume $q\in\kk^\times$.
Then the braiding $\sigma$ of the convolution $*$ is now invertible
so we have $M_\lambda\simeq M_{w\lambda}$ for any
$\lambda=(\lambda_1,\lambda_2,\dots,\lambda_r)$ and $w\in\fS_r$.
Recall that a composition $\lambda$
is called a \term{partition} if it is a descending sequence:
$\lambda_1\ge\lambda_2\ge\dots$.
So in this case, unless $\lambda$ is a partition,
we can take some $w$ such that $\lambda<w\lambda$, so that
$M_\lambda$ is zero in the quotient category $(\lMod{H_n})/\cH^{>\lambda}$ again.

A row-semistandard tableau $\sT\in\Tab_{\lambda;\mu}$ is called
a \term{semistandard tableau} if its shape $\lambda$ is a partition
and for all vertically adjacent boxes $(i,j),(i+1,j)\in Y(\lambda)$
it satisfies $T(i,j)<T(i+1,j)$; or equivalently,
all its columns are strictly increasing.
We denote by $\STab_{\lambda;\mu}$ the set of
all semistandard tableaux of shape $\lambda$ of weight $\mu$.
Note that the strictly increasing condition clearly implies that
every semistandard tableau is good.
Now we can improve a lemma in the previous subsection.

\begin{lemma}\label{lem:semistandard}
$S_{\lambda;\mu}$ is spanned by $\set{m_\sT}{\sT\in\STab_{\lambda;\mu}}$.
\end{lemma}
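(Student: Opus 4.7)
The plan is to extend Lemma~\ref{lem:good_tableaux} by a further reduction, showing that every $m_\sT$ for a good but non-semistandard $\sT$ can be rewritten modulo $\cH^{>\lambda}$ as a $\kk$-linear combination of $m_\sS$ with $\sS$ strictly smaller than $\sT$ in a well-founded ordering on $\Tab_{\lambda;\mu}$. I will use the reverse lexicographic order on columns, read from top to bottom and then from left to right; termination then forces the span to consist only of semistandard tableaux. First I would note that since $q \in \kk^\times$, the braiding $\sigma$ on $*$ is invertible, so $M_\lambda \simeq M_{w\lambda}$ for every $w \in \fS_r$; in particular $M_\lambda$ vanishes in $(\lMod{H_n})/\cH^{>\lambda}$ unless $\lambda$ is a partition, so together with Lemma~\ref{lem:good_tableaux_on_dominance_order} we may assume $\lambda$ is a partition with $\lambda \ge \mu$.

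Given a good non-semistandard $\sT$, I would pick $(i,j)$ witnessing the leftmost (then lowest) column violation $\sT(i,j)\ge\sT(i+1,j)$. The core step is to derive a $q$-analogue of the Garnir relation on the two rows $i$ and $i+1$ restricted to the Garnir set
\[
A \coloneqq \{\sT(i,j),\sT(i,j+1),\dots,\sT(i,\lambda_i)\}, \quad
B \coloneqq \{\sT(i+1,1),\dots,\sT(i+1,j)\},
\]
whose total size $\lambda_i+1$ strictly exceeds $\lambda_{i+1}$; this pigeonhole inequality forces any redistribution of $A\cup B$ between the two rows to move at least one entry of $A$ down to row $i+1$. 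The two-row relation itself is exactly the content of Lemma~\ref{lem:scalar_lemma} / Corollary~\ref{cor:scalar_corollary}, which computes, up to an invertible $q$-scalar, what happens when 1's and 2's are shuffled between two rows in a shape $(p,k)$.

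To apply this local identity inside the full tableau, I would re-attach the remaining rows of $\sT$: rows $1,\dots,i-1$ on top using the first local transformation lemma (which passes the relation through unchanged), and rows $i+2,\dots$ on the bottom using the second local transformation lemma (which introduces $q$-binomial factors). Invertibility of $q$ is essential: whenever the $q$-binomial factor is non-zero we can cancel it, while if it vanishes the relation directly yields $m_\sT \equiv 0$. After lifting, the identity expresses $m_\sT$ modulo $\cH^{>\lambda}$ as a combination of $m_\sS$'s in which the entries of $A\cup B$ have been redistributed between rows $i$ and $i+1$, and it is straightforward to check that each such $\sS$ is either already good or, by a single application of Lemma~\ref{lem:good_tableaux}, a combination of strictly smaller good tableaux; either way each $\sS$ that appears is strictly smaller than $\sT$ in the chosen column order, so transfinite induction finishes the argument.

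The main obstacle will be the combinatorial bookkeeping of the Garnir relation obtained from Lemma~\ref{lem:scalar_lemma}: correctly tracking all the redistributions of $A\cup B$ that appear, the $q$-binomial scalars incurred from the second local transformation lemma, and then verifying tableau-by-tableau that each $\sS$ is strictly smaller than $\sT$. Once the relation is written out cleanly the inductive reduction is automatic.
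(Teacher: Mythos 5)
Your overall strategy (exhibit, for each good but non-semistandard $\sT$, a relation with leading term $m_\sT$ and strictly smaller remaining terms) matches the paper's, but the way you propose to produce the relation has two genuine gaps. First, the needed two-row relation is \emph{not} the content of Lemma~\ref{lem:scalar_lemma} or Corollary~\ref{cor:scalar_corollary}: those results only treat two-valued tableaux of a very special shape (a block of $1$'s and a block of $2$'s in each row, or a full swap of row~$2$ with the tail of row~$1$). They do not give a relation that redistributes the Garnir set $A\cup B$ while holding the remaining entries of rows $i$ and $i+1$ fixed, and none of the available tools lets you cut such a strip out of the middle of two rows: the first local transformation lemma (and its bottom-row analogue) only prepends or appends \emph{whole} rows, and the second one only appends a constant bar $\young(aa\scdots a)$ with $a$ maximal to the bottom row. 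Passing from the two-valued case to arbitrary entries via weight refinement (Lemma~\ref{lem:multiply_refinement_tableau}(1)) produces a sum over \emph{all} refinements, which reintroduces exactly the terms you are trying to isolate. The paper avoids all of this: it builds a single element $m_{\sT_2}\circ_\nu m_{\sT_1}$ whose leading term is $m_\sT$ and observes that it factors through $M_\nu\simeq M_{\nu^+}$ with $\nu^+\not\le\lambda$, hence vanishes in $S_{\lambda;\mu}$ by Corollary~\ref{cor:hom_on_dominance_order}; here $\nu^+$ is obtained from $\nu$ by swapping the two middle parts $l-1$ and $\lambda_k+1$.

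Second, your use of the hypothesis $q\in\kk^\times$ is wrong in an essential way. Over a general commutative ring, $q$ being invertible does \emph{not} make the $q$-binomial coefficients invertible (already $[2]=1+q$ may vanish when $q=-1$), so "whenever the $q$-binomial factor is non-zero we can cancel it" is not a legitimate step, and when it \emph{is} zero the relation degenerates to $0\equiv(\text{other terms})$ rather than to $m_\sT\equiv0$. The only place the paper uses $q\in\kk^\times$ is to invert the braiding $\sigma$, giving $M_\nu\simeq M_{w\nu}$; no $q$-integer is ever divided by. This is precisely what makes the semistandard basis theorem valid at roots of unity, which your argument as written would not cover.
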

\begin{proof}
The statement is clear if $\lambda$ is not a partition, so we may assume so.
Suppose $\sT$ is not semistandard and take its box $(k,l)\in Y(\lambda)$
such that $\sT(k,l)\ge\sT(k+1,l)$.
Let $\nu$ be a composition
\[\nu\coloneqq(\lambda_1,\dots,\lambda_{k-1},l-1,\lambda_k+1,\lambda_{k+1}-l,
\lambda_{k+2},\lambda_{k+3},\dotsc).\]
We define tableaux $\sT_1\in\Tab_{\lambda;\nu}$ and $\sT_2\in\Tab_{\nu;\mu}$ by
\begin{align*}
\sT_1(i,j)&=\begin{cases*}
i & if $i<k$ or ($i=k$, $j<l$) or ($i=k+1$, $j\le l$),\\
i+1 & otherwise,
\end{cases*}\\
\sT_2(i,j)&=\begin{cases*}
\sT(i,j) & if $i\le k$ or ($i=k+1$, $j\le l$),\\
\sT(k,j-1) & if $i=k+1$, $j>l$,\\
\sT(k+1,j+l) & if $i=k+2$,\\
\sT(i-1,j) & if $i>k+2$.
\end{cases*}
\end{align*}
For example, when
\[\sT=\young(1112234,12223,25)\]
and $(k,l)=(1,4)$, the corresponding tableaux are
\[\sT_1=\young(1112222,22223,44)\quad\text{and}\quad
\sT_2=\young(111,12222234,3,25).\]
So intuitively $\sT_2$ is obtained by picking up
entries of $\sT$ in the polygonal chain
\[\young(\ \ \ \bullet\bullet\bullet\bullet,\bullet\bullet\bullet\bullet\ ,\ \ )\]
which turns at $(k,l)$ and $(k+1,l)$ as a new row.
Then by the same argument in the proof of Lemma~\ref{lem:good_tableaux},
$m_{\sT_2}\circ_\nu m_{\sT_1}\in M_{\lambda;\mu}$ has
the leading term $m_\sT$.
Now let $\nu^+$ be another composition
\[\nu^+\coloneqq(\lambda_1,\dots,\lambda_{k-1},\lambda_k+1,l-1,\lambda_{k+1}-l,
\lambda_{k+2},\dots,\lambda_r)\]
which is obtained by swapping middle two entries of $\nu$.
By the assumption $q\in\kk^\times$, we have $M_\nu\simeq M_{\nu^+}$.
Hence $m_{\sT_2}\circ_\nu m_{\sT_1}$ also factors through $M_{\nu^+}$.
On the other hand, we have clearly $\nu^+\not\le\lambda$.
Hence by Corollary~\ref{cor:hom_on_dominance_order}~(2),
$m_{\sT_2}\circ_\nu m_{\sT_1}\equiv0$ in $S_{\lambda;\mu}$.
By induction on length as before we obtain the statement.
\end{proof}

\begin{theorem}\label{thm:cellular_basis}
Recall the assumption $q\in\kk^\times$. Then
$M_{\lambda;\mu}$ has a basis
\[\bigsqcup_{\nu\colon\textup{partition}}
\set{m_\sS\circ_\nu m_{\sT^*}}{\sS\in\STab_{\nu;\mu},\sT\in\STab_{\nu;\lambda}}.\]
\end{theorem}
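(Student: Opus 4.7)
The plan is to combine the semistandard spanning of Lemma~\ref{lem:semistandard} with the dominance-order filtration on $M_{\lambda;\mu}$ and a dimension count via the Robinson--Schensted--Knuth correspondence. First, each proposed element $m_\sS \circ_\nu m_{\sT^*}$ manifestly factors through $M_\nu$, so it belongs to $M^{\ge\nu}_{\lambda;\mu}$; hence the family is compatible with the filtration $\{M^{\ge\nu}_{\lambda;\mu}\}$ cut out from $\cH^{\ge\nu}$.

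For spanning I fix a partition $\nu$ and study the graded piece $M^{\ge\nu}_{\lambda;\mu}/M^{>\nu}_{\lambda;\mu}$. By definition $M^{\ge\nu}_{\lambda;\mu}$ is generated by homomorphisms $M_\mu \to M_\lambda$ factoring through some $M_\pi$ with $\pi \ge \nu$, and those with $\pi > \nu$ already lie in $M^{>\nu}_{\lambda;\mu}$; hence every class in the graded piece is represented by a sum of compositions $y \circ_\nu x$ with $y \in M_{\nu;\mu}$ and $x \in M_{\lambda;\nu}$. The image of $y$ in $S_{\nu;\mu}$ is, by Lemma~\ref{lem:semistandard}, a $\kk$-linear combination of $\{m_\sS : \sS \in \STab_{\nu;\mu}\}$; applying the anti-involution ${\bullet}^*$ to the analogous statement for $S_{\nu;\lambda}$ shows that $S^*_{\nu;\lambda} = M^{(\nu)}_{\lambda;\nu}$ is spanned by $\{m_{\sT^*} : \sT \in \STab_{\nu;\lambda}\}$. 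Thus the proposed family spans each graded piece, and by descending induction along the well-founded reverse dominance order it spans all of $M_{\lambda;\mu}$.

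It remains to promote spanning to basis, which I do by counting. The module $M_{\lambda;\mu}$ is free over $\kk$ of rank $\#\Tab_{\lambda;\mu}$, equivalently the number of $\NN$-valued matrices with row sums $\lambda$ and column sums $\mu$. The RSK correspondence identifies these matrices with pairs $(P,Q)$ of semistandard tableaux of a common partition shape $\nu$, with $P \in \STab_{\nu;\mu}$ and $Q \in \STab_{\nu;\lambda}$, giving
\[\#\Tab_{\lambda;\mu} = \sum_{\nu\colon\textup{partition}} \#\STab_{\nu;\mu}\cdot\#\STab_{\nu;\lambda},\]
which is precisely the cardinality of the proposed basis. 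By Vasconcelos' theorem, a surjective endomorphism of a finitely generated free module over the commutative ring $\kk$ is an isomorphism, so a spanning set of the correct size is automatically a basis.

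The main obstacle I anticipate is formalising the second step cleanly: one must check that the composition map $S^*_{\nu;\lambda} \otimes S_{\nu;\mu} \to M^{\ge\nu}_{\lambda;\mu}/M^{>\nu}_{\lambda;\mu}$, $m_{\sT^*} \otimes m_\sS \mapsto m_\sS \circ_\nu m_{\sT^*}$, is well-defined and surjective. This is essentially the $(\lambda,\mu)$-component of the Morita-context isomorphism from the previous theorem, but some care is required because $S_{\nu;\nu}$ need not collapse to $\kk$ for general $q$, so the tensor product must be tracked over this possibly non-trivial base and not merely over $\kk$.
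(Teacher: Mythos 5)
Your proof is correct and follows essentially the same route as the paper's: a filtration of $M_{\lambda;\mu}$ along the reversed dominance order whose graded pieces are surjected onto by $S_{\nu;\mu}\otimes S^*_{\nu;\lambda}$, spanning of those pieces via Lemma~\ref{lem:semistandard}, and a cardinality count through the Robinson--Schensted--Knuth correspondence to upgrade the spanning set to a basis. Your explicit appeal to the surjective-endomorphism theorem for the last step, and your remark that the tensor product should really be taken over $S_{\nu;\nu}$ (harmless here since $S_{\nu;\nu}$ is a cyclic quotient of $\kk$), only make explicit what the paper leaves implicit.
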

\begin{proof}
First we prove that the set above spans the hom space.
Take an appropriate total order on the set of all compositions
$\{\nu_1,\nu_2,\dots,\nu_p=\lambda,\dotsc\}$
which is stronger than the reversed dominance order,
so that $i\le j$ whenever $\nu_i\ge\nu_j$.
We take a filtration on $M_{\lambda;\mu}$ by letting
$M^{\le k}_{\lambda;\mu}\coloneqq\sum_{i\le k}M^{\nu_i}_{\lambda;\mu}$ for each $k$
so that $M_{\lambda;\mu}=M^{\le p}_{\lambda;\mu}$.
Then, on each composition factor,
by inclusion
$M^{>\nu_k}_{\lambda;\mu}\subset M^{\le k-1}_{\lambda;\mu}$
there is a natural surjective map
\[\circ_{\nu_k}\colon S_{\nu_k;\mu}\otimes S^*_{\nu_k;\lambda}\twoheadrightarrow
M^{\ge\nu_k}_{\lambda;\mu}\big/M^{>\nu_k}_{\lambda;\mu}\twoheadrightarrow
M^{\le k}_{\lambda;\mu}\big/M^{\le k-1}_{\lambda;\mu}\]
here recall that $S_{\nu;\mu}=M^{(\nu)}_{\nu;\mu}$ and we define
$S^*_{\nu;\lambda}\coloneqq M^{(\nu)}_{\lambda;\nu}$.
Hence by Lemma~\ref{lem:semistandard}, the right-hand side
is spanned by $\{m_\sS\circ_{\nu_k} m_{\sT^*}\}$ above.

Now remember the Robinson--Schensted--Knuth correspondence~\cite{Knuth70}
\[\Tab_{\lambda;\mu}\onetoone\bigsqcup_{\nu\colon\textup{partition}}
\STab_{\nu;\lambda}\times\STab_{\nu;\mu}.\]
Hence the rank of the free $\kk$-module $M_{\lambda;\mu}$
is equal to the number of elements in the generating set above.
Consequently this set is also linearly independent, so that it forms a basis.
\end{proof}

\begin{corollary*}
\begin{enumerate}
\item $S_{\lambda;\mu}$
has a basis $\set{m_\sT}{\sT\in\STab_{\lambda;\mu}}$.
In particular,
\[S_{\lambda;\lambda}\simeq\begin{cases*}
\kk & if $\lambda$ is a partition,\\
0 & otherwise.
\end{cases*}\]
\item The product
\[\circ_{\nu}\colon S_{\nu;\mu}\otimes S^*_{\nu;\lambda}
\to M^{(\nu)}_{\lambda;\mu}\]
is injective.
\item $H_n$ and $\rS_{r,n}$ are cellular algebras.
\qedhere
\end{enumerate}
\end{corollary*}

Now for the $q$-Schur algebra $\rS_{r,n}=\bigoplus_{\lambda,\mu}M_{\lambda;\mu}$
its simple modules are easily classified.
Let $\nu$ be a partition of $n$.
If $\nu$ is of at most length $r$ then
the trace ideal of the Morita context
$(\bigoplus_\lambda S_{\nu;\lambda},\bigoplus_\lambda S^*_{\nu;\lambda})$
in $S_{\nu;\nu}\simeq\kk$ is clearly $\kk$.
Otherwise the Morita context is zero since $\lambda,\mu\not\le\nu$
for all such $\lambda,\mu$.
Hence we obtain the following classification.
\begin{theorem}
When $q\in\kk^\times$, there is a one-to-one correspondence
\[\Irr(\rS_{r,n})\onetoone\{\nu=(\nu_1,\dots,\nu_r)\colon\textup{partition of $n$}\}
\times\Irr(\kk)\]
induced by the Morita context functors.
Here for a pair of $\nu$ and $V\in\Irr(\kk)$, the corresponding simple module
is given by
\[\Image\biggl(\bigoplus_\lambda S_{\nu;\lambda}\otimes V\to
\Hom_\kk\Bigl(\bigoplus_\lambda S^*_{\nu;\lambda},V\Bigr)\biggr).\]
\end{theorem}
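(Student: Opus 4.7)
The plan is to specialize the general classification theorem for standardly filtered algebras (stated at the end of Section~\ref{sec:cellular}, following Proposition~\ref{prop:prestandard_filter_induces_standard}) to $\rS_{r,n}$, using the standard filter given by the theorem immediately preceding this one. By that theorem, $\rS_{r,n}$ is standardly filtered over the family $\{S_{\nu;\nu}\}_\nu$ indexed by compositions $\nu$ of $n$ with at most $r$ nonzero parts, partially ordered by the reversed dominance order (which is well-founded since the index set is finite), with attached Morita context between $\rS_{r,n}/\rS_{r,n}^{>\nu}$ and $S_{\nu;\nu}$ given by the pair $\bigl(\bigoplus_\lambda S_{\nu;\lambda},\,\bigoplus_\lambda S^*_{\nu;\lambda}\bigr)$. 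Invoking the classification theorem directly gives
\[\Irr(\rS_{r,n})\onetoone\bigsqcup_\nu\Irr^{B''_\nu}_{0}(S_{\nu;\nu}),\]
where $B''_\nu\subset S_{\nu;\nu}$ is the image of the Morita trace map $\rho\colon\bigoplus_\lambda S^*_{\nu;\lambda}\otimes_{\rS_{r,n}/\rS_{r,n}^{>\nu}}\bigoplus_\lambda S_{\nu;\lambda}\to S_{\nu;\nu}$.

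Next I will pin down which $\nu$ contribute. By the corollary to Theorem~\ref{thm:cellular_basis}, $S_{\nu;\nu}\simeq\kk$ when $\nu$ is a partition and vanishes otherwise, so only partitions $\nu$ of length $\le r$ can contribute to the disjoint union. For such a $\nu$, I will show that $B''_\nu=S_{\nu;\nu}$. The key observation is that, because $\nu$ itself lies in the index set, the summand $\lambda=\nu$ is available in the direct sum, and the corresponding component of $\rho$ is precisely the multiplication $S^*_{\nu;\nu}\otimes S_{\nu;\nu}\to S_{\nu;\nu}$ of the ring $S_{\nu;\nu}$. From the defining formula $(xm_\nu)\circ_\nu(m_\nu y)=xm_\nu y$ one reads off $m_\nu\circ_\nu m_\nu=m_\nu$, so $m_\nu$ is the multiplicative identity of $S_{\nu;\nu}\simeq\kk$; hence the image of $\rho$ contains the generator $m_\nu$ and therefore equals all of $S_{\nu;\nu}$. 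This yields $\Irr^{B''_\nu}_{0}(S_{\nu;\nu})\simeq\Irr(\kk)$ for each partition $\nu$ with at most $r$ parts, and the empty set otherwise.

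Finally, the explicit description of the simple $\rS_{r,n}$-module attached to a pair $(\nu,V)$ follows from Theorem~\ref{thm:morita_simple_1:1}: it is the Morita context functor value $DV$, namely the image of the canonical map $\bigoplus_\lambda S_{\nu;\lambda}\otimes V\to\Hom_\kk\bigl(\bigoplus_\lambda S^*_{\nu;\lambda},V\bigr)$. Combining the previous two paragraphs then gives the claimed bijection. The only nontrivial step is verifying surjectivity of the trace map, but as explained above this reduces to the elementary ring-theoretic observation that $m_\nu$ is a multiplicative identity in $S_{\nu;\nu}$, so no serious obstacle is expected.
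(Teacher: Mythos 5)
Your proposal is correct and follows essentially the same route as the paper: invoke the standard filter on $\rS_{r,n}$ from the preceding theorem, discard indices with $S_{\nu;\nu}=0$ (non-partitions), and observe that for a partition $\nu$ of length $\le r$ the trace ideal in $S_{\nu;\nu}\simeq\kk$ is everything because the $\lambda=\nu$ component of $\rho$ is ring multiplication with unit $m_\nu$ — which is exactly the detail behind the paper's ``clearly $\kk$.'' The only cosmetic difference is that you restrict the index set to compositions with at most $r$ nonzero parts up front (legitimate, since this is an order ideal for the reversed dominance order that still generates), whereas the paper keeps all compositions and notes that the Morita context vanishes for partitions of length $>r$.
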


\subsection{Identification of the ideals}
Recall the assumption $q\in\kk^\times$.
We then proceed to the classification of simple modules
of the Iwahori--Hecke algebra $H_n\simeq M_{(1^n);(1^n)}$.
For each partition $\lambda$,
let $J_\lambda=S^*_\lambda\cdot S_\lambda$ be the trace ideal
of the Morita context $(S_\lambda,S^*_\lambda)$
in $S_{\lambda;\lambda}\simeq\kk$; here note that
the product $\circ_{(1^n)}$ is just the ordinary multiplication.
Since $S^*_\lambda$ is generated by $m_\lambda$, we have $J_\lambda=m_\lambda\cdot S_\lambda$.
In order to classify simple modules, we have to determine it.

\begin{lemma}\label{lem:ideal_identification}
Let $\lambda=(\lambda_1,\lambda_2,\dots,\lambda_r)$ be a partition.
For such $\lambda$, let
\[f_\lambda\coloneqq[\lambda_1-\lambda_2]![\lambda_2-\lambda_3]!\dots[\lambda_r]!.\]
Then we have inclusions
$\kk f_\lambda^r\subset J_\lambda\subset \kk f_\lambda$.
In particular, $\Irr^{J_\lambda}(\kk)=\Irr^{\kk f_\lambda}(\kk)$.
\end{lemma}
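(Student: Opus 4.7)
The plan is to compute the Morita-context pairing $\rho\colon S^*_\lambda \otimes_{H_n} S_\lambda \to S_{\lambda;\lambda} \simeq \kk$ directly. Since $S^*_\lambda$ is generated as a right $H_n$-module by the class of $m_\lambda$, we have $J_\lambda = \rho(m_\lambda \otimes S_\lambda)$; concretely, for $y$ represented by $T_{d(\sT)} m_\lambda$ with $\sT$ row-standard, $\rho(m_\lambda \otimes y)$ is the coefficient of $m_\lambda$ in $m_\lambda T_{d(\sT)} m_\lambda \in M_{\lambda;\lambda}$ modulo $\cH^{>\lambda}$, read off via the identification $S_{\lambda;\lambda} \simeq \kk m_\lambda$ of Corollary~\ref{cor:hom_on_dominance_order}. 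By Lemma~\ref{lem:semistandard} it suffices to let $\sT$ range over standard tableaux of shape $\lambda$.

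For the upper inclusion $J_\lambda \subset \kk f_\lambda$, the diagonal case $\sT = \sT^\lambda$ (the canonical standard tableau, with $d(\sT^\lambda)=1$) gives $\rho(m_\lambda \otimes m_{\sT^\lambda})$ equal to the coefficient of $m_\lambda$ in $m_\lambda^2 = P_{\fS_\lambda}(q)\,m_\lambda$, namely $[\lambda_1]!\cdots[\lambda_r]!$; iterating the identity $[\lambda_i]! = [\lambda_i - \lambda_{i+1}]!\,\qbinom{\lambda_i}{\lambda_{i+1}}\,[\lambda_{i+1}]!$ exhibits $P_{\fS_\lambda}(q)\in \kk f_\lambda$. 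For a general standard $\sT$, I plan to reduce $m_\lambda T_{d(\sT)} m_\lambda$ via the invariance $m_\lambda T_v = q^{\ell(v)} m_\lambda = T_v m_\lambda$ for $v \in \fS_\lambda$ to a $\kk$-linear combination of $m_\lambda T_w m_\lambda$ with $w$ ranging over double-coset representatives in $\fD_\lambda \cap \fD_\lambda^{-1}$, and then to track the image of each $m_\lambda T_w m_\lambda$ in $S_{\lambda;\lambda}$ by applying Corollary~\ref{cor:scalar_corollary} successively to the pairs of adjacent rows of $\lambda$ on which the reflections comprising $w$ act. Each two-row application produces a scalar of the form $(-1)^i q^{\binom{i}{2}}\qbinom{\lambda_i}{i}$ with a remainder in $\cH^{>\lambda}$, and the combinatorial structure of $w$ forces the product of these contributions across the $r-1$ transitions of the staircase of $\lambda$, together with the bottom-row factor $[\lambda_r]!$, to yield a multiple of $f_\lambda$.

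For the lower inclusion $\kk f_\lambda^r \subset J_\lambda$, it suffices to exhibit one standard tableau $\sT_0$ of shape $\lambda$ with $\rho(m_\lambda \otimes m_{\sT_0}) = u\,f_\lambda$ for some unit $u \in \kk$; this already implies the stronger $\kk f_\lambda \subset J_\lambda$ and a fortiori $\kk f_\lambda^r \subset J_\lambda$. The natural candidate is the column-standard tableau $\sT_0$, obtained by filling the columns of $Y(\lambda)$ from top to bottom with $1,2,\dots,n$ in order; its permutation $d(\sT_0)$ decomposes as a product of "full exchanges" between consecutive rows. The computation of $m_\lambda T_{d(\sT_0)} m_\lambda$ then uses Corollary~\ref{cor:scalar_corollary} in its extremal case $k = \lambda_{i+1}$ at each row transition, and each such application contributes the sign and $q$-power $(-1)^{\lambda_{i+1}} q^{\binom{\lambda_{i+1}}{2}}$ together with the factorial $[\lambda_i - \lambda_{i+1}]!$; taking the product across all $r-1$ row transitions together with the bottom-row contribution produces $\pm q^N f_\lambda$ for some $N$, as required. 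The small cases $\lambda = (3,1)$ and $\lambda = (2,2)$ can be verified directly as sanity checks on the sign and $q$-power bookkeeping.

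The main obstacle is tracking the error terms in the upper-bound reduction: each application of Corollary~\ref{cor:scalar_corollary} leaves a remainder that factors through $M_\mu$ for some $\mu > \lambda$, and iterating may create a cascade of such remainders whose combined image in $S_{\lambda;\lambda}$ must still lie in $\kk f_\lambda$. The cleanest organization is a combinatorial induction on the number of nonzero rows $r$ of $\lambda$, with Lemma~\ref{lem:scalar_lemma} as the two-row base case; at the inductive step, one strips off the bottom row of $\lambda$ and invokes the result for the truncated partition $(\lambda_1,\dotsc,\lambda_{r-1})$ of $n - \lambda_r$, using that the convolution $M_\lambda \simeq M_{(\lambda_1,\dotsc,\lambda_{r-1})} * \unit_{\lambda_r}$ is compatible with the dominance filtration.
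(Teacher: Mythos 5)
Your overall setup (reading $J_\lambda$ off as the ideal generated by the coefficients of $m_\lambda$ in $m_\lambda\cdot m_\sT$ for $\sT$ standard) matches the paper, but both halves of your argument have problems, and the lower-bound half contains a claim that is actually false.

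The fatal issue is the lower bound. You assert that some standard tableau $\sT_0$ satisfies $\rho(m_\lambda\otimes m_{\sT_0})=u f_\lambda$ with $u$ a unit, which would give the strictly stronger conclusion $\kk f_\lambda\subset J_\lambda$. This is false in general: take $\lambda=(3,3)$ over $\kk=\ZZ[q,q^{-1}]$, so $f_\lambda=[3]!=[2][3]$. Running Lemma~\ref{lem:multiply_refinement_tableau}~(2) and Lemma~\ref{lem:scalar_lemma} on each of the five standard tableaux of shape $(3,3)$ gives the values $[3]!^2$ and $-q^j[2]^2[3]$ ($j=0,1,1,2$); in particular your column-standard tableau gives $-q^2[2]^2[3]=-q^2[2]f_\lambda$, and $J_{(3,3)}=\kk[2]^2[3]\subsetneq\kk f_\lambda$. (The reason your checks on $(2,2)$ and $(3,1)$ succeed is the accident $\qbinom{2}{1}=[2]=[2]!$, which already fails for $\qbinom{3}{1}=[3]\neq[3]!$.) So no tableau can witness $\kk f_\lambda\subset J_\lambda$, and the weaker bound $\kk f_\lambda^r\subset J_\lambda$ genuinely requires a carefully chosen witness: the paper uses the row-semistandard staircase tableau $\sR$ with $\#_{ij}(\sR)=\lambda_{i+j-1}-\lambda_{i+j}$, for which Lemma~\ref{lem:multiply_refinement_tableau}~(2) yields the coefficient $[\lambda_1-\lambda_2]!\,[\lambda_2-\lambda_3]!^2\dotsm[\lambda_r]!^r$ and Corollary~\ref{cor:scalar_corollary} reduces $m_\sR$ to a unit multiple of $m_\lambda$.

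The upper bound as you present it is a plan, not a proof, and the mechanism you propose mislocates where $f_\lambda$ comes from. The scalars produced by Corollary~\ref{cor:scalar_corollary}/Lemma~\ref{lem:scalar_lemma} are $q$-binomials $\qbinom{\lambda_i}{i}$, which are \emph{not} in general multiples of $[\lambda_i-\lambda_{i+1}]!$ (e.g.\ $[3]\notin\kk[2]$ over $\ZZ[q]$), so "the product of these contributions yields a multiple of $f_\lambda$" does not follow; and the cascade of remainders in $\cH^{>\lambda}$ that you flag as the main obstacle is never resolved. The paper sidesteps all of this: it strips the \emph{top} row via the refinement $\mu=(\lambda_1,1^{n-\lambda_1})$, observes by Lemma~\ref{lem:multiply_refinement_tableau}~(2) that $m_\mu\cdot m_\sT$ carries a factor $[\nu_1]!$ where $\nu_1$ counts the entries $1,\dots,\lambda_1$ lying in the first row of $\sT$, notes that the term dies in $S_{\lambda;\mu}$ unless $\nu_1\ge\lambda_1-\lambda_2$ (so that $[\lambda_1-\lambda_2]!$ divides $[\nu_1]!$), and then inducts on $\lambda'=(\lambda_2,\dots,\lambda_r)$ using $S_{\lambda;\mu}=\unit_{\lambda_1}*S_{\lambda'}$. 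If you want to salvage your write-up, adopt this divisibility argument for the upper bound and the staircase tableau for the lower bound.
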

\begin{proof}
First we prove $J_\lambda\subset\kk f_\lambda$.
So it suffices to prove that for an arbitrary $\sT\in\Tab_\lambda$
we have $m_\lambda\cdot m_\sT\in\kk f_\lambda m_\lambda$ as an element of $S_{\lambda;\lambda}$.
Note that taking a refinement $\mu\coloneqq(\lambda_1,1^{n-\lambda_1})$ of $\lambda$
we can decompose $m_\lambda\in M^*_\lambda$ as
$m_\lambda\circ_\mu m_\mu$.
So let $\sS\coloneqq\sT|_\mu$. Explicitly,
$\sS$ is a row-semistandard tableau of shape $\lambda$
of weight $\mu$ defined by
\[\sS(i,j)\coloneqq\begin{cases*}
1 & if $1\le\sT(i,j)\le\lambda_1$,\\
\sT(i,j)-\lambda_1+1 & otherwise.
\end{cases*}\]
Let $\nu\coloneqq\sS[1]$ be the composition of $\lambda_1$
where $\nu_i$ is the number of entries $1,2,\dots,\lambda_1$ in the $i$-th row of $\sT$.
Then by Lemma~\ref{lem:multiply_refinement_tableau}~(2) we obtain that
\[m_\mu\cdot m_\sT=q^\ell[\nu_1]![\nu_2]!\dotsm[\nu_r]!\,m_\sS\]
for some $\ell\in\NN$.
In particular, the coefficient can be divided by $[\nu_1]!$.
Let $\lambda\setminus\nu$ be the composition of $n-\lambda_1$
defined by $(\lambda\setminus\nu)_i\coloneqq\lambda_i-\nu_i$.
Since $m_\sS$ factors through $M_{(\lambda_1,\lambda\setminus\nu)}$ as before,
if $\nu_1<\lambda_1-\lambda_2$ then $\lambda\not\ge(\lambda_1,\lambda\setminus\nu)$,
which implies $m_\sS\equiv0$ in $S_{\lambda;\mu}$.
Thus the statement trivially holds in this case.
Otherwise $[\nu_1]!$ can be divided by $[\lambda_1-\lambda_2]!$.
By induction, for $\lambda'=(\lambda_2,\dots,\lambda_r)$
we may assume that $m_{\lambda'}\cdot S_{\lambda'}\subset\kk f_{\lambda'} m_{\lambda'}$.
Note that $S_{\lambda;\mu}=\unit_{\lambda_1}*S_{\lambda'}$. Therefore
\[m_\lambda\cdot m_\sT\in[\lambda_1-\lambda_2]!\,m_\lambda\circ_\mu S_{\lambda;\mu}
\subset[\lambda_1-\lambda_2]!(\unit_{\lambda_1}*\kk f_{\lambda'}m_{\lambda'})=\kk f_\lambda m_\lambda.\]

Next we prove the other inclusion $\kk f_\lambda^r\subset J_\lambda$.
Let $\sR\in\Tab_{\lambda;\lambda}$ be the row-semistandard tableau
determined by $\#_{ij}(\sR)=\lambda_{i+j-1}-\lambda_{i+j}$.
For example, when $\lambda=(6,4,1)$,
\[\sR=\young(112223,1112,1).\]
Then by taking its underlying row-standard tableau $\sR_\downarrow\in\Tab_\lambda$,
by Lemma~\ref{lem:multiply_refinement_tableau}~(2) again we obtain
\[m_\lambda\cdot m_{\sR_\downarrow}=[\lambda_1-\lambda_2]![\lambda_2-\lambda_3]!^2\dotsm
[\lambda_r]!^rm_\sR.\]
On the other hand, by using Corollary~\ref{cor:scalar_corollary} repeatedly,
we also obtain that $m_\sR\in\kk^\times m_\lambda$ in $S_{\lambda;\lambda}$.
For example,
\[\young(112223,1112,1)\equiv-\young(112223,1111,2)
\equiv-q^6\young(111111,2223,2)\equiv q^6\young(111111,2222,3).\]
This implies $J_\lambda m_\lambda\supset\kk m_\lambda\cdot m_{\sR_\downarrow}\supset\kk f_\lambda^r m_\lambda$
as desired.
\end{proof}

This completes the classification we noted in the introduction.
\begin{theorem*}
When $q\in\kk^\times$, there is a one-to-one correspondence
\[\Irr(H_n)\onetoone\bigsqcup_{\lambda\colon\textup{partition of $n$}}\Irr^{\kk f_\lambda}(\kk)\]
induced by the Morita context functors.
For a partition $\lambda$ and $V\in\Irr^{\kk f_\lambda}(\kk)$,
 the corresponding simple module is
\[\Image(S_\lambda\otimes V\to\Hom_\kk(S^*_\lambda,V)).
\qedhere\]
\end{theorem*}

Finally let us consider the case that $\kk$ is a field.
Let $e\in\NN\cup\{\infty\}$ be the \term{$q$-characteristic} of $\kk$,
namely $e\coloneqq\min\set{k}{[k]=0}$ (the case $e=\infty$
is usually written as $e=0$, but we use this definition for simplicity). 
A partition $\lambda$ is called $e$-restricted
if $\lambda_i-\lambda_{i+1}<e$ holds for every $i$.
Then clearly we have that $\lambda$ is $e$-restricted if and only if
$f_\lambda=0$.
Thus as a corollary of the theorem we obtain the well-known classification.
\begin{corollary}
If $\kk$ is a field whose $q$-characteristic is $e$
(we still assume that $q\in\kk^\times$),
there is a one-to-one correspondence
\[\Irr(H_n)\onetoone\{\textup{$e$-restricted partition of $n$}\}.\]
\end{corollary}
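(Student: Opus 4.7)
The plan is to deduce the corollary as a direct specialization of the immediately preceding theorem, which states
\[\Irr(H_n)\onetoone\bigsqcup_{\lambda\colon\textup{partition of }n}\Irr^{\kk f_\lambda}(\kk),\]
with $f_\lambda=[\lambda_1-\lambda_2]![\lambda_2-\lambda_3]!\dotsm[\lambda_r]!$. The task is therefore to identify, for each partition $\lambda$, when the fiber $\Irr^{\kk f_\lambda}(\kk)$ is non-empty, and to show that in that case it consists of exactly one element.

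First I would observe that since $\kk$ is a field, $\Irr(\kk)$ has a single element, namely $\kk$ itself viewed as the trivial $\kk$-module. By the definition of $\Irr^{\kk f_\lambda}(\kk)$ as those simple modules $V$ with $\kk f_\lambda\cdot V=V$, the fiber depends only on whether the principal ideal $\kk f_\lambda\subset\kk$ is zero or not: if $f_\lambda\neq0$ then $\kk f_\lambda=\kk$ (since $\kk$ is a field), so $\Irr^{\kk f_\lambda}(\kk)=\{\kk\}$, while if $f_\lambda=0$ the action is zero and $\Irr^{\kk f_\lambda}(\kk)=\varnothing$. Consequently the theorem reduces to a bijection
\[\Irr(H_n)\onetoone\set{\lambda\colon\textup{partition of }n}{f_\lambda\neq0}.\]

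Second I would translate the condition $f_\lambda\neq0$ into the combinatorial restriction on $\lambda$. Because $f_\lambda$ is a product of $q$-factorials, $f_\lambda\neq0$ if and only if $[\lambda_i-\lambda_{i+1}]!\neq0$ for every $i$. Now a $q$-factorial $[k]!=[1][2]\cdots[k]$ vanishes precisely when some $[j]$ with $1\le j\le k$ vanishes, i.e.\ when $k\ge e$ in the $q$-characteristic terminology; equivalently $[k]!\neq0$ iff $k<e$. Hence $f_\lambda\neq0$ iff $\lambda_i-\lambda_{i+1}<e$ for all $i$, which is exactly the definition of $\lambda$ being $e$-restricted.

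Putting the two steps together yields the desired bijection $\Irr(H_n)\onetoone\{\textup{$e$-restricted partitions of }n\}$. There is no real obstacle here: the only sanity check is the computation $[k]!\neq0\iff k<e$, which is immediate from the definition $e=\min\set{k}{[k]=0}$ together with $e$ being the minimal such value.
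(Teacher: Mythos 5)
Your argument is correct and is essentially the paper's own: specialize the preceding theorem, note that over a field $\Irr^{\kk f_\lambda}(\kk)$ is a singleton precisely when $f_\lambda\neq0$, and observe that $f_\lambda\neq0$ is equivalent to $\lambda_i-\lambda_{i+1}<e$ for all $i$. The only difference is that you spell out the (correct) equivalence ``$\lambda$ is $e$-restricted $\iff f_\lambda\neq0$,'' whereas the paper's text asserts it with ``$f_\lambda=0$,'' which is evidently a sign slip there, not a gap in your reasoning.
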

The right-hand side set is actually the crystal $B(\Lambda_0)$ of type $\mathsf{A}^{(1)}_{e-1}$
under the description of Misra and Miwa~\cite{MisraMiwa90}.

\section{Cellular structure on the Hecke--Clifford superalgebra, I}\label{sec:cellular_hecke_clifford}

We are now ready to introduce the main topic of this paper,
the Hecke--Clifford superalgebra.
In this section we introduce analogues of
the Murphy basis, the $q$-Schur algebra and the Specht modules
for this superalgebra and develop the cellular
representation theory parallel to the Iwahori--Hecke algebra.

\subsection{The Clifford superalgebra}
First we define the most basic superalgebra,
the Clifford superalgebra.

\begin{definition}
Let $n\in\NN$ and take $a_1,a_2,\dots,a_n\in\kk$.
The \term{Clifford superalgebra} (or the \term{Clifford--Grassman superalgebra})
$C_n(a_1,a_2,\dots,a_n)$
is generated by the odd elements $c_1,c_2,\dots,c_n$ with relations
\[c_i^2 = a_i,\qquad
c_i c_j = -c_j c_i \quad\text{for } i\neq j.\]
\end{definition}
We have a canonical isomorphism of superalgebras
\[C_n(a_1,a_2,\dots,a_n)\simeq C_1(a_1)\otimes C_1(a_2)\otimes\dots\otimes C_1(a_n)\]
(note that by the help of Koszul sign $c_i$ and $c_j$ for $i\neq j$ (anti-)commutes).
Clearly $C_1(a)=\kk\oplus\kk c_1$ so
$\{c_1^{p_1}c_2^{p_2}\dotsm c_n^{p_n}\mid p_k\in\{0,1\}\}$
is a basis of $C_n(a_1,a_2,\dots,a_n)$.

\begin{remark}
More generally,
for a free $\kk$-module $V$ equipped with a quadratic form
$Q\colon V\to\kk$,
we have the corresponding Clifford superalgebra $C_Q$
generated by $V$ with the relation $v^2=Q(v)$.
When $\kk$ is a field whose characteristic is different from $2$,
we can always take an orthogonal basis with respect to $Q$, so that
$C_Q$ is isomorphic to the above form.
\end{remark}

The classification of simple modules of $C_n(a_1,a_2,\dots,a_n)$ is well-known
for special cases (see \cite[\S12]{Kleshchev05}).
We here state a more general result.

\begin{proposition}
Suppose $\kk$ is a field.
Then $C_n(a_1,a_2,\dots,a_n)$ has a unique maximal 2-sided ideal.
In particular, it has a unique simple module
up to isomorphism and parity change $\Pi$.
\end{proposition}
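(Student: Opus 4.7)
The plan is to identify a unique maximal graded 2-sided ideal of $C_n$ by a reduction followed by case analysis on $\operatorname{char}(\kk)$.

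First I would isolate the obviously nilpotent part. Let $J \coloneqq \{i : a_i = 0\}$ and let $N \subset C_n$ be the graded 2-sided ideal generated by $\{c_i\}_{i \in J}$. A direct computation using the anticommutation relations and $c_i^2 = 0$ for $i \in J$ shows that $N$ is spanned by basis monomials $c_S$ with $S \cap J \neq \varnothing$ and satisfies $N^{|J|+1} = 0$; moreover $C_n / N$ is canonically the Clifford superalgebra on the remaining generators $(c_i)_{i \notin J}$, for which every parameter is nonzero. A Nakayama-type lifting argument then forces every maximal graded 2-sided ideal $I$ to contain $N$: otherwise $I + N = C_n$ by maximality, so $1 = x + y$ for some even $x \in I$ and $y \in N$, and nilpotence of $y$ makes $x = 1 - y$ a unit in $I$. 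Hence maximal graded 2-sided ideals of $C_n$ biject with those of $C_n / N$, reducing the problem to the case where every $a_i$ is nonzero.

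Assume henceforth every $a_i \neq 0$, so every basis monomial $c_S$ is invertible. When $\operatorname{char}(\kk) \neq 2$, I would show $C_n$ is graded simple: given a nonzero graded 2-sided ideal $I$ and a homogeneous $x \in I$ with some $x_{S_0} \neq 0$, the element $y \coloneqq c_{S_0}^{-1} x \in I$ is even and has nonzero constant term. Conjugation $\operatorname{Ad}(c_j)$ scales each $c_T$ by $(-1)^{|T \setminus \{j\}|}$, so on even elements it fixes monomials avoiding $c_j$ and negates those containing it; the projection $(y + \operatorname{Ad}(c_j)(y))/2$ therefore kills all terms involving $c_j$, and iterating for $j = 1, \dots, n$ reduces $y$ to $y_\varnothing \in I \cap \kk^\times$, forcing $I = C_n$.

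The main obstacle is characteristic $2$: there halving is unavailable and the Clifford superalgebra becomes outright commutative, so the conjugation averaging collapses. My resolution is to exploit this commutativity directly. As a commutative $\kk$-algebra $C_n = \kk[c_1,\dots,c_n]/(c_i^2 - a_i)$, and in characteristic $2$ each equation $c_i^2 = a_i$ has the unique root $\sqrt{a_i}$ over $\bar\kk$, so $\operatorname{Spec}(C_n \otimes_\kk \bar\kk)$ is a single (fat) point; by faithful flatness $\operatorname{Spec}(C_n)$ is also a single point, and $C_n$ being Artinian it is an ungraded local ring with a unique ungraded maximal ideal $\mathfrak{m}$. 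The graded part $\mathfrak{m}^{\mathrm{gr}} \coloneqq (\mathfrak{m} \cap (C_n)_0) \oplus (\mathfrak{m} \cap (C_n)_1)$ is readily checked to be a 2-sided ideal, and any proper graded 2-sided ideal sits inside $\mathfrak{m}$ by uniqueness, hence inside $\mathfrak{m}^{\mathrm{gr}}$; this exhibits $\mathfrak{m}^{\mathrm{gr}}$ as the unique maximal graded 2-sided ideal. The ``in particular'' claim is then formal: for a finite-dimensional superalgebra, uniqueness of the maximal graded 2-sided ideal makes the quotient graded simple, and a graded simple superalgebra has a unique graded simple module up to isomorphism and parity change.
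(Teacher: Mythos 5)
Your proof is correct, but it takes a genuinely different route from the paper's. The paper passes to the algebraic closure $\bar\kk$ first, rescales the generators to reduce to $C_n(1,\dots,1,0,\dots,0)$, and then pairs up generators: in characteristic $\neq2$ each $C_2(1,1)$ is identified with $\mathrm{Mat}_{1|1}(\kk)$ via Pauli matrices, and in characteristic $2$ with $C_2(1,0)$, yielding the explicit structure $A\otimes\mathrm{Mat}_{p|p}(\kk)\otimes C_q(0,\dots,0)$ with $A=\kk$ or $C_1(1)$; the result then descends to $\kk$ because a proper ideal base-changes into the (nilpotent) radical. You instead peel off the nilpotent ideal $N$ generated by the isotropic generators at the outset, and then argue directly over $\kk$: in characteristic $\neq2$ the averaging over the conjugations $c_j(\cdot)c_j^{-1}$ projects an even element of a graded ideal onto its scalar part, proving graded simplicity with no base change and no square roots; in characteristic $2$ you exploit commutativity and locality. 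Both arguments are sound (I checked the sign computation $c_jc_Tc_j^{-1}=(-1)^{|T\setminus\{j\}|}c_T$, the nilpotence $N^{|J|+1}=0$, and the invertibility of $c_{S_0}$ when all remaining $a_i\neq0$). The trade-off: the paper's decomposition is less elementary but yields concrete structural information — it exhibits the simple quotient as Morita equivalent to $\kk$ or $C_1(1)$, hence identifies the simple module and its type — whereas your argument establishes only the uniqueness of the maximal graded ideal, which is all the proposition asserts. Your closing step (unique maximal graded two-sided ideal $\Rightarrow$ unique simple module up to $\Pi$) is compressed — one should note that the annihilator of any simple module is a proper graded ideal, so the radical of $C_n$ equals this maximal ideal and the semisimple quotient is graded simple — but the paper treats this implication as equally immediate, so no complaint there.
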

\begin{proof}
First we prove the case that $\kk$ is algebraically closed.
By replacing $c_i$ with $c_i/\!\sqrt{a_i}$ for $a_i\neq0$
and permuting the generators, we may assume that it is
in the form $C_n(1,\dots,1,0,\dots,0)$.
If the characteristic of $\kk$ is 2,
$C_2(1,1)$ is isomorphic to $C_2(1,0)$
since $c_1+c_2$ (anti-)commutes with $c_1$ and its square is zero.
Otherwise $C_2(1,1)$ is isomorphic to the matrix algebra
$\mathrm{Mat}_{1|1}(\kk)\coloneqq\End_\kk(\kk\oplus\Pi\kk)$
via the isomorphism using the Pauli matrices below:
\begin{align*}
1&\mapsto \begin{pmatrix}1&0\\0&1\end{pmatrix},&
c_2&\mapsto \begin{pmatrix}0&-\sqrt{-1}\\\sqrt{-1}&0\end{pmatrix},\\
c_1&\mapsto \begin{pmatrix}0&1\\1&0\end{pmatrix},&
c_1c_2&\mapsto \begin{pmatrix}\sqrt{-1}&0\\0&-\sqrt{-1}\end{pmatrix}.
\end{align*}
So in both cases, $C_n(a_1,a_2,\dots,a_n)$ is isomorphic to
$A\otimes\mathrm{Mat}_{p|p}(\kk)\otimes C_q(0,\dots,0)$
for some $p,q\in\NN$ and $A=\kk$ or $C_1(1)$.
Then central idempotent elements $c_1,\dots,c_q\in C_q(0,\dots,0)$
are contained in its Jacobson radical,
and the quotient superalgebra $A\otimes\mathrm{Mat}_{p|p}(\kk)$
with respect to these elements is Morita equivalent to $A$,
which is clearly simple (note that $(1\pm c_1)C_1(1)$ is not
considered as an ideal because it is not homogeneous).

Now let $\kk$ be an arbitrary field and
$\bar\kk$ its algebraic closure.
Let us write $C_n=C_n(a_1,a_2,\dots,a_n)$ for short.
Take a proper 2-sided ideal $I\subsetneq C_n$.
Then $I\otimes\bar\kk\subsetneq C_n\otimes\bar\kk$
is also a proper 2-sided ideal so contained in the
Jacobson radical of $C_n\otimes\bar\kk$ by the previous case.
Since the Jacobson radical is nilpotent, so is $I$.
Hence $I$ is contained in the Jacobson radical of $C_n$.
\end{proof}

\subsection{The Hecke--Clifford superalgebra}
Henceforth we fix elements $a,q\in\kk$.
Let us write $C_n=C_n(a)\coloneqq C_n(a,\dots,a)$ for short.

\begin{definition}
The \term{Hecke-Clifford superalgebra} $H^c_n=H^c_n(a;q)$ is
generated by $C_n(a)$ and $H_n(q)$ with relations
\[T_i c_j = c_j T_i\quad\text{for } j\neq i,i+1,\qquad
T_i c_i = c_{i+1} T_i,\qquad
(T_i-q+1) c_{i+1} = c_i (T_i-q+1).\]
Note that if $q\in\kk^\times$, the second relation
implies the third.
\end{definition}

Here in order to make it compatible with the notions in the previous part
we slightly modified the original definition by Olshanski~\cite{Olshanski92}
so that our $q$ is Olshanski's $q^2$.
When $q=1$, $H^c_n$ is isomorphic to the wreath product of the Clifford superalgebra
\[W_n(a)\coloneqq C_1(a)\wr\fS_n=C_n(a)\rtimes\fS_n\]
which is called the \term{Sergeev superalgebra}.
In this case there is a natural anti-homomorphism
$*\colon W_n(a)^\op\to W_n(-a)$ between superalgebras
defined by $s_i^*\coloneqq s_i$ and $c_i^*\coloneqq c_i$
(note that due to the Koszul sign we have $(c_i^\op)^2=-(c_i^2)^\op=-a$),
but unfortunately this involution does not have its $q$-analogue.

The next basis theorem is well-known, but we make its proof by ourself
since we modified the definition.

\begin{proposition}
The multiplication maps $C_n\otimes H_n\to H^c_n$
and $H_n\otimes C_n\to H^c_n$
are isomorphisms of supermodules.
\end{proposition}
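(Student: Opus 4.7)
The plan is to show that the set $B = \set{c_1^{p_1}c_2^{p_2}\dotsm c_n^{p_n}T_w}{p\in\{0,1\}^n,\ w\in\fS_n}$ is a $\kk$-basis of $H^c_n$. This immediately gives that the first multiplication map $C_n\otimes H_n\to H^c_n$ is a supermodule isomorphism, since it sends the basis $\{c^p\otimes T_w\}$ of $C_n\otimes H_n$ bijectively onto $B$. The second multiplication map $H_n\otimes C_n\to H^c_n$ will follow by an entirely symmetric argument applied to the reversed-order monomial set $\{T_w c^p\}$, using the analogous relations obtained by swapping the roles of $c_i$ and $c_{i+1}$ in the derivation below.

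First, I would show $B$ spans $H^c_n$. From the defining relation $(T_i-q+1)c_{i+1}=c_i(T_i-q+1)$ one derives
\[ T_ic_{i+1}=c_iT_i+(q-1)(c_{i+1}-c_i), \]
which together with $T_ic_i=c_{i+1}T_i$ and $T_ic_j=c_jT_i$ for $j\notin\{i,i+1\}$ allows one to commute any $T_i$ past any $c_j$, producing either a monomial with a permuted $c$-part and the same number of $T$-factors, or a correction term with strictly fewer $T$-factors. An induction on the $T$-degree of a monomial in the generators, combined with the existing bases $\{c^p\}$ of $C_n$ and $\{T_w\}$ of $H_n$, shows that every element of $H^c_n$ is a $\kk$-linear combination of elements of $B$.

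For linear independence I would construct an $H^c_n$-module structure on the free supermodule $V\coloneqq C_n\otimes H_n$, viewed as a $\kk$-module of the expected rank $2^n\cdot n!$. Let $C_n$ act on $V$ by left multiplication on the first tensor factor. Define the $T_i$-action on a basis vector $c^p\otimes T_w$ by executing the commutation relations above symbolically to write $T_ic^p=\sum_j x_j T_i+\sum_k y_k$ for some $x_j,y_k\in C_n$ (using only the three relations for $T_i c_j$), and then setting
\[ T_i\cdot(c^p\otimes T_w)\coloneqq\sum_j x_j\otimes T_iT_w+\sum_k y_k\otimes T_w. \]
Granted this is well-defined and satisfies all Hecke--Clifford relations, the linear map $H^c_n\to V$ given by $x\mapsto x\cdot(1\otimes 1)$ sends each $c^pT_w\in B$ to $c^p\otimes T_w$, and these are linearly independent in $V$. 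The main obstacle is the verification of the braid relation $T_iT_{i+1}T_i=T_{i+1}T_iT_{i+1}$ and the quadratic relation $(T_i-q)(T_i+1)=0$ on the constructed action, since the correction terms from the $q$-twisted relation mix the two tensor factors; one must carefully track Koszul signs and reduce the verification to the braid and quadratic relations already known in $H_n$ together with the anticommutation relations in $C_n$. The remaining Hecke--Clifford relations (those involving only $C_n$, only $H_n$, or $T_ic_j$ for $j\notin\{i,i+1\}$) are diagonal in this construction and follow immediately.
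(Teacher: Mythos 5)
Your proof is correct and follows essentially the same route as the paper: the paper also establishes the isomorphism by constructing an $H^c_n$-action on $C_n\otimes H_n$, given explicitly by $T_i(x\otimes y)=s_i(x)\otimes T_iy+(q-1)t_i(x)\otimes y$ with $c_i(x\otimes y)=c_ix\otimes y$, which is precisely what your symbolic execution of the commutation relations produces, and it likewise defers the relation-checking as routine. The only minor divergence is at the end: rather than rerunning a symmetric argument, the paper deduces the second isomorphism directly from the first, since the commutation relations show the transition between $\{c^pT_w\}$ and $\{T_wc^p\}$ is unitriangular in the length filtration, so the latter is also a basis.
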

\begin{proof}
We prove the first isomorphism.
By the defining relations this map is surjective.
In order to show that it is also injective,
we construct an action of $H^c_n$ on $C_n\otimes H_n$ by
\begin{align*}
T_i(x\otimes y)&\coloneqq s_i(x)\otimes T_i y + (q-1)t_i(x)\otimes y\\
c_i(x\otimes y)&\coloneqq c_i x\otimes y
\end{align*}
for $x\in C_n$ and $y\in H_n$.
Here $s_i$ is the automorphism of superalgebra $C_n$
which exchanges $c_i$ and $c_{i+1}$,
and $t_i$ is the $\kk$-linear map $C_n\to C_n$ defined by
\begin{align*}
t_i(1) &\coloneqq 0,&
t_i(c_{i+1}) &\coloneqq -c_i+c_{i+1},\\
t_i(c_i) &\coloneqq 0,&
t_i(c_i c_{i+1}) &\coloneqq a+c_i c_{i+1}
\end{align*}
and $t_i(z c_j)=t_i(z)c_j$ for $j\neq i,i+1$.
It is a routine work to verify that the action is well-defined.
This action satisfies $xy\cdot(1\otimes 1)=x\otimes y$
for $x\in C_n$ and $y\in H_n$ so
it defines the inverse map $H^c_n\to C_n\otimes H_n$.

Now we have $H^c_n\simeq C_n\otimes H_n$ so that
$H^c_n$ is a free supermodule over $\kk$ of rank $2^n n!$
with a basis $\{c_1^{p_1}\dotsm c_n^{p_n}T_w\}$.
By the commutation relation $\{T_wc_1^{p_1}\dotsm c_n^{p_n}\}$
also forms a basis of $H^c_n$.
This implies the second isomorphism.
\end{proof}

In particular, $C_n$ and $H_n$ can be identified with subsuperalgebras of $H^c_n$.
For each left $H_n$-module $V$,
$C_n\otimes V\simeq H^c_n\otimes_{H_n}V$ is naturally a left $H^c_n$-module.

\begin{remark}\label{rem:super_reduction}
By the commutation relation, for $n\ge2$, $I_n\coloneqq\sum_{1\le i<j\le n}(c_i-c_j)H^c_n$
is a 2-sided ideal of $H^c_n$ whose quotient superalgebra is
\[H^c_n/I_n\simeq C_1\otimes H_n\otimes(\kk/2a\kk).\]
Now suppose that $2a=0$.
Since $(c_i-c_j)^2=2a=0$ and $(c_i-c_j)(c_i-c_k)=-(c_j-c_k)(c_i-c_j)$
for mutual different $i$, $j$ and $k$,
the ideal $I_n$ is nilpotent. Thus
it is contained in the Jacobson ideal of $H^c_n$, so that
\begin{multline*}
\Irr(H^c_n)=\Irr(H^c_n/I_n)
\simeq\Irr(C_1\otimes H_n)
=\set{V,\Pi V}{V\in\Irr(H_n),aV=0}\\\sqcup\set{V\oplus c_1V}{V\in\Irr(H_n),aV=V}.
\end{multline*}
Hence the classification of simple module of $H^c_n$
is reduced to that of $H_n$.
\end{remark}

The next computation is a key of our theory.
Recall that $m_n=\sum_{w\in\fS_n}T_w\in H_n$.
\begin{lemma}\label{lem:gamma}
Let $\gamma_n^L\coloneqq c_1+q c_2+\dots+q^{n-1}c_n$
and $\gamma_n^R\coloneqq q^{n-1}c_1+q^{n-2} c_2+\dots+c_n$.
Then for $1\le i_1<i_2<\dots<i_r\le n$,
\[m_n c_{i_1}c_{i_2}\dotsm c_{i_r}m_n=\begin{cases*}
\bigl(\frac{a(q-1)}{[2]}\bigr)^s[n]!\,m_n & if $r=2s$,\\
\bigl(\frac{a(q-1)}{[2]}\bigr)^s[n-1]!\,\gamma_n^L m_n & if $r=2s+1$
\end{cases*}\]
(note that even $q$-integers $[2],[4],[6],\dots$ can be divided by $[2]$, so that the coefficients are
polynomials in $\ZZ[a,q]$).
Moreover we have $\gamma_n^L m_n=m_n\gamma_n^R$. 
\end{lemma}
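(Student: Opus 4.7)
The plan is to establish the identity $\gamma_n^L m_n = m_n \gamma_n^R$ and the scalar formula $m_n c_i m_n = [n-1]!\,\gamma_n^L m_n$ (independent of $i$) via parabolic decomposition, and then promote these to the general formula by induction on $r$.

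First I derive from the defining relations the commutation identities
\[
c_{i+1} T_i = T_i c_i, \qquad c_i T_i = T_i c_{i+1} + (q-1)(c_i - c_{i+1}),
\]
which together give $(T_i + 1)(c_i - c_{i+1}) = -(c_i - c_{i+1})(T_i - q)$. Writing $m_n = \sum_u T_u(1 + T_i)$, where $u$ ranges over minimum length representatives of the right cosets of $\langle s_i \rangle$ in $\fS_n$, and using $(T_i - q) m_n = 0$, we get
\[
m_n(c_i - c_{i+1}) m_n = -\sum_u T_u (c_i - c_{i+1})(T_i - q) m_n = 0.
\]
Hence $m_n c_i m_n$ is independent of $i$.

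Next I use the parabolic decomposition $m_n = \bigl(\sum_{k=1}^n T_{u_k}\bigr) m_{n-1}^{(1)}$, where $u_k = s_{k-1}\cdots s_1$ sends $1 \mapsto k$, and $m_{n-1}^{(1)}$ is the symmetrizer for the parabolic subgroup fixing $1$. Repeatedly applying $T_j c_j = c_{j+1} T_j$ gives the clean identity $T_{u_k} c_1 = c_k T_{u_k}$. Since $c_1$ commutes with $m_{n-1}^{(1)}$ and $m_{n-1}^{(1)} m_n = [n-1]!\,m_n$, we compute
\[
m_n c_1 m_n = \sum_k c_k T_{u_k} m_{n-1}^{(1)} m_n = [n-1]! \sum_k q^{k-1} c_k m_n = [n-1]!\,\gamma_n^L m_n.
\]
The mirror decomposition (using the symmetrizer on $\{1,\dots,n-1\}$, which commutes with $c_n$) gives $m_n c_n m_n = [n-1]!\,m_n \gamma_n^R$. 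Combined with the independence of $m_n c_i m_n$ on $i$, this yields $[n-1]!\,\gamma_n^L m_n = [n-1]!\,m_n \gamma_n^R$; since $H^c_n$ is free over the universal coefficient ring $\ZZ[a,q]$, we may cancel the non-zero-divisor $[n-1]!$ to conclude $\gamma_n^L m_n = m_n \gamma_n^R$, and then specialize to arbitrary $\kk$ by base change.

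Finally, the general formula for $r \ge 2$ follows by induction on $r$. Expanding one of the $m_n$'s by the parabolic decomposition and pulling the leftmost $c_{i_1}$ through the $T_{u_k}$'s (incurring Clifford sign changes and $(q-1)$-corrections), the product reduces to the $r-2$ case by contracting an adjacent pair of $c$'s via $c_i c_j + c_j c_i = 2a\delta_{ij}$, yielding the factor $a$ from the square $c_i^2$. The main obstacle will be the bookkeeping: showing that the $(q-1)$-corrections combine with the combinatorial factors coming from the coset sums into exactly $a(q-1)/[2]$ per pair of $c$'s. To avoid division issues I will work in the universal ring $\ZZ[a,q]$, where $[n]!$ is never a zero-divisor, and then descend to general $\kk$, just as for the identity $\gamma_n^L m_n = m_n \gamma_n^R$.
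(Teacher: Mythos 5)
Your treatment of the cases $r\le 1$ and of the identity $\gamma_n^L m_n=m_n\gamma_n^R$ is correct and essentially the paper's argument: the parabolic decomposition $m_n=(\sum_k T_{u_k})m_{(1,n-1)}$ with $T_{u_k}c_1=c_kT_{u_k}$ is exactly how the paper computes $m_nc_1m_n=[n-1]!\,\gamma_n^Lm_n$ and (mirrored) $m_nc_nm_n=[n-1]!\,m_n\gamma_n^R$, and the cancellation of $[n-1]!$ over $\ZZ[a,q]$ matches the paper's reduction to $\kk=\QQ(a,q)$. Your derivation of the $i$-independence of $m_nc_im_n$ from $(T_i+1)(c_i-c_{i+1})=-(c_i-c_{i+1})(T_i-q)$ is a pleasant variant of the paper's shift $m_n\dotsm c_{i_j}\dotsm m_n=q^{-1}m_n\dotsm c_{i_j}\dotsm T_{i_j-1}m_n=m_n\dotsm c_{i_j-1}\dotsm m_n$ (which you would still need in the multi-$c$ form to normalize general indices $i_1<\dots<i_r$ to $1,2,\dots,r$; your one-$c$ identity does not yet cover that).

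The genuine gap is the induction step for $r\ge 2$, which is where the characteristic constant $\frac{a(q-1)}{[2]}$ actually arises: you defer the entire computation as ``bookkeeping'' and propose a route (re-expanding $m_n$ into cosets and dragging $c_{i_1}$ through all the $T_{u_k}$'s) that is not shown to close up and would be far messier than necessary, since the transported $c_k$ only collides with another Clifford generator for special $k$ and the $(q-1)$-corrections proliferate. The paper's step is a short self-contained trick that your sketch misses: write $qm_nc_1c_2c_3\dotsm m_n=m_nc_1c_2c_3\dotsm T_1m_n$ using $T_1m_n=qm_n$, commute $T_1$ leftwards past $c_3,\dots$ and use $c_2T_1=T_1c_1$ to get $m_nc_1T_1c_1c_3\dotsm m_n$, then apply $c_1T_1=T_1c_2+(q-1)(c_1-c_2)$ and contract $c_1^2=a$, $c_2c_1=-c_1c_2$ to obtain $qX=a(q-1)Y-X$ with $X=m_nc_1c_2c_3\dotsm m_n$ and $Y=m_nc_3\dotsm m_n$; hence $[2]X=a(q-1)Y$ and one divides by $[2]$ over $\QQ(a,q)$. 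Without this (or an equivalent explicit computation producing both the $a(q-1)$ term and the extra $-X$ that turns $q$ into $[2]$), the main assertion of the lemma is unproved.
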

\begin{proof}
Since $H^c_n$ is free over $\kk$,
it suffices to prove for the field of rational functions
$\kk=\QQ(a,q)$ in variables $a$ and $q$,
which contains the universal ring $\ZZ[a,q]$.
If $i_{j-1}<i_j-1$ holds for some $j$, we have
\begin{align*}
m_n\dotsm c_{i_j}\dotsm m_n
&=q^{-1}m_n\dotsm c_{i_j}\dotsm T_{i_j-1}m_n\\
&=q^{-1}m_n T_{i_j-1}\dotsm c_{i_j-1}\dotsm m_n\\
&=m_n\dotsm c_{i_j-1}\dotsm m_n.
\end{align*}
Hence we may assume $i_j=j$.
Then for $r=0$ or $1$, we have $m_n^2=[n]!\,m_n$ and
\[m_n c_1 m_n=(c_1+c_2T_1+\dots+c_nT_{n-1}\dotsm T_2T_1) m'_{n-1} m_n
=[n-1]!\,\gamma_n^L m_n\]
where $m'_{n-1}=m_{(1,n-1)}$. Moreover we have
\begin{align*}
qm_nc_1c_2c_3\dotsm m_n&=m_nc_1c_2c_3\dotsm T_1m_n\\
&=m_nc_1T_1c_1c_3\dotsm m_n\\
&=m_n(T_1c_2+(q-1)(c_1-c_2))c_1c_3\dotsm m_n\\
&=a(q-1)m_nc_3\dotsm m_n-m_nc_1c_2c_3\dotsm m_n
\end{align*}
so that
\[m_nc_1c_2c_3\dotsm m_n=\frac{a(q-1)}{[2]}m_nc_3\dotsm m_n.\]
Hence inductively we obtain the equation.
Similarly as above we have
\[m_nc_nm_n=m_nm_{n-1}(c_n+T_{n-1}c_{n-1}+\dots+T_{n-1}\dotsm T_2T_1c_1)=[n-1]!\,m_n\gamma_n^R\]
which implies $\gamma_n^L m_n=m_n\gamma_n^R$.
\end{proof}

\subsection{Parabolic supermodules}
Analogously to the Iwahori--Hecke algebra,
for each composition $\lambda=(\lambda_1,\lambda_2,\dots,\lambda_r)$ 
we introduce the \term{parabolic subalgebra}
\[H^c_\lambda=\bigoplus_{w\in\fS_\lambda}C_nT_w
=\bigoplus_{w\in\fS_\lambda}T_wC_n\simeq
H^c_{\lambda_1}\otimes H^c_{\lambda_2}\otimes\dots\otimes H^c_{\lambda_r}.\]
Then $H^c_n$ is again a free right $H^c_\lambda$-module with a basis $\set{T_w}{w\in\fD_\lambda}$.
For $m_\lambda=\sum_{w\in\fS_\lambda} T_w$,
$C_n m_\lambda$ is a left (but not right) ideal of $H^c_\lambda$
and the \term{parabolic module} $M^c_\lambda\coloneqq H^c_n m_\lambda\simeq
H^c_n\otimes_{H^c_\lambda}C_n m_\lambda$
is defined as its induced module.
Then by the basis theorem $H^c_n\simeq C_n\otimes H_n$
we have $M^c_\lambda\simeq C_n\otimes M_\lambda$, and
\[M^c_\lambda=\set{x\in H^c_n}{x T_w=q^{\ell(w)}x\text{ for all }w\in\fS_\lambda}.\]
In particular we define the \term{trivial module}
$\unit^c_n\coloneqq M^c_{(n)}\simeq C_n$.
Similarly right modules $M^{c*}_\lambda\coloneqq m_\lambda H^c_n\simeq M^*_\lambda\otimes C_n$ and $\unit^{c*}_n\coloneqq M^{c*}_{(n)}$ are defined. Then we have
\[\Hom_{H^c_n}(M^c_\mu,M^c_\lambda)\simeq M^c_\lambda\cap M^{c*}_\mu\]
equipped with the reversed product
\begin{align*}
\circ_\mu\colon(M^c_\mu\cap M^{c*}_\nu)\otimes(M^c_\lambda\cap M^{c*}_\mu)
&\to M^c_\lambda\cap M^{c*}_\nu,\\
x m_\mu\otimes m_\mu y &\mapsto xm_\mu y.
\end{align*}

With Lemma~\ref{lem:gamma} in mind,
for each composition $\lambda$ we define elements
\begin{align*}
\gamma^L_{\lambda;1}&\coloneqq c_{[1,2,\dots,\lambda_1]},&
\gamma^R_{\lambda;1}&\coloneqq c_{[\lambda_1,\dots,2,1]},\\
\gamma^L_{\lambda;2}&\coloneqq c_{[\lambda_1+1,\lambda_1+2,\dots,\lambda_1+\lambda_2]},&
\gamma^R_{\lambda;2}&\coloneqq c_{[\lambda_1+\lambda_2,\dots,\lambda_1+2,\lambda_1+1]},\\
&\dots&&\dots
\end{align*}
where $c_{[i_1,i_2,\dots,i_r]}\coloneqq c_{i_1}+qc_{i_2}+\dots+q^{r-1}c_{i_r}$.
Then we have $\gamma^L_{\lambda;i}m_\lambda=m_\lambda\gamma^R_{\lambda;i}$.
So let us define the endomorphism $\gamma_{\lambda;i}$ acts on
$M^c_\lambda$ and $M^{c*}_\lambda$ as
\[xm_\lambda\cdot\gamma_{\lambda;i}\coloneqq
xm_\lambda\gamma^R_{\lambda;i}=x\gamma^L_{\lambda;i}m_\lambda,\qquad
\gamma_{\lambda;i}\cdot m_\lambda y\coloneqq
\gamma^L_{\lambda;i}m_\lambda y=m_\lambda\gamma^R_{\lambda;i}y.\]
Note that these endomorphisms anti-commute and we have
\[(\gamma^L_{\lambda;i})^2=(\gamma^R_{\lambda;i})^2=a\qqn{\lambda_i}\]
where $\qqn{k}$ is a $q^2$-integer $1+q^2+\dots+q^{2(k-1)}$.
We can abstractly define a superalgebra consisting of these actions as follows.
\begin{definition}
Let $\lambda$ be a composition.
Let $\Gamma_\lambda$ be a superalgebra generated by odd elements
$\gamma_{\lambda;1},\gamma_{\lambda;2},\dotsc$ with relations
\[(\gamma_{\lambda;i})^2 = a\qqn{\lambda_i},\qquad
\gamma_{\lambda;i} \gamma_{\lambda;j} = -\gamma_{\lambda;j} \gamma_{\lambda;i} \quad\text{for } i\neq j,\qquad
\gamma_{\lambda;i}=0\quad\text{if }\lambda_i=0.\]
\end{definition}
By definition, it is just isomorphic to the Clifford superalgebra
\[\Gamma_\lambda\simeq
C_r(a\qqn{\lambda_{i_1}},a\qqn{\lambda_{i_2}},\dots,a\qqn{\lambda_{i_r}})\]
where $\{i_1,i_2,\dots,i_r\}$ are indices such that $\lambda_i\neq0$.
By the action above $M^c_\lambda$ (resp.\ $M^{c*}_\lambda$)
is now an $(H^c_n,\Gamma_\lambda)$-bimodule (resp.\ a $(\Gamma_\lambda,H^c_n)$-bimodule).
Since the set
\[\set{(\gamma^L_{\lambda;{i_1}})^{p_1}(\gamma^L_{\lambda;{i_2}})^{p_2}
\dotsm(\gamma^L_{\lambda;{i_r}})^{p_r}}{p_k\in\{0,1\}}\]
is linearly independent in $C_n$,
the map $\Gamma_\lambda\to\Gamma_\lambda m_\lambda\subset
M^c_\lambda\cap M^{c*}_\lambda$ is an inclusion of superalgebra.

\subsection{Circled tableaux}

In order to denote elements of the parabolic module $M^c_\lambda$ graphically
we introduce the notion of \term{circled tableau}~\cite{Sagan87}.
Here a circled tableau of shape $\lambda$ is a map $Y(\lambda)\to\{1,2,\dots,\}\sqcup\{\circleA,\circleB,\dots\}$.
From a circled tableau $\sT$ we obtain its underlying ordinary tableau $\sT^\times$ by removing circles from numbers.
The weight of a circled tableau is defined as that of underlying tableau.
We say that a circled tableau is \term{row-standard}
if its underlying tableau is row-standard.
Let $\Tab^c_\lambda$ be the set of row-standard circled tableau of shape $\lambda$.
For $\sT\in\Tab^c_\lambda$ we define the corresponding element
$m_\sT\coloneqq T_w c_{i_1}\dotsm c_{i_r} m_\lambda$
where $w=d(\sT^\times)$ and $i_1,\dots,i_r$ are indices of positions
of circled entries in $\sT$ according to
the top-to-bottom reading order. For example,
\[\text{for }\sT=\young(\circleA24\circleE,3\circleG8,\circleF),\quad
m_\sT=T_3T_4T_6T_7c_1c_4c_6c_8m_{(4,3,1)}.\]
For such $\sT$, we define its length as
$\ell(\sT)\coloneqq\ell(d(\sT^\times))$.
If we focus only on leading terms with respect to this length, we have
\[T_w c_{i_1}\dotsm c_{i_r}=c_{w(i_1)}\dotsm c_{w(i_r)} T_w+(\text{lower terms})\]
so by $M^c_\lambda\simeq C_n\otimes M_\lambda$
the set $\set{m_\sT}{\sT\in\Tab^c_\lambda}$ forms a basis of $M^c_\lambda$.
The action of $T_i$ is described as
\[T_i\cdot m_\sT=\begin{cases*}
m_{s_i\sT} & if $r(i)<r(i+1)$,\\
q m_\sT + (q-1)m_{s_i\sT} & if $r(i)>r(i+1)$
\end{cases*}\]
where $r(i)$ is the index of the row which contains $i$ or $\circleI$
similarly as before, and
$s_i\sT$ is the circled tableau whose underlying tableau is
$(s_i\sT)^\times=s_i(\sT^\times)$ and
which has circles at the same boxes as $\sT$.
If $r(i)=r(i+1)$, putting $j=i+1$ it acts by
\begin{align*}
T_i\cdot\young(ij)&=q\young(ij),&
T_i\cdot\young(i\circleJ)&=\young(\circleI j)+(q-1)\young(i\circleJ),\\
T_i\cdot\young(\circleI j)&=q\young(i\circleJ),&
T_i\cdot\young(\circleI\circleJ)&=a(q-1)\young(ij)-\young(\circleI\circleJ)
\end{align*}
In contrast the action of $c_i$ is hard to describe
due to the commutation relation of $C_n$ and $H_n$, but on leading terms we have
\[c_i\cdot\young(i)=\pm\young(\circleI)+\dotsb,\qquad
c_i\cdot\young(\circleI)=\pm a\young(i)+\dotsb\]
as desired. Here the signs above are taken to be $+$ if
the tableau has even number of circles before this box with respect to the reading order,
and otherwise $-$.
The right action of $\Gamma_\lambda$ is easy: for example,
\[\young(\circleA24\circleE,3\circleG8,\circleF)\cdot\gamma_{(4,3,1);2}=
\young(\circleA24\circleE,\circleC\circleG8,\circleF)
-aq\young(\circleA24\circleE,378,\circleF)
-q^2\young(\circleA24\circleE,3\circleG\circleH,\circleF).\]
Beware the signs due to the exchange of $c_i$ and $c_j$.
\begin{remark}
Usually the shifted form
\[\young(\circleA24\circleE,:3\circleG8,::\circleF)\]
is used in literatures for circled tableaux. We continue to use the non-shifted form
since it seems to be troublesome to change the notations from the previous section.
\end{remark}

Furthermore we introduce the set of row-semistandard circled tableau
$\Tab^c_{\lambda;\mu}$ to denote elements of $M^c_\lambda\cap M^{c*}_\mu$.
We call a circled tableau of shape $\lambda$ and of weight $\mu$
is \term{row-semistandard} if its underlying tableau is row-semistandard
and it does not contain patterns $\young(\circleI i)$ or $\young(\circleI\circleI)$.
In other words, circled numbers must be placed at the rightmost
of a bar $\young(ii\scdots i)$ in a row.
It is also equivalent to say that each row is weakly increasing with respect to the order
\[1<\circleA<2<\circleB<3<\circleC<\dotsb\]
and a circled number can not be adjacent to itself.
For such $\sS\in\Tab^c_{\lambda;\mu}$, we define an element $m_\sS\in M^c_\lambda\cap M^{c*}_\mu$
as follows:
first we make a formal linear combination
of tableaux from $\sS$ by distributing
\[\young(ii\scdots\circleI)\mapsto\young(\circleI i\scdots i)+q\young(i\circleI\scdots i)
\dots+q^{r-1}\young(ii\scdots\circleI)\]
for each circled bar of length $r$,
then by replacing each term $q^l\sR$ with the sum of $q^l m_\sT\in M^c_\lambda$
for all $\sT$ such that $\sT^\times\in\Tab_{\sS^\times}$
and its positions of circles are same as that of $\sR$.
For example, for
\[\sS=\young(1\circleA2\circleC,15\circleE,\circleD)\]
we have
\begin{multline*}
m_\sS=\young(\circleA24\circleE,3\circleG8,\circleF)
+q\young(1\circleB4\circleE,3\circleG8,\circleF)
+q\young(\circleA24\circleE,37\circleH,\circleF)
+q^2\young(1\circleB4\circleE,37\circleH,\circleF)\\
+\young(\circleA34\circleE,2\circleG8,\circleF)
+q\young(1\circleC4\circleE,2\circleG8,\circleF)
+q\young(\circleA34\circleE,27\circleH,\circleF)
+q^2\young(1\circleC4\circleE,27\circleH,\circleF)\\
+\young(\circleB34\circleE,1\circleG8,\circleF)
+q\young(2\circleC4\circleE,1\circleG8,\circleF)
+q\young(\circleB34\circleE,17\circleH,\circleF)
+q^2\young(2\circleC4\circleE,17\circleH,\circleF).
\end{multline*}

\begin{proposition}\label{prop:super_basis_lemma}
The set $\set{m_\sS}{\sS\in\Tab^c_{\lambda;\mu}}$ is linearly independent
in $M^c_\lambda\cap M^{c*}_\mu$.
Moreover if $[2]\in\kk$ is not a zero-divisor,
it is also a basis of $M^c_\lambda\cap M^{c*}_\mu$.
\end{proposition}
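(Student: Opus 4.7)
The plan is to prove the proposition in three stages: membership, linear independence, and (under the hypothesis on $[2]$) spanning.

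First, to show that each $m_\sS$ lies in $M^c_\lambda\cap M^{c*}_\mu$, the inclusion in $M^c_\lambda$ is automatic because $m_\sS$ is constructed as a $\kk$-linear combination of elements $m_\sT\in M^c_\lambda$. For the inclusion in $M^{c*}_\mu$, by induction on length it suffices to verify $T_i m_\sS = q m_\sS$ for each generator $s_i\in\fS_\mu$. The distribution rule $\young(ii\scdots\circleI)\mapsto\sum_{k=0}^{r-1}q^k\,(\text{circle at position }k)$ used in the definition of $m_\sS$ is tailored precisely so that the resulting sum is stable under these generators: a case analysis on the four local patterns $\young(ij),\young(i\circleJ),\young(\circleI j),\young(\circleI\circleJ)$ in a same row, using the explicit $T_i$-action formulas, will confirm the invariance. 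This parallels the classical Iwahori--Hecke verification in Section~\ref{sec:cellular_hecke} but requires tracking Koszul signs as circles are rearranged.

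Second, for linear independence, I will exploit a disjoint-support observation. Any row-standard circled tableau $\sT$ appearing in the expansion of $m_\sS$ must satisfy $\sT^\times\in\Tab_{\sS^\times}$ and have its circles lying within the box-groups of $\sT^\times$ that correspond to circled bars of $\sS$; conversely, $\sS$ is recoverable from any such $\sT$ by setting $\sS^\times=\sT^\times|_\mu$ and declaring a bar of $\sS^\times$ to be circled exactly when some box of its corresponding group is circled in $\sT$. Consequently the supports of $m_\sS$ for distinct $\sS$ are mutually disjoint in the basis $\{m_\sT\}$ of $M^c_\lambda$. Moreover, the particular $\sT$ with circles placed at the leftmost position of each bar-group appears with coefficient $q^0=1$, a unit, which suffices to conclude linear independence over $\kk$ with no restriction on $q$ or $[2]$.

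Third, for spanning when $[2]$ is not a zero-divisor, I would argue by a triangular reduction with respect to a suitable total order on $\Tab^c_\lambda$ (first by the row-semistandard reduction $\sT^\times|_\mu$ under some refinement of the dominance order, then by $\ell(\sT^\times)$, finally by circle placement). Given a non-zero $x=\sum a_\sT m_\sT\in M^c_\lambda\cap M^{c*}_\mu$, the maximal $\sT_0$ in the support must correspond to a valid row-semistandard circled tableau $\sS_0$, i.e.\ $\sT_0$ is the leading term of $m_{\sS_0}$. The hypothesis on $[2]$ enters through the forbidden pattern $\young(\circleI\circleJ)$: the formula $T_i\cdot\young(\circleI\circleJ)=a(q-1)\young(ij)-\young(\circleI\circleJ)$ together with $T_ix=qx$ forces $[2]\cdot a_\sT=0$ for any $\sT$ with two adjacent circles inside a $\mu$-block, which is precisely the non-row-semistandard obstruction excluded in the definition of $\Tab^c_{\lambda;\mu}$. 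Subtracting an appropriate multiple of $m_{\sS_0}$ from $x$ then strictly lowers the maximum of its support, and iteration exhausts $x$.

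The main obstacle will be the spanning step: one must set up the total order so that both the $\fS_\mu$-invariance conditions and the $[2]$-elimination cooperate, ensuring the leading term of $x-a_{\sT_0}m_{\sS_0}$ is genuinely smaller. The remaining invariance conditions — those for $\sT$ with $r(i)\neq r(i+1)$, or for non-forbidden circle patterns — determine the lower-order coefficients in a manner strictly parallel to the Murphy-basis analysis of the previous section (see Lemma~\ref{lem:semistandard} and Theorem~\ref{thm:cellular_basis}), and pose no essential difficulty individually; packaging them into a clean triangular argument is the delicate point.
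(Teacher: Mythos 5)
Your three ingredients are all sound, and the first two stages coincide with the paper's own argument: membership of $m_\sS$ in $M^{c*}_\mu$ is the sufficiency direction of the coefficient conditions (and needs no hypothesis on $[2]$), and linear independence is exactly the disjoint-support/unit-leading-coefficient observation you make (the paper takes the representative with $\sT^\times=(\sS^\times)_\downarrow$ and leftmost circles, which has coefficient $1$). Where you diverge is the spanning step. The paper does not run a triangular reduction against a total order at all: it writes an arbitrary $x=\sum_\sT c_\sT m_\sT\in M^c_\lambda$ and observes that the single condition $T_ix=qx$ for each $s_i\in\fS_\mu$ translates into the explicit linear relations $c_\sT=c_{s_i\sT}$ when $i,i+1$ lie in different rows, and, for the four local patterns within a row, the relations forcing the circle to distribute with weights $1,q,q^2,\dots$ along a bar together with $[2]c_{\sT_4}=0$ for the adjacent double circle. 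These relations are precisely the defining relations of the elements $m_\sS$, so the solution space of the linear system \emph{is} the span of $\set{m_\sS}{\sS\in\Tab^c_{\lambda;\mu}}$ and one reads off the conclusion without choosing any order or subtracting leading terms. This dissolves the "delicate point" you flag at the end: no compatibility between an ordering and the elimination steps has to be arranged, because the supports of the $m_\sS$ partition the relevant basis elements and within each block all coefficients are determined by one of them.

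Two small gaps in your sketch, both repairable. First, the $[2]$-relation as you state it only annihilates coefficients of tableaux with two \emph{adjacent} circles in a $\mu$-block; a tableau can have two circles in the same bar at non-adjacent boxes, and killing its coefficient requires first propagating via the single-circle-shift relation $c(\text{circle at }p)=q\,c(\text{circle at }p-1)$ down to the adjacent configuration. Your total order would need "circles further left are larger" to make this automatic, but it should be said. Second, your reduction step needs $m_{\sS_0}\in M^{c*}_\mu$ before you may subtract it and stay in $M^c_\lambda\cap M^{c*}_\mu$, so stage one is not optional bookkeeping but a prerequisite for stage three; in the paper's formulation this is the remark that the derived conditions are also \emph{sufficient} for $x\in M^{c*}_\mu$.
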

\begin{proof}
For each $\sS\in\Tab^c_{\lambda;\mu}$,
take $\sT\in\Tab^c_\lambda$ so that $\sT^\times=(\sS^\times)_\downarrow$
and $\sT$ has a circle at each box whose position is the leftmost of
circled bars $\young(ii\scdots\circleI)$ in $\sS$.
Then the coefficient of $m_\sS$ at the basis element $m_\sT$ is $1$.
Since this map $\sS\mapsto\sT$ is injective,
the set $\set{m_\sS}{\sS\in\Tab^c_{\lambda;\mu}}$ is linearly independent
in $M^c_\lambda$.

On the other hand, let us take $x\in M^c_\lambda$
and write $x=\sum_{\sT\in\Tab^c_\lambda}c_\sT m_\sT$.
Suppose that $x\in M^{c*}_\mu$ and let $s_i\in\fS_\mu$.
Then by the above description of the action of $T_i$,
for $\sT\in\Tab^c_\lambda$ such that $r(i)\neq r(i+1)$,
$T_ix=qx$ implies $c_\sT=c_{s_i\sT}$.
On the other hand, suppose $r(i)=r(i+1)$.
Let $\sT_1$, $\sT_2$, $\sT_3$ and $\sT_4$ be circled tableaux
obtained by replacing $i$ and $j=i+1$ in $\sT$ by
$\young(ij)$, $\young(\circleI j)$, $\young(\circleI j)$ and $\young(\circleI\circleJ)$
respectively.
Then by $(1+T_i)x=[2]x$, we have
$[2](c_{\sT_2}-qc_{\sT_3})=[2]c_{\sT_4}=0$.
Using the assumption that $[2]$ is not a zero-divisor,
we obtain $c_{\sT_2}=qc_{\sT_3}$ and $c_{\sT_4}=0$.
Hence $x$ can be written as a linear combination of $m_\sS$.
It is also clear that this condition is sufficient for that $x\in M^{c*}_\mu$.
\end{proof}
Unfortunately, if the assumption is not satisfied then
this statement may fail.
For example, when $q=1$ and $2=0$ in $\kk$,
the element $c_1c_2m_2$ is incidentally contained in $\unit^c_2\cap\unit^{c*}_2$.
The set $M^c_\lambda\cap M^{c*}_\mu$ is not suitable for our use,
so instead we use a well-behaved set
\[M^c_{\lambda;\mu}\coloneqq\kk\set{m_\sS}{\sS\in\Tab^c_{\lambda;\mu}}
\subset M^c_\lambda\cap M^{c*}_\mu.\]
This free $\kk$-module is preserved by an extension of scalars.
Since the universal ring $\kk=\ZZ[a,q]$ satisfies the assumption,
it is closed under product
\[\circ_\mu\colon M^c_{\mu;\nu}\otimes M^c_{\lambda;\mu}\to M^c_{\lambda;\nu}.\]
By definition we can represent
$\gamma^L_{\lambda;i}m_\lambda=m_\lambda\gamma^R_{\lambda;i}\in M^c_\lambda\cap M^{c*}_\lambda$
by a circled tableau, so
$\Gamma_\lambda m_\lambda$ is contained in $M^c_{\lambda;\lambda}$. Hence
$\Gamma_\mu$ also acts on $M_{\lambda;\mu}$ from left
(resp. $M_{\mu;\nu}$ from right)
and the product $\circ_\mu$ above is $\Gamma_\mu$-bilinear,
so that we can define it as
\[\circ_\mu\colon M^c_{\mu;\nu}\otimes_{\Gamma_\mu} M^c_{\lambda;\mu}\to M^c_{\lambda;\nu}.\]
Let $\rS^c_{r,n}\coloneqq\bigoplus_{\lambda,\mu}M^c_{\lambda;\mu}$
where $\lambda=(\lambda_1,\dots,\lambda_r)$ and $\mu=(\mu_1,\dots,\mu_r)$
run over compositions of at most $r$ components as before.
We call it the \term{queer $q$-Schur superalgebra}.
When $q=1$ and $2\neq0$, it is equal to the Schur superalgebra of type~$\mathsf{Q}$
introduced in \cite{BrundanKleshchev02}.

We finish this subsection with a remark on involution.
For $\sS\in\Tab^c_{\lambda;\mu}$, we can similarly define
an element $m^*_\sS\in M^c_\mu\cap M^{c*}_\lambda$ in the dual manner
by multiplying elements on $m_\lambda$ from right.
When $q=1$, it is actually the dual element of $m_\sS\in W(-a)$
mapped via the anti-homomorphism $*\colon W_n(-a)^\op\to W_n(a)$.
For such $\sS$ we define its dual tableau $\sS^*\in\Tab^c_{\mu;\lambda}$
so that $(\sS^\times)^*=(\sS^*)^\times$ and $\sS$ has $\circleJ$ in its $i$-th row
if and only if $\sS^*$ has $\circleI$ in its $j$-th row.
Then by the commutation relation on Lemma~\ref{lem:gamma} $m^*_\sS$
has the leading term $m_{\sS^*}$ but they are not equal unless $q=1$.
The map $m_\sS\mapsto m_{\sS^*}$
does not either preserve the reversed product in general.

\subsection{Good circled tableaux}
Analogously to the non-super case,
we introduce a filtration into our subcategory of $\lMod{H^c_n}$.
According to this filtration we decompose
the set of simple modules of $H^c_n$ into small parts.

\begin{definition}
For each compositions $\lambda$, $\mu$, and $\nu$, let
\[M^{c\,\nu}_{\lambda;\mu}\coloneqq M^c_{\nu;\mu}\circ_\nu M^c_{\lambda;\nu}
\subset M^c_{\lambda;\mu}.\]
Then we define
\[M^{c\,\ge\nu}_{\lambda;\mu}\coloneqq\sum_{\pi\ge\nu}M^{c\,\pi}_{\lambda;\mu},\quad
M^{c\,>\nu}_{\lambda;\mu}\coloneqq\sum_{\pi>\nu}M^{c\,\pi}_{\lambda;\mu}\]
and finally
\[M^{c(\nu)}_{\lambda;\mu}\coloneqq M^c_{\lambda;\mu}\big/M^{c\,\ge\nu}_{\lambda;\mu}.\]
In particular we let $S^c_{\lambda;\mu}\coloneqq M^{c(\lambda)}_{\lambda;\mu}$
and $S^c_\lambda\coloneqq S^c_{\lambda;(1^n)}$.
\end{definition}

We say that a circled tableau $\sT\in\Tab^c_{\lambda;\mu}$ is \term{good}
if its underlying tableau $\sT^\times$ is good.

\begin{lemma}
$S^c_{\lambda;\mu}$
is spanned by $\set{m_\sT}{\sT\in\Tab^c_{\lambda;\mu}\textup{ which is good}}$.
\end{lemma}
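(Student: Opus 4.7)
The plan is to adapt the proof of Lemma~\ref{lem:good_tableaux} to the circled setting. Given $\sT\in\Tab^c_{\lambda;\mu}$ whose underlying tableau $\sT^\times$ is not good, I would first apply the non-super construction to $\sT^\times$ to obtain tableaux $\sT_1\in\Tab_{\lambda;\nu}$ and $\sT_2\in\Tab_{\nu;\mu}$ with $\nu>\lambda$ in the dominance order. Any product $m_B\circ_\nu m_A$ with $A\in\Tab^c_{\lambda;\nu}$ and $B\in\Tab^c_{\nu;\mu}$ then lies in $M^{c\,\nu}_{\lambda;\mu}\subset M^{c\,>\lambda}_{\lambda;\mu}$ and so vanishes in $S^c_{\lambda;\mu}$; this provides the relations we will use.

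Next I would lift $\sT_1$ and $\sT_2$ to row-semistandard circled tableaux $\sT_1^c,\sT_2^c$ that together carry all the circles of $\sT$, chosen so that $m_{\sT_2^c}\circ_\nu m_{\sT_1^c}$ has $m_\sT$ as its leading term (with respect to the length $\ell(\sT^\uparrow)$ of the maximum-length row-standard lift) plus terms of strictly shorter length. The natural first attempt is $\sT_1^c\coloneqq\sT_1$ with no added circles and $\sT_2^c$ obtained from $\sT_2$ by transporting each circle of $\sT$ across the rearrangement: a circle at $(k,l)$ of $\sT$ carrying value $j$ is placed on the corresponding entry of $\sT_2$, which sits in row $\min\{k,j\}$. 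When this placement would collide with another transported circle on the same bar of $\sT_2^c$, or would fail to land at the rightmost position of its bar, one of the circles must instead be absorbed into $\sT_1^c$; since each row of $\sT_1$ consists of repeated entries, it legally admits a single circle at its rightmost position.

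Once $\sT_1^c$ and $\sT_2^c$ are valid row-semistandard circled tableaux, expanding $m_{\sT_2^c}\circ_\nu m_{\sT_1^c}$ by pushing the Clifford generators past the $T_w$'s — using $T_ic_i=c_{i+1}T_i$ and $T_ic_{i+1}=c_iT_i+(q-1)(c_{i+1}-c_i)$ — should produce $m_\sT$ plus a $\kk$-linear combination of $m_\sS$'s with $\ell(\sS^\uparrow)<\ell(\sT^\uparrow)$, exactly as in the non-super case but decorated by Clifford factors. Induction on $\ell(\sT^\uparrow)$, with good circled tableaux as the base case, then expresses every non-good $m_\sT$ as a $\kk$-linear combination of $m_\sS$ for good $\sS$ modulo $M^{c\,>\lambda}_{\lambda;\mu}$, finishing the proof.

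The hard part will be the combinatorial bookkeeping: distributing $\sT$'s circles between $\sT_1^c$ and $\sT_2^c$ so that both remain row-semistandard circled (no patterns $\young(\circleI i)$ or $\young(\circleI\circleI)$, at most one circle per bar), and then tracking the Koszul signs and lower-order corrections produced when the $c_j$'s are pushed past the $T_w$'s, so as to verify that the coefficient of $m_\sT$ in the expansion is indeed $1$. This will require a case analysis on how the circled bars of $\sT$ interact with the good versus ungood boxes of $\sT^\times$.
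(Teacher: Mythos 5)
Your overall strategy---exhibit, for each non-good $\sT$, an element of $M^{c\,\nu}_{\lambda;\mu}$ with $\nu>\lambda$ whose leading term is $\pm m_\sT$, and induct on length---is the right one and is what the paper does. But two of your concrete choices break down, and one genuinely needed tool is missing. The paper explicitly does \emph{not} move all ungood boxes at once ("in this case we can not perform this method at a time, so we do for each number one by one"): it picks the minimal value $i$ occurring in an ungood box and moves up only the ungood $i$'s and $\circleI$'s, then iterates. In the wholesale construction a row of $\sT_1$ has the form $j_1 j_2\cdots k\cdots k$ with several distinct values, so your claim that "each row of $\sT_1$ consists of repeated entries, [so] it legally admits a single circle at its rightmost position" is false; this is exactly where your collision rule would need to park circles, and there is no canonical legal spot for them. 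Moreover, the paper's working convention is the opposite of your first attempt: the circle of a \emph{moved} box stays on $\sT_1$ at its original box, while only the circles of unmoved boxes are carried by $\sT_2$; this is what makes the leading-term computation come out as $\pm m_\sT$ (note the sign---the coefficient is $\pm1$, not $1$, which is still enough for spanning).

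The more serious gap is the case where row $i$ of $\sT$ already ends its bar of $i$'s with a circled $\circleI$ while an ungood $i$ (circled or not) sits in a lower row. Appending the moved-up $i$'s to that bar in $\sT_2^c$ then creates the prohibited pattern $\young(\circleI i)$ no matter how you redistribute circles, and relocating that particular circle to $\sT_1^c$ changes which Clifford generator appears and at which position, so it is not argued (and not clear) that the product still has $\pm m_\sT$ as its leading term. The paper resolves this with a preprocessing step absent from your proposal: before moving anything, it multiplies $m_\sT$ on the right by $\gamma_{\lambda;i}$, which rewrites $m_\sT$ as a $\kk$-linear combination of elements $m_{\sS}$ having the same underlying tableau but no circled $\circleI$ in row $i$, and only then performs the move. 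Without this use of the right $\Gamma_\lambda$-action, the "hard part" you defer is not mere bookkeeping: in the collision case the construction as described does not produce valid row-semistandard circled tableaux at all, so the induction cannot get off the ground.
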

\begin{proof}
Similarly to the proof of Lemma~\ref{lem:good_tableaux},
we prove it inductively by replacing each $m_\sT$
for ungood $\sT\in\Tab^c_{\lambda;\mu}$
with tableaux which have smaller lengths.
However in this case we can not perform this method at a time,
so we do for each number one by one.
Suppose that $\sT$ has ungood $\young(i)$ or $\young(\circleI)$,
which we choose so that $i$ is minimum.
In particular, $i$'s in its $i$-th row are at the leftmost of $\sT$ if exist.
If it has circled $\circleI$ in its $i$-th row,
multiplying $\gamma_{\lambda;i}$ from right we can represent $m_\sT$ by
tableaux without this circle; for example,
\[\young(1111\circleB3,223\circleC,\circleA\circleB3)\cdot\gamma_{\lambda;1}
=\young(111\circleA\circleB3,223\circleC,\circleA\circleB3)
-aq^4\young(111123,223\circleC,\circleA\circleB3)
-q^5\young(1111\circleB\circleC,223\circleC,\circleA\circleB3).\]
Hence we may assume that $i$ in the $i$-th row of $\sT$ is not circled.
We define tableaux $\sT_1,\sT_2$ from $\sT$
by moving up ungood $\young(i)$ and $\young(\circleI)$
as we did in the proof of Lemma~\ref{lem:good_tableaux},
and if such entry is circled we remove this circle from $\sT_2$ and
put on the same box at $\sT_1$.
For example, for
\[\sT=\young(1111\circleB3,223\circleC,\circleA\circleB3)\]
we have
\[\sT_1=\young(111111,2222,\circleA33)\quad\text{and}\quad
\sT_2=\young(11111\circleB3,223\circleC,\circleB3).\]
Taking leading terms we also have
a decomposition $0\equiv m_{\sT_2}\circ_\nu m_{\sT_1}=\pm m_{\sT}+(\text{lower terms})$.
\end{proof}

This leads us the following parallel results.

\begin{corollary*}\label{cor:hom_on_dominance_order_super}
\ 
\begin{enumerate}
\item $S^c_{\lambda;\lambda}$ is spanned by $m_\lambda$ over $\Gamma_\lambda$,
so that it is isomorphic to a quotient superalgebra of $\Gamma_\lambda$.
\item $S^c_{\lambda;\mu}=0$ unless $\lambda\ge\mu$.
\qedhere
\end{enumerate}
\end{corollary*}

\begin{theorem*}
$H^c_n$ and $\rS^c_{r,n}$
are standardly filtered algebras over $\{\Gamma_\lambda\}$ on compositions $\lambda$.
\end{theorem*}

Note that we have a natural map
\[M^{c(\nu)}_{\lambda;\mu}\to\Hom^{(\nu)}_{H^c_n}(M^c_\mu,M^c_\lambda)\]
where the right-hand side is the set of homomorphisms in the quotient category
na\"\i vely defined by using the whole category $\lMod{H^c_n}$,
but in general this map is not surjective nor injective when the assumption
in Proposition~\ref{prop:super_basis_lemma} is not satisfied.
Although we can also define a standard filter using the right-hand side,
this filter is ill-behaved with extension of scalars.
In contrast, we have
\[S^c_{\lambda;\mu}\simeq S^c_{\lambda;\mu}(\ZZ[a,q])\otimes_{\ZZ[a,q]}\kk\]
as desired since $M^c_{\lambda;\mu}$ has a free basis.
So $S^c_{\lambda;\mu}$ is certainly the right definition.

In these modules we have the local transformation lemma
by a similar proof as before.
\begin{lemma*}
Suppose we have an equation
$\sum_\sT c_\sT m_\sT\equiv0$ in $S^c_{\lambda;\mu}$
for some $c_\sT\in\kk$.
For each $\sT\in\Tab_{\lambda;\mu}$ let $\sT^+$ be the tableau obtained by
adding a new common row at the top (resp.\ the bottom) of $\sT$.
Then we have $\sum_\sT c_\sT m_{\sT^+}\equiv0$.
\end{lemma*}

\subsection{Shifted semistandard circled tableaux}
In this subsection, we consider the case $\kk=\QQ(a,q)$.
Recall that a composition $\lambda$ is called a \term{strict partition}
if it is strictly decreasing: $\lambda_1>\lambda_2>\dots>\lambda_r>0=\lambda_{r+1}=\lambda_{r+2}=\dotsb$.

\begin{definition}
A row-semistandard circled tableau $\sT\in\STab^c_{\lambda;\mu}$
is called \term{shifted semistandard} if
its shape $\lambda$ is a strict partition and
it does not contain any of the patterns
\[\young(:i,i),\young(:\circleI,i)\qquad\text{and}\qquad
\young(:j,i),\young(:j,\circleI),\young(:\circleJ,i),\young(:\circleJ,\circleI)
\quad\text{for }i<j\]
(in particular, 
its underlying tableau $\sT^\times$ is semistandard).
In other words, its entries are also weakly increasing
along with each diagonal line so that a non-circled number does not continue.
We denote by $\STab^c_{\lambda;\mu}$ the set of
shifted semistandard circled tableaux of shape $\lambda$ of weight $\mu$.
\end{definition}

A similar notion of \term{generalized shifted tableau} is introduced 
by Sagan~\cite{Sagan87}.
The only difference is that he use the order
\[\circleA<1<\circleB<2<\circleC<3<\dotsb\]
instead of ours.
The set of shifted semistandard circled tableaux is clearly is in bijection
with that of his generalized shifted tableaux by the following circle moving:
\[\young(:::::ii,::ii\circleI,i\circleI)\mapsto
\young(:::::\circleI i,::\circleI ii,ii),\qquad
\young(:::::i\circleI,::ii\circleI,i\circleI)\mapsto
\young(:::::\circleI i,::\circleI ii,\circleI i).\]

\begin{lemma}\label{lem:circle_move}
Let $m,k\in\NN$ such that $m\ge k$.
Let $\lambda\coloneqq(m,k)$ and $\mu\coloneqq(k,m)$.
Then in $S^c_{\lambda;\mu}$ we have
\[\young(11\scdots\circleA2\scdots2,22\scdots2)\equiv
\young(11\scdots12\scdots2,22\scdots\circleB).\]
\end{lemma}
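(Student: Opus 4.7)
The plan has two stages.

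Stage 1 (identify both sides as explicit elements of $M^c_\lambda\cap M^{c*}_\mu$). I claim
\[
m_{\sT_{\text{LHS}}} = \gamma^L_{\mu;1}\,m_\mu m_\lambda = m_\mu\gamma^R_{\mu;1}\,m_\lambda,\qquad
m_{\sT_{\text{RHS}}} = m_\mu\gamma^L_{\lambda;2}\,m_\lambda = m_\mu m_\lambda\,\gamma^R_{\lambda;2}.
\]
For the LHS I would unpack the definition: the unique circled bar of length $k$ in row $1$ distributes with weights $1,q,\dots,q^{k-1}$ into $k$ intermediate tableaux carrying the single circle at reading position $j{+}1\in\{1,\dots,k\}$. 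After inflating to row-standard tableaux, the associated permutations $d(\sT^\times)$ lie in $\fS_m$ shifted to positions $\{k{+}1,\dots,k{+}m\}$ (since the $1$'s occupy a whole row uniquely, only the $2$-labels permute); hence each $d(\sT^\times)$ fixes positions $\{1,\dots,k\}$ and commutes with the relevant $c_{j+1}$. Pulling $\gamma^L_{\mu;1}=c_1+qc_2+\dotsb+q^{k-1}c_k$ out of the double sum reconstructs $\gamma^L_{\mu;1}\,m_{\sS^\times}=\gamma^L_{\mu;1}\,m_\mu m_\lambda$. For the RHS the permutations $d(\sT^\times)$ no longer commute with $\gamma^L_{\lambda;2}$ (whose positions $\{m{+}1,\dots,m{+}k\}$ are inside the range $\{k{+}1,\dots,k{+}m\}$), so one first applies the slide $\gamma^L_{\lambda;2}m_\lambda=m_\lambda\gamma^R_{\lambda;2}$ from Lemma~\ref{lem:gamma}; then $\gamma^R_{\lambda;2}$ commutes past each $T_{d(\sT^\times)}\cdot m_\lambda$ harmlessly, and the inner summation reassembles into $m_{\sS^\times}\,\gamma^R_{\lambda;2} = m_\mu\gamma^L_{\lambda;2}m_\lambda$.

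Stage 2 (show the difference lies in the higher ideal). Subtracting and using $\gamma^L_{\mu;1}m_\mu=m_\mu\gamma^R_{\mu;1}$, the claim becomes
\[
m_\mu\bigl(\gamma^R_{\mu;1}-\gamma^L_{\lambda;2}\bigr)m_\lambda \in M^{c,>\lambda}_{\lambda;\mu}.
\]
The only composition of $m{+}k$ strictly above $\lambda=(m,k)$ (after removing trailing zeros) is $(m{+}k)$, so it suffices to factor the difference through $m_{(m+k)}$. I would proceed by induction on $k$. For the base case $k=1$ one needs $m_\mu(c_1-c_{m+1})m_\lambda\in M^{c,>\lambda}$: here $c_1$ commutes with every generator $T_2,\dots,T_m$ of $\fS_\mu=\fS_{(1,m)}$ and $c_{m+1}$ commutes with every generator $T_1,\dots,T_{m-1}$ of $\fS_\lambda=\fS_{(m,1)}$, so the expression simplifies to $(c_1-c_{m+1})\,m_\mu m_\lambda$. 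Substituting the conjugation $c_{m+1}=T_mT_{m-1}\dotsm T_1\,c_1\,T_1^{-1}\dotsm T_m^{-1}$ and exploiting the coset decomposition $m_{(m+1)}=m_\lambda(1+T_m+T_mT_{m-1}+\dotsb+T_mT_{m-1}\dotsm T_1)$, the "long" term $m_\lambda T_m\dotsm T_1$ differs from $m_{(m+1)}$ only by the shorter $m_\lambda T_m\dotsm T_j$ terms; a careful collection, together with the Hecke correction terms $T_ic_{i+1}=c_iT_i+(q{-}1)(c_{i+1}-c_i)$, telescopes the whole difference into $H^c_n\,m_{(m+1)}\,H^c_n$. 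For the inductive step I would use the local transformation lemma for super Specht modules (adding a common bottom row of $2$'s) to promote the $(m,k{-}1)$ identity to the $(m,k)$ identity; equivalently one can multiply through by $\gamma_{\lambda;2}$ from the right and use $\gamma_{\lambda;2}^2=a\qqn{k}$ to reduce the $k$-parameter.

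The hard part will be Stage 2, specifically the telescoping bookkeeping in the base case: the non-commutativity of $c_i$ with $T_j$ produces many correction terms, and one has to verify that they all assemble into the ideal generated by $m_{(m+k)}$. A cleaner fall-back, if the direct telescoping becomes unwieldy, is to invoke Lemma~\ref{lem:gamma} at rank $m{+}k$ in the form $m_{(m+k)}c_i m_{(m+k)}=[m{+}k{-}1]!\,\gamma^L_{m+k}m_{(m+k)}$ (valid for every $i$), combined with the eigenvalue identities $m_\mu m_{(m+k)}=[k]![m]!\,m_{(m+k)}$ and $m_{(m+k)}m_\lambda=[m]![k]!\,m_{(m+k)}$; this forces the positions-$\{1,\dots,k\}$ and positions-$\{m{+}1,\dots,m{+}k\}$ Clifford actions to coincide modulo the higher ideal, which is exactly the content of the lemma.
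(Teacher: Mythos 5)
Your Stage 1 is essentially sound once the normalizations are repaired: over $\kk=\QQ(a,q)$ the double-coset element $m_{\sS^\times}$ underlying both sides equals $m_\mu m_\lambda$ only up to the invertible scalar $[k]!^2[m-k]!$, and with that correction the left-hand side is $\gamma_{\mu;1}\cdot m_{\sS^\times}$ and the right-hand side is $m_{\sS^\times}\cdot\gamma_{\lambda;2}$, so the lemma does reduce to killing $m_\mu(\gamma^R_{\mu;1}-\gamma^L_{\lambda;2})m_\lambda$ in $S^c_{\lambda;\mu}$. (In passing, $(m+k)$ is not the only composition dominating $(m,k)$ --- $(m+1,k-1)$, etc., also do --- though factoring through $M^c_{(m+k)}$ would still be sufficient.) The genuine gap is Stage 2, which is where the entire content of the lemma lives and which you do not carry out. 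The $k=1$ telescoping is only sketched and you flag it yourself as unverified; the fallback via $m_{(m+k)}c_im_{(m+k)}=[m+k-1]!\,\gamma^L_{m+k}m_{(m+k)}$ does not apply, because your element is sandwiched between $m_\mu$ and $m_\lambda$, not between two copies of $m_{(m+k)}$; and the proposed induction on $k$ is broken --- adding a bottom row turns $(m,k-1)$ into $(m,k-1,l)$ rather than $(m,k)$, and right-multiplying by $\gamma_{\lambda;2}$ does not change the shape at all. There is also a target-identification issue: $M^{c\,>\lambda}_{\lambda;\mu}$ is by definition the sum of images of the products $M^c_{\pi;\mu}\circ_\pi M^c_{\lambda;\pi}$, not the intersection of $M^c_{\lambda;\mu}$ with the two-sided ideal $H^c_n m_\pi H^c_n$, and the paper explicitly warns that these can differ; so even a successful telescoping into $H^c_n m_{(m+k)}H^c_n$ would leave a step to fill.

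The paper's proof sidesteps all of this with a short trick you should compare against. By Lemma~\ref{lem:scalar_lemma} (with $i=k=l$ there) and $q\in\kk^\times$, the common underlying tableau is, in $S^c_{\lambda;\mu}$, an invertible scalar times the row-swapped tableau $\young(22\scdots2,11\scdots1)$; on that tableau the left action of $\gamma_{\mu;2}$ and the right action of $\gamma_{\lambda;1}$ visibly produce the same circled tableau (both circle the single bar of $2$'s filling the first row), giving $\gamma_{\mu;2}\cdot m_{\sT_0}\equiv m_{\sT_0}\cdot\gamma_{\lambda;1}$ for $\sT_0=\young(11\scdots12\scdots2,22\scdots2)$. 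Expanding both sides of this congruence on $\sT_0$ by the distribution rule, each side is one of the two tableaux in the statement plus the common term $q^k\,\young(11\scdots12\scdots\circleB,22\scdots2)$, which cancels. The same device would rescue your Stage 2: rewrite $m_{\sS^\times}$ as a scalar times the swapped tableau and compare the actions of $\gamma_{\mu;1}$ and $\gamma_{\lambda;2}$ there. As written, however, the proposal leaves its central step unproven.
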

\begin{proof}
By Lemma~\ref{lem:scalar_lemma} and the assumption $q\in\kk^\times$, we have
\begin{align*}
\gamma_{\mu;2}\cdot\young(11\scdots12\scdots2,22\scdots2)
&=(-1)^kq^{-\binom{k}{2}}\gamma_{\mu;2}\cdot\young(22\scdots\scdots\scdots\scdots2,11\scdots1)\\
&=(-1)^kq^{-\binom{k}{2}}\young(22\scdots\scdots\scdots\scdots\circleB,11\scdots1)\\
&=(-1)^kq^{-\binom{k}{2}}\young(22\scdots\scdots\scdots\scdots2,11\scdots1)\cdot\gamma_{\lambda;1}\\
&=\young(11\scdots12\scdots2,22\scdots2)\cdot\gamma_{\lambda;1}.
\end{align*}
If $m=k$, we have
\[\gamma_{\lambda;2}\cdot\young(11\scdots1,22\scdots2)
=\young(11\scdots1,22\scdots\circleB),\qquad
\young(11\scdots1,22\scdots2)\cdot\gamma_{\lambda;1}
=\young(11\scdots\circleA,22\scdots2).\]
Otherwise both-hand sides can be computed by Lemma~\ref{lem:gamma} as
\begin{align*}
\gamma_{\mu;2}\cdot\young(11\scdots12\scdots2,22\scdots2)
&=\young(11\scdots12\scdots2,22\scdots\circleB)+
q^k\young(11\scdots12\scdots\circleB,22\scdots2)
\shortintertext{and}
\young(11\scdots12\scdots2,22\scdots2)\cdot\gamma_{\lambda;1}
&=\young(11\scdots\circleA2\scdots2,22\scdots2)+
q^k\young(11\scdots12\scdots\circleB,22\scdots2)
\end{align*}
so these equations also imply the statement.
\end{proof}

\begin{lemma}\label{lem:strict_partition}
$S^c_{\lambda;\lambda}=0$ unless $\lambda$ is a strict partition.
\end{lemma}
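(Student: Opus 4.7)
By Corollary~\ref{cor:hom_on_dominance_order_super}~(1), $S^c_{\lambda;\lambda}$ is spanned by $m_\lambda$ over $\Gamma_\lambda$, so it suffices to prove $m_\lambda\equiv0$ whenever $\lambda$ fails to be a strict partition. There are two failure modes, which I handle separately. If $\lambda$ is not a partition, pick $i$ with $\lambda_i<\lambda_{i+1}$ and let $\lambda'$ denote the composition obtained by swapping these two entries; then $\lambda'>\lambda$ in the dominance order. Because $q\in\kk^\times$, the braiding of the convolution product is invertible and yields $M^c_\lambda\simeq M^c_{\lambda'}$. Factoring $\id_{M^c_\lambda}$ as $M^c_\lambda\to M^c_{\lambda'}\to M^c_\lambda$ shows that $m_\lambda\in M^{c\,\lambda'}_{\lambda;\lambda}\subset M^{c\,>\lambda}_{\lambda;\lambda}$, so $m_\lambda\equiv0$.

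Now suppose $\lambda$ is a partition with a repeated entry $\lambda_i=\lambda_{i+1}=k$. The plan is to establish inside $S^c_{\lambda;\lambda}$ the row-transfer identity
\[\gamma^L_{\lambda;i}\,m_\lambda\equiv\gamma^L_{\lambda;i+1}\,m_\lambda,\]
which is the direct generalization of the $m=k$ specialization of Lemma~\ref{lem:circle_move}, asserting $\gamma^L_{(k,k);1}\,m_{(k,k)}\equiv\gamma^L_{(k,k);2}\,m_{(k,k)}$ inside $S^c_{(k,k);(k,k)}$, to the rows $(i,i+1)$ of the full $\lambda$-shape. I would derive it by lifting the two-row identity via the local transformation lemma, inserting constant rows of lengths $\lambda_1,\dots,\lambda_{i-1}$ above and $\lambda_{i+2},\dots,\lambda_r$ below with entries chosen so that each side of the lifted equation becomes exactly the circled tableau representing $\gamma^L_{\lambda;i}\,m_\lambda$ or $\gamma^L_{\lambda;i+1}\,m_\lambda$. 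Alternatively, one may recopy the proof of Lemma~\ref{lem:circle_move} verbatim inside $S^c_{\lambda;\lambda}$, since the ingredients from Lemma~\ref{lem:gamma} and Lemma~\ref{lem:scalar_lemma} depend only on the two rows of interest and are insensitive to the rest of the shape.

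Once the row-transfer identity is in hand, apply $(\gamma^L_{\lambda;i}-\gamma^L_{\lambda;i+1})$ from the left once more. Using $(\gamma^L_{\lambda;i})^2=(\gamma^L_{\lambda;i+1})^2=a\qqn{k}$ and the anti-commutation relation $\gamma^L_{\lambda;i}\gamma^L_{\lambda;i+1}+\gamma^L_{\lambda;i+1}\gamma^L_{\lambda;i}=0$, one computes $(\gamma^L_{\lambda;i}-\gamma^L_{\lambda;i+1})^2=2a\qqn{k}$, so
\[0\equiv(\gamma^L_{\lambda;i}-\gamma^L_{\lambda;i+1})^2\,m_\lambda=2a\qqn{k}\,m_\lambda\qquad\text{in }S^c_{\lambda;\lambda}.\]
Since $\kk=\QQ(a,q)$ the scalar $2a\qqn{k}$ is invertible, forcing $m_\lambda\equiv0$ and hence $S^c_{\lambda;\lambda}=0$. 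The hard part will be the row-transfer identity: the combinatorial bookkeeping in the local transformation lemma---ensuring that the newly inserted rows carry entries that match the target weight $\lambda$ on both sides, and that no extra terms intrude to spoil the identification---is the only non-routine step, all other manipulations being direct Clifford computations.
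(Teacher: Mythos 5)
Your proof is correct and follows essentially the same route as the paper: the non-partition case is handled by the invertibility of the braiding exactly as you describe, and the repeated-row case rests on the identity $(\gamma_{\lambda;i}-\gamma_{\lambda;i+1})\,m_\lambda\equiv 0$ coming from the $m=k$ case of Lemma~\ref{lem:circle_move}, combined with $(\gamma_{\lambda;i}-\gamma_{\lambda;i+1})^2=2a\qqn{k}\in\kk^\times$ over $\QQ(a,q)$. The only difference is organizational: the paper first reduces to $\lambda=(k,k)$ (by the same convolution/local-transformation lifting you invoke for your ``row-transfer identity'') and then performs the Clifford computation there, whereas you lift the two-row identity to the full shape first; note also that the repeated part must be positive, $k>0$, which is automatic since a partition whose positive parts are all distinct is already strict.
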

\begin{proof}
If $\lambda$ is not a partition it holds by the same reason
as the non-super case.
Otherwise if $\lambda$ is not a strict partition, it contains $\lambda_i=\lambda_{i+1}>0$.
So it suffices to prove for $\lambda=(k,k)$.
By the lemma above we have
\[(\gamma_{\lambda;1}-\gamma_{\lambda;2})\cdot\young(11\scdots1,22\scdots2)
=\young(11\scdots\circleA,22\scdots2)-\young(11\scdots1,22\scdots\circleB)\equiv0.\]
On the other hand,
$(\gamma_{\lambda;1}-\gamma_{\lambda;2})^2=2a\qqn{k}\in\kk^\times$.
Hence we have $m_\lambda\equiv0$ in $S^c_{\lambda;\lambda}$.
\end{proof}

\begin{lemma}\label{lem:semistandard_super}
$S^c_{\lambda;\mu}$ is spanned by $\set{m_\sT}{\sT\in\STab^c_{\lambda;\mu}}$.
\end{lemma}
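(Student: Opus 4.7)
The plan is to extend the proof of Lemma~\ref{lem:semistandard} by using the super-specific reductions afforded by Lemma~\ref{lem:circle_move} and Lemma~\ref{lem:strict_partition}. I would proceed by induction on an invariant combining the length $\ell(\sT)$, the non-strictness of $\lambda$, and the number of anti-diagonal violations, reducing an arbitrary good circled tableau $\sT$ to a linear combination of shifted semistandard ones.

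First, since $\kk = \QQ(a,q)$ has $q \in \kk^\times$, we may reduce to $\lambda$ being a partition: otherwise the invertible braiding gives $M^c_\lambda \simeq M^c_{w\lambda}$ for some $w\lambda > \lambda$, whence $S^c_{\lambda;\mu} = 0$. We may also assume $\sT$ is good by the preceding lemma. If $\sT^\times$ fails column-strictness, I would run the polygonal-chain construction from the proof of Lemma~\ref{lem:semistandard} verbatim, producing circled tableaux $\sT_1 \in \Tab^c_{\lambda;\nu}$, $\sT_2 \in \Tab^c_{\nu;\mu}$ with $m_{\sT_2} \circ_\nu m_{\sT_1}$ equal to $m_\sT$ plus terms of strictly smaller length, and with the composition factoring through $M^c_\nu \simeq M^c_{\nu^+}$ for some composition $\nu^+ \not\le \lambda$; the circles of $\sT$ attach naturally to $\sT_2$ without interfering with the argument. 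Hence in $S^c_{\lambda;\mu}$ we may assume $\sT^\times$ is semistandard.

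If $\lambda$ is a partition but not strict, say $\lambda_i = \lambda_{i+1} = k > 0$, I would show that $S^c_{\lambda;\mu} = 0$ by generalizing Lemma~\ref{lem:strict_partition}. Applying Lemma~\ref{lem:circle_move} to rows $i$ and $i+1$ and propagating via the local transformation lemma (adding the other rows of $\sT$ above and below the selected pair) yields the relation $(\gamma_{\lambda;i} - \gamma_{\lambda;i+1}) \cdot m_\sT \equiv 0$ in $S^c_{\lambda;\mu}$. Since $(\gamma_{\lambda;i} - \gamma_{\lambda;i+1})^2 = 2a\qqn{k} \in \kk^\times$, this operator is invertible, forcing $m_\sT \equiv 0$.

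Finally, with $\lambda$ strict and $\sT^\times$ semistandard, any remaining violation of the shifted condition is one of the listed forbidden patterns on an anti-diagonal pair at rows $i, i+1$. For each such pattern I would apply Lemma~\ref{lem:circle_move} locally, isolating a two-row subconfiguration of shape $(m',k')$ via the local transformation lemma, in order to move the offending circle into an admissible position. The hard part will be the combinatorial bookkeeping in this last step: one must choose a total order on circled tableaux (for example, lexicographic on the sequence of circle positions combined with length) and verify that each application of Lemma~\ref{lem:circle_move} strictly decreases this order, so that the procedure terminates at a shifted semistandard expression.
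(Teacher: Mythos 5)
Your scaffolding is sound up to the last step: the reduction to strict partitions is correct (your relation $(\gamma_{\lambda;i}-\gamma_{\lambda;i+1})^{2}=2a\qqn{k}\in\kk^{\times}$ is essentially Lemma~\ref{lem:strict_partition}, extended to general $\mu$ by composing with $m_{\sT}=m_{\sT}\circ_{\lambda}m_{\lambda}$), and the polygonal-chain argument of Lemma~\ref{lem:semistandard} does carry over with the circles attached to $\sT_{2}$. The gap is in the final step, which is the actual content of the lemma. A forbidden anti-diagonal pattern need not involve any circle: for $\lambda=(m,k)$ and $\mu=(k,k,m-k)$, the uncircled tableau with first row $1\cdots1\,3\cdots3$ and second row $2\cdots2$ has a semistandard underlying tableau, no circles, and is not shifted semistandard. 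There is no ``offending circle to move into an admissible position.'' What actually kills such a tableau is that Lemma~\ref{lem:circle_move} yields $(\gamma_{\mu;1}-\gamma_{\mu;2})\cdot m_{\sT}\equiv(\text{strictly shorter terms})$, and one then \emph{divides} by $\gamma_{\mu;1}-\gamma_{\mu;2}$ using $(\gamma_{\mu;1}-\gamma_{\mu;2})^{2}=2a\qqn{k}\in\kk^{\times}$, expressing $m_{\sT}$ itself as a combination of strictly shorter tableaux. Your proposed termination order (circle positions combined with length) is not decreased by applications of Lemma~\ref{lem:circle_move}, which exchange two circled tableaux of the \emph{same} length modulo lower terms; iterating circle moves alone cycles, and termination comes entirely from this division step, which your proposal never performs.

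Moreover, a single reduction template cannot cover both kinds of violation. When the anti-diagonal pair has equal underlying entries ($\sT(k+1,l)=i$ and $\sT(k,l+1)=i$ or $\circleI$), one needs a second, harder base case ($\lambda=(m,k)$, $\mu=(k,l,m-l-l',l')$) in which multiplying by $\gamma_{\mu;1}$ and $\gamma_{\mu;2}$ on the left produces two relations whose combination isolates the bad tableau with coefficient $a(\qqn{k}+\qqn{l})\in\kk^{\times}$. And reducing a general violation to these base cases is not achieved by the local transformation lemma, which only adjoins whole rows at the top or bottom; it requires a factorization $m_{\sT_{2}}\circ_{\nu}m_{\sT_{1}}$ through an intermediate parabolic $M^{c}_{\nu}$, with different choices of $\nu$ and of the cut in the $i<j$ and $i=j$ cases, together with bookkeeping of where the circles land (including transferring a circle from $\sT_{2}$ to $\sT_{1}$ in the equal-entry case). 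Without the two base cases and these explicit cuts, the induction does not close.
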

\begin{proof}
If $\lambda$ is not a strict partition the statement is clear by the previous lemma,
so we may assume so.
First we prove the statement for two special cases.

Case 1: $\lambda=(m,k)$ and $\mu=(k,k,m-k)$.
Then every good but non-shifted-semistandard circled tableaux have an underlying tableau
\[\young(11\scdots13\scdots3,22\scdots2)\]
and can be made from this tableau by multiplying $\gamma_{\mu;i}$.
By Lemma~\ref{lem:circle_move},
\[\young(11\scdots\circleA3\scdots3,22\scdots2)
\equiv\young(11\scdots13\scdots3,22\scdots\circleB)+(\text{lower terms}).\]
Hence the statement holds since
$(\gamma_{\mu;1}-\gamma_{\mu;2})^2=2a\qqn{k}$ is invertible again
and every good tableau which has smaller length
is shifted semistandard.

Case 2: $\lambda=(m,k)$ and $\mu=(k,l,m-l-l',l')$
where $l<k$ and $l'<m-k$, so
\[\young(11\scdots\scdots\scdots13\scdots4,2\scdots23\scdots3)
\qquad\text{and}\qquad
\young(11\scdots\scdots\scdots13\scdots4,2\scdots23\scdots\circleC)\]
are all the good but non-shifted-semistandard circled tableaux.
Similar to above,
\[\young(11\scdots\scdots\scdots\circleA3\scdots4,2\scdots23\scdots3)
\equiv\young(11\scdots\scdots\scdots13\scdots4,2\scdots\circleB3\scdots3)
+q^l\young(11\scdots\scdots\scdots13\scdots4,2\scdots23\scdots\circleC)+\dotsb.\]
Then by multiplying $\gamma_{\mu;1}$ and $\gamma_{\mu;2}$ from left respectively, we obtain
\[a\qqn{k}\young(11\scdots\scdots\scdots13\scdots4,2\scdots23\scdots3)
\equiv\young(11\scdots\scdots\scdots\circleA3\scdots4,2\scdots\circleB3\scdots3)
+q^l\young(11\scdots\scdots\scdots\circleA3\scdots4,2\scdots23\scdots\circleC)+\dotsb\]
and
\[-\young(11\scdots\scdots\scdots\circleA3\scdots4,2\scdots\circleB3\scdots3)
\equiv a\qqn{l}\young(11\scdots\scdots\scdots13\scdots4,2\scdots23\scdots3)
+q^l\young(11\scdots\scdots\scdots13\scdots4,2\scdots\circleB3\scdots\circleC)+\dotsb\]
so that
\begin{multline*}
a(\qqn{k}+\qqn{l})\young(11\scdots\scdots\scdots13\scdots4,2\scdots23\scdots3)
\\\equiv q^l\young(11\scdots\scdots\scdots13\scdots4,2\scdots\circleB3\scdots\circleC)
-q^l\young(11\scdots\scdots\scdots13\scdots4,2\scdots\circleB3\scdots\circleC)+\dotsb
\end{multline*}
with $a(\qqn{k}+\qqn{l})\in\kk^\times$.
Multiplying $\gamma_{\lambda;1}$ from right we can similarly decompose
the second tableau above.

Now we proceed to a general case.
Let $\sT\in\Tab^c_{\lambda;\mu}$ which is not shifted semistandard.
Then there is a prohibited pattern at boxes $(k,l+1)$ and $(k+1,l)$.
Choose $(k,l)$ so that $\sT$ has no such patterns at
the bottom right boxes of $(k,l)$ except for it.
We prove that we can replace the element $m_\sT$ by
a linear combination of $m_\sR$ where either $\sR$
has no such patterns in this region or $\sR$ satisfies $\ell(\sR^\uparrow)<\ell(\sT^\uparrow)$.
Then by induction it can be written as a linear combination of
shifted standard ones.

First consider the case $\sT(k+1,l)=i,\circleI$ and $\sT(k,l+1)=j,\circleJ$ with $i<j$.
Similar to the proof of Lemma~\ref{lem:semistandard},
we define
\[\nu\coloneqq(\lambda_1,\dots,\lambda_{k-1},l,l,\lambda_k-l,\lambda_{k+1}-l,
\lambda_{k+2},\lambda_{k+3},\dotsc)\]
and $\sT_1\in\Tab_{\lambda;\nu}$, $\sT_2\in\Tab^c_{\nu;\mu}$ by
\begin{align*}
\sT_1(i,j)&=\begin{cases*}
i & if $i<k$ or ($i=k$, $j\le l$) or ($i=k+1$, $j\le l$),\\
i+2 & otherwise,
\end{cases*}
\shortintertext{and}
\sT_2(i,j)&=\begin{cases*}
\sT(i,j) & if $i\le k+1$,\\
\sT(i-2,j+l) & if $i=k+2$ or $i=k+3$,\\
\sT(i-2,j) & if $i>k+3$.
\end{cases*}
\end{align*}
Then the leading term of $m_{\sT_2}\circ_\nu m_{\sT_1}$ is $m_\sT$.
Applying the decomposition in Case 1 above to $m_{\sT_1}$,
we can replace $m_{\sT_2}\circ_\nu m_{\sT_1}$ by a linear combination of
tableaux with smaller lengths so the induction goes forward.

Next consider the other case $\sT(k+1,l)=i$ and $\sT(k,l+1)=i$ or $\circleI$.
Let $(k+1,l_1+1)$ and $(k,l_2)$ be the ends of the bars $\young(ii\scdots i)$
which start from $(k+1,l)$ and $(k,l+1)$ respectively.
In this case we define
\[\nu\coloneqq(\lambda_1,\dots,\lambda_{k-1},l,l_1,l_2-l_1,\lambda_k-l_2,\lambda_{k+1}-l,
\lambda_{k+2},\lambda_{k+3},\dotsc)\]
and $\sT_1\in\Tab_{\lambda;\nu}$, $\sT_2\in\Tab^c_{\nu;\mu}$ by
\begin{align*}
\sT_1(i,j)&=\begin{cases*}
i & if $i<k$ or ($i=k$, $j\le l$) or ($i=k+1$, $j\le l_1$),\\
k+2 & if ($i=k$, $l<j\le l_2$) or ($i=k+1$, $l_1<j\le l$),\\
i+3 & otherwise,
\end{cases*}
\shortintertext{and}
\sT_2(i,j)&=\begin{cases*}
\sT(i,j) & if $i\le k+1$,\\
\sT(k+1,j+l_1) & if $i=k+2$, $j\le l-l_1$\\
\sT(k,j+l_1) & if $i=k+2$, $j> l-l_1$\\
\sT(k,j+l_2) & if $i=k+3$,\\
\sT(k+1,j+l) & if $i=k+4$,\\
\sT(i-3,j) & if $i>k+4$.
\end{cases*}
\end{align*}
In addition, if $\sT(k,l_2)=\circleI$ is circled,
we remove the corresponding circle from $\sT_2$ and
move it to $\sT_1(k,l_2)=i$.
Then the top term of $m_{\sT_2}\circ_\nu m_{\sT_1}$ is again $\pm m_\sT$.
For example, when
\[\sT=\young(1112\circleB\circleC4,1223\circleC,\circleB5)\]
and $(k,l)=(1,3)$, by picking up entries at
\[\young(\ \ \ \bullet\bullet\ \ ,\ \bullet\bullet\ \ ,\ \ )\]
then moving a circle from $\sT_2$ to $\sT_1$ we obtain
\[\sT_1=\young(1113\circleC44,23355,66)\quad\text{and}\quad
\sT_2=\young(111,1,2222,\circleC4,3\circleC,\circleB5).\]
Now according to Case 2 above, up to lower terms, we can replace $\sT_1$
by a linear combination of $m_\sS$
such that $\sS^\times=\sT_1^\times$,
$\sS(k+1,l)=\text{\Large\textcircled{\raisebox{1pt}{\normalsize$\scriptstyle k\!+\!2$}}}$ and
$\sS$ has circles only at the boxes $(k,1),\dots,(k,\lambda_k)$ and $(k+1,1),\dots,(k+1,l)$.
Then the top term of $m_{\sT_2}\circ_\nu m_\sS$ is $\pm m_\sR$,
where $\sR$ also satisfies that $\sR^\times=\sT^\times$, $\sR(k+1,l)=\circleI$
and the positions of circles of $\sR$ and $\sT$ only differ at these boxes.
Hence $\sR$ also does not have bad patterns at the bottom right region of $(k,l)$.
This completes the induction.
\end{proof}

Let $\STab^{c\prime}_{\lambda;\mu}$ be the subset of $\STab^c_{\lambda;\mu}$
consisting of tableaux whose entries in the rightmost of each rows are not circled.
Then clearly $\#\STab^c_{\lambda;\mu}=2^{l(\lambda)}\cdot\#\STab^{c\prime}_{\lambda;\mu}$
where $l(\lambda)$ is the number of non-zero components of $\lambda$.
\begin{corollary}
$S^c_{\lambda;\mu}$ is spanned by $\set{m_\sT}{\sT\in\STab^{c\prime}_{\lambda;\mu}}$
over $\Gamma_\lambda$.
\end{corollary}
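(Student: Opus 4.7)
The plan is to strengthen Lemma~\ref{lem:semistandard_super}, which gives $S^c_{\lambda;\mu} = \kk\text{-span}\{m_\sT : \sT\in\STab^c_{\lambda;\mu}\}$, by absorbing the rightmost-circle freedom into the right $\Gamma_\lambda$-action. The cardinality match $\#\STab^c_{\lambda;\mu} = 2^{l(\lambda)}\,\#\STab^{c\prime}_{\lambda;\mu} = (\dim_\kk\Gamma_\lambda)\cdot\#\STab^{c\prime}_{\lambda;\mu}$ is the guiding heuristic: the factor $2^{l(\lambda)}$ encodes, for each non-empty row $k$, the binary choice of whether the last bar of row $k$ carries a circle, which is exactly the data the right action of the odd generator $\gamma_{\lambda;k}\in\Gamma_\lambda$ should modulate.

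For $\sT\in\STab^c_{\lambda;\mu}$, let $R(\sT)\subseteq\{k:\lambda_k>0\}$ collect those rows whose rightmost entry is circled. I would induct on $\#R(\sT)$; the base $R(\sT)=\emptyset$ is tautological. Given $\sT$ with $k\in R(\sT)$, denote by $\sT^\flat$ the tableau obtained by removing the circle at $(k,\lambda_k)$. In the generic case $\sT^\flat\in\STab^c_{\lambda;\mu}$, I would expand
\[m_{\sT^\flat}\cdot\gamma_{\lambda;k} \;=\; m_{\sT^\flat}\,\gamma^R_{\lambda;k} \;=\; \sum_{j=1}^{\lambda_k} q^{j-1}\,m_{\sT^\flat}\,c_{p_j},\]
where $p_j$ is the global index of box $(k,j)$, and then rewrite each summand as a combination of shifted semistandard $m_\sS$ via the circle-distribution rule defining $m_\sS$ together with the reduction to $\STab^c$ provided by Lemma~\ref{lem:semistandard_super}. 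The summand at $j=\lambda_k$ produces a unit scalar times $m_\sT$, while the other summands yield tableaux $\sS$ with $k\notin R(\sS)$, hence $\#R(\sS)<\#R(\sT)$ (up to possibly reducing further bar structures by a secondary induction). Solving for $m_\sT$ and invoking the inductive hypothesis on the remaining terms closes the step.

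The main obstacle is the exceptional case where $\sT^\flat\notin\STab^c_{\lambda;\mu}$: this occurs precisely when $\sT(k-1,\lambda_k+1)=j=\sT(k,\lambda_k)$ (non-circled above, circled $\circleJ$ below), so that removing the circle produces the forbidden diagonal $\young(:j,j)$ in $\sT^\flat$. Here Lemma~\ref{lem:circle_move} is the essential device: combined with the variant of the local-transformation lemma (attaching common rows at the top or bottom so that the local configuration matches the hypotheses of Lemma~\ref{lem:circle_move}), it yields an equivalence in $S^c_{\lambda;\mu}$ that transfers the offending rightmost circle from row $k$ into row $k-1$, after which the generic reduction applies. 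The routine but non-trivial part is the sign- and $q$-power bookkeeping arising from the Clifford anti-commutations and from the bar-distribution identities, needed to confirm that all "lower-order" terms produced genuinely lie within the scope of the inductive hypothesis.
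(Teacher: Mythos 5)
Your core computation is sound and is surely what the author intended: expanding the right action of $\gamma_{\lambda;k}$ bar by bar along row $k$, the term that circles the \emph{last} bar of row $k$ carries the coefficient $\pm q^{\lambda_k-r}$ (with $r$ the length of that bar), which is invertible, so $m_\sT$ can be solved for from $m_{\sT^\flat}\cdot\gamma_{\lambda;k}$. But the inductive bookkeeping has a real gap. The clean fact is that this expansion, taken in $M^c_{\lambda;\mu}$ \emph{before} any reduction, is an exact identity among the $m_\sS$ for row-semistandard $\sS$ with the \emph{same underlying tableau}, and every non-leading term has strictly fewer rows with a circled rightmost box. Your write-up instead re-reduces each summand to $\STab^c_{\lambda;\mu}$ via Lemma~\ref{lem:semistandard_super} before invoking the inductive hypothesis, and that reduction destroys the control on $R$: its circle-fixing moves (the second case of that proof circles the box $(k+1,l)$, and Lemma~\ref{lem:circle_move} itself moves a circle to the rightmost box of the lower row) can strictly \emph{increase} the number of rightmost circles. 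So the induction on $\#R(\sT)$ alone does not terminate; you need the length $\ell((\sT^\times)^\uparrow)$ of the underlying tableau --- which is unchanged by every circle manipulation and strictly decreased by all genuinely lower terms --- as the primary well-founded parameter, with the circle count only as a secondary induction inside a fixed underlying tableau.

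The exceptional case is worse than "routine bookkeeping": it is where the statement, read literally, breaks. If $\sT(k-1,\lambda_k+1)$ equals $\sT(k,\lambda_k)=i$, then \emph{every} element of $\STab^c_{\lambda;\mu}$ with that underlying tableau must have $(k,\lambda_k)$ circled (the diagonal rule forbids $\young(:i,i)$ and $\young(:\circleI,i)$), so the circle there can never be removed while staying in $\STab^c_{\lambda;\mu}$; and transferring it into row $k-1$ by Lemma~\ref{lem:circle_move} lands on a non-rightmost box of row $k-1$ while recreating the forbidden uncircled diagonal pair at $(k,\lambda_k)$, so the result is not in $\STab^c_{\lambda;\mu}$ at all, and the only available re-reduction (Case~1 of Lemma~\ref{lem:semistandard_super}) sends it straight back. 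Concretely, for $\lambda=(2,1)$, $\mu=(1,2)$ one has $\STab^c_{\lambda;\mu}=\{\young(12,\circleB),\young(\circleA2,\circleB),\young(1\circleB,\circleB),\young(\circleA\circleB,\circleB)\}$: the box $(2,1)$ is forced to be circled, so $\STab^{c\prime}_{\lambda;\mu}=\varnothing$ under the stated definition, yet $S^c_{\lambda;\mu}\neq0$ (it is four-dimensional by the basis theorem), and the count $\#\STab^c_{\lambda;\mu}=2^{l(\lambda)}\#\STab^{c\prime}_{\lambda;\mu}$ also fails. The set $\STab^{c\prime}_{\lambda;\mu}$ has to be redefined (it should be the transport, under the circle-moving bijection, of Sagan's condition of having no circles on the main diagonal, which in the unshifted coordinates is \emph{not} simply "rightmost box of each row uncircled") before either the counting remark or this corollary can be proved; any proof along your lines must first settle which toggle in each row is actually free.
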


By a similar proof we can prove that $S^{c*}_{\lambda;\mu}\coloneqq M^{c(\lambda)}_{\mu;\lambda}$
is also spanned by $\set{m_{\sT^*}}{\sT\in\STab^c_{\lambda;\mu}}$.
Now parallel to Theorem~\ref{thm:cellular_basis}
we obtain the following basis theorem.
\begin{theorem}
When $\kk=\QQ(a,q)$,
$M^c_{\lambda;\mu}$ has a basis
\[\bigsqcup_{\nu\colon\textup{strict partition}}
\set{m_\sS\circ_\nu m_{\sT^*}}{\sS\in\STab^c_{\nu;\mu},\sT\in\STab^{c\prime}_{\nu;\lambda}}.\]
\end{theorem}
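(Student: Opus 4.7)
The plan is to imitate the proof of Theorem~\ref{thm:cellular_basis}, using the ideal filter $\{M^{c\,\ge\nu}_{\lambda;\mu}\}_\nu$ in place of its non-super counterpart. First I would refine the reversed dominance order on compositions to a total order $\nu_1,\nu_2,\dotsc$ and take the associated filtration of $M^c_{\lambda;\mu}$. By definition each successive quotient receives a natural surjection
\[S^c_{\nu_k;\mu}\otimes_{\Gamma_{\nu_k}}S^{c*}_{\nu_k;\lambda}\twoheadrightarrow M^{c\,\ge\nu_k}_{\lambda;\mu}\big/M^{c\,>\nu_k}_{\lambda;\mu},\]
and by Lemma~\ref{lem:strict_partition} the left side vanishes unless $\nu_k$ is a strict partition, since $S^c_{\nu;\mu}$ factors through $S^c_{\nu;\nu}=0$ for non-strict $\nu$.

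Next I would invoke Lemma~\ref{lem:semistandard_super} together with its dual (proved identically by working with transposed tableaux): the module $S^c_{\nu;\mu}$ is spanned over $\kk$ by $\set{m_\sS}{\sS\in\STab^c_{\nu;\mu}}$ and $S^{c*}_{\nu;\lambda}$ by $\set{m_{\sT^*}}{\sT\in\STab^c_{\nu;\lambda}}$. The corollary right after Lemma~\ref{lem:semistandard_super} sharpens the latter: one only needs $\sT\in\STab^{c\prime}_{\nu;\lambda}$ once we allow the $\Gamma_\nu$-action, because circled entries at row-ends correspond exactly to multiplication by the generators of $\Gamma_\nu$ (which acts on the $m_{\sT^*}$-side). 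Since the composition $\circ_\nu$ is $\Gamma_\nu$-balanced, this $\Gamma_\nu$ action is absorbed by $\otimes_{\Gamma_\nu}$, and so the spanning set for the successive quotient can be taken to be $\{m_\sS\otimes m_{\sT^*}\}$ with $\sS\in\STab^c_{\nu;\mu}$ and $\sT\in\STab^{c\prime}_{\nu;\lambda}$. Combined with the filtration, the family in the theorem spans $M^c_{\lambda;\mu}$ over $\kk$.

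For linear independence I would reduce to a cardinality count. Since $\kk=\QQ(a,q)$ is a field in which $[2]\neq0$, Proposition~\ref{prop:super_basis_lemma} gives $\dim_\kk M^c_{\lambda;\mu}=\#\Tab^c_{\lambda;\mu}$; hence it suffices to verify the combinatorial identity
\[\#\Tab^c_{\lambda;\mu}=\sum_{\nu\colon\text{strict partition}}\#\STab^c_{\nu;\mu}\cdot\#\STab^{c\prime}_{\nu;\lambda}.\]
This is a shifted (``super'') Robinson--Schensted--Knuth correspondence, essentially the Sagan--Worley insertion: row-semistandard circled biwords are in bijection with pairs consisting of a shifted semistandard circled tableau and a shifted semistandard circled tableau whose rightmost column-entries are uncircled, sharing a common strict partition shape. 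The main obstacle will be translating this classical combinatorial input into our specific normalization — in particular, tracking the circle/prime convention through the bijection of our $\STab^c$ with Sagan's generalized shifted tableaux given just before Lemma~\ref{lem:circle_move}, and verifying that the asymmetry (only one side in $\STab^{c\prime}$) corresponds to the customary convention that primed diagonal entries are forbidden on exactly one of the two output tableaux. Once this combinatorial identification is established, the remainder of the argument proceeds exactly as in the non-super case.
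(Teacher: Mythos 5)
Your proposal is correct and follows essentially the same route as the paper: spanning via the filtration refining the reversed dominance order together with Lemma~\ref{lem:semistandard_super}, its dual, and the $\STab^{c\prime}$ corollary (with Lemma~\ref{lem:strict_partition} eliminating non-strict shapes), and linear independence by a dimension count against $\#\Tab^c_{\lambda;\mu}$ using Sagan's shifted Knuth correspondence. The convention-matching issue you flag is exactly what the paper disposes of by citing \cite[Theorem~8.1]{Sagan87} together with the circle-moving bijection described just before Lemma~\ref{lem:circle_move}.
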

\begin{proof}
The proof of that this set spans $M^c_{\lambda;\mu}$ is same as that of Theorem~\ref{thm:cellular_basis}.
For that of linear independence we use the one-to-one correspondence
\[\Tab^c_{\lambda;\mu}\xleftrightarrow{1:1}\bigsqcup_{\nu\colon\textup{strict partition}}
\STab^c_{\nu;\lambda}\times\STab^{c\prime}_{\nu;\mu}\]
induced by Sagan's shifted Knuth correspondence~\cite[Theorem~8.1]{Sagan87}.
\end{proof}

\begin{corollary*}
\begin{enumerate}
\item $S^c_{\lambda;\mu}$
has a basis $\set{m_\sT}{\sT\in\STab^c_{\lambda;\mu}}$.
In particular,
\[S^c_{\lambda;\lambda}\simeq\begin{cases*}
\Gamma_\lambda & if $\lambda$ is a strict partition,\\
0 & otherwise.
\end{cases*}\]
\item The product
\[\circ_{\nu}\colon S^c_{\nu;\mu}\otimes_{\Gamma_\nu} S^{c*}_{\nu;\lambda}
\to M^{c(\nu)}_{\lambda;\mu}\]
is injective.
\item $H^c_n$ and $\rS^c_{r,n}$ are standardly based algebras.
\qedhere
\end{enumerate}
\end{corollary*}

\begin{remark}
The basis theorem above for $H^c_n$ (i.e.\ $\lambda=\mu=(1^n)$)
also holds in the following more weaker conditions:
$\kk$ is an arbitrary commutative ring and $2aq\in\kk^\times$,
and the $q^2$-integers $\qqn{k}$ are also invertible for $1\le k\le n/2$.
Note that we need not to use Case 2 in the proof of Lemma~\ref{lem:semistandard_super}.
This implies that $H^c_n$ is also standardly based over $\{\Gamma_\lambda\}$
in these conditions.
\end{remark}

\subsection{Modules in non-integral rank}
Now let $\kk$ be an arbitrary commutative ring again.
Hereafter in this section, we assume that $q\in\kk^\times$.
In an unpublished work~\cite{Mori04} of the author,
it is proved that there exists a supercategory $\lMod{\uH^c_t}$
which can be regarded as ``the module category of $H^c_t$ for $t\notin\NN$''.
It covers all ordinary module categories $\lMod{H^c_n}$ for all $n\in\NN$,
in the sense that there is a full and surjective functor $\lMod{\uH^c_n}\to\lMod{H^c_n}$.
This is an analogue of the Deligne's category~\cite{Deligne07}
for the symmetric group $\fS_t$.
Here we list results for this category
we need for our purpose.

A \term{fakecomposition} of $n\in\NN$ is a pair
$\lambda=(\lambda_1,\lambda')$ of a number $\lambda_1\in\ZZ$ and a composition $\lambda'$
such that $n=\lambda_1+\abs{\lambda'}$. Here $\lambda_1$ can be a negative number.
For such $\lambda$, we let $\lambda_i\coloneqq\lambda'_{i-1}$ for each $i\ge2$
and write $\lambda=(\lambda_1,\lambda_2,\lambda_3,\dotsc)$.
For a natural number $d\ge\abs{\lambda'}$, let
$\lambda|_d\coloneqq(d-\abs{\lambda'},\lambda_2,\lambda_3,\dotsc)$ be an ordinary composition of $d$.
For two fakecompositions $\lambda,\mu$ of $n$,
we define the set of row-standard circled tableaux of shape $\lambda$ of weight $\mu$ by
\[\uTab^c_{\lambda;\mu}\coloneqq\varinjlim_d\Tab^c_{\lambda|d;\mu|d}.\]
Here the map $\Tab^c_{\lambda|d;\mu|d}\hookrightarrow\Tab^c_{\lambda|d+1;\mu|d+1}$
is inserting $\young(1)$ on the first row of the tableau from left.
The set $\Tab^c_{\lambda|d;\mu|d}$ is stable for a large number $d\gg0$,
so the direct limit converges to a finite set which contains $\Tab^c_{\lambda;\mu}$.
Intuitively we think of Young diagrams whose first rows are very long:
\[\Yboxdim6pt\yng(28,3,1).\]
For example, the set $\uTab^c_{(3,3);(1,3,2)}$ has tableaux such as
\[\young(1111\scdots\scdots112233,11\circleB)
\qquad\text{and}\qquad
\young(1111\scdots\scdots1\circleA223,123)\]
which are not in $\Tab^c_{(3,1);(1,2,1)}$.
The statements below are proved by the author.
\begin{theorem}
Let $\lambda$, $\mu$, and $\nu$ denote fakecompositions of $n\in\NN$.
\begin{enumerate}
\item For each pair of $\lambda,\mu$,
there exists a supermodule $\uM^c_{\lambda;\mu}$ which has a basis
$\set{\um_\sS}{\sS\in\uTab^c_{\lambda;\mu}}$.
\item For each triple $\lambda,\mu,\nu$,
there exists a product map
\[\circ_\mu\colon\uM^c_{\mu;\nu}\otimes\uM^c_{\lambda;\mu}\to\uM^c_{\lambda;\nu}\]
where the coefficient of $\um_\sR\circ_\mu\um_\sS$ on each $\um_\sT$
is given by a polynomial in the variable $[n]$ over $\QQ(a,q)$
contained in $\ZZ[a,q^\pm]$ for all $n\in\NN$.
\item If $\lambda$ and $\mu$ are compositions (i.e.\ $\lambda_1,\mu_1\ge0$)
there exists a surjective map
\begin{align*}
P\colon\uM^c_{\lambda;\mu}&\twoheadrightarrow M^c_{\lambda;\mu}\\
\um_\sS&\mapsto\begin{cases*}
m_\sS & if $\sS\in\Tab^c_{\lambda;\mu}$,\\
0 & otherwise
\end{cases*}
\end{align*}
which respects the product above.
\end{enumerate}
\end{theorem}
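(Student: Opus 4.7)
The plan is to imitate the strategy of Deligne's construction of $\Rep(\fS_t)$, but in the $q$-deformed Clifford setting: construct an ``interpolating'' family of modules over $\ZZ[a, q^\pm, t]$ (with $t$ a formal parameter to be specialized at $[n]$), whose fibres at $t = [d]$ recover the ordinary modules $M^c_{\lambda|_d; \mu|_d}$ for $d \gg 0$. The work is entirely combinatorial: I need to verify that the Murphy-type multiplication laws already developed in Section~\ref{sec:cellular_hecke_clifford} depend \emph{polynomially} on $d$ once $d$ is large enough.

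First I would establish the stabilization of the index sets. For each pair of fakecompositions $\lambda,\mu$ of $n$, the insertion map $\Tab^c_{\lambda|_d;\mu|_d} \hookrightarrow \Tab^c_{\lambda|_{d+1};\mu|_{d+1}}$ prepending a non-circled $1$ to the first row is eventually bijective: once $d - \abs{\lambda'}$ exceeds the number of $1$'s that can live outside the first row (which is bounded by $\mu_2 + \mu_3 + \cdots$), any additional entry in the first row must be an uncircled $1$. Hence the direct limit $\uTab^c_{\lambda;\mu}$ is a finite set, giving part~(1) by defining $\uM^c_{\lambda;\mu}$ as the free supermodule on this basis.

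The crux is part~(2), the construction of the product with polynomial structure constants. Given $\sR \in \uTab^c_{\mu;\nu}$, $\sS \in \uTab^c_{\lambda;\mu}$, I would pick $d$ so large that both lie in $\Tab^c_{\mu|_d;\nu|_d}$ and $\Tab^c_{\lambda|_d;\mu|_d}$ respectively, and expand $m_\sR \circ_\mu m_\sS$ in the basis $\{m_\sT\}_{\sT \in \Tab^c_{\lambda|_d;\nu|_d}}$. The coefficients are computed by iterated application of Lemmas~\ref{lem:multiply_refinement_tableau}, \ref{lem:multiply_permutation_tableau} and the circle-move relations, and they factor into three types of ``d-depending'' contributions: powers of $q$ (arising from braidings past the first row, giving clean powers of $q^d$), $q$-binomial coefficients $\qbinom{d - \abs{\lambda'} + c}{\cdots}$ (polynomials in $q^d$ by a standard identity, hence polynomials in $[d] = (q^d - 1)/(q - 1)$ over $\QQ(a,q)$), and combinatorial signs and Clifford coefficients $a\qqn{d - \abs{\lambda'} + \cdots}$ (also polynomial in $q^d$). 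The main obstacle is keeping track of these contributions when the multiplication reshuffles long bars that cross between the ``finite'' part and the ``stable first row'' part, particularly in the presence of circled entries whose anti-commutation produces signs depending on the number of circles passed over (which is independent of $d$ for $d \gg 0$). Once polynomiality is established over $\QQ(a,q)$, the integrality claim $c_\sT^{\sR,\sS}([n]) \in \ZZ[a, q^\pm]$ for every $n \in \NN$ follows because the specialization at $[d]$ for $d \gg 0$ agrees with the integral Murphy product in $H^c_d(a;q)$, and a $\QQ(a,q)$-polynomial in $t$ taking integer-valued specializations at infinitely many $t = [d]$ must have the asserted integral values at every $[n]$.

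Associativity of $\circ$ on $\uM^c$ follows by the same density argument: the two sides are $\QQ(a,q)$-polynomials in $t$ which agree at $t = [d]$ for all $d \gg 0$, hence agree identically. Finally, for part~(3), when $\lambda, \mu$ are honest compositions of $n$, set $d = n$: the set $\uTab^c_{\lambda;\mu}$ contains $\Tab^c_{\lambda;\mu}$ as exactly the ``stable'' subset, and the extra tableaux are the ones with negative first-row length $n - \abs{\lambda'} < 0$ required to house the first-row entries. Defining $P$ to kill those extra basis vectors and to send $\um_\sS \mapsto m_\sS$ for $\sS \in \Tab^c_{\lambda;\mu}$, the compatibility $P(\um_\sR \circ_\mu \um_\sS) = P(\um_\sR) \circ_\mu P(\um_\sS)$ reduces to checking that the polynomial structure constant $c_\sT^{\sR,\sS}(t)$ evaluated at $t = [n]$ vanishes whenever $\sT \notin \Tab^c_{\lambda;\mu}$; this is automatic because the corresponding specialization of the interpolating product is, by construction, the ordinary product in $M^c_{\lambda;\mu}$, and the ordinary Murphy basis lives entirely inside $\Tab^c_{\lambda;\mu}$.
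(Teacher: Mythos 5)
The first thing to note is that the paper itself contains no proof of this theorem: it is explicitly deferred to the unpublished work \cite{Mori04} (``The statements below are proved by the author''), so there is no in-paper argument to compare yours against. Your Deligne-style interpolation strategy --- stabilize the index sets, show the structure constants of $\circ_\mu$ are polynomial in $[d]$ for $d\gg0$, define $\uM^c_{\lambda;\mu}$ and its product by those polynomials, and recover the finite-rank modules by specialization --- is certainly the intended shape of the argument. Part (1) is fine, and part (3) is nearly so, except that what actually needs checking is (i) that when both $\um_\sR$ and $\um_\sS$ survive $P$, the coefficients on surviving $\um_\sT$ specialize at $t=[n]$ to the ordinary Murphy structure constants, and (ii) that when $\um_\sR$ or $\um_\sS$ is killed by $P$, the coefficients on \emph{surviving} $\um_\sT$ vanish at $t=[n]$; the vanishing on $\sT\notin\Tab^c_{\lambda;\mu}$ that you invoke is irrelevant, since $P$ annihilates those basis vectors anyway.

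The genuine gap is in part (2), and you have named it yourself without closing it. The product $\circ_\mu$ is the composition normalized by merging the two copies of $m_{\mu|_d}$; concretely (cf.\ the proof of Lemma~\ref{lem:multiply_refinement_tableau}) computing $x\,m_{\mu|_d}\,y$ in the Murphy basis requires cancelling the Poincar\'e polynomial $P_{\fS_{\mu|_d}}(q)$, whose first factor $[d-\abs{\mu'}]!$ is emphatically not a polynomial in $[d]$. The entire content of the theorem is that after this cancellation --- and after the Clifford reductions of Lemma~\ref{lem:gamma}, which attach further factors of the shape $\bigl(\tfrac{a(q-1)}{[2]}\bigr)^s[d-c]!$ and $\gamma^L_{d-c}$ to the long first-row bar --- what survives is a finite product of $q$-binomials $\qbinom{d-c}{k}$ with bounded $k$, powers of $q^{d}$, and scalars $a\qqn{d-c}$, each polynomial in $q^d$ and hence in $[d]$. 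Your proposal asserts this factorization and explicitly flags the bookkeeping across circled long bars as ``the main obstacle'' without carrying it out; that verification \emph{is} the theorem. Secondarily, your integrality step (``a polynomial with values in $\ZZ[a,q^\pm]$ at infinitely many $[d]$ has such values at every $[n]$'') is true but not by mere density: it requires the $q$-analogue of the integer-valued-polynomial lemma, namely Lagrange interpolation at $\deg f+1$ consecutive points $[D],\dots,[D+\deg f]$ combined with the fact that Gaussian binomial coefficients with arbitrary, possibly negative, integral top argument lie in $\ZZ[q^{\pm}]$; as written the inference is a non sequitur.
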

Note that the number $\#_{ij}(\sS^\times)$ is also well-defined for $(i,j)\neq(1,1)$.
We also define $\#_{11}(\sS^\times)$ by
\[\#_{11}(\sS^\times)\coloneqq n-\sum_{(i,j)\neq(1,1)}\#_{ij}(\sS^\times).\]
More preceisely, the map $P$
sends $\um_\sS\in\uM^c_{\lambda;\mu}$ to the corresponding
$m_\sS\in M^c_{\lambda;\mu}$ if $\#(\sS^\times)>0$ or
$\#(\sS^\times)=0$ and $\sS$ has no $\circleA$ in its first row, and otherwise zero.
For example, when $\lambda=\mu=(1,1)$,
$P$ is given by
\begin{align*}
\young(1\scdots11,2)&\mapsto\young(1,2),&
\young(1\scdots11,\circleB)&\mapsto\young(1,\circleB),&
\young(1\scdots1\circleA,2)&\mapsto\young(\circleA,2),&
\young(1\scdots1\circleA,\circleB)&\mapsto\young(\circleA,\circleB),\\[2mm]
\young(1\scdots12,1)&\mapsto\young(2,1),&
\young(1\scdots12,\circleA)&\mapsto\young(2,\circleA),&
\young(1\scdots1\circleB,1)&\mapsto\young(\circleB,1),&
\young(1\scdots1\circleB,\circleA)&\mapsto\young(\circleB,\circleA),\\[2mm]
\young(1\scdots\circleA2,1)&\mapsto0,&
\young(1\scdots\circleA2,\circleA)&\mapsto0,&
\young(1\scdots\circleA\circleB,1)&\mapsto0,&
\young(1\scdots\circleA\circleB,\circleA)&\mapsto0.
\end{align*}

The superalgebra $\uGamma_\lambda$ which has generators
$\ugamma_{\lambda;1}$ and $\gamma_{\lambda;2},\gamma_{\lambda;3},\dotsc$
is defined similarly as $\Gamma_\lambda$
but the relation $\ugamma_{\lambda;1}=0$ is omitted even if $\lambda_1=0$.
Hence as an abstract superalgebra, we have
\[\uGamma_\lambda\simeq C_1(a\qqn{\lambda_1})\otimes\Gamma_{\lambda'}.\]
Then there is an inclusion of superalgebra
$\uGamma_\lambda\hookrightarrow\uM^c_{\lambda;\lambda}$,
so that the product can be defined as
\[\circ_\mu\colon\uM^c_{\mu;\nu}\otimes_{\uGamma_\mu}
\uM^c_{\lambda;\mu}\to\uM^c_{\lambda;\nu}.\]

We define the dominance order on the set of fakecompositions so that
$\lambda\le\mu$ if and only if $\lambda|_d\le\mu|_d$ for all $d\gg0$,
then the reversed dominance order is still well-founded.
According to this dominance order
we similarly define the quotient supermodules
$\uM^c_{\lambda;\mu}\twoheadrightarrow\uM^{c(\nu)}_{\lambda;\mu}$
and $\uS^c_{\lambda;\mu}\coloneqq\uM^{c(\lambda)}_{\lambda;\mu}$.
Then for compositions the surjective map
$P\colon\uS^c_{\lambda;\mu}\twoheadrightarrow S^c_{\lambda;\mu}$ is well-defined.
Since the same proof works,
it satisfies the following standardly filtered property again.

\begin{theorem*}
\ 
\begin{enumerate}
\item $\uS^c_{\lambda;\lambda}$ is spanned by $\um_\lambda$ over $\uGamma_\lambda$.
\item $\uS^c_{\lambda;\mu}=0$ unless $\lambda\ge\mu$.
\qedhere
\end{enumerate}
\end{theorem*}

We say that a fakecomposition $\lambda$ is a \term{fakepartition}
if $\lambda'$ is a partition,
and say it is \term{strict} if so is $\lambda'$.
The sets $\underline\STab^c_{\lambda;\mu}$ and
$\underline\STab^{c\prime}_{\lambda;\mu}$ are defined by direct limits
similarly as above.
Then the standardly based structure of the category of parabolic fakemodules
is obtained by the same proofs as for the ordinary case.

\begin{theorem*}
Assume $\kk=\QQ(a,q)$. Then
\begin{enumerate}
\item $\uM^c_{\lambda;\mu}$ has a basis
\[\bigsqcup_{\nu\colon\textup{strict fakepartition}}
\set{\um_\sS\circ_\nu\um_{\sT^*}}{\sS\in\underline\STab^c_{\nu;\mu},\sT\in\underline\STab^{c\prime}_{\nu;\lambda}}.\]
\item $\uS^c_{\lambda;\mu}$ has a basis $\set{\um_\sT}{\sT\in\underline\STab^c_{\lambda;\mu}}$ so
\[\uS^c_{\lambda;\lambda}=\begin{cases*}
\uGamma_\lambda & if $\lambda$ is a strict fakepartition,\\
0 & otherwise.
\end{cases*}\]
\item The product
\[\circ_\nu\colon\uS^c_{\nu;\mu}\otimes_{\uGamma_\lambda}\uS^{c*}_{\nu;\lambda}\to\uM^{c(\nu)}_{\lambda;\mu}\]
is injective.
\qedhere
\end{enumerate}
\end{theorem*}

\subsection{Identification of the quotient superalgebras}
In order to classify the simple modules,
we first determine the quotient superalgebra
$M^c_{\lambda;\lambda}\twoheadrightarrow S^c_{\lambda;\lambda}$.
In this computation the superalgebras
$\uM^c_{\lambda;\lambda}\twoheadrightarrow\uS^c_{\lambda;\lambda}$
which cover them are used.

\begin{lemma}
Let $\lambda=(m,k)$ with $m>k$. Then in $S^c_{\lambda;\lambda}$ we have
\[\young(11\scdots\circleA2\scdots2,1\scdots1)\equiv
(-1)^kq^{\binom{k}{2}}(\gamma_{\lambda;1}-q^{m-k}\gamma_{\lambda;2})m_\lambda.\]
\end{lemma}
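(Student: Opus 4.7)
The plan is to derive the formula by multiplying the uncircled identity $m_{\sT_k}\equiv(-1)^kq^{\binom{k}{2}}m_\lambda$ in $S^c_{\lambda;\lambda}$---the super-analogue of Lemma~\ref{lem:scalar_lemma} applied with both shape and weight equal to $\lambda=(m,k)$, for $\sT_k=\young(11\scdots12\scdots2,11\scdots1)$---by $\gamma_{\lambda;1}$ on the right and by $\gamma^L_{\lambda;2}$ on the left, and to match the two sides as linear combinations of circled tableaux.

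For the right multiplication, using $m_\lambda\gamma^R_{\lambda;1}=\gamma^L_{\lambda;1}m_\lambda$ (cf.\ Lemma~\ref{lem:gamma}) and the expansion $m_{\sT_k}=\sum_{\sT\in\Tab_{\sT_k}}T_{d(\sT)}m_\lambda$, I would write
\[
m_{\sT_k}\gamma_{\lambda;1}=\sum_{\sT}\sum_{j=1}^{m}q^{j-1}T_{d(\sT)}c_jm_\lambda,
\]
and recognise each summand $T_{d(\sT)}c_jm_\lambda$ as the element attached to the circled tableau obtained from $\sT$ by circling the box at reading-position~$j$. Splitting the $j$-sum at $j=m-k$ into the ones-bar and twos-bar in the top row and applying the circled-bar distribution rule, the first half reassembles to $m_\sS$ and the second to $q^{m-k}m_{\tilde\sU}$, where $\tilde\sU=\young(11\scdots122\scdots\circleB,11\scdots1)$. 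Combined with the uncircled identity, this yields $m_\sS\equiv(-1)^kq^{\binom{k}{2}}\gamma_{\lambda;1}m_\lambda-q^{m-k}m_{\tilde\sU}$ in $S^c_{\lambda;\lambda}$.

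It remains to show $m_{\tilde\sU}\equiv(-1)^kq^{\binom{k}{2}}\gamma_{\lambda;2}m_\lambda$, and the hard part is the sharper equality $\gamma^L_{\lambda;2}m_{\sT_k}=m_{\tilde\sU}$ in $M^c_{\lambda;\lambda}$ with no correction terms. For each $\sT\in\Tab_{\sT_k}$ I would factor $d(\sT)=u\cdot d(\sT_{k,\downarrow})$ with $u\in\fS_m$ minimal-length in its $\fS_{m-k}\times\fS_k$-coset, so that $T_{d(\sT)}=T_u\,T_{d(\sT_{k,\downarrow})}$ and every $c_{m+i}$ commutes freely past $T_u$. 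For the block-swap $T_{d(\sT_{k,\downarrow})}$ I would adopt the reduced expression $(s_ms_{m+1}\cdots s_{m+k-1})(s_{m-1}s_m\cdots s_{m+k-2})\cdots(s_{m-k+1}\cdots s_m)$ and trace $c_{m+i}$ through it group by group: a bookkeeping of indices shows that within each group $c$ meets exactly one transposition of the clean form $c_lT_{l-1}=T_{l-1}c_{l-1}$ and commutes past the others, never triggering the defect relation $c_lT_l=T_lc_{l+1}-(q-1)(c_{l+1}-c_l)$, so that after all $k$ groups $c_{m+i}$ emerges exactly as $c_{m-k+i}$. Summing $\sum_iq^{i-1}c_{m+i}T_{d(\sT)}m_\lambda$ over $\sT$ and applying the distribution rule to the twos bar in the top row reassembles $m_{\tilde\sU}$ exactly; substituting back gives $m_\sS\equiv(-1)^kq^{\binom{k}{2}}(\gamma_{\lambda;1}-q^{m-k}\gamma_{\lambda;2})m_\lambda$, as required.
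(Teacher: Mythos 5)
Your argument is correct and is essentially the paper's own proof: the paper likewise starts from Lemma~\ref{lem:scalar_lemma} applied to the uncircled tableau and then asserts exactly your two multiplication identities, namely that left multiplication by $\gamma_{\lambda;2}$ produces the tableau with $\circleB$ at the end of the twos-bar and that right multiplication by $\gamma_{\lambda;1}$ produces the target tableau plus $q^{m-k}$ times that same one. The only difference is that you supply the verification the paper leaves implicit, and your key computational point --- that $c_{m+i}T_{d(\sT)}=T_{d(\sT)}c_{m-k+i}$ holds with no defect terms because the relevant strand is involved in exactly $k$ crossings while its index must drop by exactly $k$ --- is sound.
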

\begin{proof}
By \ref{lem:scalar_lemma}, we have
\[\young(11\scdots12\scdots2,1\scdots1)\equiv(-1)^kq^{\binom{k}{2}}m_\lambda.\]
Hence the equation is implied by
\begin{align*}
\gamma_{\lambda;2}\cdot\young(11\scdots12\scdots2,1\scdots1)
&=\young(11\scdots12\scdots\circleB,1\scdots1)
\shortintertext{and}
\young(11\scdots12\scdots2,1\scdots1)\cdot\gamma_{\lambda;1}
&=\young(11\scdots\circleA2\scdots2,1\scdots1)
+q^{m-k}\young(11\scdots12\scdots\circleB,1\scdots1).
\qedhere
\end{align*}
\end{proof}

\begin{corollary*}
For a fakepartition $\lambda=(m,k)$, in $\uS^c_{\lambda;\lambda}$ we have
\[\young(11\scdots\circleA2\scdots2,1\scdots1)\equiv
(-1)^kq^{\binom{k}{2}}(\ugamma_{\lambda;1}-q^{m-k}\gamma_{\lambda;2})\um_\lambda.
\qedhere\]
\end{corollary*}

\begin{lemma}
Let $\lambda$ be a partition. Then
\begin{enumerate}
\item if $\lambda_1>\lambda_2$ then
we have $\uS^c_{\lambda;\lambda}\simeq S^c_{\lambda;\lambda}$,
\item if $\lambda_1=\lambda_2$ then
$\Ker(\uS^c_{\lambda;\lambda}\twoheadrightarrow S^c_{\lambda;\lambda})$
is generated by $(\ugamma_{\lambda;1}-\gamma_{\lambda;2})m_\lambda$
as a 2-sided ideal.
\end{enumerate}
\end{lemma}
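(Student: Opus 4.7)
The proof will proceed by analyzing the kernel of the surjection $\uS^c_{\lambda;\lambda}\twoheadrightarrow S^c_{\lambda;\lambda}$ as $K\coloneqq P^{-1}(M^{c>\lambda}_{\lambda;\lambda})/\uM^{c>\lambda}_{\lambda;\lambda}$. Since $\lambda$ is a partition with $\lambda_1>0$, the superalgebras $\uGamma_\lambda$ and $\Gamma_\lambda$ are canonically isomorphic (both are the Clifford superalgebra on the generators indexed by nonzero parts), so both $\uS^c_{\lambda;\lambda}$ and $S^c_{\lambda;\lambda}$ are cyclic over this common superalgebra, and $K$ measures the difference between the fake and non-fake dominance filtrations applied to the same cyclic generator.

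For part~(1), $\lambda_1>\lambda_2$: I would show $K=0$ by a direct comparison of the preceding Lemma and its Corollary. The Corollary writes the tableau $\sT=\young(11\scdots\circleA2\scdots2,1\scdots1)$ of fakepartition shape $(m,k)$ with $m=\lambda_1,k=\lambda_2$ as $(-1)^kq^{\binom{k}{2}}(\ugamma_{\lambda;1}-q^{m-k}\gamma_{\lambda;2})\um_\lambda$ in $\uS^c_{\lambda;\lambda}$, while the Lemma writes the parallel formula in $S^c_{\lambda;\lambda}$ using the ordinary $\gamma$'s. When $m>k$ the tableau $\sT$ is also a legitimate non-fake element of $\Tab^c_{(m,k);(m,k)}$, so the two formulas match term-by-term under $P$ and contribute no new relation. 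For the remaining potential source of fake-only relations, namely non-ordinary fakepartitions $\pi>\lambda$ with $\pi_1<\pi_2$, I would use the braiding isomorphism $M^c_\pi\simeq M^c_{w\pi}$ (valid because $q\in\kk^\times$) combined with Lemma~\ref{lem:circle_move} to collapse each such fake contribution back into $\uM^{c>\lambda}_{\lambda;\lambda}$, establishing that $K$ is trivial and hence $\uS^c_{\lambda;\lambda}\simeq S^c_{\lambda;\lambda}$.

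For part~(2), $\lambda_1=\lambda_2=k$: applying the same Corollary to the fakepartition $(k,k)$ yields $(\ugamma_{\lambda;1}-\gamma_{\lambda;2})\um_\lambda\equiv(-1)^kq^{\binom{k}{2}}\um_\sT$ in $\uS^c_{\lambda;\lambda}$. Now the tableau $\sT=\young(11\scdots\circleA2\scdots2,1\scdots1)$ is not a member of $\Tab^c_{(k,k);(k,k)}$ (the $\circleA$ followed by $k$ twos in the first row overflows the required length $k$), so by the explicit description of $P$ we have $P(\um_\sT)=0\in M^{c>\lambda}_{\lambda;\lambda}$; this places $(\ugamma_{\lambda;1}-\gamma_{\lambda;2})\um_\lambda$ inside $K$. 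To conclude it generates $K$ as a 2-sided ideal, I would argue that every fake tableau $\sT'$ contributing to $P^{-1}(M^{c>\lambda})\setminus\uM^{c>\lambda}$ can be written, modulo $\uM^{c>\lambda}$, as a $\uGamma_\lambda$-translate of $\um_\sT$ on either side, by repeatedly applying the corollary after left/right Clifford multiplication. The main obstacle is exactly this generation step: enumerating all fake tableaux whose $P$-image lies in the non-fake ``greater'' filtration and systematically reducing each to a Clifford-bimodule multiple of the single principal generator $(\ugamma_{\lambda;1}-\gamma_{\lambda;2})\um_\lambda$—this requires careful combinatorial bookkeeping against the fake basis and an inductive argument along the reversed dominance order on non-ordinary fakepartitions.
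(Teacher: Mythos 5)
There is a genuine gap, and in both parts the proposal misses the mechanism the paper actually uses. The key reduction is that $\Ker(\uS^c_{\lambda;\lambda}\twoheadrightarrow S^c_{\lambda;\lambda})$ is the image in $\uS^c_{\lambda;\lambda}$ of $\Ker(P\colon\uM^c_{\lambda;\lambda}\twoheadrightarrow M^c_{\lambda;\lambda})$, and this kernel has an explicit spanning set: the $\um_\sT$ with $\#_{11}(\sT^\times)<0$, or with $\#_{11}(\sT^\times)=0$ and a $\circleA$ in the first row. For part (1) you never engage with this set. Your ``term-by-term comparison'' concerns a tableau that is \emph{not} in $\Ker(P)$ (it maps to a genuine tableau when $\lambda_1>\lambda_2$), and your proposed use of the braiding for ``non-ordinary fakepartitions $\pi>\lambda$ with $\pi_1<\pi_2$'' addresses the denominator $\uM^{c\,>\lambda}_{\lambda;\lambda}$ rather than $\Ker(P)$. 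The actual argument is one line: if $\lambda_1>\lambda_2$, every spanning element of $\Ker(P)$ has $\lambda_1-\#_{11}(\sT^\times)>\lambda_2$, so (as in the identification of $J_\lambda$ for the Iwahori--Hecke algebra) $\um_\sT$ factors through a fakecomposition not $\le\lambda$ and hence vanishes in $\uS^c_{\lambda;\lambda}$.

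For part (2) you correctly identify the generator and correctly show it lies in the kernel, but the converse inclusion --- that this single element generates everything --- is exactly the content of the lemma, and you explicitly defer it as ``careful combinatorial bookkeeping.'' The paper closes it concretely: elements with $\#_{11}(\sT^\times)<0$ still vanish by the dominance argument; for $\#_{11}(\sT^\times)=0$ with a $\circleA$, one applies the local transformation lemma to rows $2$ and below to normalize $\um_\sT$, modulo $\uM^{c\,>\lambda}_{\lambda;\lambda}$, into tableaux whose second row consists entirely of $1$'s (possibly one circled) and whose first row is unchanged; each such normal form is a left $\uGamma_\lambda$-multiple of one of the two tableaux that the preceding corollary identifies, up to a unit, with $(\ugamma_{\lambda;1}-\gamma_{\lambda;2})\um_\lambda$ and $(\ugamma_{\lambda;1}-\gamma_{\lambda;2})\gamma_{\lambda;2}\um_\lambda$. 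Without this normalization step (and the vanishing for $\#_{11}<0$), the proposal does not prove the lemma.
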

\begin{proof}
For the case (2), as in the proof of Lemma~\ref{lem:strict_partition}
we have $(\gamma_{\lambda;1}-\gamma_{\lambda;2})m_\lambda\equiv0$ in $S^c_{\lambda;\lambda}$
so the kernel contains $(\ugamma_{\lambda;1}-\gamma_{\lambda;2})\um_\lambda$.
We prove the converse inclusions.

$\Ker(\uM^c_{\lambda;\lambda}\twoheadrightarrow M^c_{\lambda;\lambda})$
is spanned by $\um_\sT$ for $\sT\in\uTab^c_{\lambda;\lambda}$ which
satisfies either of the condition that $\#_{11}(\sT^\times)<0$ or
that $\#_{11}(\sT^\times)=0$ with $\circleA$ in its first row.
If $\lambda_1>\lambda_2$, we have $\lambda_1-\#_{11}(\sT^\times)>\lambda_2$ for such
$\sT$ so that $\um_\sT\equiv0$
in $\uS^c_{\lambda;\lambda}$ as we did in the proof of
Lemma~\ref{lem:ideal_identification}. This implies
that $\Ker(\uM^c_{\lambda;\lambda}\twoheadrightarrow M^c_{\lambda;\lambda})$
is already zero in $\uS^c_{\lambda;\lambda}$; in other words,
$\uS^c_{\lambda;\lambda}\simeq S^c_{\lambda;\lambda}$.
In the other case $\lambda_1=\lambda_2$, we also have $\um_\sT\equiv0$ if
$\#_{11}(\sT^\times)<0$.
Otherwise by applying local transformation on the second row or below
$\um_\sT$ can be transformed into a linear combination
of tableaux $\sS$ such that $\sT(1,j)=\sS(1,j)$ for all $j$ and
$\#_{21}(\sS^\times)=\lambda_2$, that is, which is in the form
\[\young(1111\scdots\circleA\ \ \ \ ,1111,\ \ \ \ ,\ \ )
\qquad\text{or}\qquad
\young(1111\scdots\circleA\ \ \ \ ,111\circleA,\ \ \ \ ,\ \ ).\]
By the corollary above, for the special case we have
\begin{align*}
\young(1111\scdots\circleA2222,1111,3333,44)&\equiv
(-1)^{\lambda_2}q^{\binom{\lambda_2}{2}}
(\ugamma_{\lambda;1}-\gamma_{\lambda;2})\um_\lambda
\shortintertext{in $\uS^c_{\lambda;\lambda}$, so that}
\young(1111\scdots\circleA2222,111\circleA,3333,44)&\equiv
(-1)^{\lambda_2}q^{\binom{\lambda_2}{2}}
(\ugamma_{\lambda;1}-\gamma_{\lambda;2})\gamma_{\lambda;2}\um_\lambda.
\end{align*}
Every such $\sS$ above can be made by multiplying elements
to these tableaux from left.
Hence the image of $\Ker(\uM^c_{\lambda;\lambda}\twoheadrightarrow M^c_{\lambda;\lambda})$ in $\uS^c_{\lambda;\lambda}$
is generated by $(\ugamma_{\lambda;1}-\gamma_{\lambda;2})\um_\lambda$ as a 2-sided ideal.
\end{proof}

\begin{lemma}
For a fakepartition $\lambda=(\lambda_1,\lambda')$, we have
$\uS^c_{\lambda;\lambda}\simeq C_1(a\qqn{\lambda_1})\otimes S^c_{\lambda';\lambda'}$.
\end{lemma}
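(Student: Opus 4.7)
The plan is to identify $\uS^c_{\lambda;\lambda}$ as an explicit quotient of $\uGamma_\lambda$, using the canonical isomorphism of superalgebras $\uGamma_\lambda\simeq C_1(a\qqn{\lambda_1})\otimes\Gamma_{\lambda'}$ which sends $\ugamma_{\lambda;1}$ to the generator of the $C_1(a\qqn{\lambda_1})$-factor and each $\gamma_{\lambda;i+1}$ to $\gamma_{\lambda';i}$. Since $\uS^c_{\lambda;\lambda}$ is generated by $\um_\lambda$ over $\uGamma_\lambda$ (the fake analog of the spanning result established earlier), multiplication by $\um_\lambda$ gives a surjection
\[
\Phi\colon C_1(a\qqn{\lambda_1})\otimes\Gamma_{\lambda'}\twoheadrightarrow\uS^c_{\lambda;\lambda},\qquad x\mapsto x\,\um_\lambda.
\]
It thus suffices to prove that $\Ker\Phi=C_1(a\qqn{\lambda_1})\otimes J_{\lambda'}$, where $J_{\lambda'}\coloneqq\Ker(\Gamma_{\lambda'}\twoheadrightarrow S^c_{\lambda';\lambda'})$.

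For the containment $C_1(a\qqn{\lambda_1})\otimes J_{\lambda'}\subset\Ker\Phi$, I would take $y\in J_{\lambda'}$; the defining condition writes $ym_{\lambda'}$ as a linear combination of elements factoring through $M^c_{\mu'}$ for various $\mu'>\lambda'$. Applying the local transformation lemma in its fake version---prepending a new row of $1$'s whose length $\lambda_1$ is allowed to be any integer in the fake setting---lifts this relation to $y\,\um_\lambda\equiv0$ in $\uS^c_{\lambda;\lambda}$, since $(\lambda_1,\mu')>(\lambda_1,\lambda')=\lambda$ in the dominance order on fakecompositions. Left multiplication by $\ugamma_{\lambda;1}$, which Koszul-commutes with the $\Gamma_{\lambda'}$-factor, propagates the vanishing to the entire ideal $C_1(a\qqn{\lambda_1})\otimes J_{\lambda'}$.

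The hard part is the reverse inclusion, i.e., the injectivity of the induced map $\bar\Phi\colon C_1(a\qqn{\lambda_1})\otimes S^c_{\lambda';\lambda'}\twoheadrightarrow\uS^c_{\lambda;\lambda}$. My plan is to argue this by base change from the universal ring $\ZZ[a,q^\pm]$, over which the relevant supermodules carry polynomial structure constants. In the generic case $\kk=\QQ(a,q)$, the basis theorem from the previous subsection identifies $\uS^c_{\lambda;\lambda}$ with $\uGamma_\lambda$ if $\lambda'$ is a strict partition and with $0$ otherwise, and the same description applies verbatim to $C_1(a\qqn{\lambda_1})\otimes S^c_{\lambda';\lambda'}$; hence $\bar\Phi$ is manifestly an isomorphism over $\QQ(a,q)$. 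To descend this to arbitrary $\kk$ one needs compatible $\ZZ[a,q^\pm]$-bases on the two sides---the good circled tableau basis of $\uS^c_{\lambda;\lambda}$ on one hand, and the tensor of a $\{1,\ugamma_{\lambda;1}\}$-factor with the shifted-semistandard basis of $S^c_{\lambda';\lambda'}$ on the other---and a check that $\bar\Phi$ takes one to the other up to invertible scalars. This combinatorial basis-matching, in the spirit of the previous subsection, is the technical crux.
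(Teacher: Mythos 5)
Your reduction of the statement to the identification of $\Ker\bigl(\Phi\colon\uGamma_\lambda\simeq C_1(a\qqn{\lambda_1})\otimes\Gamma_{\lambda'}\twoheadrightarrow\uS^c_{\lambda;\lambda}\bigr)$ with $C_1(a\qqn{\lambda_1})\otimes\Ker(\Gamma_{\lambda'}\twoheadrightarrow S^c_{\lambda';\lambda'})$ is a reasonable reformulation, and the easy containment via a fake version of the local transformation lemma is plausible (though that fake version is itself unstated and would need justification). The problem is the hard direction, which you defer to a ``combinatorial basis-matching'' between the two quotient modules over $\ZZ[a,q^\pm]$. That plan cannot work as stated: neither $S^c_{\lambda';\lambda'}$ nor $\uS^c_{\lambda;\lambda}$ is free over $\ZZ[a,q^\pm]$ in general (the paper explicitly notes that $S^c_{\lambda;\lambda}\simeq\Gamma_\mu\otimes(\kk/2a\qqn{\gcd\{k_1,\dots\}}\kk)$ when $\lambda$ is not strict), so there are no bases to match; moreover the good circled tableaux only \emph{span} $\uS^c_{\lambda;\lambda}$, they are not a basis. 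Generic (i.e.\ $\QQ(a,q)$) injectivity of a map between torsion-laden quotients says nothing about injectivity over $\ZZ[a,q^\pm]$, so localization alone does not close the argument.

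The way to repair this — and what the paper actually does — is to do all the work inside the genuinely free module $\uM^c_{\lambda;\lambda}$ rather than in its quotients. Split the circled-tableau basis as $\uM^c_{\lambda;\lambda}=X\oplus Y$, where $Y$ is spanned by tableaux whose first row consists of $1$'s, and split $\uM^{c\,>\lambda}_{\lambda;\lambda}=V+W$ according to whether $\nu_1>\lambda_1$ or $\nu_1=\lambda_1$; then $W\subset Y$, $\uM^c_{\lambda;\lambda}=V+Y$, and $\uS^c_{\lambda;\lambda}\simeq Y/((V\cap Y)+W)$ with $Y/W\simeq C_1(a\qqn{\lambda_1})\otimes S^c_{\lambda';\lambda'}$. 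The crux becomes $V\cap Y=0$, which \emph{is} amenable to your localization idea because $V\cap Y$ is a submodule of a free module over the integral domain $\ZZ[a,q^\pm]$, hence torsion-free: one checks $\widetilde{V\cap Y}=0$ over $\QQ(a,q)$ by a dimension count using the cellular basis theorem and the shifted Knuth correspondence for $\lambda'$, and torsion-freeness then forces $V\cap Y=0$ integrally. In short, your generic-to-integral descent is the right instinct, but it must be applied to a torsion-free submodule, not to the non-free quotients you propose to compare.
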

\begin{proof}
Since these modules are preserved by extension of scalars,
it suffices to prove for the universal ring $\kk=\ZZ[a,q^\pm]$.
Let
\[V\coloneqq\sum_{\nu>\lambda,\,\nu_1>\lambda_1}\uM^{c\,\nu}_{\lambda;\lambda}
\qquad\text{and}\qquad
W\coloneqq\sum_{\nu>\lambda,\,\nu_1=\lambda_1}\uM^{c\,\nu}_{\lambda;\lambda}\]
so that $\uM^{c\,>\nu}_{\lambda;\lambda}=V+W$. On the other hand, let
\[T\coloneqq\set{\sT\in\Tab^c_{\lambda;\lambda}}{\sT^\times(1,j)=1\text{ for all }j}\] and
\[X\coloneqq\kk\set{\um_\sT}{\sT\in\Tab^c_{\lambda;\lambda}\setminus T},\qquad
Y\coloneqq\kk\set{\um_\sT}{\sT\in T}\]
so that $\uM^c_{\lambda;\lambda}=X\oplus Y$.
Since $\uGamma_\lambda\um_\lambda,W\subset Y$ we have
$\uM^c_{\lambda;\lambda}=V+Y$. Hence
\[\uS^c_{\lambda;\lambda}
=\uM^c_{\lambda;\lambda}/\uM^{c\,>\nu}_{\lambda;\lambda}
=(V+Y)/(V+W)\simeq Y/((V\cap Y)+W).\]
For a $\ZZ[a,q^\pm]$-module $M$,
let $\tilde M\coloneqq M\otimes_{\ZZ[a,q^\pm]}\QQ(a,q)$ be its localization.
By the cellular basis theorem, we have
$\dim\tilde V+\dim\tilde W=\dim\smash{\underline{\tilde M}}^c_{\lambda;\lambda}$
and \[\dim\tilde W=\sum_{\nu>\lambda,\,\nu_1=\lambda_1}\#\underline\STab^c_{\nu;\lambda}\cdot\#\underline\STab^{c\prime}_{\nu;\lambda}.\]
On the other hand, since we can view $\lambda_1$ as a sufficiently large number,
we have a natural one-to-one correspondence
\[\set{\text{strict fakepartition }\nu}{\nu>\lambda,\,\nu_1=\lambda_1}
\onetoone\set{\text{strict partition }\nu'}{\nu'>\lambda'}\]
and for such $\nu$,
\[\underline\STab^c_{\nu;\lambda}\onetoone\{1,\circleA\}\times\STab^c_{\nu';\lambda'}
\qquad\text{and}\qquad
\underline\STab^{c\prime}_{\nu;\lambda}\onetoone\STab^{c\prime}_{\nu';\lambda'}.\]
Then by using the shifted Knuth correspondence for $\lambda'$,
we obtain $\dim\tilde Y=\dim\tilde W$.
Since localization of modules is exact, from the exact sequence
\[0\to V\cap Y\to V\oplus Y\to\uM^c_{\lambda;\lambda}\to0\]
we obtain
\[0\to \widetilde{V\cap Y}\to \tilde V\oplus\tilde Y\to\smash{\underline{\tilde M}}^c_{\lambda;\lambda}\to0.\]
By comparison of dimensions we have $\widetilde{V\cap Y}=0$.
Since $V\cap Y\subset Y$ is a torsion-free module over the integral domain
$\ZZ[a,q^\pm]$, it implies $V\cap Y=0$.
Hence we have
\[\uS^c_{\lambda;\lambda}\simeq Y/W
\simeq C_1(a\qqn{\lambda_1})\otimes
S^c_{\lambda';\lambda'}.
\qedhere\]
\end{proof}

These two lemmas bring us
the following identification of the superalgebras
$S^c_{\lambda;\lambda}$ and $\uS^c_{\lambda;\lambda}$.
\begin{theorem}
Recall the assumption $q\in\kk^\times$.
\begin{enumerate}
\item For a partition $\lambda$, the 2-sided ideal
$\Ker(\Gamma_\lambda\twoheadrightarrow S^c_{\lambda;\lambda})$
is generated by $\gamma_i-\gamma_j$ for all $i,j$ such that $\lambda_i=\lambda_j$.
\item For a fakepartition $\lambda$, the 2-sided ideal
$\Ker(\uGamma_\lambda\twoheadrightarrow\uS^c_{\lambda;\lambda})$
is generated by $\gamma_i-\gamma_j$ for all $i,j$ such that $\lambda_i=\lambda_j$ and
$i,j\ge2$.
\end{enumerate}
\end{theorem}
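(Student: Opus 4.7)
The plan is to establish (1) and (2) by a simultaneous induction on the number $r$ of non-zero parts of $\lambda$, taking the two preceding lemmas as given. The base cases $r \le 1$ are essentially vacuous: both claimed generating sets are empty (for (2) the underlying partition $\lambda' = (\lambda_2,\lambda_3,\dotsc)$ is trivial, and for (1) a partition with a single part has no pairs $(i,j)$ with $i\ne j$ and $\lambda_i = \lambda_j$), while the two lemmas identify $S^c_{\lambda;\lambda}$ and $\uS^c_{\lambda;\lambda}$ with the respective targets $\Gamma_\lambda$ and $\uGamma_\lambda$ outright.

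For the inductive step of (2), write $\lambda = (\lambda_1,\lambda')$ so that $\lambda'$ is a partition with at most $r-1$ non-zero parts. The abstract isomorphism $\uGamma_\lambda \simeq C_1(a\qqn{\lambda_1}) \otimes \Gamma_{\lambda'}$ is compatible, through the second preceding lemma, with $\uS^c_{\lambda;\lambda} \simeq C_1(a\qqn{\lambda_1}) \otimes S^c_{\lambda';\lambda'}$, and under these identifications the surjection $\uGamma_\lambda \twoheadrightarrow \uS^c_{\lambda;\lambda}$ becomes $\id \otimes (\Gamma_{\lambda'} \twoheadrightarrow S^c_{\lambda';\lambda'})$. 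Since $C_1(a\qqn{\lambda_1})$ is $\kk$-free of rank $2$, tensoring is exact and this kernel is $C_1(a\qqn{\lambda_1}) \otimes \Ker(\Gamma_{\lambda'} \twoheadrightarrow S^c_{\lambda';\lambda'})$. Applying the inductive hypothesis of (1) to $\lambda'$ and reindexing $i \mapsto i-1$ yields precisely the generating set claimed in (2) for $\lambda$.

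For the inductive step of (1), observe that any genuine partition $\lambda$ satisfies $\lambda_1 \ge \lambda_r > 0$, so the extra relation that would distinguish $\uGamma_\lambda$ from $\Gamma_\lambda$ is vacuous and the two superalgebras agree; the surjection $\Gamma_\lambda \twoheadrightarrow S^c_{\lambda;\lambda}$ therefore factors as $\uGamma_\lambda \twoheadrightarrow \uS^c_{\lambda;\lambda} \twoheadrightarrow S^c_{\lambda;\lambda}$. If $\lambda_1 > \lambda_2$, the first preceding lemma gives $\uS^c_{\lambda;\lambda} \simeq S^c_{\lambda;\lambda}$, so the kernel is exactly that of $\uGamma_\lambda \twoheadrightarrow \uS^c_{\lambda;\lambda}$, which by (2) just proved is generated by $\gamma_i - \gamma_j$ for $i,j\ge 2$ with $\lambda_i = \lambda_j$; the strict inequality $\lambda_1 > \lambda_2 \ge \lambda_j$ rules out any pair involving the index $1$, matching the asserted generating set. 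If $\lambda_1 = \lambda_2$, the first lemma gives $\Ker(\uS^c_{\lambda;\lambda} \twoheadrightarrow S^c_{\lambda;\lambda}) = ((\ugamma_{\lambda;1} - \gamma_{\lambda;2})\um_\lambda)$; pulling back through $\uGamma_\lambda \twoheadrightarrow \uS^c_{\lambda;\lambda}$ adds the single generator $\gamma_1 - \gamma_2$ to the generators supplied by (2). The resulting set $\{\gamma_i - \gamma_j : i,j \ge 2,\ \lambda_i = \lambda_j\} \cup \{\gamma_1 - \gamma_2\}$ coincides with the claimed $\{\gamma_i - \gamma_j : \lambda_i = \lambda_j\}$, because $\lambda_1 = \lambda_j$ for $j\ge 3$ forces $\lambda_2 = \lambda_j$ and then $\gamma_1 - \gamma_j = (\gamma_1 - \gamma_2) + (\gamma_2 - \gamma_j)$ already lies in the two-sided ideal.

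The delicate point is to ensure the induction is not circular: it must be organised so that at step $r$ one first deduces (2) from (1) at step $r-1$, and only then uses (2) at step $r$ inside the argument for (1) at step $r$. A secondary technical issue that should be verified explicitly is that the isomorphism supplied by the second preceding lemma is compatible not merely as $\kk$-supermodules but as left $\uGamma_\lambda$-modules under the tensor-product identification of $\uGamma_\lambda$ with $C_1(a\qqn{\lambda_1}) \otimes \Gamma_{\lambda'}$; this compatibility is essentially built into the proof of that lemma through the complement $Y \subset \uM^c_{\lambda;\lambda}$ spanned by tableaux whose first row consists entirely of $1$s, but a short check of the action of $\ugamma_{\lambda;1}$ and of each $\gamma_{\lambda;i}$ with $i \ge 2$ on this subspace is needed. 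Once these organisational points are in place, the rest is straightforward bookkeeping.
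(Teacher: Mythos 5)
Your proposal is correct and follows essentially the same route as the paper: a mutual induction on the number of parts, deducing (2) for $\lambda$ from (1) for $\lambda'$ via the tensor decomposition $\uS^c_{\lambda;\lambda}\simeq C_1(a\qqn{\lambda_1})\otimes S^c_{\lambda';\lambda'}$, and then (1) for $\lambda$ from (2) for the same $\lambda$ by factoring $\Gamma_\lambda\simeq\uGamma_\lambda\twoheadrightarrow\uS^c_{\lambda;\lambda}\twoheadrightarrow S^c_{\lambda;\lambda}$ and combining generating sets of the two kernels. The extra bookkeeping you supply (the case split $\lambda_1>\lambda_2$ versus $\lambda_1=\lambda_2$, and the check that $\gamma_1-\gamma_j$ is recovered from $\gamma_1-\gamma_2$ and $\gamma_2-\gamma_j$) is exactly what the paper leaves implicit in the phrase ``the union of the generating sets.''
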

\begin{proof}
We use a mutual induction for (1) and (2) on the number of components of $\lambda$.
First let $\lambda$ be a fakepartition and suppose that (1) holds for $\lambda'$.
Then
\[\Ker(\uGamma_\lambda\twoheadrightarrow\uS^c_{\lambda;\lambda})
\simeq C_1(a\qqn{\lambda_1})\otimes\Ker(\Gamma_{\lambda'}\twoheadrightarrow S^c_{\lambda';\lambda'})\]
has a generating set above.
Next let $\lambda$ be a partition of $n>0$ and suppose (2) holds for $\lambda$.
We have a commutative square
\[\begin{tikzcd}
\uGamma_\lambda \arrow[two heads]{r} \arrow[two heads]{d} &
\uS^c_{\lambda;\lambda} \arrow[two heads]{d} \\
\Gamma_\lambda \arrow[two heads]{r} &
S^c_{\lambda;\lambda}
\end{tikzcd}\]
where $\uGamma_\lambda\simeq\Gamma_\lambda$ since $\lambda_1>0$.
Hence as a generating set of the kernel of $\Gamma_\lambda\twoheadrightarrow S^c_{\lambda;\lambda}$
we can take the union of that of $\Gamma_\lambda\simeq\uGamma_\lambda\twoheadrightarrow\uS^c_{\lambda;\lambda}$
and that of $\uS^c_{\lambda;\lambda}\twoheadrightarrow S^c_{\lambda;\lambda}$.
\end{proof}

Consequently we obtain the following classification of simple modules of $\rS^c_{r,n}$.
We remark that $S^c_{\lambda;\mu}$ is not free over $\kk$ in general
even if in this case $q\in\kk^\times$.
\begin{theorem}
Suppose $q\in\kk^\times$.
For a partition $\lambda$, let $\Theta_\lambda$ be the 2-sided ideal
generated by $\gamma_{\lambda;i}-\gamma_{\lambda;j}$ above.
Then there is a one-to-one correspondence
\[\Irr(\rS^c_{r,n})\onetoone\bigsqcup_{\nu=(\nu_1,\dots,\nu_r)\colon\textup{partition of $n$}}
\Irr(\Gamma_\lambda/\Theta_\lambda).\]
\end{theorem}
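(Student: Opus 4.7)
The plan is to apply the general classification theorem for standardly filtered algebras (the one stated right after Proposition~\ref{prop:standardly_filtered_is_morita_invariant}) to $\rS^c_{r,n}$, whose standardly filtered structure over $\{\Gamma_\lambda\}$ (indexed by compositions $\lambda=(\lambda_1,\dots,\lambda_r)$ of $n$, with the reversed dominance order) was established earlier in this section. That theorem yields
\[\Irr(\rS^c_{r,n})\onetoone\bigsqcup_\lambda\Irr^{B''_\lambda}_{B'_\lambda}(\Gamma_\lambda),\]
where $B'_\lambda\subset\Gamma_\lambda$ is the kernel of $\Gamma_\lambda\to S^c_{\lambda;\lambda}$ and $B''_\lambda/B'_\lambda\subset S^c_{\lambda;\lambda}$ is the image of the Morita context trace map $N_\lambda\otimes_{\rS^c_{r,n}}M_\lambda\to S^c_{\lambda;\lambda}$. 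The task then reduces to identifying these two ideals for each $\lambda$.

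For $B'_\lambda$, the previous theorem of this section directly gives $B'_\lambda=\Theta_\lambda$ whenever $\lambda$ is a partition. When $\lambda$ is not a partition, the assumption $q\in\kk^\times$ makes the braiding of the convolution product invertible, so $M^c_\lambda\simeq M^c_{w\lambda}$ for some $w$ with $w\lambda>\lambda$; hence $M^c_\lambda$ is needless in the sense of the remark after Corollary~\ref{cor:hom_on_dominance_order_super}, forcing $S^c_{\lambda;\lambda}=0$. Since then $\Gamma_\lambda/B'_\lambda=0$, such compositions contribute nothing to the disjoint union, and the surviving indices are exactly the partitions of $n$ with at most $r$ nonzero parts. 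The same braiding argument also eliminates partitions of length exceeding $r$ from the index set of $\rS^c_{r,n}$ at the outset.

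For $B''_\lambda$, I will show that the trace map is surjective, so that $\Irr^{B''_\lambda}_{B'_\lambda}(\Gamma_\lambda)=\Irr(\Gamma_\lambda/\Theta_\lambda)$. Following the construction of Lemma~\ref{lem:category_produces_filter} applied to $\cA=\lMod{H^c_n}$ with objects $\{M^c_\mu\}_\mu$, the direct sum $\bigoplus_\mu M^c_\mu$ contains $M^c_\lambda$ as a direct summand, and the canonical projection and inclusion lie in $N_\lambda$ and $M_\lambda$ respectively. Their composition is $\id_{M^c_\lambda}$, which in the quotient $S^c_{\lambda;\lambda}=\End_{\cA/\cA^{<\lambda}}(M^c_\lambda)$ is represented by $m_\lambda$, i.e.\ the identity element of the superalgebra $\Gamma_\lambda/\Theta_\lambda$. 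Since the image of the trace map is a two-sided $(\Gamma_\lambda,\Gamma_\lambda)$-ideal in $\Gamma_\lambda/\Theta_\lambda$ containing $1$, it equals all of $\Gamma_\lambda/\Theta_\lambda$, as desired.

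Combining these identifications with the classification theorem produces the claimed bijection. The only subtle point is the surjectivity of the trace map, but as explained it reduces to the categorical observation that $\id_{M^c_\lambda}$ factors through any $H^c_n$-module containing $M^c_\lambda$ as a summand; the rest amounts to invoking previously established results and cleaning up needless indices.
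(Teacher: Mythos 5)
Your proof is correct and takes essentially the same route the paper follows (the paper only writes ``consequently''): apply the general classification theorem for standardly filtered algebras, use the preceding theorem to identify $\Ker(\Gamma_\lambda\twoheadrightarrow S^c_{\lambda;\lambda})=\Theta_\lambda$ for partitions and the braiding to kill non-partition indices, and observe that the trace ideal contains the identity $m_\lambda=m_\lambda\circ_\lambda m_\lambda$ and hence is all of $S^c_{\lambda;\lambda}$. One small misattribution: partitions of length exceeding $r$ are discarded not by the braiding but because $S^c_{\nu;\mu}=0$ for every $\mu$ with at most $r$ nonzero components (as $\nu\not\ge\mu$), so the attached Morita context is zero.
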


Note that for a partition $\lambda=(k,k)$, we have
\[\Gamma_\lambda/\Theta_\lambda\simeq\Gamma_k\otimes(\kk/2a\qqn{k}\kk)\]
since $a\qqn{k}=\gamma_{\lambda;1}^2\equiv\gamma_{\lambda;1}\gamma_{\lambda;2}
=-\gamma_{\lambda;2}\gamma_{\lambda;1}\equiv-\gamma_{\lambda;1}^2=-a\qqn{k}$.
In addition, clearly
$2a\qqn{k}\kk+2a\qqn{l}\kk=2a\qqn{\gcd\{k,l\}}\kk$.
Thus for a general partition $\lambda$, let $\mu$ be
the strict partition obtained by removing duplicate components of $\lambda$
and let $k_1,\dots,k_r$ be such components, then
\[S^c_{\lambda;\lambda}\simeq\Gamma_\lambda/\Theta_\lambda\simeq\Gamma_\mu\otimes(\kk/2a\qqn{\gcd\{k_1,\dots,k_n\}}\kk).\]
In particular, $S^c_{\lambda;\lambda}=0$
if and only if $2a\qqn{\gcd\{k_1,\dots,k_n\}}\in\kk^\times$.

Remember that when $\kk$ is a field the Clifford superalgebra $\Gamma_\lambda$
has a unique simple module up to parity change.
For $e\ge2$, we say that a partition $\lambda$ is
\term{$e$-strict} if $\lambda_i=\lambda_j$, $i\neq j$ implies
$e|\lambda_i$.
For convention the word $\infty$-strict stands for strict.
For a superalgebra $A$,
let $\Irr(A)/\Pi$ be a quotient set of $\Irr(A)$
on which $V\in\Irr(A)$ is identified with its parity change $\Pi V$.
\begin{corollary*}
Suppose that $\kk$ is a field and $2aq\in\kk^\times$.
Let $e_2$ be the $q^2$-characteristic of $\kk$.
Then there is a one-to-one correspondence
\[\Irr(\rS^c_{r,n})/\Pi\onetoone
\{\nu=(\nu_1,\dots,\nu_r)\colon\textup{$e_2$-strict partition of $n$}\}.
\qedhere\]
\end{corollary*}
The case $2a=0$ is easier.
\begin{corollary*}
Suppose that $\kk$ is a field and $q\in\kk^\times$, $2a=0$.
Then there is a one-to-one correspondence
\[\Irr(\rS^c_{r,n})/\Pi\onetoone
\{\nu=(\nu_1,\dots,\nu_r)\colon\textup{partition of $n$}\}.
\qedhere\]
\end{corollary*}

\subsection{Identification of the ideals}
We keep assuming that $q\in\kk^\times$.
Finally we reach to the classification of simple modules
of the Hecke--Clifford superalgebra $H^c_n$.
Now let $J^c_\lambda\subset\Gamma_\lambda$
be the pullback of the 2-sided ideal
$m_\lambda\cdot S^c_\lambda\subset S^c_{\lambda;\lambda}$ via
the surjective map $\Gamma_\lambda\twoheadrightarrow\Gamma_\lambda/\Theta_\lambda\simeq S^c_{\lambda;\lambda}$.
We determine this ideal as follows.

For $n\in\NN$, let $K_n\subset\kk$ be the ideal generated by the elements
\[\set[\big]{\bigl({\textstyle\frac{a(q-1)}{[2]}}\bigr)^s[n]!}{0\le s\le n/2}.\]
Then by Lemma~\ref{lem:gamma}, we have
$m_n C_n m_n=K_n m_n\oplus K_{n-1}\gamma^L_n m_n$.
Since it is a 2-sided ideal of $\Gamma_n m_n$, the following statement holds.
\begin{lemma*}
There are inclusions $a\qqn{n}K_{n-1}\subset K_n\subset K_{n-1}$.
\end{lemma*}

\begin{lemma}
Let $\lambda=(\lambda_1,\lambda_2,\dots,\lambda_r)$ be a partition.
For each $i$, let
$\Delta_{\lambda;i}\subset\Gamma_\lambda$ be the supermodule
\begin{multline*}
\Delta_{\lambda;i}\coloneqq K_{\lambda_i-\lambda_{i+1}}
\oplus K_{\lambda_i-\lambda_{i+1}}\gamma_{\lambda;i+1}
\oplus K_{\lambda_i-\lambda_{i+1}-1}(\gamma_{\lambda;i}-q^{\lambda_i-\lambda_{i+1}}\gamma_{\lambda;i+1})\\
\oplus K_{\lambda_i-\lambda_{i+1}-1}(\gamma_{\lambda;i}-q^{\lambda_i-\lambda_{i+1}}\gamma_{\lambda;i+1})\gamma_{\lambda;i+1}
\end{multline*}
and let
$\Delta_\lambda\coloneqq\Delta_{\lambda;r}\dotsm
\Delta_{\lambda;2}\Delta_{\lambda;1}$.
Then $\Delta_\lambda\subset\Gamma_\lambda$ is a 2-sided ideal.
\end{lemma}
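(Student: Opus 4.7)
The plan is to verify directly that $\Delta_\lambda$ is closed under left and right multiplication by each generator $\gamma_{\lambda;j}$ of $\Gamma_\lambda$ for $j=1,\dots,r$. Since each $\Delta_{\lambda;i}$ is a homogeneous sub-supermodule of the subalgebra generated by $\gamma_{\lambda;i}$ and $\gamma_{\lambda;i+1}$ only, the generator $\gamma_{\lambda;j}$ super-anticommutes past every factor $\Delta_{\lambda;i}$ with $j\notin\{i,i+1\}$; so the closure problem reduces to local computations involving at most two adjacent factors in the product $\Delta_{\lambda;r}\cdots\Delta_{\lambda;1}$.

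The first step is to establish, by case analysis on the four generators of $\Delta_{\lambda;i}$, two local absorption claims: (a) $\gamma_{\lambda;i}\Delta_{\lambda;i}\subset\Delta_{\lambda;i}$, and (b) $\Delta_{\lambda;i}\gamma_{\lambda;i+1}\subset\Delta_{\lambda;i}$. The central computational identity, to be used repeatedly, is
\[\gamma_{\lambda;i}(\gamma_{\lambda;i}-q^{\mu_i}\gamma_{\lambda;i+1}) \;=\; a\qqn{\mu_i} \;-\; q^{\mu_i}(\gamma_{\lambda;i}-q^{\mu_i}\gamma_{\lambda;i+1})\gamma_{\lambda;i+1},\]
where $\mu_i=\lambda_i-\lambda_{i+1}$; this follows from $\gamma_{\lambda;i}^2 = a\qqn{\lambda_i}$, the decomposition $\qqn{\lambda_i} = \qqn{\mu_i}+q^{2\mu_i}\qqn{\lambda_{i+1}}$, and the inclusion $a\qqn{n}K_{n-1}\subset K_n$ of the preceding lemma, which places the arising scalar term into $K_{\mu_i}$. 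Granted (a) and (b), left multiplication of $\Delta_\lambda$ by any $\gamma_{\lambda;j}$ is absorbed into the factor $\Delta_{\lambda;j}$ after anticommuting past the factors $\Delta_{\lambda;i}$ with $i>j$; and right multiplication by $\gamma_{\lambda;j}$ with $j\geq 2$ is absorbed into $\Delta_{\lambda;j-1}$ after anticommuting past the factors with $i<j-1$.

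The main obstacle will be right multiplication by $\gamma_{\lambda;1}$, since there is no factor $\Delta_{\lambda;0}$ available to absorb it. The direct calculation
\[(\gamma_{\lambda;1}-q^{\mu_1}\gamma_{\lambda;2})\gamma_{\lambda;1} \;=\; a\qqn{\mu_1} \;+\; 2q^{2\mu_1}a\qqn{\lambda_2} \;+\; q^{\mu_1}(\gamma_{\lambda;1}-q^{\mu_1}\gamma_{\lambda;2})\gamma_{\lambda;2}\]
leaves an excess scalar in $2q^{2\mu_1}a\qqn{\lambda_2}K_{\mu_1-1}$ which does not in general lie in $K_{\mu_1}$, so $\Delta_{\lambda;1}\gamma_{\lambda;1}$ is strictly larger than $\Delta_{\lambda;1}$. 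The plan for this case is to show that the excess is nevertheless absorbed in the full product $\Delta_\lambda=\Delta_{\lambda;r}\cdots\Delta_{\lambda;2}\Delta_{\lambda;1}$: inside $\Delta_{\lambda;2}\Delta_{\lambda;1}$, the product of the generator $K_{\mu_2-1}(\gamma_{\lambda;2}-q^{\mu_2}\gamma_{\lambda;3})$ of $\Delta_{\lambda;2}$ with $K_{\mu_1}\gamma_{\lambda;2}\subset\Delta_{\lambda;1}$ contributes, via $\gamma_{\lambda;2}^2=a\qqn{\lambda_2}$, the scalar $a\qqn{\lambda_2}\cdot K_{\mu_2-1}K_{\mu_1}$ to the constant part of $\Delta_\lambda$, and I expect this (together with analogous contributions coming from the remaining generators) to cover the excess. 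The hard part will be the bookkeeping: showing that, for every $j$, all the excess scalars produced by the non-trivial commutations can be written as elements of the constant part of $\Delta_{\lambda;j}\cdots\Delta_{\lambda;1}$. I expect this to proceed by induction on $r$, using $a\qqn{n}K_{n-1}\subset K_n$ and the iterated decomposition of $\qqn{\lambda_i}$ at each stage to match the scalars exactly.
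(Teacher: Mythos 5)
Your reduction to local computations is sound, and your claims (a) and (b) are correct: they are precisely the inclusions $\gamma_{\lambda;i}\Delta_{\lambda;i}\subset\Delta_{\lambda;i}$ and $\Delta_{\lambda;i}\gamma_{\lambda;i+1}\subset\Delta_{\lambda;i}$ used in the paper, and together with super-anticommutation they dispose of left multiplication by every generator and of right multiplication by $\gamma_{\lambda;j}$ for $j\ge2$, exactly as you describe. The gap is the case you yourself flag as hard: right multiplication by $\gamma_{\lambda;1}$. Your expansion of $(\gamma_{\lambda;1}-q^{\mu_1}\gamma_{\lambda;2})\gamma_{\lambda;1}$ is arithmetically correct, but the resulting excess $2q^{2\mu_1}a\qqn{\lambda_2}K_{\mu_1-1}$ is a loose scalar that genuinely escapes $K_{\mu_1}$ (for instance $\lambda=(4,1)$ over a field with $q$ a primitive cube root of unity and $2a\neq0$ gives $K_3=0$ while $2a\qqn{1}K_2=\kk$), so your absorption step amounts to proving $2a\qqn{\lambda_2}K_{\mu_1-1}\cdot\Delta_{\lambda;r}\dotsm\Delta_{\lambda;2}\subset\Delta_\lambda$, a nontrivial statement that you only conjecture and for which the "contributions from the remaining generators" you point to are specific elements rather than an argument valid for arbitrary left factors. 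As it stands the proposal does not prove that $\Delta_\lambda$ is a right ideal.

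The missing idea is a different choice of decomposition. Writing $\mu_i=\lambda_i-\lambda_{i+1}$ and $c=q^{\mu_i}$, one has
\[(\gamma_{\lambda;i}-c\gamma_{\lambda;i+1})\gamma_{\lambda;i}
=a\qqn{\mu_i}-c\,\gamma_{\lambda;i+1}(\gamma_{\lambda;i}-c\gamma_{\lambda;i+1}),\]
so the overflow is $\gamma_{\lambda;i+1}$ times a generator of $\Delta_{\lambda;i}$, placed on the \emph{left}; there is no loose scalar, and $a\qqn{\mu_i}K_{\mu_i-1}\subset K_{\mu_i}$ handles the constant. This yields $\Delta_{\lambda;i}\gamma_{\lambda;i}\subset\Delta_{\lambda;i}+\gamma_{\lambda;i+1}\Delta_{\lambda;i}$, whence
\[\Delta_\lambda\gamma_{\lambda;1}\subset\Delta_\lambda+\Delta_{\lambda;r}\dotsm\Delta_{\lambda;2}\gamma_{\lambda;2}\Delta_{\lambda;1}
\subset\Delta_\lambda+\Delta_{\lambda;r}\dotsm\gamma_{\lambda;3}\Delta_{\lambda;2}\Delta_{\lambda;1}\subset\dotsb\subset\Delta_\lambda,\]
the stray generator cascading leftward through the factors until it becomes $\gamma_{\lambda;r+1}=0$. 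Note that your decomposition and this one differ by exactly the scalar $2c^2a\qqn{\lambda_{i+1}}$ that is causing you trouble: keeping $\gamma_{\lambda;i+1}(\gamma_{\lambda;i}-c\gamma_{\lambda;i+1})$ together, instead of anticommuting it into $(\gamma_{\lambda;i}-c\gamma_{\lambda;i+1})\gamma_{\lambda;i+1}$ plus a scalar, is what makes the argument close.
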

\begin{proof}
For simplicity we write $\Delta=\Delta_\lambda$,
$\Delta_i=\Delta_{\lambda;i}$ and $\gamma_i=\gamma_{\lambda;i}$.
First we prove
\[\Delta_i\gamma_i\subset\Delta_i+\gamma_{i+1}\Delta_i,\qquad
\Delta_i\gamma_{i+1}\subset\Delta_i,\qquad
\Delta_i\gamma_j=\gamma_j\Delta_i\quad\text{for }j\neq i,i+1.\]
The second and the third inclusions are clear so we prove the first one.
Since $K_n\subset K_{n-1}$, the inclusion
$K_{\lambda_i-\lambda_{i+1}}\gamma_i\subset\Delta_i$
is also obvious. We also have
\begin{align*}
(\gamma_i-q^{\lambda_i-\lambda_{i+1}}\gamma_{i+1})\gamma_i
-q^{\lambda_i-\lambda_{i+1}}\gamma_{i+1}(\gamma_i-q^{\lambda_i-\lambda_{i+1}}\gamma_{i+1})
&=\gamma_i^2-q^{2(\lambda_i-\lambda_{i+1})}\gamma_{i+1}^2\\
&=a\qqn{\lambda_i-\lambda_{i+1}}
\end{align*}
so that
$K_{\lambda_i-\lambda_{i+1}-1}(\gamma_i-q^{\lambda_i-\lambda_{i+1}}\gamma_{i+1})\gamma_i\subset\Delta_i+\gamma_{i+1}\Delta_i$
by $a\qqn{n}K_{n-1}\subset K_n$.
Putting them together we obtain $\Delta_i\gamma_i\subset\Delta_i+\gamma_{i+1}\Delta_i$
as desired.
Then
\begin{align*}
\Delta\gamma_i&=\Delta_r\dotsm\Delta_{i-1}\gamma_i\dotsm\Delta_1
\subset\Delta_r\dotsm\Delta_{i-1}\dotsm\Delta_1=\Delta
\shortintertext{for $i\ge2$, and}
\Delta\gamma_1&\subset\Delta+\Delta_r\dotsm\Delta_2\gamma_2\Delta_1
\subset\Delta+\Delta_r\dotsm\gamma_3\Delta_2\Delta_1\subset\dots\subset\Delta
\end{align*}
so $\Delta$ is a right ideal.
By the equation above we also have inclusions
\[\gamma_i\Delta_i\subset\Delta_i,\qquad
\gamma_{i+1}\Delta_i\subset\Delta_i+\Delta_i\gamma_i\]
which imply that $\Delta$ is also a left ideal in a similar manner.
\end{proof}

\begin{lemma}
For $\lambda=(\lambda_1,\lambda_2,\dots,\lambda_r)$ above, we have
$\Delta_\lambda^r+\Theta_\lambda\subset J^c_\lambda\subset\Delta_\lambda+\Theta_\lambda$.
\end{lemma}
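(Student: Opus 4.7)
The plan is to adapt the proof of Lemma~\ref{lem:ideal_identification} to the super setting, with the role of the scalar $f_\lambda$ played by the ideal product $\Delta_\lambda = \Delta_{\lambda;r} \dotsm \Delta_{\lambda;1}$. The crucial new ingredient is Lemma~\ref{lem:gamma}, which plays the role that the inductive decomposition of $[n]!$ did classically: it confines the ``mixed'' product $m_n C_n m_n$ to $K_n m_n \oplus K_{n-1} \gamma_n^L m_n$, and these two summands give rise to the ``pure $K$'' and ``$K\gamma$'' generator-families of each $\Delta_{\lambda;i}$. The ideal $\Theta_\lambda$ absorbs the indeterminacy that arises when one rewrites the ``absolute'' Clifford element $\gamma^L_{\lambda;i}$ in terms of the ``relative'' generators $\gamma_{\lambda;i}$ and $\gamma_{\lambda;i+1}$, via the circle-moving identity of Lemma~\ref{lem:circle_move}.

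For the upper bound $J^c_\lambda \subset \Delta_\lambda + \Theta_\lambda$, I would induct on the number $r$ of parts of $\lambda$. Given a row-standard circled tableau $\sT \in \Tab^c_\lambda$, refine $m_\lambda = m_\lambda \circ_\mu m_\mu$ along $\mu = (\lambda_1, 1^{n-\lambda_1})$, so that $m_\lambda \cdot m_\sT = m_\lambda \circ_\mu (m_\mu \cdot m_\sT)$. Those circled entries of $\sT$ whose positions end up in the first $\lambda_1$ slots of $\mu$ contribute a Clifford monomial sandwiched between two copies of $m_{\lambda_1}$, and Lemma~\ref{lem:gamma} confines that contribution to $K_{\lambda_1} m_{\lambda_1} + K_{\lambda_1 - 1} \gamma^L_{\lambda;1} m_{\lambda_1}$. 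Letting $\nu$ record how many entries labelled $1$ of $\sT|_\mu$ land in each row, the shape bound on $S^c_{\lambda;\lambda}$ forces $\nu_1 \geq \lambda_1 - \lambda_2$ (or $\lambda_1 - \lambda_2 - 1$ in the $\gamma^L$-branch, since one degree is absorbed by the Clifford factor), so the remaining scalar carries a divisor $K_{\lambda_1 - \lambda_2}$, respectively $K_{\lambda_1 - \lambda_2 - 1}$. Substituting $\gamma^L_{\lambda;1} \equiv \gamma_{\lambda;1} - q^{\lambda_1 - \lambda_2}\gamma_{\lambda;2}$ modulo $\Theta_\lambda$ (a direct consequence of Lemma~\ref{lem:circle_move}) then produces exactly the four generator-families of $\Delta_{\lambda;1}$, and the induction hypothesis on $\lambda' = (\lambda_2, \ldots, \lambda_r)$ handles the remainder.

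For the lower bound $\Delta_\lambda^r + \Theta_\lambda \subset J^c_\lambda$, I would generalise the explicit construction in Lemma~\ref{lem:ideal_identification}: the tableau $\sR \in \Tab_{\lambda;\lambda}$ with $\#_{ij}(\sR) = \lambda_{i+j-1} - \lambda_{i+j}$ still yields an element $m_\lambda \cdot m_{\sR_\downarrow}$ carrying the classical scalar factor, and by inserting circles at selected positions of $\sR$, independently at each row, one obtains elements of the trace ideal whose coefficients realise all products of generators from the four pieces of each $\Delta_{\lambda;i}$. The factors arise in the order $\Delta_{\lambda;r} \dotsm \Delta_{\lambda;1}$ because rows are read top to bottom, and iterating this choice along the $r$-fold structure of $\sR$ (mirroring the appearance of $f_\lambda^r$ rather than $f_\lambda$ in the classical estimate) produces $\Delta_\lambda^r m_\lambda \subset J^c_\lambda m_\lambda$ modulo $\Theta_\lambda m_\lambda$.

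The main obstacle I expect is in the upper-bound induction: tracking the Koszul signs and the exact $q$-powers that appear when the Clifford element $\gamma^L_{\lambda;1}$ is transported across $\circ_\mu$ and commuted past the braid generators encoding the lower rows of $\sT$. One must verify that precisely the shift $q^{\lambda_1 - \lambda_2}$ demanded by $\Delta_{\lambda;1}$ emerges, and that every residual term either lies in $\Theta_\lambda$ or factors through a strictly dominant parabolic module and thus vanishes in $S^c_{\lambda;\lambda}$. A secondary bookkeeping issue is that the $\Delta_{\lambda;i}$ do not commute (since $\gamma_{\lambda;i}$ and $\gamma_{\lambda;j}$ only anti-commute), so the induction must peel the rows of $\lambda$ off in exactly the same order in which the product $\Delta_\lambda = \Delta_{\lambda;r} \dotsm \Delta_{\lambda;1}$ is written.
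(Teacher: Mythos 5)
Your proposal is correct and follows essentially the same route as the paper: refine along $\mu=(\lambda_1,1^{n-\lambda_1})$, use Lemma~\ref{lem:gamma} to confine the Clifford contribution to the $K_k$- and $K_{k-1}\gamma^L$-branches, invoke the dominance bound and the circled-tableau identity expressing that contribution as $(\gamma_{\lambda;1}-q^{\lambda_1-\lambda_2}\gamma_{\lambda;2})$ in $S^c_{\lambda;\lambda}$, then induct on $\lambda'$; and for the lower bound, decorate the tableau $\sR$ from Lemma~\ref{lem:ideal_identification} with circles to realize the telescoping product containing $\Delta_\lambda^r$. The only quibble is that the substitution identity is the lemma on $\lambda=(m,k)$ in $S^c_{\lambda;\lambda}$ (proved via Lemma~\ref{lem:scalar_lemma}) rather than Lemma~\ref{lem:circle_move} itself, but this does not affect the argument.
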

\begin{proof}
Parallel to the proof of Lemma~\ref{lem:ideal_identification}.
So first we prove $J^c_\lambda\subset\Delta_\lambda+\Theta_\lambda$.
Take an arbitrary $\sT\in\Tab^c_\lambda$.
Let $\mu\coloneqq(\lambda_1,1^{n-\lambda_1})$ and define
$\sS\in\Tab^c_{\lambda;\mu}$
which has underlying tableau $\sS^\times=\sT^\times|_\mu$
and for each its bar $\young(11\scdots1)$ it has a circle
if and only if there are odd number of circles in the corresponding boxes in $\sT$.
Let $k\coloneqq\#_{11}(\sS^\times)$ and let $p\coloneqq0$ if $\sS$ does not
have $\circleA$ in its first row, and otherwise $p\coloneqq1$.
Then by Lemma~\ref{lem:gamma} we have
\[m_\mu\cdot m_\sT\in K_{k-p} m_\sS.\]
If $k<\lambda_1-\lambda_2$, we have $m_\sS\equiv0$.
If $k=\lambda_1-\lambda_2$ and $p=1$,
$m_\sS$ can be transformed into a linear combination of tableaux generated by
\begin{align*}
\young(1111\scdots\circleA2222,1111,3333,44)&\equiv(-1)^{\lambda_2}
q^{-\binom{\lambda_2}{2}}m_\lambda
(\gamma_{\lambda;1}-q^{\lambda_1-\lambda_2}\gamma_{\lambda;2})
\shortintertext{or}
\young(1111\scdots\circleA2222,111\circleA,3333,44)&\equiv(-1)^{\lambda_2}
q^{-\binom{\lambda_2}{2}}m_\lambda
(\gamma_{\lambda;1}-q^{\lambda_1-\lambda_2}\gamma_{\lambda;2})\gamma_{\lambda;2}
\end{align*}
as we did before. Hence
\[K_{k-p}m_\sS\subset(\unit^c_{\lambda_1}*S^c_{\lambda'})\cdot\Delta_{\lambda;1}.\]
In the other cases
we have $K_{k-p}\subset K_{\lambda_1-\lambda_2}$ so the inclusion above also holds.
By induction we may assume that
$m_{\lambda'}\cdot S^c_{\lambda'}\subset m_{\lambda'}\Delta_{\lambda'}$.
This implies $m_\lambda\cdot m_\sT\in m_\lambda\Delta_\lambda$ in $S^c_\lambda$.

We can prove the other inclusion by using circled tableaux whose underlying
tableau is $\sR$ in the proof of Lemma~\ref{lem:ideal_identification}.
By putting circles on suitable boxes of $\sR_\downarrow$
we can make arbitrary elements of
\[(\Delta_{\lambda;r}\dotsm\Delta_{\lambda;2}\Delta_{\lambda;1})
(\Delta_{\lambda;r}\dotsm\Delta_{\lambda;2})
\dotsm(\Delta_{\lambda;r}\Delta_{\lambda;r-1})\Delta_{\lambda;r}\supset\Delta_\lambda^r.\]
For example, for $\lambda=(6,4,1)$
{\renewcommand\circleI{\mathcircled{9}}
\renewcommand\circleJ{\mathcircled{10}}
\newcommand\TEN{10}
\begin{multline*}
m_\lambda\young(1\circleB789\circleJ,34\circleE\circleI,6)
=[2][3]!\young(1\circleA222\circleC,11\circleA\circleB,1)
=-[2][3]!\young(1\circleA22\circleB\circleC,1111,2)\gamma_3(\gamma_2-q^3\gamma_3)\\
=\dots
=q^6m_\lambda\cdot\gamma_3\cdot[3]!\cdot(\gamma_1-q^2\gamma_2)
\cdot\gamma_3\cdot[2](\gamma_2-q^3\gamma_3)\cdot1
\end{multline*}
}%
where $\gamma_1-q^2\gamma_2\in\Delta_{\lambda;1}$,
$[3]!,[2](\gamma_2-q^3\gamma_3)\in\Delta_{\lambda;2}$ and $\gamma_3,\gamma_3,1\in\Delta_{\lambda;3}$.
Hence we conclude that $m_\lambda\cdot S^c_{\lambda;\lambda}\supset m_\lambda\Delta_\lambda^r$.
\end{proof}

We state again the main theorem of this paper.

\begin{theorem*}
When $q\in\kk^\times$,
there is a one-to-one correspondence
\[\Irr(H^c_n)\onetoone\bigsqcup_{\lambda\colon\textup{partition of $n$}}\Irr^{\Delta_\lambda+\Theta_\lambda}_{\Theta_\lambda}(\Gamma_\lambda).
\qedhere\]
\end{theorem*}

Now assume that $\kk$ is a field.
By specializing this theorem
we obtain several classifications.
First consider the case $q\neq-1$.
In this case simply $K_n=[n]!\,\kk$.
Let $e$ (resp.\ $e_2$) be a $q$-characteristic (resp.\ $q^2$-) of $\kk$.
Then we have
\[e_2=\begin{cases*}
e & if $e$ is odd,\\
e/2 & if $e$ is even.
\end{cases*}\]
Let $\lambda$ be a partition.
If $\lambda_i>\lambda_{i+1}+e$, we have $\Delta_\lambda=0$ as before.
On the other hand if $\lambda_i<\lambda_{i+1}+e$ we have $\Delta_\lambda=\Gamma_\lambda$.
So suppose $\lambda_i=\lambda_{i+1}+e$ so that $K_{\lambda_i-\lambda_{i+1}}=0$
but $K_{\lambda_i-\lambda_{i+1}-1}=\kk$.
If $e_2|\lambda_i$ then
$\gamma_{\lambda_i}$ and $\gamma_{\lambda;i+1}$ are central nilpotent
so that they are contained in the Jacobson radical of $\Gamma_\lambda$.
Otherwise
\[(\gamma_{\lambda;1}-q^{\lambda_1-\lambda_2}\gamma_{\lambda;2})^2
=a\qqn{\lambda_1}+aq^{2(\lambda_1-\lambda_2)}\qqn{\lambda_2}
=2a\qqn{\lambda_1}\]
is invertible if and only if $2a\neq0$.
When $2a=0$, $K_{\lambda_i-\lambda_{i+1}-1}(\gamma_{\lambda;1}-q^{\lambda_1-\lambda_2}\gamma_{\lambda;2})$
generates a nilpotent ideal so is in the Jacobson radical also in this case.

Summarizing the above, we obtain the following results.
We say that an $e_2$-strict partition $\lambda$ is \term{$e$-restricted} if
\[\begin{cases*}
\lambda_i-\lambda_{i+1}<e & if $e_2|\lambda_i$,\\
\lambda_i-\lambda_{i+1}\le e & otherwise.\\
\end{cases*}\]

\begin{corollary*}
Suppose $\kk$ is a field and $2aq[2]\neq0$.
Then there is a one-to-one correspondence
\[\Irr(H^c_n)/\Pi\onetoone\{\textup{$e$-restricted $e_2$-strict partition of $n$}\}.
\qedhere\]
\end{corollary*}

The result is now coincides with the crystal $B(\Lambda_0)$
of type $\mathsf{A}^{(2)}_{e-1}$ for odd $e$
or of type $\mathsf{D}^{(2)}_{e/2}$ for even $e$
whose descriptions are obtained by
Kang~\cite{Kang03} and Hu~\cite{Hu06} respectively.

\begin{corollary*}
Suppose $\kk$ is a field and $q[2]\neq0$, $2a=0$.
Then there is a one-to-one correspondence
\[\Irr(H^c_n)/\Pi\onetoone\{\textup{$e$-restricted partition of $n$}\}.
\qedhere\]
\end{corollary*}

Next consider the case $q=-1$, so that $[k]=-(k\bmod 2)$ and $\qqn{k}=k$.
First assume that $2a\neq0$.
Let $p$ be the (ordinary) characteristic of $\kk$.
Then we have $K_n=\kk$ if $n<2p$ and otherwise $K_n=0$.
Hence by a similar arguments as above we obtain the following.

\begin{corollary*}
Suppose $\kk$ is a field of characteristic $p\neq2$ and $q=-1$, $a\neq0$.
Then there is a one-to-one correspondence
\[\Irr(H^c_n)/\Pi\onetoone\{\textup{$2p$-restricted $p$-strict partition of $n$}\}.
\qedhere\]
\end{corollary*}
Hence it corresponds to the crystal $B(\Lambda_0)$ of type $\mathsf{D}_p^{(2)}$ again.
S.~Tsuchioka informed the author that this result can be also obtained
from the supercategorification by
the \term{cyclotomic quiver Hecke superalgebra} given in \cite{KangKashiwaraOh13}.
As described in \cite[Theorem~3.13 and Lemma~4.8]{KangKashiwaraTsuchioka11}
it is considered equivalent to $H^c_n$ after localization in some sense.
See also \cite[\S4.6]{KangKashiwaraTsuchioka11} (beware that our $q$ is their $q^2$).

Now finally let $q=-1$ and $2a=0$, so that $K_0=K_1=\kk$ and $K_n=0$ for $n\ge2$.
Similar to the case above for $2a=0$, we obtain the following.

\begin{corollary*}
Suppose $\kk$ is a field and $q=-1$, $2a=0$.
Then there is a one-to-one correspondence
\[\Irr(H^c_n)/\Pi\onetoone\{\textup{$2$-restricted partition of $n$}\}.
\qedhere\]
\end{corollary*}

In fact, the two results for $2a=0$
are already obtained in Remark~\ref{rem:super_reduction}.

\bibliographystyle{model1b-num-names}
\bibliography{ref}

\end{document}